\documentclass[11pt]{amsart}
\usepackage{amsmath,mathtools,color,amssymb}
\usepackage{hyperref}

\usepackage[matrix,arrow,curve]{xy}

\addtolength{\hoffset}{-1cm}
\addtolength{\textwidth}{2cm}

\newcommand{\Ga}{\mathbb{G}_a}
\newcommand{\C}{\mathbb{C}}
\newcommand{\Z}{\mathbb{Z}}

\newcommand{\Cx}{\C^\times}
\newcommand{\N}{\mathbb{N}}

\newcommand{\bR}{\mathbf{R}}

\newcommand{\cW}{{\mathcal W}}
\newcommand{\oW}{{\overline{\cW}}}
\newcommand{\g}{\mathfrak{g}}
\newcommand{\fgv}{\mathfrak g^\vee}

\newcommand{\sslash}{\mathbin{/\mkern-6mu/}}

\DeclareMathOperator{\ad}{ad}
\DeclareMathOperator{\End}{End}
\DeclareMathOperator{\gr}{gr}
\DeclareMathOperator{\Gr}{Gr}
\DeclareMathOperator{\Id}{Id}

\DeclareMathOperator{\Spec}{Spec}

\DeclareMathOperator{\wt}{wt}

\newcommand{\gkloA}{\mathbf{A}}
\newcommand{\gkloB}{\mathbf{B}}
\newcommand{\gkloC}{\mathbf{C}}
\newcommand{\cA}{\mathcal{A}}

\newcommand{\conjectural}{\widetilde{Y}}

\newtheorem{Theorem}{Theorem}[section]
\newtheorem{Corollary}[Theorem]{Corollary}
\newtheorem{Conjecture}[Theorem]{Conjecture}
\newtheorem{Proposition}[Theorem]{Proposition}
\newtheorem{Lemma}[Theorem]{Lemma}

\theoremstyle{plain}

\theoremstyle{definition}
\newtheorem{Def}[Theorem]{Definition}

\theoremstyle{remark}

\newtheorem{Rem}[Theorem]{Remark}

\title{Hamiltonian reduction for affine Grassmannian slices and truncated shifted Yangians}
\author{Joel Kamnitzer}
\address{J.~Kamnitzer: Department of Mathematics, University of Toronto, Canada}
\email{jkamnitz@math.toronto.edu}
\author{Khoa Pham}
\address{K.~Pham: Department of Mathematics, University of Toronto, Canada}
\email{khoatd.pham@mail.utoronto.ca}
\author{Alex Weekes}
\address{A.~Weekes: Department of Mathematics and Statistics, University of Saskatchewan, Canada}
\email{weekes@math.usask.ca}

\begin{document}
	\maketitle
	\begin{abstract}
		Generalized affine Grassmannian slices provide geometric realizations for weight spaces of representations of semisimple Lie algebras. They are also Coulomb branches, symplectic dual to Nakajima quiver varieties.   In this paper, we prove that neighbouring generalized affine Grassmannian slices are related by Hamiltonian reduction by the action of the additive group.  We also prove a weaker version of the same result for their quantizations, algebras known as truncated shifted Yangians.
	\end{abstract}

\section{Introduction}

\subsection{Generalized affine Grassmannian slices}
In recent years, there has been great interest in (generalized) affine Grassmannian slices, since they are geometric incarnations of weight spaces for representations of semisimple algebras.  Moreover such a slice is symplectic dual to a Nakajima quiver variety, another geometric incarnation of the same weight space.

More precisely, let $ G $ be a complex semisimple group and consider the affine Grassmannian $ \Gr_G = G((t^{-1}))/G[t] $ of $ G $.  Inside the affine Grassmannian, we have the spherical Schubert cells $ \Gr^\lambda := G[t]t^\lambda $, for $ \lambda $ a dominant coweight of $ G $. We also have transverse orbits $ \cW_\mu := G_1[[t^{-1}]]t^\mu $, for $ \mu $ a dominant coweight and we can study their intersections 
$$\oW^\lambda_\mu := \cW_\mu \cap \overline{\Gr^\lambda} $$
which are known as \textbf{affine Grassmannian slices}.

In \cite{BFN}, Braverman-Finkelberg-Nakajima generalized these spaces $\oW_\mu^\lambda$ and  $\cW_\mu$ to the case where $ \mu $ is not dominant (see Section \ref{se:WmuDef} for the definitions).  They proved that when $ G $ is simply laced, then $\oW^\lambda_\mu $ is the Coulomb branch of a quiver gauge theory, whose Higgs branch is a Nakajima quiver variety.  This was extended to the non-simply-laced case by Nakajima and the third author in \cite{NW}, to describe $\oW^\lambda_\mu$ as the Coulomb branch of a quiver gauge theory with symmetrizers.

\subsection{Relations between slices}
It is natural to expect a geometric relationship between pairs of slices $ \oW^\lambda_\mu $ and $ \oW^\lambda_{\mu + \alpha_i^\vee} $, much as Nakajima defined correspondences \cite[Definition 5.6]{Nak98} relating the corresponding pairs of quiver varieties. 

When $ G = PGL_n $ and $ \mu \le d\varpi_1^\vee $ is dominant, then the slices $ \oW^{d\varpi_1^\vee}_\mu $ are Poisson isomorphic to Slodowy slices in the nilpotent cone of $ \mathfrak{gl}_d $, by the Mirkovi\'{c}-Vybornov isomorphism \cite{MVy, MVy2, QMV}.  By the work of Gan-Ginzburg \cite{GG}, a Slodowy slice is the Hamiltonian reduction of the nilpotent cone by the action of a unipotent group.  In their PhD theses, Morgan \cite{Mor} and Rowe \cite{Ro} used these ideas to show that when $ \mu, \mu + \alpha_i^\vee $ are both dominant, then $ \oW^{d\varpi_1^\vee}_{\mu + \alpha_i^\vee} $ is the Hamiltonian reduction of $ \oW^{d\varpi_1^\vee}_\mu $ with respect to a natural $ \Ga $-action.  

In this paper, we establish such a relation for any semisimple $ G $ and any $ \lambda, \mu $.  For any simple coroot $ \alpha_i^\vee$, we define an action of $ \Ga $ on $\cW_\mu $ by left multiplication using the corresponding root subgroup which restricts to an action on $ \oW^\lambda_\mu $.  Our first main result is the following.

\begin{Theorem} \label{th:intro1}
	Let $ \lambda $ be a dominant coweight, $ \mu $ a coweight, and $ \alpha_i^\vee $ any simple coroot. There is a Poisson isomorphism $ \oW^\lambda_\mu \sslash_1 \Ga \cong \oW^\lambda_{\mu + \alpha_i^\vee} $.
\end{Theorem}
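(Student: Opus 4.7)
The plan is to build an explicit Poisson isomorphism $\oW^\lambda_\mu \sslash_1 \Ga \cong \oW^\lambda_{\mu+\alpha_i^\vee}$ by first constructing one at the level of the ambient slices $\cW_\mu$ and $\cW_{\mu+\alpha_i^\vee}$, and then checking that it restricts appropriately. The first task is to identify a moment map $\Phi_i \colon \cW_\mu \to \mathbb{A}^1$ for the $\Ga$-action induced by the one-parameter subgroup $e_i \colon \Ga \hookrightarrow G$ associated to the simple root vector $E_i$. Guided by the Gan--Ginzburg / Morgan--Rowe picture for Slodowy slices and the Mirkovi\'c--Vybornov isomorphism as a template, $\Phi_i$ should be the coefficient of $t^{-1}$ in a specific matrix entry of the loop-group representative of a point of $\cW_\mu$; intrinsically, it is the leading coefficient of a distinguished generalized minor, or equivalently a specific generator in the shifted-Yangian presentation of $\C[\cW_\mu]$.

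Next, using the $SL_2$-embedding $\varphi_i \colon SL_2 \to G$ attached to $\alpha_i$, I would produce a loop-group identity that shifts $\mu$ to $\mu+\alpha_i^\vee$. The idea is that on the level set $\Phi_i^{-1}(1)$, a representative $g \cdot t^\mu$ can be rewritten as $g' \cdot t^{\mu+\alpha_i^\vee}$ for some $g' \in G_1[[t^{-1}]]$ by right-multiplying by a suitable element of $\varphi_i(SL_2((t^{-1})))$ modulo $G[t]$. The prototype is the Bruhat/Birkhoff decomposition in affine $SL_2$, where the condition $\Phi_i = 1$ is precisely what forces the resulting element to land in the correct Bruhat cell; the construction is manifestly $\Ga$-invariant, and the inverse is read off from the same identity. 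Since the modification is by right-multiplication in the $\varphi_i$-direction, it preserves the $G[t]$-orbit structure in $\Gr_G$, so the isomorphism $\cW_\mu \sslash_1 \Ga \cong \cW_{\mu+\alpha_i^\vee}$ should restrict to $\oW^\lambda_\mu \sslash_1 \Ga \cong \oW^\lambda_{\mu+\alpha_i^\vee}$. Poisson compatibility is then automatic, since Hamiltonian reduction of a Poisson variety by a Hamiltonian $\Ga$-action carries a canonical Poisson structure.

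The main obstacle is the non-dominant case. When $\mu$ is non-dominant, $\cW_\mu$ is not literally a subvariety of $\Gr_G$, but rather the generalized slice of Braverman--Finkelberg--Nakajima, defined via the shifted-Yangian / Coulomb-branch construction. In that setting, neither the $\Ga$-action, nor the moment map $\Phi_i$, nor the shift-by-$\alpha_i^\vee$ identity can be produced by naive loop-group manipulations: one has to translate the affine $SL_2$ Bruhat picture into explicit relations among the Drinfeld generators of the classical shifted Yangian, or equivalently among matrix coefficients of the scattering matrix. Once this translation is in hand, the restriction to the transverse slices $\oW^\lambda_\mu$ should follow from the BFN description of the closure condition, which is compatible with the $\varphi_i$-direction modification.
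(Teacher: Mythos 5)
Your high-level strategy — identify a moment map $\Phi_i$ from a distinguished Fourier coefficient, use the $SL_2$-embedding $\tau_i$ to shift $\mu$ to $\mu+\alpha_i^\vee$ on the level set $\Phi_i^{-1}(1)$, and read off the $\Ga$-invariance of the construction — matches the spirit of the paper's argument. But there are three genuine gaps.

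First, your concern about the non-dominant case is backwards. For \emph{every} coweight $\mu$ the paper takes $\cW_\mu := U_1[[t^{-1}]]\,T_1[[t^{-1}]]\,t^\mu\,U_{-,1}[[t^{-1}]]$ as a closed subscheme of $G((t^{-1}))$ (equation (\ref{eq: def of Wmu})); this is precisely the Braverman--Finkelberg--Nakajima generalized slice, and the Yangian only enters to supply the Poisson structure. So the "naive loop-group manipulations" that you worry might break down for non-dominant $\mu$ are exactly what the proof uses. Abandoning that picture in favor of a purely algebraic translation in the Drinfeld generators would make the argument far harder.

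Second, the statement that right-multiplication in the $\tau_i(SL_2((t^{-1})))$-direction "preserves the $G[t]$-orbit structure in $\Gr_G$" is false: only multiplication by $G[t]$ (on the appropriate side) preserves cosets in $\Gr_G = G((t^{-1}))/G[t]$, and $\cW_\mu$ for non-dominant $\mu$ is not a subset of $\Gr_G$ to begin with. The point the paper needs (and proves, in Lemma \ref{le:W0alphai} and the proof of Theorem \ref{psiminverses}) is that the specific modifying element $\xi_i(g)^{-1}$ lies in $\tau_i(SL_2[t])\subset G[t]$; \emph{left}-multiplication by an element of $G[t]$, followed by the projection $\pi$ (which adjusts on both sides by $U[t]$ and $U_-[t]$), preserves $\overline{G[t]t^\lambda G[t]}$ and hence the slice closure conditions. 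Without pinning down that the modification lives in $G[t]$, you cannot conclude the construction restricts to $\oW^\lambda_\mu$.

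Third, Poisson compatibility is not "automatic." The quotient $\oW^\lambda_\mu\sslash_1\Ga$ indeed carries a canonical Poisson structure, but you still need the constructed map to $\oW^\lambda_{\mu+\alpha_i^\vee}$ to be Poisson, and this is the hard part. The paper does this by factoring your map through the multiplication isomorphism $m:\oW^0_{-\alpha_i^\vee}\times\cW_{\mu+\alpha_i^\vee}\to\Phi_i^{-1}(\Cx)_\mu$, and then proving that multiplication is Poisson (Theorem \ref{multpoi}, equivalently Theorem \ref{th:intro3}), which resolves a conjecture from \cite{FKPRW} and requires the "filling-out" machinery of Proposition \ref{denseunion}. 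Any complete proof of the statement must supply some substitute for this step.
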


In \cite[Section 2(vi)]{BFN}, Braverman-Finkelberg-Nakajima introduced multiplication maps (see section \ref{multsection}), $ \oW^{\lambda_1}_{\mu_1} \times \oW^{\lambda_2}_{\mu_2} \rightarrow \oW^{\lambda_1 + \lambda_2}_{\mu_1 + \mu_2} $.  These are restrictions of multiplication maps $ \cW_{\mu_1} \times \cW_{\mu_2} \rightarrow \cW_{\mu_1 + \mu_2}$ \cite[section 5.9]{FKPRW}.

In order to prove the Theorem \ref{th:intro1}, we prove the following stronger result using these multiplication maps.
\begin{Theorem} \label{th:intro2}
	The multiplication map gives a $ \Ga$-equivariant Poisson isomorphism 
	$$
	\oW^0_{-\alpha_i^\vee} \times \cW_{\mu + \alpha_i^\vee} \rightarrow \Phi_i^{-1}(\Cx)_\mu \subset \cW_\mu	
	$$
	which restricts to an isomorphism
	$$
	\oW^0_{-\alpha_i^\vee} \times \oW^\lambda_{\mu + \alpha_i^\vee} \rightarrow \Phi_i^{-1}(\Cx)_\mu^\lambda \subset \oW^\lambda_\mu 
	$$
	where $ \Phi_i $ is the moment map for the $ \Ga $ action on $ \cW_\mu $.
\end{Theorem}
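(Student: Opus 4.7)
The approach is to construct an explicit inverse to the multiplication map $m \colon \oW^0_{-\alpha_i^\vee} \times \cW_{\mu + \alpha_i^\vee} \to \cW_\mu$ and verify the required compatibilities.

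First, I would study the rank-one slice $\oW^0_{-\alpha_i^\vee}$ in detail. It arises essentially from the $SL_2$ subgroup associated to $\alpha_i^\vee$, and from the GKLO description one can see that it is a two-dimensional Poisson variety, that the $\Ga$-action on it is free, and that the restriction of the moment map to $\oW^0_{-\alpha_i^\vee}$ is everywhere nonzero, taking values in $\Cx$. Next, I would record the formal properties of $m$. Since the $\Ga$-action on $\cW_\mu$ is by left multiplication via $U_{\alpha_i}$, it is absorbed into the first factor under $m$, so $m$ is equivariant with respect to the first-factor action on the domain. Correspondingly, the moment map $\Phi_i$ on $\cW_\mu$ pulls back along $m$ to the moment map of the first factor alone, and therefore the image of $m$ lies in $\Phi_i^{-1}(\Cx)_\mu$. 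That $m$ is Poisson is inherited from the Poisson property of the multiplication map proved in \cite[Section 5.9]{FKPRW}.

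The main obstacle is proving bijectivity onto $\Phi_i^{-1}(\Cx)_\mu$. My plan is to construct the inverse explicitly: given $z \in \Phi_i^{-1}(\Cx)_\mu$, the nonvanishing of $\Phi_i(z)$ should pin down a unique $x \in \oW^0_{-\alpha_i^\vee}$ with matching moment value such that $y := x^{-1} z$ lies in $\cW_{\mu + \alpha_i^\vee}$. The delicate part is to show in coordinates (say GKLO coordinates) that this $y$ indeed lies in the shifted slice $\cW_{\mu + \alpha_i^\vee}$, not merely in $\cW_\mu$. This amounts to verifying that after conjugation by $t^{\alpha_i^\vee}$ (which scales the $\pm \alpha_i$-root components by $t^{\pm 2}$), the element representing $y$ remains in $G_1[[t^{-1}]]$ translated to the correct coweight. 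The assumption $\Phi_i(z) \in \Cx$ is exactly what enables the required cancellation of the leading negative-power terms in the $-\alpha_i$ direction.

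Finally, for the restriction to $\oW^\lambda_\mu$, one verifies that the factorization respects the Schubert closure conditions. Since the multiplication map restricts to $\oW^{\lambda_1}_{\mu_1} \times \oW^{\lambda_2}_{\mu_2} \to \oW^{\lambda_1+\lambda_2}_{\mu_1+\mu_2}$ by \cite[Section 2(vi)]{BFN}, the image of $\oW^0_{-\alpha_i^\vee} \times \oW^\lambda_{\mu + \alpha_i^\vee}$ under $m$ is contained in $\Phi_i^{-1}(\Cx)^\lambda_\mu$. Conversely, given $z \in \Phi_i^{-1}(\Cx)^\lambda_\mu$, the inverse factorization $z = xy$ should produce $y \in \oW^\lambda_{\mu + \alpha_i^\vee}$, since $\oW^0_{-\alpha_i^\vee}$ carries the minimal possible $\lambda = 0$ and so $x$ contributes nothing to the Schubert bound. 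The $\Ga$-equivariance and Poisson compatibility then transfer to the restricted isomorphism.
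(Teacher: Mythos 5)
Your overall strategy — explicitly describing $\oW^0_{-\alpha_i^\vee}$, recording that the moment map pulls back to the first factor, and then constructing an explicit inverse to the multiplication — is the same as the paper's (Lemmas \ref{le:W0alphai}--\ref{le:xi}, Theorem \ref{psiminverses}, Proposition \ref{mequivariant}, Corollary \ref{cor: reduction for slices v1}). However, there are two substantive issues.

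First, you write that the Poisson property of $m$ is ``inherited from the Poisson property of the multiplication map proved in \cite[Section 5.9]{FKPRW}.'' That section of \cite{FKPRW} only \emph{defines} the multiplication maps and \emph{conjectures} (Conjecture 5.20 there) that they are Poisson. That conjecture is one of the main contributions of the present paper: it is proved in Theorem \ref{multpoi}, and the argument is genuinely nontrivial — it needs the filling-out statement (Proposition \ref{denseunion}), the fact that the slice multiplications are Poisson (Proposition \ref{prop: mult map restricts to slices}, proved via a birationality reduction), and the filtration results of Section \ref{se:filtrations}. So as written, your plan has a gap here: you are assuming a result that is not in the literature you cite and would need to be supplied.

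Second, your description of the factorization elides the projection $\pi$. The multiplication is $m(g_1,g_2)=\pi(g_1g_2)$, the $\Ga$-action is $a\cdot g = \pi(x_{-i}(-d_i^{1/2}a)g)$, and the inverse is $g\mapsto \bigl(\xi_i(g),\,\pi(\xi_i(g)^{-1}g)\bigr)$; in particular $\xi_i(g)^{-1}g$ does \emph{not} lie in $\cW_{\mu+\alpha_i^\vee}$ but only in $U((t^{-1}))^i_1 T_1[[t^{-1}]] t^{\mu+\alpha_i^\vee} U_-((t^{-1}))$ (Lemma \ref{le:impsi}), so one must apply $\pi$ and then check compatibility via Proposition \ref{pr:commuteUi}. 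Relatedly, the element $x\in\oW^0_{-\alpha_i^\vee}$ is not determined by ``matching moment value'' alone (that fixes one of its two coordinates); the paper pins it down from the first \emph{two} Fourier coefficients $\psi_i^{(1)}(g),\psi_i^{(2)}(g)$. Your alternative of imposing the condition $x^{-1}z\in\cW_{\mu+\alpha_i^\vee}$ is morally equivalent, but you would still have to prove that such an $x$ exists and is unique, which in the end amounts to the same explicit $SL_2$ computation.
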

As $  \oW^0_{-\alpha_i^\vee} \cong T^* \C^\times $ (see Lemma \ref{le:W0alphai}), this immediately implies Theorem \ref{th:intro1}.

\subsection{Shifted Yangians}
In \cite{KWWY} and \cite[Appendix B]{BFN}, we introduced a family of algebras, called shifted Yangians $ Y_\mu $, which are used to quantize the spaces $ \cW_\mu$.  We also studied quotients, called truncated shifted Yangians $ Y^\lambda_\mu $, which quantize  the generalized affine Grassmannian slices $ \oW^\lambda_\mu $ (see section \ref{section: tsy} for the definition of these algebras).

Moreover in \cite{FKPRW}, we introduced comultiplication map $ Y_{\mu_1 + \mu_2} \rightarrow Y_{\mu_1} \otimes Y_{\mu_2} $ which were conjectured to quantize the above multiplication maps.  Here we resolve this conjecture from \cite{FKPRW}.

\begin{Theorem} \label{th:intro3}
	Upon applying associated graded, the comultiplication map $ Y_{\mu_1 + \mu_2} \rightarrow Y_{\mu_1} \otimes Y_{\mu_2} $ becomes the multiplication map $ \C[\cW_{\mu_1 + \mu_2}] \rightarrow \C[\cW_{\mu_1}] \otimes \C[\cW_{\mu_2}] $.
\end{Theorem}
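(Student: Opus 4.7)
The plan is to verify the equality on algebra generators. Let $\Delta$ denote the comultiplication $Y_{\mu_1+\mu_2} \to Y_{\mu_1} \otimes Y_{\mu_2}$ from \cite{FKPRW}, and let $m^*$ denote the pullback along the multiplication map $m \colon \cW_{\mu_1} \times \cW_{\mu_2} \to \cW_{\mu_1+\mu_2}$. Once the existence of $\gr \Delta$ is established, both $\gr \Delta$ and $m^*$ are algebra homomorphisms, and the symbols of the standard generators $E_i^{(r)}, F_i^{(r)}, H_i^{(r)}$ of $Y_{\mu_1+\mu_2}$ generate $\gr Y_{\mu_1+\mu_2} \cong \C[\cW_{\mu_1+\mu_2}]$, so it suffices to check agreement on each such symbol.

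The first step is to show that $\Delta$ is filtered, i.e.\ that it sends the degree-$\leq d$ part of $Y_{\mu_1+\mu_2}$ into the degree-$\leq d$ part of the tensor-product filtration on $Y_{\mu_1} \otimes Y_{\mu_2}$. This is done by inspecting the explicit formulas from \cite{FKPRW} for the action of $\Delta$ on each generator and bounding the filtration degrees of the terms that appear. The second step is to compute $\gr \Delta$ on each generator, retaining only the leading symbols. The third step is to match these with $m^*$ on the corresponding coordinate functions: since $m$ is induced by multiplying group-valued sections in $\Gm$, pulling back the coefficient functions of the generating matrix on $\cW_{\mu_1+\mu_2}$ produces convolution-type formulas in the classical coordinates, and these should coincide with the expressions extracted in step two.

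The main obstacle is the shift. The comultiplication $\Delta$ of \cite{FKPRW} is not a naive restriction of the usual Yangian coproduct; it carries correction factors depending on $\mu_1$ and $\mu_2$ that force the image into the correct shifted tensor product. The technical heart of the argument is to show that these corrections contribute only in strictly lower filtration degree, so that on leading symbols what survives is precisely the classical convolution product computed by $m^*$. A useful reduction is to first settle the rank one case, where $\Delta$ and the multiplication on $\cW_\mu$ can both be written explicitly in $2 \times 2$ matrix form, and then bootstrap to general $G$ using the compatibility of $\Delta$ and $m$ with the $\mathfrak{sl}_2$-embeddings attached to each simple root, together with the known behaviour of $\Delta$ with respect to the Cartan series.
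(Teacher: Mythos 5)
Your approach differs fundamentally from the paper's, and it has two genuine gaps.

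First, your key heuristic is wrong: the shift corrections in the formulas of Theorem~\ref{th:coproductanti} (the terms like $S_j^{(-\langle\mu_1,\alpha_j\rangle+1)}\otimes E_j^{(1)}$ and $-\sum_{\gamma>0}F_\gamma^{(1)}\otimes[E_j^{(1)},E_\gamma^{(1)}]$) do \emph{not} drop into strictly lower filtration degree. The degree bookkeeping in Proposition~\ref{comultfiltrcomp} shows that every summand in $\Delta(E_j^{(-\langle\mu_1,\alpha_j\rangle+2)})$ has the \emph{same} degree $\langle\nu_1-\mu_1,\alpha_j\rangle+2$; none of the correction terms is subleading. Passing to $\gr$ therefore does not kill them. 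On the classical side $m^*$ is the pullback along $(g_1,g_2)\mapsto\pi(g_1 g_2)$, which involves the nontrivial projection $\pi$, and it produces correction terms of exactly this shape. What you would actually have to prove is that these leading-order terms \emph{match}, not that they vanish; you are attributing to cancellation what in fact requires a nontrivial identity.

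Second, the comultiplication is given explicitly only on the Levendorskii-type generators $E_i^{(1)}, E_i^{(r)}$ for a few small $r$, etc.\ (Theorem~\ref{th:coproductanti}); all higher $E_i^{(r)}, F_i^{(r)}, H_i^{(r)}$, and all $E_\beta^{(q)}, F_\beta^{(q)}$ with $\beta$ non-simple, are reached by iterated commutators. To propagate agreement between $\gr\Delta$ and $m^*$ from the explicit generators to the full (infinitely-generated) polynomial ring $\C[\cW_{\mu_1+\mu_2}]$, you need to know that $m^*$ respects Poisson brackets, so that you can reconstruct the higher generators on both sides by bracketing and invoke Proposition~\ref{antidomgen}. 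This is precisely what you cannot assume: the Poissonness of $m$ is the substantive content of the theorem, not an input. The paper's Theorem~\ref{multpoi} proves it first, and this requires a genuinely new device --- Proposition~\ref{denseunion} (the ``filling-out'' statement), which reduces a function-level identity on the infinite-dimensional scheme $\cW_\mu$ to restrictions on the finite-dimensional slices $\oW^\lambda_\mu$, where the multiplication is already known to be Poisson by Proposition~\ref{prop: mult map restricts to slices}. Once $m$ is Poisson, the identification $\gr\Delta=m^*$ follows from \cite[Prop.~5.21]{FKPRW}, an abstract rigidity statement, rather than from a generator-by-generator match. Your plan inverts the logical order and treats as a computational matching problem something the paper handles by a density-plus-rigidity argument; without first establishing that $m$ is Poisson (for which the filling-out argument is the key new idea), the rank-one bootstrap will not close the gap.
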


With this result in hand, the following non-commutative generalization of the first part of Theorem \ref{th:intro2} is very natural and is our second main theorem.

\begin{Theorem} \label{th:intro4}
	Comultiplication gives an isomorphism
	$$
	Y_\mu[(E_i^{(1)})^{-1}] \cong Y_{-\alpha_i^\vee}^0 \otimes Y_{\mu + \alpha_i^\vee}
	$$
	and we obtain $ Y_{\mu + \alpha_i^\vee} $ as the quantum Hamiltonian reduction of $Y_\mu$.
\end{Theorem}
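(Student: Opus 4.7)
The plan is to quantize the argument behind Theorem \ref{th:intro2}, using Theorem \ref{th:intro3} as the bridge between the quantum and classical pictures via associated graded. First I would construct the comparison map as the composition of the comultiplication $\Delta : Y_\mu \to Y_{-\alpha_i^\vee} \otimes Y_{\mu + \alpha_i^\vee}$ from \cite{FKPRW} with the truncation $Y_{-\alpha_i^\vee} \twoheadrightarrow Y^0_{-\alpha_i^\vee}$ applied to the first tensor factor. Since $Y^0_{-\alpha_i^\vee}$ quantizes $\oW^0_{-\alpha_i^\vee} \cong T^*\Cx$, the image of $E_i^{(1)}$ is invertible (it lifts the coordinate $\Phi_i$ on the $\Cx$ factor), so by the universal property of Ore localization the map extends uniquely to a filtered algebra homomorphism
\[
\Psi : Y_\mu\bigl[(E_i^{(1)})^{-1}\bigr] \longrightarrow Y^0_{-\alpha_i^\vee} \otimes Y_{\mu+\alpha_i^\vee}.
\]

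To show $\Psi$ is an isomorphism, I would pass to associated graded. Assuming $E_i^{(1)}$ satisfies the Ore condition in $Y_\mu$ and its symbol is not a zero divisor (so that $\gr Y_\mu\bigl[(E_i^{(1)})^{-1}\bigr] = \C[\cW_\mu]\bigl[\Phi_i^{-1}\bigr]$), Theorem \ref{th:intro3} identifies $\gr \Psi$ with the pullback of the multiplication isomorphism $\oW^0_{-\alpha_i^\vee} \times \cW_{\mu+\alpha_i^\vee} \xrightarrow{\sim} \Phi_i^{-1}(\Cx)_\mu$ furnished by Theorem \ref{th:intro2}. Since $\gr \Psi$ is then an isomorphism and the filtrations are exhaustive and bounded below, $\Psi$ itself is an isomorphism.

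For the second statement, I would identify $E_i^{(1)}$ as the quantum moment map for the $\Ga$-action on $Y_\mu$ coming from the root subgroup associated to $\alpha_i^\vee$; this quantizes the fact that $\Phi_i$ is the classical moment map. The quantum Hamiltonian reduction of $Y_\mu$ by $\Ga$ at value $1$ is then $\bigl(Y_\mu/(E_i^{(1)}-1)Y_\mu\bigr)^{\Ga}$. Transporting through $\Psi$ and observing that $Y^0_{-\alpha_i^\vee}$ is the Weyl algebra on $\Cx$, whose quantum reduction by translation at value $1$ is $\C$, the reduction collapses to $\C \otimes Y_{\mu + \alpha_i^\vee} = Y_{\mu+\alpha_i^\vee}$.

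The hardest steps are two filtration-theoretic points: first, verifying the Ore condition for $E_i^{(1)}$ in $Y_\mu$ and that $\gr$ commutes with the resulting localization; and second, pinning down $E_i^{(1)}$ precisely (up to any normalization depending on the shift $\mu$) as the quantum moment map, so that the reduction identity on the nose reads as above. Both should reduce to explicit computations in the Drinfeld-style presentation of the shifted Yangian developed in \cite{KWWY, FKPRW, BFN}, using that $E_i^{(1)}$ is of minimal filtration degree among the generators meeting $\Phi_i$.
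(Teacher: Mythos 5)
Your high-level plan matches the paper's: localize at $E_i^{(1)}$, extend the comultiplication followed by truncation to the localization by the universal property of Ore localization, compare associated gradeds to the classical isomorphism of Theorem \ref{th:intro2}, and then for the reduction statement transport through the isomorphism and reduce the Weyl-algebra factor $Y^0_{-\alpha_i^\vee}\cong D(\Cx)$.

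However there is a genuine gap in your middle step. You write ``Since $\gr\Psi$ is then an isomorphism and the filtrations are exhaustive and bounded below, $\Psi$ itself is an isomorphism.'' The filtrations $F_{\nu_1,\nu_2}Y_\mu$ are in general \emph{not} bounded below: the generators $E_\beta^{(q)}$ and $F_\beta^{(q)}$ have degrees $\langle\nu_1,\beta\rangle+q$ and $\langle\nu_2,\beta\rangle+q$, which can be arbitrarily negative. In the paper this is exactly the obstruction: Lemma \ref{nonnegativecondition} shows that one can arrange a non-negative filtration on $Y^0_{-\alpha_i^\vee}\otimes Y_{\mu+\alpha_i^\vee}$ only when $\mu$ is sufficiently dominant (there must exist a coweight $\nu$ with $\langle\nu,\alpha_i\rangle=-1$, $\langle\nu,\beta\rangle\geq -1$ for all positive roots, and $\langle\mu-\nu,\beta\rangle\geq -1$ for all positive roots), and that establishes Theorem \ref{th:domiso} for those $\mu$ only. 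For general $\mu$, injectivity still follows from separatedness plus the graded isomorphism (Lemma \ref{liftgradediso}(1)), but surjectivity requires a separate argument — Theorem \ref{tildeDeltaiso} — which proceeds by embedding $Y_\mu$ into a sufficiently dominant $Y_{\mu+\eta}$ via a shift morphism, observing that this already hits the subalgebra $Y^0_{-\alpha_i^\vee}\otimes Y^\geq_{\mu+\alpha_i^\vee}$, and then showing via the estimate $\Delta(F_j^{(r)})\in 1\otimes F_j^{(r)}+Y^<\otimes Y^\geq$ (Lemma \ref{Ygeq}) that the remaining $1\otimes F_j^{(r)}$ also lie in the image. Without this you cannot conclude surjectivity of $\Psi$ when $\mu$ is not sufficiently dominant. (You correctly flag the Ore condition for $E_i^{(1)}$ and the commutation of $\gr$ with localization as points to check; the paper proves these via local nilpotence of $\ad_{E_i^{(1)}}$ and standard facts on filtered localization, so those steps are routine once done.)
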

Here $ E_i^{(1)} $ is inverted because it is the quantum moment map for the $ \Ga $-action. The algebra $Y_{-\alpha_i^\vee}^0$ is isomorphic to the ring of differential operators on $\C^\times$ by Remark \ref{rem: cotangent bundle of C*}, a quantum version of the isomorphism $  \oW^0_{-\alpha_i^\vee} \cong T^* \C^\times $.

\subsection{Truncated shifted Yangians}
It is natural to expect a result similar to Theorem \ref{th:intro4} for the truncated shifted Yangians.  However, unlike in the commutative case, passing from $ Y_\mu $ to $ Y^\lambda_\mu $  is not immediate, since the kernel of $Y_\mu \rightarrow Y^\lambda_\mu $ is not completely understood.

In \cite{KWWY}, we gave conjectural generators of this kernel (when $ \mu $ is dominant).  In fact, we proved they generate subject to a certain ``reducedness'' conjecture for a modular version of $ \overline{\Gr^\lambda}$.  In \cite{KMWY}, for $ G = SL_n $, we proved this reducedness conjecture, and thus the conjecture about the kernel. In Appendix \ref{section: appendix}, we extend this proof to non-dominant $ \mu $ (still for $ G = SL_n$).
Applying this description of the kernel we obtain the following result for $ G = SL_n $, unconditionally, and for general $ G $, conditional on the reducedness conjecture.

\begin{Theorem} \label{th:intro5}
	Comultiplication gives an isomorphism
	$$
	Y_\mu^\lambda[(E_i^{(1)})^{-1}] \cong Y_{-\alpha_i}^0 \otimes Y_{\mu + \alpha_i^\vee}^\lambda
	$$
	and we obtain $ Y_{\mu + \alpha_i^\vee}^\lambda $ as the quantum Hamiltonian reduction of $Y_\mu^\lambda$.
\end{Theorem}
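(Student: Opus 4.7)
The strategy is to deduce Theorem \ref{th:intro5} from Theorem \ref{th:intro4} by showing that, under the comultiplication isomorphism
$$\Phi : Y_\mu[(E_i^{(1)})^{-1}] \xrightarrow{\sim} Y_{-\alpha_i^\vee}^0 \otimes Y_{\mu+\alpha_i^\vee},$$
the extension of the truncation ideal $I_\mu^\lambda := \ker(Y_\mu \twoheadrightarrow Y_\mu^\lambda)$ corresponds to $Y_{-\alpha_i^\vee}^0 \otimes I_{\mu+\alpha_i^\vee}^\lambda$. Once this is established, quotienting both sides by the matched ideals yields the claimed isomorphism of truncated algebras, and the quantum Hamiltonian reduction interpretation is automatic from Theorem \ref{th:intro4}, since $E_i^{(1)}$ is the quantum moment map for the $\Ga$-action.

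To match the ideals, I would pass to the associated graded with respect to the standard PBW filtration on the shifted Yangians. By Theorem \ref{th:intro3}, the associated graded of $\Phi$ is precisely the classical multiplication isomorphism of Theorem \ref{th:intro2} (after inverting the classical moment map $\Phi_i$). Theorem \ref{th:intro2} states that this classical isomorphism restricts to an isomorphism between the corresponding truncated loci inside $\cW_\mu$ and $\oW^0_{-\alpha_i^\vee} \times \cW_{\mu+\alpha_i^\vee}$, so the associated graded Poisson ideals of the truncations already match. Combined with the flatness of $Y_\mu^\lambda$ as a filtered deformation of $\C[\oW^\lambda_\mu]$, a standard filtered-lifting argument reduces the problem to exhibiting lifts of the classical ideal generators to the quantum side that still lie in the quantum truncation ideals.

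Such lifts are provided by the explicit generators of the truncation ideal described in \cite{KWWY} (certain coefficients of the generating series, above a fixed degree threshold determined by $\lambda$). For $G = SL_n$, these generators are known unconditionally to cut out $I_\mu^\lambda$, by \cite{KMWY} together with the extension to non-dominant $\mu$ carried out in Appendix \ref{section: appendix}; for general $G$ this holds conditionally on the reducedness conjecture. With generators in hand, I would directly compute the action of $\Phi$ on each generator, using the known formula for the comultiplication from \cite{FKPRW}. The main obstacle is precisely this last step: controlling the comultiplication on the high-degree coefficients of the Cartan/negative generating series and verifying that, modulo the matched localized ideal on the right, they reduce to elements of $Y_{-\alpha_i^\vee}^0 \otimes I_{\mu+\alpha_i^\vee}^\lambda$. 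The classical limit of this verification is guaranteed by the previous paragraph, so what remains is to promote an equality of associated gradeds to an equality of filtered ideals, which follows once the generators are shown to be compatible on the nose, or alternatively by a Hilbert-series/dimension count using the flatness of the two sides.
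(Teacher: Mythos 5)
Your overall strategy matches the paper's: descend the isomorphism of Theorem \ref{th:intro4} by tracking the truncation ideal, using the fact that the $A_j^{(r)}$ (for $r>m_j$) generate that ideal in type A (unconditionally, via the appendix) or conditionally on the reducedness conjecture in general. You are also right to single out the key technical step as the main obstacle: one must show that the composite $Y_\mu \to Y^0_{-\alpha_i^\vee}\otimes Y^\lambda_{\mu+\alpha_i^\vee}(\bR)$ kills each $A_j^{(r)}$ with $r>m_j$. That, together with the observation (Lemma \ref{le:don't need iso}) that any map making the diagram commute is automatically an isomorphism by comparing associated gradeds, finishes the proof. However, the shortcuts you propose to avoid the explicit computation do not close the gap. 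An equality of associated graded ideals does not by itself force an equality of the filtered ideals: for example in $\C[x]$ with the degree filtration, the ideals $(x)$ and $(x+1)$ have the same associated graded ideal but are distinct. Likewise the Hilbert-series/flatness argument only applies once you already have the inclusion $\Phi(I_\mu^\lambda) \subseteq Y^0_{-\alpha_i^\vee}\otimes I_{\mu+\alpha_i^\vee}^\lambda$ giving a well-defined surjection between the two candidate quotients; establishing that inclusion is exactly the ``weak form'' (Theorem \ref{th: weak form of conjecture}) which you are trying to bypass.

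What the paper actually does here is carry out the computation you defer. The coproduct formulas of \cite{FKPRW} give $\Delta$ only on the Levendorskii generators $E_i^{(1)}, F_i^{(1)}, S_i^{(\le 2)}$, and it is not straightforward to propagate them to the high modes $A_j^{(r)}$. The paper instead goes through the RTT presentation: Lemma \ref{lem: RTT coproduct} and Proposition \ref{prop: RTT vs GKLO} identify $\gkloA_i(u)$, $\gkloB_i(u)$, $\gkloC_i(u)$ with matrix entries of the $R$-matrix, for which the coproduct is multiplicative; Proposition \ref{lem: estimate coproduct} then estimates $\Delta(\gkloA_i(u))$ modulo terms that die in $Y^0_{-\alpha_i^\vee}$ by weight considerations on the root lattice grading, and Theorem \ref{thm: explicit comult} gives the clean closed formula $A_i(u)\mapsto A_i(u)\otimes A_i(u)+d_i[A_i(u),F_i^{(1)}]\otimes[E_i^{(1)},A_i(u)]$ and $A_j(u)\mapsto 1\otimes A_j(u)$ for $j\ne i$. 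Since the right-hand sides are visibly polynomials in $u^{-1}$ of degree $m_j$, the high modes vanish, yielding the factoring. This RTT step is the real content and is missing from your proposal; the classical limit alone cannot replace it.
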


\subsection{Categorical $\fgv$-action}
Let $ \fgv $ be the Lie algebra of the group Langlands dual to $ G $. Since $ \oW^\lambda_\mu $ is a geometric incarnation of the weight space, $ V(\lambda)_\mu $, of an irreducible representation of $ \fgv $, it is natural to try to use generalized affine Grassmannian slices and their quantizations to construct categorical $ \fgv$-actions.  

To a certain extent, this was carried out in \cite{KLRYangians}, where we proved (in simply-laced type) an equivalence of categories between the category $\mathcal O $ for $ Y^\lambda_\mu $ and a category of modules for a Khovanov-Lauda-Rouquier-Webster algebra.  By \textit{transport de structure}, this leads to a categorical $ \fgv$-action on these category $ \mathcal O$s.  

This leads to the natural question of how to express this categorical action in a manner more intrinsic to the algebras $Y^\lambda_\mu$.  In order to answer this question, we would like to relate the algebras $ Y^\lambda_\mu $ and $Y^\lambda_{\mu + \alpha^\vee_i}$. This was our main motivation for developing Theorem \ref{th:intro5}.  However, we have not been able (thus far) to use Theorem \ref{th:intro5} to express the categorical $ \fgv$-action.

On the other hand, in a forthcoming work \cite{KWWYnew}, joint with Webster and Yacobi, we will construct a different relationship between $ Y^\lambda_\mu $ and $Y^\lambda_{\mu + \alpha^\vee_i}$, which is grounded in their Coulomb branch realizations.  Using this relationship, we can express the categorical $ \fgv $-action.  The exact relation between these two papers remains mysterious.

\subsection{Possible generalization}
We will now discuss Theorem \ref{th:intro2} from the Coulomb branch perspective.  Fix a reductive group $ H $ and a representation $ V $ of $ H $.  Given such a pair, Braverman-Finkelberg-Nakajima \cite{BFN1} defined a certain Poisson variety $ M_C(H,V)$, called the Coulomb branch.  

Now fix also a coweight $\gamma : \C^\times \rightarrow H $ and let $L_\gamma$ be the centralizer of $\gamma$, this is a Levi subgroup. 
Let $V^\gamma$ be the invariants for this $\C^\times$.

We learned of the following conjecture from Justin Hilburn.
\begin{Conjecture} \label{co:justin}
There is an isomorphism between $ M_C(L_\gamma,V^\gamma) $ and an open subset of $ M_C(H, V)$.
\end{Conjecture}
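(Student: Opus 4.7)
The plan is to extend the strategy of Theorem~\ref{th:intro2} from quiver gauge theories to abstract Coulomb branches. Indeed, Theorem~\ref{th:intro2} itself can be viewed as a case of Conjecture~\ref{co:justin}: when $(H,V)$ is the quiver gauge theory data underlying $\cW_\mu$ and $\gamma$ is a generic cocharacter of a $GL_1$ factor inside the gauge node at $i$, then $L_\gamma$ splits off a copy of $GL_1$ from that node, and the open subset $M_C(L_\gamma, V^\gamma)$ becomes precisely $\oW^0_{-\alpha_i^\vee} \times \cW_{\mu + \alpha_i^\vee}$, with the factor $\oW^0_{-\alpha_i^\vee} \cong T^* \Cx$ coming from the split-off $GL_1$. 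The goal is therefore to find a proof that works directly in the BFN setup.

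My first step would be to identify the candidate open subset. The cocharacter $\gamma$ distinguishes a system of root subgroups of $H$, namely those whose roots pair positively with $\gamma$, and each such root subgroup contributes a matrix coefficient in $\C[M_C(H,V)]$ via the multiplication action on the variety of triples. I would take the open subset of $M_C(H,V)$ to be the common non-vanishing locus of these matrix coefficients, in direct analogy with $\Phi_i^{-1}(\Cx)_\mu \subset \cW_\mu$ from Theorem~\ref{th:intro2}.

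My second step would be to construct the isomorphism with $M_C(L_\gamma, V^\gamma)$ via a concatenation map on BFN's variety of triples: an $L_\gamma$-bundle on the formal disc with a $V^\gamma$-section, tensored with the $\gamma$-twisted trivial $H$-bundle, should produce an $H$-bundle with a $V$-section. On the distinguished open locus, the components of the $V$-section not coming from $V^\gamma$ should be forced by the invertibility of the chosen matrix coefficients, making the construction invertible. Passing to $H[[t]] \rtimes \Cx$-equivariant Borel-Moore homology would then yield the required isomorphism of coordinate rings.

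The main obstacle is making this concatenation rigorous for general $(H,V)$. Our proof of Theorem~\ref{th:intro2} relies on concrete moduli descriptions of $\cW_\mu$ that are not available in the abstract setting. Two natural avenues present themselves: an intrinsic construction on the variety of triples, generalising the BFN multiplication map $\cW_{\mu_1} \times \cW_{\mu_2} \to \cW_{\mu_1+\mu_2}$ used in our proof; or an equivariant cohomological localization argument for the $\Cx$-action defined by $\gamma$, inverting the $\gamma$-positive root weights. Either route demands careful handling of the pro-finite nature of the equivariant Borel-Moore homology, and the central technical hurdle will be upgrading localization from a topological statement to one about affine schemes, establishing that the induced map is an open immersion rather than just a homeomorphism onto its image.
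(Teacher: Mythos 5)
This statement is labeled as a \emph{Conjecture} in the paper, and indeed the paper does not prove it in general: the authors attribute it to Justin Hilburn and only verify it in special cases (Theorem \ref{th:intro2} gives the case $\gamma = \varpi_{i,1}^\vee$ for ADE quiver gauge theories; \cite{KrylovP} handles general $\gamma$ in the ADE quiver setting; \cite{FKPRW} and \cite{Kato} address $V=0$; and \cite{KWWYnew} will treat $\gamma$ central). You appear to understand this, since you frame your text as a research plan rather than a completed argument.

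As a plan, your outline is reasonable and your identification of Theorem \ref{th:intro2} as the model case is correct. However, there is a genuine gap that you yourself flag but do not close: the argument for Theorem \ref{th:intro2} in the paper is carried out entirely on the slice side via explicit Gauss-factorization manipulations in $G((t^{-1}))$ (Lemmas \ref{le:g1g2}--\ref{le:xi}, Theorem \ref{psiminverses}), and the Coulomb branch identification $M_C(H,V)\cong\oW^\lambda_\mu$ is used only to translate the result back. Nothing in the paper constructs the map $M_C(L_\gamma, V^\gamma)\to M_C(H,V)$ intrinsically on the variety of triples, and your two proposed avenues — a concatenation map generalizing BFN multiplication, or a $\Cx$-localization argument in equivariant Borel--Moore homology — each require new constructions. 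In particular, the ``candidate open subset'' you describe (common nonvanishing of matrix coefficients for $\gamma$-positive root subgroups) needs to be defined as an affine open subscheme of $M_C(H,V)$ and shown to be the image of a morphism, and upgrading a localization isomorphism from graded vector spaces to a scheme-theoretic open immersion is exactly the hard step you identify. So the proposal does not yet constitute a proof of the conjecture; it is a plausible but incomplete program, consistent with the fact that the statement remains open in the paper.
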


Fix $ G, \lambda, \mu $ as before, but assume that $ G $ is simply-laced.  Let $ H = \prod_j GL_{v_j} $ be a product of general linear groups, one for each vertex of the Dynkin diagram, with dimensions $ v_j $ determined by $ \lambda - \mu = \sum v_j \alpha^\vee_j $.  Let $ V $ be the representation space of the framed Dynkin quiver, with these gauge vertices and with framing vertices of dimensions $ w_j $, where $ \lambda = \sum_j w_j \varpi_j^\vee$.

Then by \cite{BFN}, we know that $ M_C(H, V) \cong \oW^\lambda_\mu $.  Moreover, if we choose $ \gamma = \varpi_{i,1}^\vee $ (the first fundamental weight of $ GL_{v_i}$), then we can see that $L_\gamma = \C^\times \times \prod_j GL_{v'_j} $, where $ v'_j = v_j $ if $ j \ne i $ and $ v'_i = v_i - 1$.  Moreover $ V^\gamma $ is the representation space of the framed quiver with dimension vectors $ v', w $.  Thus, we see that $ M_C(L_\gamma, V^\gamma) \cong T^* \C^\times \times \oW^\lambda_{\mu + \alpha_i^\vee} $.  Thus, we see that Theorem \ref{th:intro2} establishes Conjecture \ref{co:justin} for this case. 
More generally, Proposition 5.7 from Krylov-Perunov \cite{KrylovP} establishes Conjecture \ref{co:justin} for any choice of $ \gamma $ in this ADE quiver setting (in fact, a more general version of the conjecture where we consider $ \gamma : \C^\times \rightarrow \tilde H $, where $ \tilde H $ is an extension of $H $ by a flavour torus).

Conjecture \ref{co:justin} should also hold when $ V = 0 $.  In this case $ M_C(H,0) $ is the universal centralizer space, see \cite[Section 3(x)(a)]{BFN1}.  In \cite{FKPRW}, we constructed a map $ M_C(L_\gamma,0) \rightarrow M_C(H,0)$, which should be an isomorphism with an open subset.  Note that this map was not defined in ``Coulomb branch'' terms and thus it is not easy to see how it generalizes to the case $ V \ne 0 $.  On the other hand, a recent paper by Kato \cite{Kato} does give a ``Coulomb branch'' construction of a map $ M_C(L_\gamma,0) \rightarrow M_C(H,0)$, but for \emph{$K$-theoretic} Coulomb branches.

Conjecture \ref{co:justin} also holds when $ \gamma $ is central in $ G $; this will be a special case of the results of \cite{KWWYnew}.

\subsection{Acknowledgements}
We thank Alexander Braverman, Tudor Dimofte, Michael Finkelberg, Davide Gaiotto, Justin Hilburn, Aleksei Ilin, Dinakar Muthiah, Leonid Rybnikov, Ben Webster, and Oded Yacobi for helpful discussions.  We also thank our anonymous referees. This research was supported in part by Perimeter Institute for Theoretical Physics. Research at Perimeter Institute is supported by the Government of Canada through the Department of Innovation, Science and Economic Development Canada and by the Province of Ontario through the Ministry of Research, Innovation and Science.

\section{Generalized affine Grassmannian slices}\label{reltoWmu}

\subsection{Notation}
\label{section:notation}
Let $G$ be a connected complex semisimple group  with Lie algebra $\mathfrak{g}$. Let $T$ be a maximal torus of $G$, $B$ a Borel subgroup, and $B_-$ the opposite Borel subgroup. Consider $U$ (resp. $U_-$) the unipotent radical of $B$ (resp. $B_-$).  Denote by $\g,\mathfrak{h},  \mathfrak{b}^\pm, \mathfrak{u}^\pm$ the corresponding Lie algebras.   Let $W$ be the Weyl group, and $\Delta$ the set of roots.  Write $\{\alpha_i\}_{i\in I}$ for the set of simple roots, $\{\alpha_i^\vee\}_{i\in I}$ the simple coroots, and $\{\varpi_i\}_{i \in I}$ and $\{\varpi_i^\vee\}_{i\in I}$ the fundamental (co)weights.   Write $\langle \cdot, \cdot\rangle: \mathfrak{h}\times \mathfrak{h}^\ast \rightarrow \C$ for the natural pairing.

Let $(a_{ij})_{i,j \in I}$ be the Cartan matrix, and let $d_i$ be the unique coprime positive integers such that $d_i a_{ij}$ is symmetric.  Define a bilinear form on $\mathfrak{h}^\ast$ by $(\alpha_i, \alpha_j) =  d_i a_{ij}$.  We will also denote this form by $\alpha_i \cdot \alpha_j$.

Let $e_i, f_i, h_i$ be the Chevalley generators for $\g$.  There is a non-degenerate symmetric invariant  bilinear form $(\cdot,\cdot)$ on $\g$ defined by $(e_i, f_j) = \delta_{ij} d_i^{-1}$ and $(h_i, h_j) = d_j^{-1}a_{ij}$.

For each $ i$ we have a homomorphism $ \tau_i : SL_2 \rightarrow G $.  In particular we can restrict this map to the upper (resp. lower) triangular subgroups of $ SL_2 $ and we denote these restrictions $ x_i : \Ga \rightarrow G $ (resp. $ x_{-i} : \Ga \rightarrow G $); this exponentiates the element $ e_i $ (resp. $f_i$).

For each simple root $\alpha_i $, we have a group homomorphism $ \psi_i : U \rightarrow \Ga$ (and also $ \psi_i^- : U_- \rightarrow \Ga$).  We normalize $\psi_i$ so that the composition of $ x_i $ with $ \psi $ is the identity (and similarly for $ \psi_i^-$).

We extend $\psi_i $ to $ UTU_- \subset G $ by defining $ \psi_i(uhv) = \psi_i(u) $.

Let $X$ be an affine Poisson scheme over $\C$, and suppose that an algebraic group $H$ acts on $X$ by Poisson automorphisms.  Recall that this action is called Hamiltonian if there exists a moment map $\mu: \operatorname{Lie}(H) \rightarrow \C[X]$, which is a homomorphism of Lie algebras, such that
$$
\{\mu(h), f \}(x) = \frac{\partial}{\partial \epsilon} f \big( \exp( - \epsilon h) x\big) \big|_{\epsilon = 0}
$$
for all $h\in \operatorname{Lie}(H), f\in \C[X]$ and $x\in X$.  If $\xi \in \operatorname{Lie}(H)^\ast$ is $H$-invariant, we denote the corresponding Hamiltonian reduction by
$$
X \sslash_\xi H = \operatorname{Spec} \left( \raisebox{.2em}{$\C[X]$}\left/\raisebox{-.2em}{$\langle \mu(h)- \xi(h) : h \in \operatorname{Lie} H \rangle$}\right. \right)^H 
$$

\subsection{Loop and arc spaces}
\label{section: loops and arcs}
	We shall briefly discuss the topic of arc and loop spaces, following \cite{Fr,Gortz}.  For our purposes it will be sufficient to consider loops into affine schemes.  Let $ A $ be a finitely-generated commutative $\C$-algebra, 
	$$A=\mathbb{C}[x_1,\dots ,x_n]/\langle f_1, \dots, f_m\rangle$$ 
	where the $f_i$'s are polynomials in $x_j$'s. Let $X=\Spec A$ be the corresponding finite-type affine scheme. 
	
	Let $d\geq 0$. The arc spaces $X[t]$, $X[t]_{\leq d}$ and $X[[t]]$ are defined in terms of their functors of points: for any $\mathbb{C}$-algebra $R$,
	\begin{align*}
	X[t](R)&=X(R[t])=\hom_\mathbb{C}(A,R[t])= \big\{ (x_1(t),\dots ,x_n(t))\in R[t]^n: f_i(x_j(t))=0\big\},\\
	X[t]_{\leq d}(R)&= \big\{ (x_1(t),\dots ,x_n(t))\in R[t]^n: f_i(x_j(t))=0, \deg(x_j(t))\leq d\big\}\subseteq X[t](R),\\
	X[[t]](R)&=X(R[[t]])=\hom_\mathbb{C}(A,R[[t]])= \big\{ (x_1(t),\dots ,x_n(t))\in R[[t]]^n: f_i(x_j(t))=0\big\}.
	\end{align*}
We may also define the loop space $X((t))$ by the functor
\begin{align*}
X((t))(R) &= X\big( R((t)) \big) = \hom_\mathbb{C}\big(A, R((t)) \big)\\
& =\big\{ (x_1(t),\dots ,x_n(t))\in R((t))^n: f_i(x_j(t))=0\big\}.
\end{align*}
Then $X[[t]]$ and $X[t]_{\leq d}$  are affine schemes (see Lemma \ref{lemma: arc spaces as affine schemes}), while $X[t]$ and $X((t))$ are ind-schemes. 

\begin{Rem}
We will also often work with the scheme $X[[t^{-1}]]$ and the ind-schemes $X[t^{-1}]$ and $X((t^{-1}))$. They are defined analogously to their counterparts $X[[t]], X[t]$ and $X((t))$ above, and enjoy analogous properties.
\end{Rem}

It will be useful to work using explicit coordinates.	We write $x_j(t)=\sum_r x_j^{(r)}t^r$. Consider the polynomial ring $\mathbb{C}[x_j^{(r)}: 1\leq j\leq n, r> 0]$. Following \cite[3.4.2]{Fr}, define a derivation $T$ on this polynomial ring by $T(x_j^{(r)})=(r+1)x_j^{(r+1)}$. For $1\leq i\leq m$, let $\tilde{f}_i\in \mathbb{C}[x_j^{(r)}: 1\leq j\leq n, r\ge0]$ be the same polynomials as $f_i$, with $x_j$ replaced by $x_j^{(0)}$. Next, set
	\[
	B= \mathbb{C}[x_j^{(r)}: 1\leq n\leq j, r\geq 0]/\langle T^k\tilde{f_i} : 1\leq i\leq m, k\geq 0\rangle.
	\]
	and let $I_d=\langle x_j^{(r)}: r> d\rangle \subseteq B$.
The following result is \cite[3.4.2]{Fr}.

\begin{Lemma}
\label{lemma: arc spaces as affine schemes}
	The functors $X[[t]], X[t]_{\leq d} $ are represented by the schemes $  \Spec B$ and $ \Spec B/I_d $.
\end{Lemma}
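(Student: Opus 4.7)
The plan is to verify representability directly by computing functors of points: expand power series into their coefficients, then match the polynomial conditions imposed by the $f_i$ with the ideal generators $T^k \tilde{f}_i$ of $B$.

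First I would unwind the functor of points. A $\mathbb{C}$-algebra map $A \to R[[t]]$ is the same as a tuple of power series $(x_1(t),\dots,x_n(t))\in R[[t]]^n$ with $f_i(x_1(t),\dots,x_n(t))=0$ for each $i$. Writing $x_j(t) = \sum_{r\geq 0} x_j^{(r)} t^r$ with coefficients in $R$, such a tuple is equivalent to a map of sets $\{x_j^{(r)}\} \to R$, i.e.\ a $\mathbb{C}$-algebra map $\mathbb{C}[x_j^{(r)} : 1\leq j\leq n,\,r\geq 0] \to R$. So the representability question reduces entirely to identifying, inside the polynomial ring $\mathbb{C}[x_j^{(r)}]$, the ideal cut out by the relations $f_i(x_j(t))=0$.

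The key computation is the identity
\[
f_i\big(x_1(t),\dots,x_n(t)\big) \;=\; \sum_{k\geq 0} \frac{T^k(\tilde f_i)}{k!}\,t^k \qquad \text{in } \mathbb{C}[x_j^{(r)}][[t]].
\]
I would prove this by introducing the $\mathbb{C}$-algebra homomorphism $\sigma \colon \mathbb{C}[x_j^{(r)}] \to \mathbb{C}[x_j^{(r)}][[t]]$ defined by $\sigma(a) = \sum_{k\geq 0} \frac{T^k(a)}{k!} t^k$. That $\sigma$ is a ring homomorphism is a Leibniz-rule computation: since $T$ is a derivation, $T^k$ satisfies the usual binomial formula, and rearranging the double sum gives $\sigma(ab)=\sigma(a)\sigma(b)$. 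Next, $\sigma(x_j^{(0)}) = \sum_k \frac{T^k(x_j^{(0)})}{k!} t^k = \sum_k x_j^{(k)} t^k = x_j(t)$, using that $T^k(x_j^{(0)}) = k!\, x_j^{(k)}$ by induction on the defining rule $T(x_j^{(r)})=(r+1)x_j^{(r+1)}$. Applying $\sigma$ to $\tilde f_i \in \mathbb{C}[x_j^{(r)}]$ then yields the displayed identity, since $\tilde f_i$ is $f_i$ evaluated at $x_j^{(0)}$.

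With this identity in hand, the vanishing of $f_i(x_1(t),\dots,x_n(t))$ in $R[[t]]$ is equivalent, coefficient by coefficient, to the vanishing of $T^k(\tilde f_i)$ in $R$ for every $k\geq 0$ (here we use that $\mathbb{C}$ has characteristic zero, so dividing by $k!$ is harmless). Therefore
\[
X[[t]](R) \;=\; \hom_\mathbb{C}\big(\mathbb{C}[x_j^{(r)}]/\langle T^k \tilde f_i\rangle,\, R\big) \;=\; \hom_\mathbb{C}(B,R),
\]
which identifies $X[[t]]$ with $\Spec B$. For the truncated arc space $X[t]_{\leq d}$, the only additional constraint on a map $A\to R[t]\subset R[[t]]$ of degree at most $d$ is $x_j^{(r)}=0$ for all $r>d$, which is precisely the vanishing of the ideal $I_d$; hence $X[t]_{\leq d}(R) = \hom_\mathbb{C}(B/I_d, R)$, giving $X[t]_{\leq d} = \Spec(B/I_d)$.

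There is really no serious obstacle: the whole lemma is a clean expansion-of-coefficients argument, and the only step that requires a small idea rather than bookkeeping is recognizing $\sigma$ as a ring homomorphism (equivalently, recognizing $T/\text{factorials}$ as the formal Taylor expansion associated to $f\mapsto f(x(t))$). Everything else is formal unpacking of the functor of points.
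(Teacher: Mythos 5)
Your proof is correct and self-contained. The paper does not actually give its own argument here; it simply cites [Fr, 3.4.2] (Frenkel's \emph{Langlands correspondence for loop groups}), and your computation is the standard one that source uses. The crux — recognizing $\sigma(a) = \sum_{k\geq 0}\frac{T^k(a)}{k!}t^k$ as the ring homomorphism that extends $x_j^{(0)}\mapsto x_j(t)$ and hence equals formal Taylor expansion — is exactly right, and the verification that $\sigma$ is multiplicative via the Leibniz binomial expansion of $T^k(ab)$, together with the inductive identity $T^k(x_j^{(0)})=k!\,x_j^{(k)}$, is sound. The appeal to characteristic zero when discarding the $k!$ denominators is the one place where the argument genuinely uses the base field, and you flag it. The reduction for $X[t]_{\leq d}$ by adding the ideal $I_d$ is also correct, since the extra condition $\deg x_j(t)\leq d$ is literally $x_j^{(r)}=0$ for $r>d$.
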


We introduce some notation before our next result.  Let $Y = \Spec A$ be an affine scheme, and suppose that $\{Y_d\}$ is a collection of closed subschemes of $Y$, where $d$ runs over some index set.  Then we can write each $Y_d = \Spec A / J_d$ for some ideal $J_d \subset A$.  We will say that the collection $\{Y_d\}$ \textbf{fills-out} $Y$ if the intersection $\bigcap_d J_d = 0$.  In other words, this means that the ideals $J_d$ separate functions: two elements $f,g \in A$ are equal iff $f-g \in J_d$ for all $d$. Alternatively, it means that there is no proper closed subscheme $Z\subset Y$ which contains all $Y_d$ as closed subschemes.

\begin{Lemma}
\label{lemma: fills-out}
The collection of subschemes $\{ X[t]_{\leq d} \}_{d \geq 0}$ fills-out $X[[t]]$.
\end{Lemma}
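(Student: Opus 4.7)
The plan is to exploit the natural $\mathbb{Z}_{\geq 0}$-grading on $B$ coming from $t$-weight, declaring each generator $x_j^{(r)}$ to have weight $r$. The first step is to verify that this grading descends to $B$. Since $\tilde{f}_i$ is a polynomial in the weight-zero variables $x_j^{(0)}$ alone, it is homogeneous of weight $0$; and since $T$ sends $x_j^{(r)}$ to a scalar multiple of $x_j^{(r+1)}$, it shifts weight by $+1$. Therefore each $T^k \tilde{f}_i$ is homogeneous of weight $k$, the ideal $\langle T^k \tilde{f}_i \rangle$ is homogeneous, and $B$ inherits a decomposition $B = \bigoplus_{w \geq 0} B_w$.

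Next I would observe that $I_d$ is itself a homogeneous ideal, being generated by the $x_j^{(r)}$ with $r > d$, and is furthermore contained in $B_{>d} := \bigoplus_{w > d} B_w$. This is immediate from the non-negativity of the grading: any expression $\sum h_\alpha \cdot x_{j_\alpha}^{(r_\alpha)}$ with $r_\alpha > d$ has each monomial contribution of weight $\geq r_\alpha > d$.

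To finish, suppose $f \in \bigcap_{d \geq 0} I_d$. Any lift of $f$ to the polynomial ring involves only finitely many variables and hence only finitely many weights, so $f$ admits a finite homogeneous decomposition $f = \sum_w f_w$ in $B$. Since each $I_d$ is homogeneous, $f_w \in I_d \subseteq B_{>d}$ for every $d$ and every $w$; taking $d \geq w$ forces $f_w = 0$, whence $f = 0$.

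I do not expect a serious obstacle here; the single conceptual step is recognizing that the defining relations of $B$ respect the $t$-weight grading, after which the argument is formal. The finite-type hypothesis on $X$ is used only to guarantee that the homogeneous decomposition of any element of $B$ has finite support.
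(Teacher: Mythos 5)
Your proof is correct and follows the same route as the paper: grading $B$ by $\deg(x_j^{(r)}) = r$, noting that $I_d$ is homogeneous and concentrated in degrees $>d$, and concluding $\bigcap_d I_d = 0$. You add useful extra detail by explicitly verifying that the defining relations $T^k\tilde f_i$ are homogeneous so that the grading descends to $B$, a point the paper states without proof.
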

\begin{proof}
In view of the previous lemma, we need to show that $\bigcap_d I_d=\{0\}$ inside the ring $B$. Observe that $B$ is a graded ring by setting $\deg (x_j^{(r)})=r$, and that $I_d$ is homogeneous and lies in degree greater than $d$. Therefore, $\bigcap_d I_d$ is also homegeneous and lies in degree greater than $d$ for all $d$. Therefore, $\bigcap_d I_d=\{0\}$.
\end{proof}

The proof of the previous Lemma immediately generalizes to the following result.
\begin{Lemma}\label{lemma: fills-out2}
	Suppose that $A = \bigoplus_{n \in \Z} A_n$ is a graded ring, and the ideals $J_d$ are homogeneous.  Let $ Y = \Spec A $ and let $ Y_d = \Spec A/J_d $.  Assume also that the integers $n_d := \min \{ \deg a : a \in J_d\}$ are well-defined, and that $\{n_d\}$ is unbounded above.  Then $\{Y_d\}$ fills-out $Y$.
\end{Lemma}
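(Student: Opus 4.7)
The plan is to mimic the proof of Lemma \ref{lemma: fills-out} almost verbatim, with the role of ``degree greater than $d$'' now played by ``degree at least $n_d$''. The key point is that the assumption that $\min\{\deg a : a \in J_d\}$ is well-defined, combined with homogeneity of $J_d$, forces $J_d \subseteq \bigoplus_{n \geq n_d} A_n$: indeed, any element of $J_d$ decomposes into its homogeneous components, each of which lies in $J_d$ (since $J_d$ is homogeneous), and hence each has degree at least $n_d$.

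Next, I would observe that an intersection of homogeneous ideals is homogeneous, so $\bigcap_d J_d$ is a homogeneous ideal of $A$. To show it is zero, it suffices to show it contains no nonzero homogeneous element. Suppose $a \in \bigcap_d J_d$ is homogeneous of some degree $m \in \Z$. Since $\{n_d\}$ is unbounded above, we can pick $d$ with $n_d > m$; but then $a \in J_d$ with $\deg a = m < n_d$ contradicts the definition of $n_d$ unless $a = 0$. Therefore $\bigcap_d J_d = 0$, and by the definition recalled just before Lemma \ref{lemma: fills-out}, $\{Y_d\}$ fills out $Y$.

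There is no serious obstacle here; the only care needed is that $A$ is $\Z$-graded (not merely $\N$-graded), but the hypothesis that $n_d$ is a well-defined minimum is exactly the replacement for boundedness below that makes the argument go through.
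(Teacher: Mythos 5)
Your proof is correct and is precisely the generalization the paper has in mind when it says the proof of Lemma \ref{lemma: fills-out} ``immediately generalizes'': you use homogeneity of $J_d$ to conclude $J_d \subseteq \bigoplus_{n \geq n_d} A_n$, then observe that a homogeneous element of $\bigcap_d J_d$ would have some fixed degree $m$, contradicting the existence of $d$ with $n_d > m$. This is the same degree-bound argument as the paper's proof of the preceding lemma, just with the threshold $d$ replaced by $n_d$.
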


\begin{Rem}
The notion of filling-out is closely related to Zariski density, but not equivalent. For example, take $Y = \Spec \C[x] $ and $Y_d = \Spec \C[x] / \langle x^d \rangle$. Then the collection $\{Y_d\}$ fills-out $Y$, but the union $\bigcup_d Y_d$ is not Zariski dense.
\end{Rem}

\subsection{The scheme $\mathcal{W}_\mu$}\label{Wmudef}
\label{se:WmuDef}
For any affine algebraic group $H$, we may form the loop space $H[[t^{-1}]]$ as in the previous section, which is an infinite-dimensional affine scheme.  We denote by $H_1[[t^{-1}]]$ the kernel of the evaluation $H[[t^{-1}]] \rightarrow H$ at $t^{-1}=0$.  Then $H_1[[t^{-1}]] \subset H[[t^{-1}]]$ is a closed subscheme.  

Let $\mu: \mathbb{G}_m \rightarrow T$ be any coweight. Denote by $t^\mu$ its image in $G((t^{-1}))$. We are interested in the following subscheme of $ G((t^{-1})) $.
\begin{equation}
\label{eq: def of Wmu}
\mathcal{W}_\mu:= U_1[[t^{-1}]]T_1[[t^{-1}]]t^\mu U_{-,1}[[t^{-1}]].
\end{equation}

\begin{Rem} \label{re:usual}
	When $ \mu $ is dominant, then the natural map $ G((t^{-1})) \rightarrow G((t^{-1}))/G[t]$ restricts to an isomorphism betweeen $ \mathcal W_\mu $ and the usual affine Grassmannian slice $ G_1[[t^{-1}]] t^\mu \subset G((t^{-1}))/G[t]$ (see \cite[Remark 5.10]{FKPRW}).
	\end{Rem}

The inclusion $U_1[[t^{-1}]]\rightarrow U((t^{-1}))$ gives rise to the isomorphism $U_1[[t^{-1}]]\simeq U((t^{-1}))/U[t]$, see \cite[Lemma 5.4]{Krylov}. So, there is a morphism
\[
\pi:  U((t^{-1})) T_1[[t^{-1}]] t^\mu U_-((t^{-1})) \longrightarrow \mathcal{W}_\mu .
\]
defined by $ \pi(u ht^\mu u_-) = n u ht^\mu u_- n_- $ where $ n \in U[t], n_- \in U_-[t] $ are chosen (uniquely) so that $ nu \in U_1[[t^{-1}],  u_- n_- \in U_{-,1}[[t^{-1}]] $.

We record, for posterity, the following easy lemma.

\begin{Lemma}
	Let $ g \in U((t^{-1})) T_1[[t^{-1}]] t^\mu U_-((t^{-1}))$ and $ n \in U[t], n_- \in U_-[t] $.  Then we have $\pi(ngn_-) = \pi(g) $.
\end{Lemma}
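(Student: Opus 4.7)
The plan is to expand both sides of the claimed equality using the explicit recipe defining $\pi$, and conclude by a short telescoping computation. The only input beyond the definitions themselves is the uniqueness of the factorization $U((t^{-1})) = U[t] \cdot U_1[[t^{-1}]]$ and its $U_-$-analogue, which the paper has already invoked in order to define $\pi$.

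First I would write $g = u h t^\mu u_-$ with $u \in U((t^{-1}))$, $h \in T_1[[t^{-1}]]$, $u_- \in U_-((t^{-1}))$, so that by definition $\pi(g) = p \cdot u h t^\mu u_- \cdot p_-$, where $p \in U[t]$ and $p_- \in U_-[t]$ are uniquely determined by the requirements $pu \in U_1[[t^{-1}]]$ and $u_- p_- \in U_{-,1}[[t^{-1}]]$. (I use $p, p_-$ here to avoid a name clash with the $n, n_-$ of the lemma statement.) Since $U[t] \subset U((t^{-1}))$ and $U_-[t] \subset U_-((t^{-1}))$ as subgroups, the product $n g n_- = (nu) \cdot h \cdot t^\mu \cdot (u_- n_-)$ is again presented in the form needed in order to apply $\pi$.

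The main step is then to identify the unique elements $q \in U[t]$, $q_- \in U_-[t]$ satisfying $q(nu) \in U_1[[t^{-1}]]$ and $(u_- n_-) q_- \in U_{-,1}[[t^{-1}]]$. I claim $q = p n^{-1}$ and $q_- = n_-^{-1} p_-$ work: these lie in $U[t]$ and $U_-[t]$ respectively since those are groups, and one checks $q(nu) = pu \in U_1[[t^{-1}]]$ and $(u_- n_-) q_- = u_- p_- \in U_{-,1}[[t^{-1}]]$ by the choice of $p, p_-$. Uniqueness forces $q, q_-$ to be exactly these. Substituting back,
$\pi(n g n_-) = q \cdot n g n_- \cdot q_- = p n^{-1} \cdot n u h t^\mu u_- n_- \cdot n_-^{-1} p_- = p u h t^\mu u_- p_- = \pi(g)$.

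I do not anticipate any serious obstacle: the argument is a short bookkeeping exercise with one-sided decompositions, and what makes the telescoping work is precisely that $U[t]$ and $U_-[t]$ are themselves subgroups (so $p n^{-1}$ and $n_-^{-1} p_-$ land back in them).
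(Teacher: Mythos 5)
Your proof is correct, and it is precisely the ``easy'' argument the lemma's wording invites; the paper records the lemma without proof, so there is nothing to compare it against. The two ingredients you isolate are the right ones: the uniqueness of the factorization $U((t^{-1}))=U[t]\cdot U_1[[t^{-1}]]$ (and its $U_-$ analogue), and the fact that $U[t]$, $U_-[t]$ are subgroups, so that $pn^{-1}$ and $n_-^{-1}p_-$ are still of the required type and the expression telescopes.
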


	There is a family of related spaces, called \textbf{generalized affine Grassmannian slices}:  Given a dominant coweight $\lambda$ of $ G$, consider the subscheme $\overline{\Gr^\lambda} := \overline{G[t]t^\lambda} \subset \Gr_G = G((t^{-1}))/G[t]$, the spherical Schubert variety.  	 Then we define $ \overline{G[t] t^\lambda G[t]}$ as the preimage of $\overline{\Gr^\lambda}$ under $G((t^{-1}))\rightarrow \Gr_G $.  On the level of $ \C$-points, we have that 
	$$ \overline{G[t]t^\lambda G[t]} = \bigcup_{\lambda' \le \lambda} G[t]t^{\lambda'}G[t]$$

For any dominant coweight $ \lambda $ and any coweight $\mu \le \lambda$, we define the generalized slice to be $$\overline{\mathcal{W}}^\lambda_\mu=\mathcal{W}_\mu \cap \overline{G[t]t^\lambda G[t]}$$
This intersection is taken inside the ind-scheme $G((t^{-1}))$, and in particular $\oW_\mu^\lambda $ is a closed subscheme of $\cW_\mu$.  Equivalently, $ \oW^\lambda_\mu $ is the fibre product of $ \cW_\mu $ and $ \overline{\Gr^\lambda} $ over $ \Gr_G $.

These generalized affine Grassmannian slices were introduced in \cite{BFN} where the following results were proven.
\begin{Theorem}
\begin{enumerate}
\item $ \oW^\lambda_\mu$ admits a modular description in terms of certain principal $G$-bundles on $ \mathbb P^1 $ equipped with $B$ and $ B_-$ structures, as described in \cite[section 2(ii)]{BFN}.
\item $ \oW^\lambda_\mu $ is an affine variety of dimension $ \langle \lambda - \mu, 2 \rho \rangle $, where $ 2\rho$ is the sum of the positive roots. 
\item In ADE types, $\oW^\lambda_\mu $ is a Coulomb branch associated to a quiver gauge theory, by \cite[Theorem 3.10]{BFN}.  
In BCFG types, it is a Coulomb branch associated to a quiver gauge theory with symmetrizers, by \cite[Theorem 4.1]{NW}.
\item When $ \mu $ is dominant, then $ \oW^\lambda_\mu $ is isomorphic to a usual affine Grassmannian slice, see Remark \ref{re:usual} above and \cite[Remark 2.9]{BF}.
\end{enumerate}
\end{Theorem}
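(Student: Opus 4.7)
The statement is really a compendium of results about generalized slices drawn from \cite{BFN, NW, BF}, and the plan is to indicate how each of the four items would be proved in turn, leveraging those papers where possible.

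For (1), the strategy is to reinterpret the group-theoretic definition $\cW_\mu = U_1[[t^{-1}]] T_1[[t^{-1}]] t^\mu U_{-,1}[[t^{-1}]]$ as a moduli problem.  One identifies $\Gr_G$ with isomorphism classes of principal $G$-bundles on $\Pp$ together with a trivialization on the formal neighbourhood of $\infty$; the factors $U_1[[t^{-1}]]$ and $U_{-,1}[[t^{-1}]]$ flanking $t^\mu$ translate into $B$- and $B_-$-reductions on the disk at $\infty$ whose values at $\infty$ are the standard ones, while $ht^\mu$ records the relative position of these two reductions, which must be $\mu$.  Intersecting with $\overline{\Gr^\lambda}$ imposes that the bundle, viewed as a bundle on $\Pp \setminus \{\infty\}$, extends to $\Pp$ with singularity at $0$ bounded by $\lambda$.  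Once this translation is carried out, item (1) is immediate.  Item (4) is essentially Remark \ref{re:usual} together with the definitions: when $\mu$ is dominant, the LDU decomposition together with Birkhoff factorization guarantees that the projection $G((t^{-1})) \to \Gr_G$ is injective on $\cW_\mu$ with image $G_1[[t^{-1}]] t^\mu$, and intersecting with $\overline{\Gr^\lambda}$ gives the classical slice.

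For (2), the dimension formula $\langle \lambda - \mu, 2\rho \rangle$ can be obtained in one of two ways.  The direct approach uses the modular description from (1) together with a transversality computation against $\overline{\Gr^\lambda}$, whose dimension $\langle \lambda, 2\rho \rangle$ is classical: the unipotent factors contribute the correct codimension, and the $B$/$B_-$ interpretation shows the intersection is transverse at a generic point.  A more indirect route is to take item (3) for granted and invoke the fact that Coulomb branches are normal affine Poisson varieties of known dimension.  Either path yields the advertised formula.

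Item (3) is by far the hardest, and is really the main obstacle in the theorem.  Here the plan is to quote the main theorem of \cite{BFN} in ADE type and of \cite{NW} in BCFG type.  Both proofs proceed by exhibiting an explicit presentation of $\C[\oW^\lambda_\mu]$ in terms of generators coming from shifted Yangians, and matching this against the presentation of $\C[M_C(H,V)]$ obtained from equivariant Borel--Moore homology of the BFN variety of triples.  The technical heart is the verification that the two presentations agree on the nose, which uses localization and a careful computation of equivariant fundamental classes.  This is exactly the content of \cite[Theorem 3.10]{BFN} and \cite[Theorem 4.1]{NW}, and reproducing it lies well beyond the scope of the present paper; we simply cite their results.
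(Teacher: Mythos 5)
The paper offers no proof of this theorem: it is an explicit compendium of results imported from \cite{BFN}, \cite{NW}, and \cite{BF}, and the citations embedded in each item constitute the entire argument. Your proposal takes the same route — citing those same sources for the substantive content — and the expository glosses you add for (1), (2), and (4) (modular reinterpretation of the factorization, transversality against $\overline{\Gr^\lambda}$, Birkhoff for dominant $\mu$) faithfully summarize what happens in the cited references, so this matches the paper.
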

The varieties $\oW_{-\alpha_i^\vee}^0$ will play an important role in the present paper, see e.g.~Proposition \ref{mequivariant} and Corollary \ref{cor: reduction for slices v1}.  It follows from Remark \ref{rem: cotangent bundle of C*} below that for any $i\in I$ there is a Poisson isomorphism
$$
\oW_{-\alpha_i^\vee}^0 \cong T^\ast \C^\times,
$$
Here the left-hand side is endowed with the Poisson structure coming from the truncated shifted Yangian $Y_{-\alpha_i^\vee}^0$ (see Section \ref{section: Poisson struture on generalized slices}), while the right-hand side is equipped with its canonical Poisson structure as a cotangent bundle. (Alternatively, we know for any $ \mu \le 0$ that $ \oW_\mu^0 $ is the moduli space of $G$-monopoles of charge $ -\mu $, whose symplectic structure was studied in \cite{FFKM}, and the above isomorphism is a special case of the results of that paper.)

If $\mu \leq \lambda \leq \nu$ where $\lambda,\nu$ are both dominant, then we have a natural inclusion $\oW_\mu^\lambda \subseteq \oW_\mu^\nu$ as subschemes of $\cW_\mu$.  

\subsection{Filling-out $\cW_\mu$}
We have a collection of closed subschemes $\{\overline{\cW}_\mu^\lambda\}$ of $\cW_\mu$, where $\lambda$ runs over dominant coweights with $\lambda \geq \mu$. Our goal in this section is to prove the following result:

\begin{Proposition}\label{denseunion}
The collection of subschemes $\{\overline{\mathcal{W}}^\lambda_\mu\}$  fills-out $\mathcal{W}_\mu$.
\end{Proposition}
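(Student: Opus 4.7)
The plan is to prove $\bigcap_\lambda I_\lambda = 0$, where $I_\lambda \subset \C[\cW_\mu]$ is the defining ideal of $\oW^\lambda_\mu$, by combining two ingredients: (i)~$\cW_\mu$ is a reduced scheme, and (ii)~every $\C$-point of $\cW_\mu$ lies in some $\oW^\lambda_\mu$. Granted (i) and (ii), any $f \in \bigcap_\lambda I_\lambda$ vanishes set-theoretically on each $\oW^\lambda_\mu$, hence on $\bigcup_\lambda \oW^\lambda_\mu(\C) = \cW_\mu(\C)$, and reducedness then forces $f = 0$.

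For (i), I would use the Gauss/Birkhoff decomposition for the pro-group $\Gm$ to establish that the multiplication map
\[
U_1[[t^{-1}]] \times T_1[[t^{-1}]] \times U_{-,1}[[t^{-1}]] \longrightarrow \cW_\mu, \qquad (u, h, u_-) \mapsto u h t^\mu u_-,
\]
is an isomorphism of schemes, identifying $\cW_\mu$ with the ``big cell'' of $\Gm$ right-translated by $t^\mu$. Each factor on the left is a pro-affine space via exponential coordinates on the corresponding pro-nilpotent or pro-abelian Lie algebras, so $\C[\cW_\mu]$ is a polynomial ring in countably many variables, and in particular an integral domain.

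For (ii), any $g \in \cW_\mu(\C) \subset G((t^{-1}))(\C)$ has a well-defined image in $\Gr_G$, which lies in some Schubert stratum $\Gr^\lambda$ (with $\lambda$ dominant) by the Cartan decomposition. Hence $g \in \cW_\mu \cap \overline{G[t] t^\lambda G[t]} = \oW^\lambda_\mu$, proving (ii).

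The main obstacle is step (i), since we need the scheme-theoretic (not merely set-theoretic) factorization and thus must handle the pro-scheme structures on the loop groups with some care. As an alternative that fits the framework of Lemma \ref{lemma: fills-out2}, I would instead consider the twisted loop-rotation $\Cx$-action $g(t) \mapsto g(zt) z^{-\mu}$ on $G((t^{-1}))$: one checks that this preserves both $\cW_\mu$ and each $\oW^\lambda_\mu$, so the induced $\Z$-grading on $\C[\cW_\mu]$ is bounded below with every $I_\lambda$ homogeneous. Projecting $\cW_\mu$ onto truncations of its three loop-group factors at order $d$ yields a finite-dimensional affine quotient containing all degree-$\leq d$ functions; a Noetherian stabilization argument combined with the pointwise covering from (ii) shows the induced map from $\oW^\lambda_\mu$ to this quotient is dominant for $\lambda$ sufficiently large, hence $n_\lambda = \min\deg(I_\lambda) \to \infty$, and Lemma \ref{lemma: fills-out2} applies.
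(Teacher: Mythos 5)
Your step (ii) is false, and since both your main argument and your alternative invoke it, this is a genuine gap. The Cartan decomposition you cite is $G(\C((t^{-1}))) = \bigsqcup_\lambda G(\C[[t^{-1}]])\,t^\lambda\,G(\C[[t^{-1}]])$, using the maximal compact $G[[t^{-1}]]$ of the local field; it does \emph{not} give $G(\C((t^{-1}))) = \bigcup_\lambda G[t]\,t^\lambda\,G[t]$. The right-hand side consists only of Laurent-polynomial loops, while a generic element of $\cW_\mu$ is a genuine power series. Concretely, for $G = SL_2$ and $\mu = 0$, the element $g = \operatorname{diag}\bigl(\tfrac{1}{1-t^{-1}},\,1-t^{-1}\bigr)$ lies in $T_1[[t^{-1}]] \subset \cW_0$, but its $(1,1)$-entry $\tfrac{1}{1-t^{-1}} = \tfrac{t}{t-1}$ is not in $\C[t,t^{-1}]$; since the paper records that $\overline{G[t]t^\lambda G[t]}(\C) = \bigcup_{\lambda' \le \lambda} G[t]t^{\lambda'}G[t](\C)$, this $g$ lies in no $\oW^\lambda_0$. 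So $\bigcup_\lambda \oW^\lambda_\mu(\C) \subsetneq \cW_\mu(\C)$, and the reducedness-plus-pointwise-covering argument cannot run. This is precisely why the statement is phrased as ``fills-out'' and not as set-theoretic coverage or naive density.

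Your alternative is structurally much closer to the paper's proof, which also works with a grading and finite-order truncations. But the paper crucially restricts to the closed subschemes $\cW_{\mu,\le d}$ of \emph{polynomial} loops, which do land in the thin Grassmannian $\Gr^{thin}_G = G[t,t^{-1}]/G[t] = \lim_\rightarrow \overline{\Gr^\lambda}$, and then uses quasi-compactness of $\cW_{\mu,\le d}$ to deduce the scheme-theoretic containment $\cW_{\mu,\le d} \subseteq \oW^\lambda_\mu$ for some $\lambda$. Your alternative instead aims at a dominance statement and again appeals to ``the pointwise covering from (ii)'' to get it; that reference should be replaced by the correct (and much weaker) observation that points of the finite-dimensional truncation extend to polynomial loops, which are then contained in some $\oW^\lambda_\mu$ — and it is cleaner still to argue scheme-theoretically via quasi-compactness as the paper does, rather than by Noetherian stabilization on point sets.

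Your ingredient (i) — that $\C[\cW_\mu]$ is a polynomial ring in countably many variables, hence a domain — is correct but is not what the paper needs; the paper's argument is purely scheme-theoretic and does not invoke reducedness or a Nullstellensatz for infinite polynomial rings.
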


\begin{proof}

Fix a closed embedding $G \subseteq \C^n$ for some $n$, i.e.~a choice of generators of the coordinate ring of $G$.  Then $U\subset G$ inherits a closed embedding $U \subset \C^n$, as do $T$ and $U_-$. For $d \geq 0$, we define the closed subscheme
$$
U_1[t^{-1}]_{\leq d} \ :=\ U_1[[t^{-1}]] \cap U[t^{-1}]_{\leq d} \ \subset \ U[[t^{-1}]]
$$
where $U[t^{-1}]_{\leq d}$ is defined as in Section \ref{section: loops and arcs}.    Similarly we define collections  $T_1[t^{-1}]_{\leq d}$ and $U_{-,1}[t^{-1}]_{\leq d}$.  Taking products, we define a closed subscheme of $\cW_\mu$ by
$$
\cW_{\mu, \leq d} \ =\  U_1[t^{-1}]_{\leq d} T_1[t^{-1}]_{\leq d}  t^\mu U_{-,1}[t^{-1}]_{\leq d}
$$ 

The inclusion $\cW_{\mu,\leq d} \hookrightarrow \cW_\mu$ is isomorphic to the inclusion
$$
U_1[t^{-1}]_{\leq d} \times T_1[t^{-1}]_{\leq d}  \times U_{-,1}[t^{-1}]_{\leq d} \ \hookrightarrow \ U_1[[t^{-1}]] \times T_1[[t^{-1}]] \times U_{-,1} [[t^{-1}]]
$$
Note that the coordinate ring of the right hand side here carries a grading, analogous to that defined for $X[[t]]$ in the proof of Lemma \ref{lemma: fills-out}. 
Applying Lemma \ref{lemma: fills-out2}, we see that $ U_1[t^{-1}]_{\leq d} \times T_1[t^{-1}]_{\leq d}  \times U_{-,1}[t^{-1}]_{\leq d}  $ fills-out $ U_1[[t^{-1}]] \times T_1[[t^{-1}]] \times U_{-,1} [[t^{-1}]]$.  Hence the collection $\{\cW_{\mu,\leq d}\}$ fills-out $\cW_\mu$.

We claim that for each $d$, there exists a $\lambda$ such that $\cW_{\mu, \leq d} \subseteq \overline{\cW}_\mu^\lambda$. Assuming this claim for the moment, suppose that $Z \subset \cW_\mu$ is a closed subscheme which contains all $\overline{\cW}_\mu^\lambda$.  Then by the claim, $Z$ also contains all $\cW_{\mu,\leq d}$.  Since $\{\cW_{\mu,\leq d}\}$ fills-out $\cW_\mu$ this implies that $Z = \cW_\mu$, and therefore $\{\overline{\cW}_\mu^\lambda\}$ also fills out $\cW_\mu$.

To prove the claim, observe that $\cW_{\mu, \leq d} \subset \cW_\mu \cap G[t,t^{-1}]$. Therefore we get a map $\cW_{\mu,\leq d} \rightarrow G[t,t^{-1}] \rightarrow G[t,t^{-1}] / G[t] =: \Gr^{thin}_G$ into the thin affine Grassmannian.  Recall that $\Gr^{thin}_G = \lim_{\rightarrow} \overline{\Gr^\lambda}$ presents $\Gr_G^{thin}$ as an ind-scheme.  Since $\cW_{\mu, \leq d}$ is an affine scheme and in particular is quasi-compact, it follows from \cite[Lemma 2.4]{Gortz} that the map $\cW_{\mu,\leq d} \rightarrow \Gr_G$ factors through $\cW_{\mu,\leq d} \rightarrow \overline{\Gr^\lambda}$ for some $\lambda$.  By the definition of $\overline{\cW}_\mu^\lambda$ this implies that $\cW_{\mu, \leq d} \subseteq \overline{\cW}_\mu^\lambda$, as claimed.
\end{proof}

\subsection{Shift and multiplication maps between the varieties $\mathcal{W}_\mu$}\label{multsection}

In this section, we look at some natural maps between the varieties $\mathcal{W}_\mu$, namely the multiplication maps and the shift maps, following \cite[Section 5.9]{FKPRW}.

\begin{Def}
\label{def: shift geometric}
	Let $\mu_1,\mu_2$ be antidominant coweights. We define the shift map $\iota_{\mu, \mu_1,\mu_2}: \mathcal{W}_{\mu+\mu_1+\mu_2} \rightarrow \mathcal{W}_\mu$, $g\mapsto \pi(t^{-\mu_1}gt^{-\mu_2})$ where $\pi$ is as in section \ref{Wmudef}.
\end{Def}

\begin{Def}
	For any coweights $\mu_1$ and $\mu_2$, we define the multiplication map $m_{\mu_1,\mu_2}:  \mathcal{W}_{\mu_1} \times \mathcal{W}_{\mu_2}  \rightarrow \mathcal{W}_{\mu_1+\mu_2}$ as $(g_1,g_2)\mapsto \pi(g_1g_2)$.
\end{Def}

We recall the following compatiblity between the shift and multiplication maps.

\begin{Lemma}\emph{\cite[Lemma~5.11]{FKPRW}} \label{shiftmultcomp}
	Let $\mu_1,\mu_2$ be any coweights and let $\nu_1,\nu_2$ be antidominant coweights. The following diagram commutes.
	\[
	\xymatrix{
		\mathcal{W}_{\mu_1+\nu_1} \times \mathcal{W}_{\mu_2+\nu_2} \ar[rr]^{m_{\mu_1+\nu_1,\mu_2+\nu_2}} \ar[d]_{\iota_{\mu_1,\nu_1,0}\times \iota_{\mu_2,0,\nu_2}} & & \mathcal{W}_{\mu_1+\mu_2+\nu_1+\nu_2} \ar[d]^{\iota_{\mu_1+\mu_2,\nu_1,\nu_2}} \\
		\mathcal{W}_{\mu_1} \times \mathcal{W}_{\mu_2} \ar[rr]_{m_{\mu_1,\mu_2}}& & \mathcal{W}_{\mu_1+\mu_2}
	}
	\]
\end{Lemma}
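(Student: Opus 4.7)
My plan is to show that both routes around the square send a pair $(g_1, g_2) \in \cW_{\mu_1+\nu_1} \times \cW_{\mu_2+\nu_2}$ to the common element $\pi(t^{-\nu_1} g_1 g_2 t^{-\nu_2}) \in \cW_{\mu_1+\mu_2}$. The only real input needed is the invariance identity $\pi(ngn_-) = \pi(g)$ for $n \in U[t]$, $n_- \in U_-[t]$ recorded just above, together with one calculation with root subgroups.

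The technical observation I would isolate first is that for any antidominant coweight $\nu$, conjugation $g \mapsto t^{-\nu} g t^\nu$ preserves both $U[t]$ and $U_-[t]$. Using the ordered-product decomposition into root subgroups, this reduces to the formula $t^{-\nu} x_\alpha(p(t)) t^\nu = x_\alpha(t^{-\langle \alpha, \nu\rangle} p(t))$, whose right-hand side remains polynomial in $t$ because antidominance of $\nu$ forces $-\langle \alpha, \nu\rangle \geq 0$ for $\alpha > 0$ and $\leq 0$ for $\alpha < 0$.

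For the clockwise route, I would write $\pi(g_1 g_2) = n g_1 g_2 n_-$ with $n \in U[t]$, $n_- \in U_-[t]$, and then factor
$$
t^{-\nu_1}\pi(g_1 g_2) t^{-\nu_2} \;=\; (t^{-\nu_1} n t^{\nu_1}) \cdot t^{-\nu_1} g_1 g_2 t^{-\nu_2} \cdot (t^{\nu_2} n_- t^{-\nu_2}).
$$
By the observation above the outer factors lie in $U[t]$ and $U_-[t]$, so the invariance identity collapses $\pi$ of this product to $\pi(t^{-\nu_1} g_1 g_2 t^{-\nu_2})$. For the counterclockwise route, I would use the defining factorization $g_i = u_i h_i t^{\mu_i+\nu_i} v_i$ to observe that $\pi(t^{-\nu_1} g_1) = p_1 \cdot t^{-\nu_1} g_1$ for some $p_1 \in U[t]$, with no $U_-[t]$ correction needed on the right (since $v_1 \in U_{-,1}[[t^{-1}]]$ is left untouched by $t^{-\nu_1}$). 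Symmetrically $\pi(g_2 t^{-\nu_2}) = g_2 t^{-\nu_2} \cdot q_2$ with $q_2 \in U_-[t]$. Multiplying these and applying $\pi$ once more absorbs $p_1$ and $q_2$ by invariance, yielding $\pi(t^{-\nu_1} g_1 g_2 t^{-\nu_2})$ a second time.

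The step I expect to be the most delicate is the bookkeeping verification that every intermediate expression lives inside the domain $U((t^{-1})) T_1[[t^{-1}]] t^\mu U_-((t^{-1}))$ on which $\pi$ is defined (for the relevant $\mu$). For the outputs of $m$ and $\iota$ this is built in, but for the raw product $t^{-\nu_1} g_1 g_2 t^{-\nu_2}$ one needs a Birkhoff-type factorization argument, since conjugating by $t^{-\nu_1}$ pushes $U_1[[t^{-1}]]$-factors out into $U((t^{-1}))$. I would handle this either by checking the identity on a suitable open dense locus and extending by reducedness, or by systematically working in the quotient $U((t^{-1}))/U[t] \cong U_1[[t^{-1}]]$ throughout, so that the invariance identity applies without pre-normalization.
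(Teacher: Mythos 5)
The paper does not give its own proof of this lemma; it simply cites \cite[Lemma~5.11]{FKPRW}, so there is no in-document argument to compare against. Your proof is correct and follows the natural route: both paths around the square reduce, via the invariance identity $\pi(ngn_-)=\pi(g)$ and the conjugation formula for root subgroups, to $\pi(t^{-\nu_1} g_1 g_2 t^{-\nu_2})$. One remark: the step you flag as "most delicate" is in fact unproblematic and needs no density or reducedness detour. Writing $g_1 = u_1 h_1 t^{\mu_1+\nu_1} v_1$ and $g_2 = u_2 h_2 t^{\mu_2+\nu_2} v_2$, the product $v_1 u_2$ lies in $G_1[[t^{-1}]]$, which always admits a Gauss decomposition $u_3 h_3 v_3$ because the big cell of $G$ contains the identity; this puts $g_1 g_2$ in $U((t^{-1}))\,T_1[[t^{-1}]]\,t^{\mu_1+\mu_2+\nu_1+\nu_2}\,U_-((t^{-1}))$ on the nose (the same calculation appears in the paper's Lemma~\ref{le:g1g2}), and conjugating this factorization by $t^{-\nu_1}$ on the left and $t^{-\nu_2}$ on the right keeps it in the domain of $\pi$ at level $\mu_1+\mu_2$. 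With that observation your argument is a complete proof, closely parallel to what FKPRW do.
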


We will see later that the shift and multiplication maps restrict to maps for the subvarieties $\overline{\cW}_\mu^\lambda$.  See Propositions \ref{shiftpoi} and \ref{prop: mult map restricts to slices}, respectively.

\section{Truncated shifted Yangians}
Following \cite[Appendix B]{BFN}, we introduce a family of algebras known as the truncated shifted Yangians. These algebras are used to quantize the generalized affine Grassmannian slices.
\subsection{Shifted Yangians}
Recall the bilinear form $\alpha_i \cdot \alpha_j = (\alpha_i, \alpha_j) = d_i a_{ij}$ from section \ref{section:notation}.

\begin{Def}\label{cartandoubleyangian}
The {\em Cartan doubled Yangian} $Y_\infty := Y_\infty(\mathfrak{g})$ is defined to be the $ \mathbb{C} $--algebra with generators $ E_i^{(q)}, F_i^{(q)}, H_i^{(p)} $ for $ i\in I$, $ q > 0 $ and $ p \in \mathbb{Z} $, with relations
\begin{align}
\label{H,H} [H_i^{(p)}, H_j^{(q)}] &= 0,  \\
\label{E,F} [E_i^{(p)}, F_j^{(q)}] &=  \delta_{ij} H_i^{(p+q-1)}, \\
\label{H,E} [H_i^{(p+1)},E_j^{(q)}] - [H_i^{(p)}, E_j^{(q+1)}] &= \frac{\alpha_i \cdot \alpha_j}{2} (H_i^{(p)} E_j^{(q)} + E_j^{(q)} H_i^{(p)}) , \\
\label{H,F} [H_i^{(p+1)},F_j^{(q)}] - [H_i^{(p)}, F_j^{(q+1)}] &= -\frac{\alpha_i \cdot \alpha_j}{2} (H_i^{(p)} F_j^{(q)} + F_j^{(q)} H_i^{(p)}) , \\
\label{E,E} [E_i^{(p+1)}, E_j^{(q)}] - [E_i^{(p)}, E_j^{(q+1)}] &= \frac{\alpha_i \cdot \alpha_j}{2} (E_i^{(p)} E_j^{(q)} + E_j^{(q)} E_i^{(p)}), \\
\label{F,F} [F_i^{(p+1)}, F_j^{(q)}] - [F_i^{(p)}, F_j^{(q+1)}] &= -\frac{\alpha_i \cdot \alpha_j}{2} (F_i^{(p)} F_j^{(q)} + F_j^{(p)} F_i^{(q)}),\\
i \neq j,\  N = 1 - a_{ij} \Longrightarrow
\label{symE} \operatorname{sym} &[E_i^{(p_1)}, [E_i^{(p_2)}, \cdots [E_i^{(p_N)}, E_j^{(q)}]\cdots]] = 0, \\
i \neq j,\ N = 1 - a_{ij} \Longrightarrow
\label{symF} \operatorname{sym} &[F_i^{(p_1)}, [F_i^{(p_2)}, \cdots [F_i^{(p_N)}, F_j^{(q)}]\cdots]] = 0.
\end{align}

\end{Def}

\begin{Def}\label{shiftedyangian}
For any coweight $\mu$, the shifted Yangian $Y_\mu$ is defined to be the quotient of $Y_\infty$ by the relations $H_i^{(p)}=0$ for all $p<-\langle \mu,\alpha_i\rangle$ and $H_i^{(-\langle \mu,\alpha_i\rangle)}=1$.
\end{Def}

\begin{Rem}
When $\mu=0$, the algebra $Y=Y_0$ is the usual Yangian. The above generators and above relations correspond to Drinfeld's ``new'' presentation of $Y$, see \cite[Theorem 12.1.3]{CP}.
\end{Rem}

It is convenient to define series version of these generators.
\begin{equation}
\label{eq: series forms}
H_j(t)= t^{\langle \mu,\alpha_j\rangle} + \sum_{r\geq 1} H_j^{(-\langle \mu,\alpha_j\rangle + r)}t^{\langle \mu,\alpha_j\rangle-r}, \quad E_j(t)=\sum_{r\geq 1} E_j^{(r)}t^{-r}, \quad  F_j(t)=\sum_{r\geq 1} F_j^{(r)}t^{-r}.
\end{equation}

We can relate these algebras in a natural way, via ``shift morphisms".
\begin{Proposition}\label{shiftmaps}\emph{\cite[Prop~3.8, Cor.~3.16]{FKPRW}}
Let $\mu$ be a coweight, and $\mu_1,\mu_2$ be antidominant coweights. Then there exists a homomorphism $\iota_{\mu,\mu_1,\mu_2}: Y_\mu \rightarrow Y_{\mu+\mu_1+\mu_2}$ defined by
\begin{equation}
H_i^{(r)}\mapsto H_i^{(r- \langle \mu_1+\mu_2,\alpha_i\rangle)},\ \ E_i^{(r)}\mapsto E_i^{(r-\langle \mu_1,\alpha_i\rangle)},\ \ F_i^{(r)}\mapsto F_i^{(r-\langle \mu_2 ,\alpha_i\rangle)}.
\end{equation}
Moreover, this homomorphism  $\iota_{\mu,\mu_1,\mu_2}: Y_\mu \rightarrow Y_{\mu+\mu_1+\mu_2}$ is injective.
\end{Proposition}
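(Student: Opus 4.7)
The plan is to prove the two assertions separately: construct the homomorphism on generators by direct verification of relations, then establish injectivity via a PBW argument.

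For existence, write $a_i := -\langle \mu_1, \alpha_i\rangle$ and $b_i := -\langle \mu_2, \alpha_i\rangle$, both nonnegative by antidominance of $\mu_1$ and $\mu_2$. The proposed assignment is the uniform index-shift $E_i^{(r)} \mapsto E_i^{(r+a_i)}$, $F_i^{(r)} \mapsto F_i^{(r+b_i)}$, $H_i^{(r)} \mapsto H_i^{(r+a_i+b_i)}$. To see that this defines a homomorphism, one inspects relations \eqref{H,H}--\eqref{symF} of $Y_\infty$: each is ``balanced'' under such a shift, in the sense that on both sides of each relation every $E_i$, $F_j$, or $H_k$ generator acquires the same total shift. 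For example, the right-hand side of \eqref{E,F} is $\delta_{ij} H_i^{(p+q-1)}$, which under the substitution (with $i=j$) becomes $H_i^{(p+q-1+a_i+b_i)}$, exactly matching the shift accumulated by the left-hand side $[E_i^{(p+a_i)}, F_j^{(q+b_j)}]$. The same bookkeeping applies verbatim to the $(H,E)$ and $(H,F)$ quadratic relations, to $(E,E)$ and $(F,F)$, and to the Serre relations \eqref{symE}--\eqref{symF}. It remains to verify compatibility with the two truncation relations defining $Y_\mu$: the relation $H_i^{(-\langle \mu, \alpha_i\rangle)} = 1$ transports to $H_i^{(-\langle \mu+\mu_1+\mu_2,\alpha_i\rangle)} = 1$, and the vanishings $H_i^{(p)} = 0$ for $p < -\langle \mu, \alpha_i\rangle$ transport to $H_i^{(s)} = 0$ for $s < -\langle \mu+\mu_1+\mu_2,\alpha_i\rangle$, both of which hold by definition of $Y_{\mu+\mu_1+\mu_2}$.

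For injectivity, the natural tool is the PBW theorem for shifted Yangians: ordered monomials in the generators $E_i^{(r)}, F_i^{(r)}$ (for $r \geq 1$) and $H_i^{(r)}$ (for $r > -\langle \mu, \alpha_i\rangle$), with respect to some fixed ordering of the index set, form a $\C$-basis of $Y_\mu$. Under $\iota_{\mu,\mu_1,\mu_2}$, each such PBW monomial in $Y_\mu$ is sent to an ordered monomial in $Y_{\mu+\mu_1+\mu_2}$ whose generator indices have been uniformly shifted up, and hence is still of the prescribed PBW form. These images are pairwise distinct monomials and hence linearly independent, so $\iota_{\mu,\mu_1,\mu_2}$ is injective.

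The main obstacle is the PBW theorem for shifted Yangians itself, which is nontrivial beyond the dominant regime. For dominantly shifted Yangians it follows from the interpretation of $Y_\mu$ as a quantization of the affine Grassmannian slice $\cW_\mu$ (so that $\gr Y_\mu \cong \C[\cW_\mu]$ is a known polynomial ring); for general coweights one must argue using the canonical filtration on $Y_\mu$ inherited from $Y_\infty$ and show directly that the associated graded is the expected polynomial algebra, which requires a careful induction on the loop variable together with control of the truncation relations. Once the PBW theorem is granted, both the existence and injectivity of the shift maps follow by essentially mechanical verification as above.
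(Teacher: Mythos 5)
Your argument is correct and follows the same two-step strategy as the cited source: existence by a direct relation-check (each of the eight defining relations, and the two truncation relations, is ``balanced'' under the index shift, as you illustrate for \eqref{E,F}), and injectivity via the PBW theorem. One point you should make explicit in the injectivity step is that the PBW variables and their total order on $Y_{\mu+\mu_1+\mu_2}$ must be chosen compatibly with the shift: fix PBW variables $E_\beta^{(q)}, F_\beta^{(q)}$ for $Y_\mu$ by fixed bracketings, observe that their images are $E_\beta^{(q-\langle\mu_1,\beta\rangle)}$ and $F_\beta^{(q-\langle\mu_2,\beta\rangle)}$ computed with the same bracketings, declare these to be the chosen PBW variables for $Y_{\mu+\mu_1+\mu_2}$ at those indices (extending arbitrarily at the remaining finitely many smaller indices), and transport the total order. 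Only with this compatible choice do ordered monomials go to ordered monomials and the PBW basis of $Y_\mu$ embed into that of $Y_{\mu+\mu_1+\mu_2}$. This is precisely the device used in the present paper's proof of Lemma~\ref{lem:shiftrespectsborel}. Your closing paragraph on how one would establish the PBW theorem itself is tangential and somewhat speculative: the paper and \cite{FKPRW} cite Theorem~\ref{PBW} as an independent prior result, and there is no circularity, since \cite{FKPRW} first establishes the shift homomorphisms, then proves PBW for arbitrary $\mu$ (using only the \emph{existence} of the shift maps to transport linear independence from the antidominant case), and concludes injectivity last.
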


\begin{Rem}
In \cite{KWWY}, for $\mu$ dominant, the shifted Yangian $Y_\mu$ is realized as a subalgebra of the usual Yangian $Y_0$ and not as a quotient of $Y_\infty$. In our setting, the shift morphism $\iota_{\mu,0,-\mu}$ corresponds to the natural inclusion $Y_\mu \rightarrow Y_0$ in \cite{KWWY}.
\end{Rem}

Next, let us introduce the following elements of the shifted Yangians, similar to certain elements of the usual Yangian considered by Levendorskii in \cite{Le1}.
\begin{Def}\label{lnH}
Set $S_i^{(-\langle \mu,\alpha_i\rangle +1)} =H_i^{(-\langle \mu,\alpha_i\rangle +1)}$ and
\begin{equation}
S_i^{(-\langle \mu,\alpha_i\rangle +2)} = H_i^{(-\langle \mu,\alpha_i\rangle +2)} - \tfrac{1}{2}\big(H_i^{(-\langle \mu,\alpha_i\rangle +1)}\big)^2
\end{equation}
\end{Def}
It is not hard to check that for $ r \ge 1 $,
\begin{align*}
[S_i^{(-\langle \mu, \alpha_i \rangle +2)} ,E_j^{(r)}]&=(\alpha_i \cdot \alpha_j)  E_j^{(r+1)}, \\
[S_i^{(-\langle \mu, \alpha_i \rangle +2)} ,F_j^{(r)}]&=-(\alpha_i \cdot \alpha_j)  F_j^{(r+1)}.
\end{align*}
These commutation relations show that $ [S_i^{(-\langle \mu, \alpha_i \rangle +2)}, ?] $ plays the role of a ``raising operator", allowing us to obtain higher $E_i^{(r)}$ and $F_i^{(r)}$ from $E_i^{(1)}$ and $F_i^{(1)}$.

\begin{Lemma}\label{4generators}\emph{\cite[Lem~3.11]{FKPRW}}
Let $\mu$ be an antidominant coweight. As a unital associative algebra, $Y_\mu$ is generated by $E_i^{(1)},F_i^{(1)},S_i^{(-\langle \mu,\alpha_i\rangle + 1)}, S_i^{(-\langle \mu,\alpha_i\rangle +2)}$.
Alternatively, $Y_\mu$ is also generated by $E_i^{(1)},F_i^{(1)}, H_i^{(-\langle \mu,\alpha_i\rangle +k)}$ ($k=1,2$). In particular, $Y_\mu$ is finitely generated.
\end{Lemma}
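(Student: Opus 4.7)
The plan is to reduce the problem to showing that every generator $E_i^{(q)}$, $F_i^{(q)}$ ($q \geq 1$) and $H_i^{(p)}$ ($p \geq -\langle \mu, \alpha_i \rangle + 1$) lies in the subalgebra $A$ generated by the four proposed families of elements. First, I would note that the two proposed generating sets generate the same subalgebra, since Definition \ref{lnH} expresses each $S_i^{(-\langle \mu, \alpha_i\rangle + k)}$ as a polynomial in $H_i^{(-\langle \mu, \alpha_i\rangle + 1)}$ and $H_i^{(-\langle \mu, \alpha_i\rangle + 2)}$ (and vice versa). So it suffices to work with the $S_i$-version.

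The key step is to verify the commutation relation $[S_i^{(-\langle \mu,\alpha_i\rangle + 2)}, E_j^{(r)}] = (\alpha_i \cdot \alpha_j) E_j^{(r+1)}$ advertised immediately after Definition \ref{lnH}. I would derive it directly from the defining relation \eqref{H,E}: applying \eqref{H,E} at $p = -\langle \mu,\alpha_i\rangle$ and using $H_i^{(-\langle \mu,\alpha_i\rangle)} = 1$ yields $[H_i^{(-\langle \mu,\alpha_i\rangle + 1)}, E_j^{(r)}] = (\alpha_i \cdot \alpha_j) E_j^{(r)}$. Applying \eqref{H,E} again at $p = -\langle \mu,\alpha_i\rangle + 1$ and subtracting $\tfrac{1}{2}[(H_i^{(-\langle \mu,\alpha_i\rangle+1)})^2, E_j^{(r)}]$ (which one expands via $[A^2,B] = A[A,B]+[A,B]A$ using the previous identity) produces exactly $(\alpha_i \cdot \alpha_j) E_j^{(r+1)}$. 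The analogous calculation with \eqref{H,F} handles the $F$-version.

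With the raising identity in hand, specialize to $j = i$: since $\alpha_i \cdot \alpha_i = 2d_i \neq 0$, iterated application of $[S_i^{(-\langle \mu,\alpha_i\rangle + 2)}, -]$ to $E_i^{(1)}$ produces $E_i^{(r)}$ for every $r \geq 1$, up to a nonzero scalar; likewise one obtains every $F_i^{(r)}$ from $F_i^{(1)}$. Hence $A$ contains all $E_i^{(q)}$ and $F_i^{(q)}$, and therefore via \eqref{E,F} it contains $H_i^{(p+q-1)} = [E_i^{(p)}, F_i^{(q)}]$ for all $p, q \geq 1$, i.e.~all $H_i^{(r)}$ with $r \geq 1$. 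Since $\mu$ is antidominant, $-\langle \mu, \alpha_i\rangle + 1 \geq 1$, so the range $r \geq -\langle \mu,\alpha_i\rangle + 1$ that remains to cover is contained in $r \geq 1$, and $A = Y_\mu$.

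The main thing to be careful about is the computation sketched above, in particular making sure the algebra manipulations to pass from $H_i^{(-\langle \mu,\alpha_i\rangle+2)}$ to $S_i^{(-\langle \mu,\alpha_i\rangle+2)}$ clean out the symmetrized term on the right side of \eqref{H,E}; this is really the only non-routine verification. Once it is done, finite generation of $Y_\mu$ is immediate, as the generating set contains $4|I|$ elements.
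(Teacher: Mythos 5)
Your proof is correct, and it uses the standard raising-operator argument that is also the basis of the proof in \cite[Lemma 3.11]{FKPRW} (the paper in question simply cites that result). The key computation — applying relation \eqref{H,E} once to get $[H_i^{(-\langle\mu,\alpha_i\rangle+1)}, E_j^{(r)}] = (\alpha_i\cdot\alpha_j)E_j^{(r)}$, and once more together with $[A^2,B]=A[A,B]+[A,B]A$ to cancel the symmetrized anticommutator term and obtain $[S_i^{(-\langle\mu,\alpha_i\rangle+2)}, E_j^{(r)}] = (\alpha_i\cdot\alpha_j)E_j^{(r+1)}$ — is exactly right, and the sign flip for $F$ via \eqref{H,F} is as you say. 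The final step of recovering all $H_i^{(r)}$ for $r\geq 1$ from $[E_i^{(p)},F_i^{(q)}]=H_i^{(p+q-1)}$ is also correct, and the antidominance hypothesis is used precisely where you use it, to ensure $-\langle\mu,\alpha_i\rangle+1\geq 1$ so that no nontrivial $H_i^{(p)}$ falls outside the range generated.
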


\subsection{PBW theorem}
In this section, we describe the PBW theorem for shifted Yangians, generalizing the case of ordinary Yangian (due to Levendorskii in \cite{L2}), and the case of dominantly shifted Yangians \cite[Prop~3.11]{KWWY}.

\begin{Def}\label{egamma}
Let $\beta$ be a positive root, and pick any decomposition $\beta = \alpha_{i_1} +\ldots + \alpha_{i_l}$ into simple roots so that the element $ [e_{i_1}, [e_{i_2}, \ldots, [ e_{i_{l-1}}, e_{i_l}]\cdots] $ is a non-zero element of the root space $\mathfrak{g}_\beta$.  Consider also $q>0$ and a decomposition $q + l - 1 = q_1 +\ldots + q_l$ into positive integers.  Then we define a corresponding element of $Y_\infty$:
\begin{equation}
E_\beta^{(q)} :=  [ E_{i_1}^{(q_1)}, [E_{i_2}^{(q_2)},\ldots [E_{i_{l-1}}^{(q_{l-1})}, E_{i_l}^{(q_l)}]\cdots ].
\end{equation}
This element $E_\beta^{(q)}$, called a \emph{PBW variable}, depends on the choices above.  However, we will fix such a choice for each $ \beta $ and $ q $.

Similarly, we define and fix a PBW variable $F_\beta^{(q)}$ for each positive root $\beta$ and each $q>0$.
\end{Def}
Choose a total order on the set of PBW variables
\begin{equation}\label{total-order-generators}
\left\{ E_\beta^{(q)} : \beta\in \Delta^+, q>0\right\} \cup \left\{ F_\beta^{(q)} : \beta\in \Delta^+, q>0 \right\} \cup \left\{ H_i^{(p)} : i\in I, p> -\langle\mu, \alpha_i\rangle \right\}
\end{equation}

For simplicity we will assume that we have chosen a block order with respect to the three subsets above, i.e. ordered monomials have the form $EFH$.

\begin{Theorem}\emph{\cite[Cor.~3.15]{FKPRW}}\label{PBW}
For $\mu$ arbitrary, the set of ordered monomials in PBW variables form a PBW basis for $Y_\mu$ over $\mathbb{C}$.
\end{Theorem}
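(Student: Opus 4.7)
The plan is to establish the PBW theorem in two stages: spanning by ordered monomials, then linear independence.

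For spanning, I would argue combinatorially using the defining relations. The block order EFH is chosen so that one can push every $E$-generator past every $F$-generator using \eqref{E,F}, and every $H$-generator to the rightmost position using \eqref{H,E} and \eqref{H,F}; the $H$'s already commute among themselves by \eqref{H,H}. Within the $E$-block, relations \eqref{E,E} and the Serre relations \eqref{symE} imply that any iterated bracket of simple generators $E_i^{(r)}$ rewrites as a linear combination of the chosen PBW variables $E_\beta^{(q)}$ plus strictly lower-order terms with respect to a suitable filtration (for instance, one setting $\deg E_i^{(r)} = \deg F_i^{(r)} = r - 1$ and $\deg H_i^{(-\langle \mu, \alpha_i\rangle + k)} = k - 1$); the same applies to the $F$-block via \eqref{F,F}, \eqref{symF}. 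An induction on degree then shows every element of $Y_\mu$ is a linear combination of ordered PBW monomials.

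For linear independence, I would pass to the associated graded $\gr Y_\mu$ for the same filtration. A direct degree check on relations \eqref{H,H}--\eqref{F,F} shows that all commutators of two generators drop in degree (this is straightforward in the antidominant regime, and requires a modified filtration in general), so $\gr Y_\mu$ is commutative and is therefore a quotient of the polynomial ring on the PBW symbols; it remains to show this surjection is injective. The cleanest approach is to identify $\gr Y_\mu$ geometrically with the coordinate ring $\C[\cW_\mu]$ of the generalized slice, using the explicit product form $\cW_\mu = U_1[[t^{-1}]]T_1[[t^{-1}]]t^\mu U_{-,1}[[t^{-1}]]$ from Section \ref{se:WmuDef}. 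Since the three factors are infinite-dimensional affine spaces, $\C[\cW_\mu]$ is a polynomial ring whose coordinates can be matched to the symbols of the PBW variables via the quantization, giving injectivity. Alternatively, one can construct an explicit faithful GKLO-type representation of $Y_\mu$ as in \cite[Appendix B]{BFN}, in which distinct ordered PBW monomials act as visibly independent differential operators.

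The main obstacle is this last step: establishing the quantization statement $\gr Y_\mu \cong \C[\cW_\mu]$, or equivalently constructing a faithful realization of $Y_\mu$, without circularly invoking the injectivity of the shift homomorphisms of Proposition \ref{shiftmaps}, which in the development of \cite{FKPRW} is itself a consequence of the PBW theorem. For dominant $\mu$ one can bootstrap from \cite[Prop~3.11]{KWWY}. For general $\mu$, the cleanest route is to handle the antidominant case first via an explicit Coulomb branch or GKLO realization, and then compare with the Cartan-doubled Yangian $Y_\infty$ of Definition \ref{cartandoubleyangian} to transport the conclusion to arbitrary $\mu$, taking care to match the block ordering of PBW monomials on both sides of the comparison.
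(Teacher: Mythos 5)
The paper does not prove this theorem; it is cited directly from \cite[Cor.~3.15]{FKPRW}, so there is no internal proof to compare against. Judged on its own, your outline correctly identifies the two-stage structure (spanning, then linear independence) and, to your credit, explicitly flags the circularity hazard: both the identification $\gr Y_\mu \cong \C[\cW_\mu]$ of Theorem \ref{grYmu} and the injectivity of shift morphisms in Proposition \ref{shiftmaps} are, in the logical order of \cite{FKPRW}, consequences of the PBW theorem, not tools available to prove it. The spanning step is a standard rewriting argument and looks sound, modulo the acknowledged need to adjust the degree function when $\langle \mu, \alpha_i\rangle \geq 0$ (your $\deg H_i^{(r)} = r + \langle\mu,\alpha_i\rangle - 1$ only makes relation \eqref{E,F} degree-compatible when $\langle\mu,\alpha_i\rangle \leq -1$).

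The gap is concentrated in your proposed resolution of linear independence. The GKLO homomorphisms $\Phi_\mu^\lambda(\bR)$ of Theorem \ref{GKLO homomorphism} are never faithful on $Y_\mu$: each factors through the truncated quotient $Y_\mu^\lambda(\bR)$, which kills $A_i^{(r)}$ for $r > m_i$ (and more). So no single GKLO representation can separate ordered PBW monomials. One could hope to salvage this by taking the product over all $\lambda$, but to show $\bigcap_\lambda \ker\Phi_\mu^\lambda = 0$ one would need something like the filling-out statement of Proposition \ref{denseunion} transported to the noncommutative algebra via $\gr Y_\mu \cong \C[\cW_\mu]$ --- i.e., precisely what is downstream of PBW. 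Your alternative, comparing with $Y_\infty$, is a more promising direction but is left unsubstantiated at exactly the point where it would carry weight: one needs a PBW theorem for $Y_\infty$ by an independent argument, and then one needs to show that the two-sided ideal of $Y_\infty$ generated by $H_i^{(p)}$, $p < -\langle\mu,\alpha_i\rangle$, and $H_i^{(-\langle\mu,\alpha_i\rangle)} - 1$ is compatible with the $E$-$F$-$H$ triangular decomposition, so that it cuts out a coordinate subspace of the PBW span rather than something larger. Neither step is automatic. Until linear independence is secured for some anchor family (in practice, sufficiently antidominant $\mu$, from which the shift maps and the already-proved spanning propagate it to all $\mu$), the argument does not close.
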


\subsection{Coproducts for shifted Yangians}
In \cite{FKPRW} (building on the work of \cite{GNW} for the case $\mu = 0$), we defined a family of coproducts for shifted Yangians. We recall these definitions here.  Although these results were proven only for simply-laced types in \cite{FKPRW}, their proofs apply in general.

\begin{Theorem}\emph{\cite[Theorem 4.8]{FKPRW}}\label{th:coproductanti}
Let $\mu, \mu_1$, and $\mu_2$ be antidominant coweights. There exists an algebra morphism $\Delta_{\mu_1,\mu_2}:Y_{\mu_1+ \mu_2} \longrightarrow Y_{\mu_1} \otimes Y_{\mu_2}$ defined by
\begin{align*}
\Delta(E_i^{(r)})&= E_i^{(r)}\otimes 1, \hskip1ex 1\leq r \leq -\langle \mu_1,\alpha_i \rangle ;\\
\Delta(E_i^{(-\langle \mu_1,\alpha_i \rangle+1)})&=E_i^{(-\langle \mu_1 ,\alpha_i \rangle +1)}\otimes 1 + 1\otimes E_i^{(1)};\\
\Delta(E_i^{(-\langle \mu_1,\alpha_i \rangle+2)})&=E_i^{(-\langle \mu_1,\alpha_i\rangle+2)}\otimes1+1\otimes E_i^{(2)}+S_i^{(-\langle \mu_1,\alpha_i\rangle+1)}\otimes E_i^{(1)}\\
&\hskip2ex -\sum\limits_{\gamma>0}F_\gamma^{(1)}\otimes[E_i^{(1)},E_\gamma^{(1)}];
\end{align*}
\begin{align*}
\Delta(F_i^{(r)})&= 1\otimes F_i^{(r)}, \hskip1ex 1\leq r \leq -\langle \mu_2,\alpha_i \rangle; \\
\Delta(F_i^{(-\langle \mu_2,\alpha_i \rangle+1)})&=1\otimes F_i^{(-\langle \mu_2 ,\alpha_i \rangle +1)}+ F_i^{(1)}\otimes 1;\\
\Delta(F_i^{(-\langle \mu_2,\alpha_i \rangle+2)})&=1\otimes F_i^{(-\langle \mu_2,\alpha_i\rangle+2)}+ F_i^{(2)}\otimes 1 +F_i^{(1)}\otimes S_i^{(-\langle \mu_2,\alpha_i\rangle+1)}\\
&\hskip2ex +\sum\limits_{\gamma>0}[F_i^{(1)},F_\gamma^{(1)}]\otimes E_\gamma^{(1)};\\
\Delta(S_i^{(-\langle \mu,\alpha_i\rangle +1 )})&=S_i^{(-\langle \mu_1,\alpha_i\rangle +1 )}\otimes 1 + 1\otimes S_i^{(-\langle \mu_2,\alpha_i\rangle +1 )};\\
\Delta(S_i^{(-\langle \mu,\alpha_i\rangle +2 )})&= S_i^{(-\langle \mu_1,\alpha_i\rangle +2 )}\otimes 1 + 1\otimes S_i^{(-\langle \mu_2,\alpha_i\rangle +2 )} - \sum\limits_{\gamma>0}( \alpha_i \cdot\gamma )F_\gamma^{(1)} \otimes E_\gamma^{(1)}.
\end{align*}
\end{Theorem}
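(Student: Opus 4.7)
The plan is to leverage the finite generation result of Lemma \ref{4generators}: since $\mu := \mu_1 + \mu_2$ is antidominant, $Y_{\mu_1+\mu_2}$ is generated by the finite set $\{E_i^{(1)}, F_i^{(1)}, S_i^{(-\langle \mu, \alpha_i\rangle+1)}, S_i^{(-\langle \mu, \alpha_i\rangle+2)}\}_{i \in I}$, so the displayed formulas already determine $\Delta_{\mu_1,\mu_2}$ (if it extends). To establish existence I would present $Y_{\mu_1+\mu_2}$ as a quotient of the free algebra on these generators modulo the relations obtained by rewriting the defining relations (\ref{H,H})--(\ref{symF}) of $Y_\infty$ (Definition \ref{cartandoubleyangian}) in terms of this smaller generating set; the task then reduces to verifying that the specified images in $Y_{\mu_1}\otimes Y_{\mu_2}$ satisfy each such rewritten relation.

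First I would use the raising-operator identities $[S_i^{(-\langle \mu, \alpha_i\rangle+2)}, E_j^{(r)}] = (\alpha_i \cdot \alpha_j)\, E_j^{(r+1)}$ (and the analogue for $F_j^{(r)}$) to derive the implied formulas for $\Delta$ on the higher generators. Once $\Delta$ is fixed on the generators listed, the equation $(\alpha_i \cdot \alpha_j)\,\Delta(E_j^{(r+1)}) = [\Delta(S_i^{(-\langle \mu, \alpha_i\rangle+2)}), \Delta(E_j^{(r)})]$ forces $\Delta(E_j^{(r+1)})$ whenever $\alpha_i \cdot \alpha_j \ne 0$ (one simply chooses $i = j$). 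A parallel computation produces explicit formulas for $\Delta(F_j^{(r)})$, and then $\Delta(H_j^{(p)})$ is determined inductively via the $[E,F]$-relation. At each stage I would keep track of the ``cross-terms'' that appear in these expansions; crucially, the antidominance of $\mu_1$ and $\mu_2$ ensures the boundary indices align so that the shift morphisms of Proposition \ref{shiftmaps} can be applied.

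With those formulas in hand I would verify (\ref{H,H})--(\ref{F,F}) and the Serre relations (\ref{symE})--(\ref{symF}) in $Y_{\mu_1}\otimes Y_{\mu_2}$. Several checks are almost formal: for example, (\ref{H,H}) follows once one shows inductively that every $\Delta(H_i^{(p)})$ lies in the commutative subalgebra generated by $H_j^{(\cdot)}\otimes 1$ and $1\otimes H_j^{(\cdot)}$. The main obstacle will be handling the Serre relations in the presence of the cross-term $\sum_{\gamma > 0} F_\gamma^{(1)} \otimes [E_i^{(1)}, E_\gamma^{(1)}]$ appearing in $\Delta(E_i^{(-\langle \mu_1, \alpha_i\rangle+2)})$, whose contributions have to cancel intricately against higher-order terms produced by the raising operators. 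To organize this, I would exploit the root-lattice weight grading on both sides to reduce every identity to finitely many monomials of fixed weight, and I would use the injective shift morphism $\iota_{0,\mu_1,\mu_2}: Y_0 \hookrightarrow Y_{\mu_1+\mu_2}$ from Proposition \ref{shiftmaps} to transport as many checks as possible back to the ordinary Yangian $Y_0$, where the analogous coproduct of Guay--Nakajima--Wendlandt is already known to exist and satisfies the required identities. The remaining relations, involving the ``boundary'' generators $E_i^{(r)}$ with $r \leq -\langle \mu_1, \alpha_i\rangle$ and their $F$-counterparts, would be verified by direct but finite computation, using the PBW theorem (Theorem \ref{PBW}) to certify that no hidden relations among the generating monomials are being missed.
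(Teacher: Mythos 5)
This theorem is not proved in the present paper: it is quoted directly from \cite[Theorem 4.8]{FKPRW}, so there is no ``paper's own proof'' to compare against here. Evaluating your plan on its own terms, the overall shape is right --- FKPRW does indeed construct $\Delta_{\mu_1,\mu_2}$ in the antidominant case by writing down the map on a small generating set and verifying a finite list of relations --- but two steps in your sketch as written have real gaps.

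First, Lemma \ref{4generators} gives finite \emph{generation}, not a finite \emph{presentation}. Your plan to ``present $Y_{\mu_1+\mu_2}$ as a quotient of the free algebra on these generators modulo the relations obtained by rewriting (\ref{H,H})--(\ref{symF})'' implicitly assumes that the infinitely many Cartan-doubled Yangian relations reduce to finitely many relations among the Levendorskii-type generators. That reduction is exactly the content of a Levendorskii presentation theorem (proved for $Y_0$ by Levendorskii \cite{Le1}, and generalized to antidominant $\mu$ in FKPRW), and it is genuinely nontrivial: without it, the verification you propose is an infinite task, and the raising-operator identity $[S_i^{(-\langle\mu,\alpha_i\rangle+2)},E_j^{(r)}]=(\alpha_i\cdot\alpha_j)E_j^{(r+1)}$ only lets you \emph{define} higher generators, not certify that all higher relations collapse to low-degree ones. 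You should name this ingredient explicitly rather than folding it into a ``rewrite.''

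Second, the proposed reduction to $Y_0$ via $\iota_{0,\mu_1,\mu_2}$ does not do what you want. That shift morphism sends $E_i^{(r)}\mapsto E_i^{(r-\langle\mu_1,\alpha_i\rangle)}$ and $F_i^{(r)}\mapsto F_i^{(r-\langle\mu_2,\alpha_i\rangle)}$, so its image misses precisely the low-index generators $E_i^{(1)},\dots,E_i^{(-\langle\mu_1,\alpha_i\rangle+2)}$ and $F_i^{(1)},\dots,F_i^{(-\langle\mu_2,\alpha_i\rangle+2)}$, which are exactly where the new cross-terms $\sum_{\gamma>0}F_\gamma^{(1)}\otimes[E_i^{(1)},E_\gamma^{(1)}]$ appear and where the hard Serre-relation cancellations must be checked. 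The GNW coproduct on $Y_0$ therefore cannot be used to offload those boundary checks. The relations that live in the image of $\iota_{0,\mu_1,\mu_2}$ can indeed be transported back, but those are the easy ones; the remaining verifications must be done by direct computation in $Y_{\mu_1}\otimes Y_{\mu_2}$, which is the bulk of the FKPRW proof. With these two points repaired --- cite the finite presentation theorem, and abandon the $Y_0$-reduction for the boundary generators in favor of direct verification --- your outline aligns with what the source reference does.
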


If we have arbitrary (not necessarily antidominant) coweights, we also have a coproduct, whose behaviour is determined by the antidominant case.

\begin{Theorem}\emph{\cite[Theorem 4.12]{FKPRW}} \label{coproduct}
Let $\mu=\mu_1+\mu_2$ where $\mu,\mu_1,\mu_2$ are arbitrary coweights. There exists a unique coproduct $\Delta_{\mu_1,\mu_2}: Y_\mu \longrightarrow Y_{\mu_1}\otimes Y_{\mu_2}$ such that, for all antidominant coweights $\eta_1,\eta_2$, the following diagram is commutative
\[
\xymatrix{
Y_\mu  \ar[d]_{\iota_{\mu,\eta_1,\eta_2}} \ar[rrr]^{\Delta_{\mu_1,\mu_2}} &&& Y_{\mu_1}\otimes Y_{\mu_2} \ar[d]^{(\iota_{\mu_1,\eta_1,0})\otimes (\iota_{\mu_2,0,\eta_2})}\\
Y_{\mu+ \eta_1+\eta_2} \ar[rrr]_{\Delta_{\mu_1+\eta_1,\mu_2+\eta_2}} &&& Y_{\mu_1+\eta_1}\otimes Y_{\mu_2+\eta_2}
}
\]
\end{Theorem}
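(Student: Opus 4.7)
The plan is to reduce the arbitrary-coweight case to the antidominant case already treated in Theorem \ref{th:coproductanti}, using the fact that the shift homomorphisms from Proposition \ref{shiftmaps} are injective. Given arbitrary $\mu_1, \mu_2$, I would first choose antidominant $\eta_1, \eta_2$ large enough that $\mu_1+\eta_1$ and $\mu_2+\eta_2$ are antidominant; the coproduct $\Delta_{\mu_1+\eta_1, \mu_2+\eta_2}$ is then defined. I would then define $\Delta_{\mu_1, \mu_2}$ as the unique map making the asserted square commute for this particular pair $(\eta_1, \eta_2)$, and afterward verify that the same square commutes for every other antidominant pair.

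Uniqueness, given any such pair, is immediate: the map $\iota_{\mu_1,\eta_1,0} \otimes \iota_{\mu_2, 0, \eta_2}$ is injective (being a tensor product of injections), so $\Delta_{\mu_1,\mu_2}$ is forced by the square. For existence, the key step is to check that for every Drinfeld generator $x \in \{E_i^{(r)}, F_i^{(r)}, H_i^{(p)}\}$ of $Y_\mu$, the element $\Delta_{\mu_1+\eta_1, \mu_2+\eta_2}\bigl(\iota_{\mu, \eta_1, \eta_2}(x)\bigr)$ lies in the image of $\iota_{\mu_1, \eta_1, 0} \otimes \iota_{\mu_2, 0, \eta_2}$; injectivity of the latter then allows us to pull back and define $\Delta_{\mu_1,\mu_2}(x)$. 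Inspecting the explicit formulas of Theorem \ref{th:coproductanti}, each summand is visibly a product of a ``left'' factor involving generators with shift pattern matching $\iota_{\mu_1,\eta_1,0}$ and a ``right'' factor matching $\iota_{\mu_2,0,\eta_2}$, so this containment reduces to a finite calculation generator by generator.

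The remaining verifications are largely formal. Preservation of the defining relations of $Y_\mu$ follows because $\Delta_{\mu_1+\eta_1,\mu_2+\eta_2}$ is an algebra map, $\iota_{\mu,\eta_1,\eta_2}$ is an injective algebra map, and pulling back a relation along an injective algebra map preserves it. Independence from the choice of $(\eta_1, \eta_2)$, which also yields the commutativity of the square for any other antidominant choice, is proved by comparing two choices through a common larger antidominant refinement $(\eta_1 + \nu_1, \eta_2 + \nu_2)$ with $\nu_i$ antidominant, using the composition identity $\iota_{\mu, \eta_1 + \nu_1, \eta_2 + \nu_2} = \iota_{\mu + \eta_1 + \eta_2, \nu_1, \nu_2} \circ \iota_{\mu, \eta_1, \eta_2}$ together with the analogous compatibility for antidominant coproducts; the latter can be read off directly from the formulas in Theorem \ref{th:coproductanti} applied to $\mu_1 + \eta_1, \mu_2 + \eta_2$ and shifted further by $\nu_1, \nu_2$.

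The main obstacle I anticipate is the bookkeeping in the existence step: one must verify that the sums over positive roots $\gamma$ appearing in $\Delta(E_i^{(-\langle \mu_1, \alpha_i\rangle +2)})$ and $\Delta(F_i^{(-\langle \mu_2, \alpha_i\rangle +2)})$, and the mixed terms involving $S_i^{(-\langle \mu_j, \alpha_i\rangle +1)}$, really do lie in the image of $\iota_{\mu_1,\eta_1,0}\otimes\iota_{\mu_2,0,\eta_2}$ once we further shift from the antidominant $(\mu_1+\eta_1, \mu_2+\eta_2)$ back to $(\mu_1, \mu_2)$. This is not deep, but it requires careful tracking of how the shifts $-\langle \eta_1, \alpha\rangle$ and $-\langle \eta_2, \alpha\rangle$ distribute between the two tensor factors in each summand, and in particular that no term is produced whose index would fall outside the range reachable by the shift map on either side.
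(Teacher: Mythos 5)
The paper does not actually prove this statement---it is cited from \cite[Theorem 4.12]{FKPRW}---so there is no in-paper proof to compare against. Your proposal is nonetheless a correct reconstruction of the standard argument, and it matches the structure one would expect from \cite{FKPRW}: define $\Delta_{\mu_1,\mu_2}$ by forcing one square with a sufficiently antidominant pair $(\eta_1,\eta_2)$, exploit injectivity of the shift maps for uniqueness, then show independence of the pair by comparing through a common antidominant refinement.

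A few points worth sharpening. First, the hard content of the existence step is exactly what you flag, but the reason it reduces to a finite check is slightly subtler than ``check generators'': since $\iota_{\mu_1,\eta_1,0}\otimes\iota_{\mu_2,0,\eta_2}$ is an injective algebra map with image a subalgebra, it suffices that the algebra generators of $Y_\mu$ land in that subalgebra after applying $\Delta_{\mu_1+\eta_1,\mu_2+\eta_2}\circ\iota_{\mu,\eta_1,\eta_2}$; and because $\Delta_{\mu_1+\eta_1,\mu_2+\eta_2}$ from Theorem \ref{th:coproductanti} is only prescribed on the Levendorskii-type generators of Lemma \ref{4generators}, the check is really on those. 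You should verify that when $\mu_1$ or $\mu_2$ fails to be antidominant (so e.g.\ $\langle\mu_1,\alpha_i\rangle\ge 0$ for some $i$), the shifted index $1-\langle\eta_1,\alpha_i\rangle$ of $E_i^{(1)}$ can land \emph{above} the threshold $-\langle\mu_1+\eta_1,\alpha_i\rangle$, so the more complicated formulas of Theorem \ref{th:coproductanti} kick in; a short computation confirms that all resulting terms do factor through the shift morphisms, but this is precisely the bookkeeping that must be done and your proposal correctly flags it as the crux. Second, your remark about ``preservation of relations'' is slightly off in phrasing: one does not pull back relations; rather, $\Delta_{\mu_1,\mu_2}$ is automatically an algebra homomorphism because it is a composite of algebra maps and the restriction of an inverse to the image of an injective algebra map. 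These are cosmetic issues; the architecture is sound.
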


\subsection{Truncated shifted Yangians}
\label{section: tsy}

Let $\lambda$ be a dominant coweight, and let $\mu$ be a coweight with $\mu \leq \lambda$.  We can write $\lambda = \sum_i \lambda_i \varpi_i^\vee$ in terms of the fundamental coweights, and $\lambda - \mu = \sum_i m_i \alpha_i^\vee$ in terms of simple coroots.  

We let $\C^\lambda = \prod_i \C^{\lambda_i} / \Sigma_{\lambda_i}$, where $\Sigma_k$ denotes the symmetric group on a set of size $k$.  Then $\C^\lambda$ is the collection of all multisets of sizes $(\lambda_i)_{i \in I}$. A point in $\C^\lambda$ will be denoted $\bR = (R_i)_{i\in I}$, where each $R_i$ is a multiset of size $\lambda_i$, and will be refered as a \emph{set of parameters of coweight} $\lambda$.  Fixing such an $\bR$, we associate the polynomials
$$
p_i(u) = \prod_{c \in R_i} (u-c)
$$

We define elements $A_i^{(r)} \in \C[H_j^{(s)} : j \in I, s > - \langle\mu,\alpha_j\rangle]$, by requiring that the following identity of formal series holds for all $i \in I$:
\begin{equation}
\label{eq: def of A gens}
H_i(u) = \frac{p_i(u) \prod_{j \ne i} \prod_{r=1}^{-a_{ji}} (u-\tfrac{1}{2}d_i a_{ij}- r d_j)^{m_j}}{u^{m_i} (u-d_i)^{m_i} } \frac{ \prod_{j \ne i} \prod_{r=1}^{-a_{ji}} A_j(u-\tfrac{1}{2}d_i a_{ij}- r d_j)}{A_i(u) A_i(u-d_i)}
\end{equation}
where $A_i(u) = 1 + \sum_{r \geq 1 } A_i^{(r)} u^{-r}$. The existence and uniqueness of such elements $A_i^{(r)}$ follows from \cite[Lemma 2.1]{GKLO}.  Note that these elements  depend implicitly on the choice of $\lambda, \mu$ and $\bR$.

Define a ring of difference operators $ \widetilde{\cA} $ as follows.  First, consider the ring having generators $w_{i,r}$ and $\mathsf{u}_{i,r}^{\pm 1}$ for $i \in I$ and $1\leq r \leq m_i$, subject to the relations 
$$
[\mathsf{u}_{i,r}^{\pm 1}, w_{j,s} ] = \pm \delta_{i,j} \delta_{r,s} d_i \mathsf{u}_{i,r}^{\pm 1},
$$
with all other generators commuting.  Then $\widetilde{\cA}$ is its Ore localization given by  introducing inverses $(w_{i,r} - w_{i,s}+ k d_i )^{-1}$ for all $i \in I$, $1\leq r \neq s \leq m_i$ and $k \in \mathbb{Z}$.   We define polynomials with coefficients in $\widetilde{A}$ by
$$
W_i(u) = \prod_{s=1}^{m_i} (u- w_{i,s}), \quad W_{i,r}(u) = \prod_{\substack{s = 1 \\ s\neq r}}^{m_i} ( u -w_{i,s}),
$$
for any $i\in I$ and $1\leq r \leq m_i$.

The shifted Yangian $ Y_\mu $ has a remarkable representation using this algebra of difference operators $ \widetilde{\cA} $.  This representation was first introduced in \cite{GKLO}, further studied in \cite{KWWY}, and generalized and connected with the theory of Coulomb branches in \cite[Appendix B]{BFN} and \cite{NW}.
\begin{Theorem}[\mbox{\cite[Theorem B.15]{BFN}, \cite[Theorem 5.4]{NW}}]  
\label{GKLO homomorphism} Fix any orientation $i\rightarrow j$ of the Dynkin diagram.  
There is a homomorphism of filtered $\C$--algebras $\Phi_\mu^\lambda(\bR): Y_\mu \longrightarrow \widetilde{\cA}$, defined by 
\begin{align*}
A_i(u) & \mapsto u^{-m_i} W_i(u), \\
E_i(u) & \mapsto - d_i^{-1/2}\sum_{r=1}^{m_i} \frac{p_i(w_{i,r}) \prod_{j \rightarrow i} \prod_{s=1}^{-a_{ji}} W_j(w_{i,r}-\tfrac{1}{2}d_i a_{ij}- s d_j )}{(u-w_{i,r}) W_{i,r} (w_{i,r})} \mathsf{u}_{i,r}^{-1}, \\
F_i(u) &  \mapsto d_i^{-1/2}\sum_{r=1}^{m_i} \frac{\prod_{j\leftarrow i} \prod_{s=1}^{-a_{ji}} W_j(w_{i,r}-\tfrac{1}{2} d_i a_{ij} +d_i - s d_j)}{(u-w_{i,r}-d_i) W_{i,r}(w_{i,r})}  \mathsf{u}_{i,r}
\end{align*}
\end{Theorem}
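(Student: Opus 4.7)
The plan is to verify the defining relations \eqref{H,H}--\eqref{symF} of the Cartan doubled Yangian directly for the proposed images, and then verify that the truncation conditions $H_i^{(p)} = 0$ for $p < -\langle \mu,\alpha_i\rangle$ and $H_i^{(-\langle\mu,\alpha_i\rangle)} = 1$ are respected. I would work throughout with generating series rather than individual coefficients, since the formulas for $\Phi(E_i(u))$ and $\Phi(F_i(u))$ are naturally presented as partial-fraction expansions in $u$.

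First I would set up the calculation by computing $\Phi(H_i(u))$ from $\Phi(A_i(u)) = u^{-m_i} W_i(u)$ using the defining identity \eqref{eq: def of A gens}. Since the $w_{j,s}$ pairwise commute, the result is a rational function in $u$ with coefficients in the commutative subalgebra $\C[w_{j,s}]$, and the Cartan relations \eqref{H,H} are then immediate. Next I would verify \eqref{H,E} and \eqref{H,F}, which are equivalent to the multiplicative shift identities
\[
(u - v - \tfrac{1}{2}\alpha_i\cdot\alpha_j)\, \Phi(H_i(u))\,\Phi(E_j(v)) = (u - v + \tfrac{1}{2}\alpha_i\cdot\alpha_j)\, \Phi(E_j(v))\,\Phi(H_i(u)) \pmod{\text{polynomial in } v},
\]
and symmetrically for $F_j$. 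These reduce to the basic commutation $[\mathsf{u}_{i,r}^{\pm 1}, w_{i,r}] = \pm d_i\, \mathsf{u}_{i,r}^{\pm 1}$ applied factor-by-factor inside the products $W_j(w_{i,r}-\cdots)$ and $W_i(u)$.

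Next I would verify \eqref{E,F}. For $i\neq j$ the relation is immediate because $\Phi(E_i)$ and $\Phi(F_j)$ involve disjoint sets of shift operators; for $i = j$ the commutator $[\Phi(E_i(u)), \Phi(F_i(v))]$ is a sum over pairs $(r,s)$ which collapses after the $\mathsf{u}_{i,r}^{-1}$ moves past $\mathsf{u}_{i,r}$, leaving simple poles at $u = w_{i,r}$ and $v = w_{i,r} + d_i$; partial fractions identify this with $\tfrac{\Phi(H_i(u)) - \Phi(H_i(v))}{u-v}$ (times a suitable constant), yielding \eqref{E,F}. The cross relations \eqref{E,E} and \eqref{F,F} for $i\neq j$ similarly reduce to commuting $\mathsf{u}_{j,s}$ past the $W_j$-factors appearing in $\Phi(E_i(u))$ via the orientation-dependent arrow convention; for $i = j$ they follow from the symmetry of the scalar kernels in $(w_{i,r}, w_{i,r'})$.

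The main obstacle will be the Serre relations \eqref{symE}, \eqref{symF}, which are notoriously delicate to check directly from partial-fraction formulas. The strategy I would take is to reduce to the rank-two case by restricting to the subalgebra generated by a pair of nodes $\{i, j\}$; here, following the approach of \cite{GKLO} and the Coulomb branch incarnation of \cite{BFN}, the relations can be checked inside types $A_1 \times A_1$, $A_2$, $B_2$, $G_2$ separately, where one can invoke the classical GKLO verification augmented with the shift polynomial $p_i(u)$ and the framing factors $W_j(\cdot)^{-a_{ji}}$. Finally, the truncation relations follow from the leading $u$-behaviour: since $\Phi(A_i(u))$ is a polynomial of degree $m_i$ in $u$ divided by $u^{m_i}$, one has $\Phi(A_i(u)) = 1 + O(u^{-1})$; plugging into \eqref{eq: def of A gens} and extracting the lowest-order coefficients in $u^{-1}$ shows that $\Phi(H_i(u))$ has leading term $u^{\langle\mu,\alpha_i\rangle}$ with no lower powers, as required for the map to descend from $Y_\infty$ to $Y_\mu$.
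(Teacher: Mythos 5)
The paper does not prove this theorem --- it is stated as a citation to \cite[Theorem B.15]{BFN} and \cite[Theorem 5.4]{NW} --- so there is no internal proof to compare against, and your sketch has to be evaluated on its own terms as a proposed direct verification.

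The skeleton is reasonable as far as it goes: the $[H,H]$, $[H,E]$, $[H,F]$, and $[E,F]$ relations do reduce to partial-fraction manipulations together with the shift identity $\mathsf{u}_{i,r}^{\pm 1} w_{i,r} = (w_{i,r}\pm d_i)\mathsf{u}_{i,r}^{\pm 1}$, and the truncation conditions $H_i^{(p)}=0$ for $p<-\langle\mu,\alpha_i\rangle$ follow from the degree in $u$ of the right-hand side of \eqref{eq: def of A gens}. Two things are missing, however. You never address that the map is required to be a homomorphism of \emph{filtered} algebras; this is straightforward once degrees are assigned to $w_{i,r}$ and $\mathsf{u}_{i,r}^{\pm 1}$ on the target, but it is part of the statement and should not be skipped. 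More seriously, your treatment of the Serre relations \eqref{symE}, \eqref{symF} is a placeholder rather than an argument. Reducing to rank-two Levi subalgebras is a legitimate idea, but "invoke the classical GKLO verification" is not available: GKLO treat Yangians of $\mathfrak{gl}_n$ (type $A$), without the truncation polynomial $p_i(u)$ or the orientation-dependent framing factors $\prod_{s=1}^{-a_{ji}} W_j(w_{i,r}-\tfrac{1}{2}d_i a_{ij}-s d_j)$, and their paper contains no $B_2$ or $G_2$ computation. The rank-two Serre calculations with these extra polynomial factors in place are precisely what a self-contained direct proof would have to carry out, and your sketch does not supply them. This is also why the cited sources do not argue this way at all: they obtain the formulas as images of generators under a Coulomb-branch realization in which the homomorphism property is structural, so the Serre relations never need to be checked relation-by-relation. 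If you want a short and correct argument, follow that geometric route; if you want a direct verification, the Serre step must actually be carried out, not deferred.
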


\begin{Def}
The \emph{truncated shifted Yangian} $Y_\mu^\lambda(\bR)$ is defined to be the image of the homomorphism $\Phi_\mu^\lambda(\bR)$.
\end{Def}

\begin{Rem}
Up to isomorphism, $Y_\mu^\lambda(\bR)$ is independent of the choice of orientation of the Dynkin diagram, see \cite[Section 6(viii)]{BFN1}.
\end{Rem}

In the special case when $\lambda = 0$, which will be relevant below, note that there are no parameters $\bR$.  Thus we will simply write $Y_\mu^0$ and $\Phi^0_\mu$.  The most important case for us is the following:

\begin{Lemma}
\label{lemma: simplest coulomb branch of quiver gauge theory}
The map $\Phi_{-\alpha_i^\vee}^0 : Y_{-\alpha_i^\vee} \rightarrow  \widetilde{\cA}$ is surjective.  In particular, $Y_{-\alpha_i^\vee}^0 = \widetilde{\cA}$ has generators $A_i^{(1)}, E_i^{(1)}, F_i^{(1)}$ with relations
$$
[E_i^{(1)}, A_i^{(1)}] = d_i E_i^{(1)}, \quad [F_i^{(1)}, A_i^{(1)}] = - d_i F_i^{(1)}, \quad E_i^{(1)} F_i^{(1)} = F_i^{(1)} E_i^{(1)} = - d_i^{-1}
$$
\end{Lemma}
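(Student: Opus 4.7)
The plan is to compute the homomorphism $\Phi_{-\alpha_i^\vee}^0$ explicitly on the three generators and read off both the surjectivity and the relations directly. With $\lambda = 0$ and $\mu = -\alpha_i^\vee$, we have $\lambda - \mu = \alpha_i^\vee$, so $m_i = 1$ and $m_j = 0$ for all $j \neq i$. Therefore $\widetilde{\cA}$ is generated just by $w := w_{i,1}$ and $\mathsf{u}^{\pm 1} := \mathsf{u}_{i,1}^{\pm 1}$ with the single relation $[\mathsf{u}^{\pm 1}, w] = \pm d_i \mathsf{u}^{\pm 1}$; no localization inverses are needed since there is no pair $r \neq s$.

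Next I would substitute into Theorem \ref{GKLO homomorphism}, using the empty-product conventions $p_i(u) = 1$ (as $\lambda = 0$) and $W_j(u) = 1$ for $j \neq i$ (as $m_j = 0$), together with $W_i(u) = u-w$ and $W_{i,1}(u) = 1$. The formulas collapse to
\[
A_i(u) \mapsto 1 - wu^{-1}, \qquad E_i(u) \mapsto -d_i^{-1/2}\frac{\mathsf{u}^{-1}}{u-w}, \qquad F_i(u) \mapsto d_i^{-1/2}\frac{\mathsf{u}}{u-w-d_i}.
\]
Reading off the coefficient of $u^{-1}$ gives $A_i^{(1)} \mapsto -w$, $E_i^{(1)} \mapsto -d_i^{-1/2}\mathsf{u}^{-1}$, and $F_i^{(1)} \mapsto d_i^{-1/2}\mathsf{u}$. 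Since $w$ and $\mathsf{u}^{\pm 1}$ generate $\widetilde{\cA}$, this shows $\Phi^0_{-\alpha_i^\vee}$ is surjective, so $Y^0_{-\alpha_i^\vee} = \widetilde{\cA}$. A direct computation using the single Weyl-type commutation relation verifies the three stated relations.

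Finally, for the presentation claim, let $\cB$ be the abstract $\C$-algebra on generators $a, e, f$ subject to $[e,a] = d_i e$, $[f,a] = -d_i f$, and $ef = fe = -d_i^{-1}$. The relations $ef = fe = -d_i^{-1}$ force $e$ and $f$ to be mutually inverse up to a scalar: $f = -d_i^{-1} e^{-1}$. Thus $\cB$ is generated by $a$ and $e^{\pm 1}$ with only the relation $[e,a] = d_i e$, which is precisely the defining presentation of the ring of difference operators $\widetilde{\cA}$ under $a \mapsto -w$, $e \mapsto -d_i^{-1/2}\mathsf{u}^{-1}$. So the tautological map $\cB \to \widetilde{\cA}$ is an isomorphism.

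I do not expect a serious obstacle here: the calculation is an unpacking of the GKLO formulas in the smallest nontrivial case. The only place requiring care is step three, where one must observe that the cubic-looking relations $ef = fe = -d_i^{-1}$ already encode the full Ore-algebra structure, so that no further relations beyond the three listed are required.
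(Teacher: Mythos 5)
Your proof is correct and follows essentially the same route as the paper: specialize the GKLO formulas to $m_i=1$, $m_j=0$, read off the images $A_i^{(1)}\mapsto -w$, $E_i^{(1)}\mapsto -d_i^{-1/2}\mathsf{u}^{-1}$, $F_i^{(1)}\mapsto d_i^{-1/2}\mathsf{u}$, and conclude. The paper dismisses the presentation claim as "immediate," whereas you spell out why the three stated relations give exactly the defining presentation of $\widetilde{\cA}$ (by eliminating $f$ in favour of $e^{-1}$) — a worthwhile detail, but not a different argument.
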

\begin{proof}
In this case $\widetilde{\cA}$ has generators $w, \mathsf{u}^{\pm 1}$ with relations $[\mathsf{u}^{\pm 1}, w] = \pm d_i \mathsf{u}^{\pm 1}$.  Since $A_i^{(1)} \mapsto - w, E_i^{(1)} \mapsto - d_i^{-1/2} \mathsf{u}^{-1}$ and $F_i^{(1)} \mapsto d_i^{-1/2} \mathsf{u}$, the claim is immediate.
\end{proof}

We may rephrase this result in terms of the ring $D(\C^\times)$ of differential operators on $\C^\times$.  Letting $z$ denote a coordinate on $\C^\times$, the ring $D(\C^\times)$ is generated by $\C[z,z^{-1}]$ along with the operator $\frac{\partial}{\partial z}$.  The following is a straightforward consequence of the previous lemma:

\begin{Lemma}
\label{rem: cotangent bundle of C*}
There is an isomorphism $Y_{-\alpha_i^\vee}^0 \stackrel{\sim}{\longrightarrow} D(\C^\times)$ defined by
$$
A_i^{(1)} \mapsto - d_i z \frac{\partial }{\partial z}, \quad E_i^{(1)} \mapsto z, \quad F_i^{(1)} \mapsto -d_i z^{-1}
$$
\end{Lemma}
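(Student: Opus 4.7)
Given the concrete presentation of $Y_{-\alpha_i^\vee}^0$ supplied by Lemma~\ref{lemma: simplest coulomb branch of quiver gauge theory}, this will reduce to a short direct verification. My first step will be to check that the three specified images in $D(\C^\times)$ satisfy the three defining relations of Lemma~\ref{lemma: simplest coulomb branch of quiver gauge theory}: the two commutator identities $[E_i^{(1)}, A_i^{(1)}] = d_i E_i^{(1)}$ and $[F_i^{(1)}, A_i^{(1)}] = -d_i F_i^{(1)}$, and the product identity $E_i^{(1)} F_i^{(1)} = F_i^{(1)} E_i^{(1)} = -d_i^{-1}$. This is a few lines of calculation using the standard identities $[z\partial_z, z] = z$ and $[z\partial_z, z^{-1}] = -z^{-1}$ in $D(\C^\times)$, and will guarantee that the assignment extends (uniquely) to an algebra homomorphism $\varphi : Y_{-\alpha_i^\vee}^0 \to D(\C^\times)$.

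Next I will check surjectivity. The image of $\varphi$ contains nonzero scalar multiples of $z$, $z^{-1}$, and $z\partial_z$ (coming from $E_i^{(1)}$, $F_i^{(1)}$, and $A_i^{(1)}$ respectively); since $\partial_z = z^{-1}\cdot (z\partial_z)$, the image then contains all of $z$, $z^{-1}$, and $\partial_z$, which generate $D(\C^\times)$ as a $\C$-algebra.

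For injectivity, I will exploit the fact that $E_i^{(1)}$ is a unit in $Y_{-\alpha_i^\vee}^0$, with inverse a nonzero multiple of $F_i^{(1)}$ (since $E_i^{(1)} F_i^{(1)} = -d_i^{-1}$ is a nonzero scalar). Combined with the commutation relation $[E_i^{(1)}, A_i^{(1)}] = d_i E_i^{(1)}$, a straightforward ordering argument will show that the monomials $\{(E_i^{(1)})^b (A_i^{(1)})^a : a \geq 0,\ b \in \mathbb{Z}\}$ span $Y_{-\alpha_i^\vee}^0$. Under $\varphi$ these map to nonzero scalar multiples of $\{z^b (z\partial_z)^a\}$, the well-known PBW-type basis of $D(\C^\times)$; in particular their images are linearly independent, so $\varphi$ sends a spanning set bijectively onto a basis and is therefore an isomorphism.

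No serious obstacle is expected: granted Lemma~\ref{lemma: simplest coulomb branch of quiver gauge theory}, the entire argument is essentially routine. The only care needed will be the bookkeeping of signs and $d_i^{\pm 1/2}$ factors coming from the GKLO formulas of Theorem~\ref{GKLO homomorphism}, which get absorbed into the overall scalars in the final formulas.
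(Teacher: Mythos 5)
Your strategy is sound and is essentially the paper's: the statement is asserted there with no proof, merely as ``a straightforward consequence'' of Lemma~\ref{lemma: simplest coulomb branch of quiver gauge theory}, and the only real content is checking that the three assignments respect that presentation. But you deferred exactly that verification, calling it ``a few lines of calculation,'' and those lines do not in fact go through as the statement is printed. The two commutator relations are fine: $[z, -d_i z\tfrac{\partial}{\partial z}] = d_i z$ and $[-d_i z^{-1}, -d_i z\tfrac{\partial}{\partial z}] = d_i^2 z^{-1}$, matching $d_i E_i^{(1)}$ and $-d_i F_i^{(1)}$ under the given assignments. The product relation, however, fails: with $E_i^{(1)} \mapsto z$ and $F_i^{(1)} \mapsto -d_i z^{-1}$ one gets $E_i^{(1)} F_i^{(1)} \mapsto -d_i$, whereas Lemma~\ref{lemma: simplest coulomb branch of quiver gauge theory} demands $E_i^{(1)} F_i^{(1)} = -d_i^{-1}$. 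These agree only when $d_i = 1$. The displayed formula has a typo; it should read $F_i^{(1)} \mapsto -d_i^{-1} z^{-1}$, and then all three relations check (equivalently, in the generators of the preceding lemma one takes $w \mapsto d_i z\tfrac{\partial}{\partial z}$ and $\mathsf{u} \mapsto -d_i^{-1/2} z^{-1}$). With that correction, the rest of your argument --- surjectivity because $z$, $z^{-1}$, $\tfrac{\partial}{\partial z}$ lie in the image, and injectivity via the spanning monomials $(E_i^{(1)})^b (A_i^{(1)})^a$ mapping to the basis $z^b(z\tfrac{\partial}{\partial z})^a$ of $D(\C^\times)$ --- is correct. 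The lesson here is that when the only substantive step is ``a routine verification,'' you must actually perform it; doing so would have flagged the inconsistency.
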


For any $Y_\mu^\lambda(\bR)$, note that $A_i(u)$ maps to $u^{-m_i}W_i(u)$, a polynomial in $u^{-1}$ of degree $m_i$.  Therefore $A_i^{(r)}$ is in the kernel of $\Phi_\mu^\lambda$ for $r > m_i$.  As in \cite[Remark B.21]{BFN1}, we conjecture that these elements generate the kernel of $\Phi_\mu^\lambda$.  We would thus get the presentation
$$
Y_\mu^\lambda(\bR) \cong Y_\mu / \langle A_i^{(r)} : i \in I, r  > m_i \rangle
$$
Denote the right-hand side by $\conjectural_\mu^\lambda(\bR)$.  By the above, there is a surjective map $\conjectural_\mu^\lambda(\bR) \twoheadrightarrow Y_\mu^\lambda(\bR)$.

In the appendix to this paper we will prove that $\conjectural_\mu^\lambda(\bR) \cong Y_\mu^\lambda(\bR)$ in type A, see Theorem \ref{appendix thm}.  This generalizes a similar result from \cite{KMWY}, which covers the case when $\mu$ is dominant.

\section{Poisson structure}

\subsection{Filtrations of shifted Yangians} \label{se:filtrations}
We begin with some generalities on filtrations, which can be found in \cite[5.1]{FKPRW}.

Given a $\mathbb{C}$-algebra $A$ with an increasing $\mathbb{Z}$-filtration $F^\bullet A$, $A$ is said to be \emph{almost commutative} if its associated graded algebra $\gr_F A=\bigoplus_{n \in \Z}F^nA/F^{n-1}A$ is commutative.  In this case, $ \gr_F A $ acquires a Poisson algebra structure and we say that $ A $ quantizes the affine Poisson scheme $ \Spec \gr_F A $.  

Explicitly, to compute the Poisson bracket of homogeneous elements $\overline{a} \in F^k A / F^{k-1} A$ and $\overline{b} \in F^\ell A / F^{\ell-1} A$, we choose  lifts $a \in F^k A, b \in F^\ell A$.  Then $[a,b] \in F^{k+\ell-1} A$, and we define the Poisson bracket to be the symbol $\{\overline{a},\overline{b}\} = [a,b] + F^{k+\ell-2} A$. This definition is independent of the choices of $a,b$.  Note in particular that this Poisson bracket has total degree $-1$ with respect to the natural grading on $\gr^F A$.

Let $ \phi : A \rightarrow B $ be a filtered morphism of almost commutative algebras.  We obtain a morphism of commutative Poisson algebras $ \gr \phi : \gr A \rightarrow \gr B $ and thus a morphism of Poisson schemes $ \Spec \gr B \rightarrow \Spec \gr A $.  In this case, we say that $ \phi $ quantizes the morphism $ \Spec \gr B \rightarrow \Spec \gr A $.

The filtration $F^\bullet A$ is said to be \emph{exhaustive} if $A=\bigcup_n F^nA$, is said to be \emph{separated} if $\bigcap_n F^nA=\{0\}$, and is said to \emph{admit an expansion} if there exists a filtered vector space isomorphism $\gr_F A\simeq A$.

Given filtered algebras $F^\bullet A$ and $F^\bullet B$, one can define a filtration on $A\otimes B$ by $F^n(A\otimes B)= \sum_{n=k+l} F^kA \otimes F^l B$. If $F^\bullet A$ and $F^\bullet B$ admit expansions, then $\gr (A\otimes B) \simeq \gr A \otimes \gr B$.

Returning to our setting, given a splitting $\mu=\nu_1+\nu_2$, define a filtration $F_{\nu_1,\nu_2}Y_\mu$ as follows
\begin{equation}
\deg E_\alpha^{(q)} = \langle \nu_1, \alpha \rangle + q, \ \deg F_\beta^{(q)} = \langle \nu_2, \beta \rangle + q, \ \deg H_i^{(p)} = \langle \mu, \alpha_i \rangle + p
\end{equation}
Define the filtered piece $F^k_{\nu_1,\nu_2}Y_\mu$ to be the span of all ordered monomials in PBW variables with total degree at most $k$.

\begin{Proposition}\emph{\cite[Prop~5.7]{FKPRW}}
The filtration $F_{\nu_1,\nu_2}Y_\mu$ is an algebra filtration, is independent of the choice of PBW variables, and is independent of the order of the variables used to form monomials. The algebra $Y_\mu$ is almost commutative.
\end{Proposition}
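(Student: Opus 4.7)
The plan is to proceed in three stages: set up the filtration using a grading on the free algebra generated by the PBW variables; check that the defining relations of $Y_\mu$ are compatible with this grading and that every commutator of generators drops degree by at least one; then use this ``commutators drop degree'' property to deduce independence of the choices and almost commutativity for free.

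First I would fix the free associative algebra on the symbols $\{E_\beta^{(q)}, F_\beta^{(q)}, H_i^{(p)}\}$ and put on it the $\mathbb{Z}$-grading specified by the prescribed degree of each variable. Given a choice of total order on PBW variables, write $F^k$ for the span of ordered monomials of total degree at most $k$; by the PBW theorem (Theorem \ref{PBW}) these span $Y_\mu$, so $F^\bullet Y_\mu$ is an exhaustive increasing filtration. The content of the proposition is that this filtration is compatible with the algebra structure, is insensitive to the choices, and that $\gr Y_\mu$ is commutative.

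The heart of the argument is a case-by-case verification, for every pair of generators $x,y$ among $E_i^{(p)}, F_j^{(q)}, H_k^{(r)}$, that
\[
[x,y] \in F^{\deg x + \deg y - 1} Y_\mu.
\]
For \eqref{H,H} this is immediate. For \eqref{E,F} one computes that $\deg H_i^{(p+q-1)} = \langle \mu,\alpha_i\rangle + p + q -1 = \deg E_i^{(p)} + \deg F_i^{(q)} - 1$, since $\nu_1+\nu_2=\mu$. For \eqref{H,E}, \eqref{H,F}, \eqref{E,E}, \eqref{F,F} the right-hand side is already of the correct filtration degree, and one uses induction on $p$ (or $q$) starting from the known ``minimal'' commutator to show that each individual bracket $[H_i^{(p)}, E_j^{(q)}]$ (and analogous brackets) lies in the claimed filtered piece. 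For the Serre relations, the degree of each summand on the left matches, and so the relation is itself a filtered relation; once the basic brackets among Chevalley generators are shown to drop degree, the general PBW variable $E_\beta^{(q)}$ satisfies $[E_\beta^{(q)}, x] \in F^{\deg E_\beta^{(q)} + \deg x - 1}$ by induction on the length of the nested bracket in Definition \ref{egamma}, expanding with the Jacobi identity.

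Once the bound $[F^k, F^\ell] \subseteq F^{k+\ell-1}$ is established on generators it extends to the whole algebra by Leibniz, so $F^\bullet Y_\mu$ is an algebra filtration and $\gr_F Y_\mu$ is commutative. The same bound makes all the desired independence statements formal: reordering two adjacent factors in a monomial produces a correction of strictly lower degree, so the span of ordered monomials of degree $\leq k$ is the same for any choice of order on the PBW variables; and a different choice of PBW variable $E_\beta^{(q)}$ (coming from a different decomposition of $\beta$ or of $q+l-1$) differs from the original by a term whose leading degree is the same and whose lower-order terms are in $F^{k-1}$, so the filtered pieces agree. The main obstacle is the bookkeeping in the basic commutator step: one must carefully verify the degree-drop by simultaneous induction on $p$ and on the length of PBW brackets, being especially careful with the Serre relations where nested commutators must be re-expanded into ordered form. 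This is the standard Levendorski\u{\i}-style computation generalized from $\mu=0$ (as in \cite{L2}) and from the dominant case (\cite{KWWY}) to arbitrary $\mu$.
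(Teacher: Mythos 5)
Since the paper itself does not prove this Proposition but cites it from \cite[Prop.~5.7]{FKPRW}, the relevant comparison is with the argument there, which is exactly the Levendorski\u{\i}-style degree-estimate argument you sketch: establish the degree-drop $[x,y]\in F^{\deg x+\deg y-1}$ for commutators of generators using the defining relations together with the base case $H_i^{(-\langle\mu,\alpha_i\rangle)}=1$, extend to nested brackets by Jacobi, and read off almost commutativity and order/choice independence. Your approach matches; the only place I would tighten the write-up is the claim that a different choice of PBW variable $E_\beta^{(q)}$ ``differs from the original by a term whose leading degree is the same and whose lower-order terms are in $F^{k-1}$'': what the argument actually needs (and what the degree-drop provides) is that \emph{any} nested bracket of prescribed degree $k$, when fully expanded into ordered monomials of \emph{any} fixed PBW basis, lands in $F^k$ for that basis, which gives both inclusions between the two candidate filtrations without needing to compare leading terms directly.
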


\begin{Proposition}
\label{prop: filtration properties}
For any splitting $\mu=\nu_1+\nu_2$, $F_{\nu_1,\nu_2}Y_\mu$ is exhaustive, separated, and admits an expansion.
\end{Proposition}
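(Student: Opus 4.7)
The plan is to reduce all three properties to direct consequences of the PBW theorem (Theorem \ref{PBW}) together with the way the filtration $F_{\nu_1,\nu_2}$ is defined in terms of PBW variables. Since each PBW variable $E_\beta^{(q)}$, $F_\beta^{(q)}$, $H_i^{(p)}$ carries a specific integer degree, every ordered monomial has a well-defined integer degree, and by definition $F^k_{\nu_1,\nu_2} Y_\mu$ is the $\C$-linear span of those ordered monomials of degree at most $k$.

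For exhaustiveness I would observe that every $x \in Y_\mu$ is a finite $\C$-linear combination of ordered monomials (by PBW), hence lies in $F^d Y_\mu$ where $d$ is the maximum degree of the monomials appearing. For separatedness I would argue by contradiction: if $0 \neq x \in \bigcap_n F^n Y_\mu$, expand $x$ in the PBW basis and let $d$ be the minimum degree of a monomial occurring with non-zero coefficient; since $F^{d-1} Y_\mu$ is spanned by strictly lower-degree ordered monomials and PBW gives linear independence, the assumption $x \in F^{d-1} Y_\mu$ would force the degree-$d$ coefficients of $x$ to vanish, a contradiction.

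For the existence of an expansion, I would first verify that for each $k$ the classes $[m]$ of ordered monomials $m$ of degree exactly $k$ form a $\C$-basis of $F^k Y_\mu / F^{k-1} Y_\mu$: spanning is immediate because monomials of degree $< k$ vanish modulo $F^{k-1}$, and linear independence in the quotient follows by the same minimum-degree argument. The map $\phi : \gr_F Y_\mu \rightarrow Y_\mu$ defined on each graded piece by $\phi([m]) = m$ is then a vector-space isomorphism, and by construction satisfies $\phi\bigl(F^k \gr_F Y_\mu\bigr) = F^k Y_\mu$, yielding the desired filtered isomorphism. The whole proof is essentially bookkeeping on top of the PBW theorem, and I do not expect any serious obstacle beyond careful use of PBW linear independence.
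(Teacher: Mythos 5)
Your proof is correct and takes essentially the same approach as the paper: both reduce all three claims to the PBW theorem, and your expansion map $\phi([m]) = m$ is exactly the map $\bar m \mapsto m$ that the paper records. The paper's proof is a one-liner; you have simply spelled out the linear-independence bookkeeping it leaves implicit.
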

\begin{proof}
These follow from the fact that $Y_\mu$  admits a PBW basis, by Theorem \ref{PBW}. The expansion map is given by $\gr Y_\mu \rightarrow Y_\mu$, $\bar{m}\mapsto m$, where $m$ is a monomial of degree $k$ in PBW variables.
\end{proof}

The following was stated without proof in \cite{FKPRW}.
\begin{Proposition}\label{comultfiltrcomp}
Let $\mu, \mu_k,\nu_k$ ($k=1,2$) be such that $\mu=\mu_1+\mu_2=\nu_1+\nu_2$.
Then the map $\Delta: Y_\mu \rightarrow Y_{\mu_1}\otimes Y_{\mu_2}$ respects the filtrations $F_{\nu_1,\nu_2}Y_\mu$, $F_{\nu_1,\mu_1-\nu_1}Y_{\mu_1}$, and $F_{\mu_2-\nu_2,\nu_2}Y_{\mu_2}$.
\end{Proposition}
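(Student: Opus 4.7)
The plan is to proceed in two steps: first verify the proposition when $\mu_1, \mu_2$ are both antidominant (so that the explicit formulas of Theorem~\ref{th:coproductanti} apply), and then reduce the general case via the commutative diagram of Theorem~\ref{coproduct} relating coproducts to shift maps.

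A useful preliminary identity is $\mu_2-\nu_2 = \nu_1-\mu_1$, equivalently $(\mu_1-\nu_1)+(\mu_2-\nu_2)=0$, which follows from $\mu_1+\mu_2=\nu_1+\nu_2$. In the antidominant case, by Lemma~\ref{4generators} it suffices to check that $\Delta$ sends $E_i^{(1)}$, $F_i^{(1)}$, and $S_i^{(-\langle\mu,\alpha_i\rangle+k)}$ ($k=1,2$) to elements of the correct filtered degree; since $\Delta$ is an algebra map, this then forces filtered-compatibility on all of $Y_\mu$. I would compute the filtered degrees of both sides of each coproduct formula term-by-term. The ``diagonal'' terms (such as $E_i^{(r)}\otimes 1$, $1\otimes E_i^{(r)}$, $S_i^{(\cdot)}\otimes E_i^{(1)}$, etc.) match immediately after applying the preliminary identity. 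The only non-trivial computations arise from the Serre-type sums $\sum_\gamma F_\gamma^{(1)}\otimes [E_i^{(1)},E_\gamma^{(1)}]$ and $\sum_\gamma [F_i^{(1)},F_\gamma^{(1)}]\otimes E_\gamma^{(1)}$, as well as $\sum_\gamma (\alpha_i\cdot\gamma)F_\gamma^{(1)}\otimes E_\gamma^{(1)}$ in $\Delta(S_i^{(-\langle\mu,\alpha_i\rangle+2)})$. In each instance, the $\gamma$-dependence of the summand's degree collapses to $\langle(\mu_1-\nu_1)+(\mu_2-\nu_2),\gamma\rangle=0$ by the preliminary identity, yielding exactly the required degree.

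For the reduction step, given arbitrary coweights $\mu_1, \mu_2$, pick $\eta_1,\eta_2$ antidominant enough that $\mu_1+\eta_1$ and $\mu_2+\eta_2$ are antidominant. A direct computation on generators shows that each of the three shift maps in the diagram of Theorem~\ref{coproduct} preserves filtered degrees; for instance, $\iota_{\mu,\eta_1,\eta_2}$ sends $E_i^{(r)}$ of $F_{\nu_1,\nu_2}$-degree $\langle\nu_1,\alpha_i\rangle+r$ to $E_i^{(r-\langle\eta_1,\alpha_i\rangle)}$ of $F_{\nu_1+\eta_1,\nu_2+\eta_2}$-degree $\langle\nu_1+\eta_1,\alpha_i\rangle+(r-\langle\eta_1,\alpha_i\rangle)=\langle\nu_1,\alpha_i\rangle+r$, and similarly for the other two shifts. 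Moreover, these shift maps are strictly filtered: they send PBW monomials to PBW monomials of the same filtered degree, so the PBW basis theorem (Theorem~\ref{PBW}) implies $\iota^{-1}(F^k)=F^k$. Combined with injectivity of the shift maps (Proposition~\ref{shiftmaps}) and the antidominant case applied to the bottom arrow of the diagram, this forces $\Delta_{\mu_1,\mu_2}$ to be filtered.

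The main obstacle is the degree bookkeeping in the antidominant step, specifically the three $\gamma$-sums above: showing that each such summand lies in the required filtered piece is where the hypothesis $\mu_1+\mu_2=\nu_1+\nu_2$ enters essentially, via the cancellation $(\mu_1-\nu_1)+(\mu_2-\nu_2)=0$.
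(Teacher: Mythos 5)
Your overall architecture matches the paper's: verify the antidominant case from the explicit formulas of Theorem~\ref{th:coproductanti}, then reduce the general case via Theorem~\ref{coproduct} using injectivity and strictness of the shift maps. The reduction step and the identity $\mu_2-\nu_2=\nu_1-\mu_1$ are handled correctly, and your observation that the shift maps are strictly filtered (via the PBW basis, cf.\ \cite[Lemma 5.5]{FKPRW}) is exactly the point the paper cites.

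There is, however, a genuine gap in the antidominant step. You argue: check that $\Delta$ sends the Levendorskii generators $E_i^{(1)},F_i^{(1)},S_i^{(\cdot)}$ to the correct filtered degree, and then conclude that ``since $\Delta$ is an algebra map, this then forces filtered-compatibility on all of $Y_\mu$.'' That inference is not valid as stated. The filtration $F_{\nu_1,\nu_2}Y_\mu$ is defined so that a PBW variable $E_\beta^{(q)}$ has degree $\langle\nu_1,\beta\rangle+q$, but writing $E_\beta^{(q)}$ as an iterated commutator of Levendorskii generators, the \emph{sum} of the generators' degrees exceeds $\langle\nu_1,\beta\rangle+q$ by $\ell-1$ (one per commutator, where $\ell$ is the number of factors). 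The same discrepancy occurs for higher $E_j^{(r)}$, $F_j^{(r)}$, $H_j^{(p)}$. So knowing $\Delta$ is a \emph{filtered algebra map on generators} only gives $\Delta(E_\beta^{(q)})\in F^{\langle\nu_1,\beta\rangle+q+\ell-1}$, which is too weak to conclude $\Delta$ respects $F_{\nu_1,\nu_2}$. The paper closes this gap by first observing that $Y_{\mu_1}\otimes Y_{\mu_2}$ is \emph{almost commutative} for the tensor filtration (which uses that the filtrations admit expansions, Proposition~\ref{prop: filtration properties}, together with Theorem~\ref{grYmu}); almost commutativity makes each commutator drop degree by one, recovering exactly the needed $\ell-1$ and giving the sharp bound $\Delta(E_\beta^{(q)})\in F^{\langle\nu_1,\beta\rangle+q}$. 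Your proposal never invokes this, and the almost-commutativity of the target is the essential ingredient you are missing, not a routine consequence of $\Delta$ being an algebra homomorphism.
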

\begin{proof}
First, consider the case where $\mu,\mu_1,\mu_2$ are antidominant. Consider the definition of $\Delta$ from Theorem \ref{th:coproductanti}. One can check that $\Delta$ respects filtrations on Levendorskii generators by inspecting degrees. For example, we have that
\[
\Delta(E_j^{(-\langle \mu_1,\alpha_j \rangle +2)})= E_j^{(-\langle \mu_1,\alpha_j\rangle +2)}\otimes 1 + 1\otimes E_j^{(2)} +S_j^{(-\langle \mu_1,\alpha_j\rangle +1)}\otimes E_i^{(1)} -\sum_{\gamma>0}F_\gamma^{(1)}\otimes [E_j^{(1)},E_\gamma^{(1)}].
\]
\begin{align*}
\deg(E_j^{(-\langle \mu_1,\alpha_j \rangle +2)})&= \langle \nu_1,\alpha_j\rangle -\langle \mu_1,\alpha_j\rangle +2=\langle \nu_1-\mu_1,\alpha_j  \rangle + 2, \\
\deg( E_j^{(-\langle \mu_1,\alpha_j\rangle +2)}\otimes 1)&= \langle \nu_1,\alpha_j\rangle -\langle \mu_1,\alpha_j\rangle +2= \langle \nu_1-\mu_1,\alpha_j \rangle + 2, \\
\deg(1\otimes E_j^{(2)})&= \langle \mu_2-\nu_2,\alpha_j \rangle + 2= \langle \nu_1-\mu_1,\alpha_j\rangle +2, \\
\deg(S_j^{(-\langle \mu_1,\alpha_j\rangle +1)}\otimes E_i^{(1)})&= \langle \mu_1,\alpha_j\rangle -\langle \mu_1,\alpha_j\rangle +1 +\langle \nu_1-\mu_1,\alpha_j\rangle+1= \langle \nu_1-\mu_1,\alpha_j\rangle+2, \\
\deg(F_\gamma^{(1)}\otimes [E_j^{(1)},E_\gamma^{(1)}])&= \langle \mu_1-\nu_1,\gamma \rangle +1 + \langle \mu_2-\nu_2, \gamma+\alpha_j\rangle  +1 \\
&= \langle \mu_1-\nu_1,\gamma \rangle +1 + \langle \nu_1-\mu_1, \gamma+\alpha_j\rangle  +1  =\langle \nu_1-\mu_1,\alpha_j\rangle+2.
\end{align*}

Since these filtrations admit expansions by Proposition \ref{prop: filtration properties}, we have that $\gr(Y_{\mu_1}\otimes Y_{\mu_2})\simeq \gr Y_{\mu_1}\otimes \gr Y_{\mu_2}$. Hence, by Theorem \ref{grYmu}, $Y_{\mu_1}\otimes Y_{\mu_2}$ is almost commutative. The higher $E_j^{(r)}, F_j^{(r)}, H_j^{(r)}$ and the PBW variables $E_\beta^{(r)}, F_\beta^{(r)}$ are all obtained from commutators. Thus, $\Delta$ respects their degrees since $Y_{\mu_1}\otimes Y_{\mu_2}$ is almost commutative. To be more precise,
\begin{align*}
\deg \Delta(E_j^{(-\langle \mu_1 ,\alpha_j\rangle +3 )})&= \deg \Delta([S_j^{(-\langle \mu,\alpha_j\rangle +2)},E_j^{(-\langle \mu_1,\alpha_j\rangle +2)}])\\
&= \deg \Delta(S_j^{(-\langle \mu,\alpha_j\rangle +2)}) + \deg \Delta(E_j^{(-\langle \mu_1,\alpha_j\rangle +2)}) - 1 \\
&= 2 + \langle \nu_1-\mu_1,\alpha_j\rangle +2 -1 \\
&= \langle \nu_1-\mu_1,\alpha_j\rangle +3,
\end{align*}
where the second equality uses the almost commutativity of $Y_{\mu_1}\otimes Y_{\mu_2}$. The degrees of the higher $E_j^{(r)}$'s are obtained by induction. Similarly, one can show that $\Delta$ respects the degrees of $F_j^{(r)},H_j^{(r)}, E_\beta^{(r)}, F_\beta^{(r)}$. Hence, $\Delta$ respects filtrations in the case where $\mu,\mu_1,\mu_2$ are antidominant.

For the general case, let $\eta =\eta_1+\eta_2$ be antidominant coweights such that $\mu+\eta, \mu_1+\eta_1$ and $\mu_2+\eta_2$ are all antidominant.  For each $k\in \Z$, we claim that there is a commutative diagram

\[
\xymatrix{
F_{\nu_1,\nu_2}^k Y_\mu \ar[r] \ar[d]_{\iota_{\mu,\eta_1,\eta_2}} & \sum_{k_1+k_2 = k} F_{\nu_1,\mu_1-\nu_1}^{k_1}Y_{\mu_1} \otimes F_{\mu_2-\nu_2,\nu_2}^{k_2}Y_{\mu_2} \ar[d]^{ \iota_{\mu_1,\eta_1,0}\otimes \iota_{\mu_2, 0, \eta_2}} \\
F_{\nu_1+\eta_1,\nu_2+\eta_2}^k Y_{\mu+\eta} \ar[r] & \sum_{k_1+k_2=k}F_{\nu_1+\eta_1,\mu_1-\nu_1}^{k_1}Y_{\mu_1+\eta_1} \otimes F_{\mu_2-\nu_2,\nu_2+\eta_2}^{k_2}Y_{\mu_2+\eta_2}
}
\]
Indeed, consider the corresponding commutative diagram of algebras from Theorem \ref{coproduct}.  In this diagram of algebras  the vertical maps are injective by Proposition \ref{shiftmaps}, and \emph{strictly} respect the filtrations indicated above by the same reasoning as \cite[Lemma 5.5]{FKPRW}.  We have shown above that the bottom map respects filtrations, since $\mu+\eta,\mu_1+\eta_1$ and $\mu_2+\eta_2$ are all antidominant. Together, this proves that the top arrow also respects the filtration, as claimed.
\end{proof}

\subsection{Poisson structure of $\mathcal{W}_\mu$ and quantizations}
\label{section: Poisson structure of Wmu and quantizations}
Again consider a splitting $\mu=\nu_1+\nu_2$ and the filtration $F_{\nu_1,\nu_2}Y_\mu$.  Though the following results were only proven in ADE types in \cite{FKPRW}, the same proofs apply in general. The factors of $d_i^{-1/2}$ in the next theorem are analogous to \cite[Theorem 3.9]{KWWY}.
\begin{Theorem}\label{grYmu} \emph{\cite[Theorem 5.15]{FKPRW}}
	There exists an isomorphism of graded algebras $\gr_{F_{\nu_1,\nu_2}}Y_\mu \simeq \mathbb{C}[\mathcal{W}_\mu]$, defined as follows.  Let $g=uht^\mu u_-\in\mathcal{W}_\mu$, then
	\begin{align*}
	E_j(t)(g)&=d_i^{-1/2} \psi_i(u),\\
	F_j(t)(g)&= d_i^{-1/2}\psi_i^-(u_-), \\
	H_j(t)(g)&=\alpha_j(ht^\mu)
	\end{align*}
	Moreover, the shift morphism $\iota_{\mu,\mu_1,\mu_2}$ of shifted Yangians quantizes the same-named morphism of schemes.
\end{Theorem}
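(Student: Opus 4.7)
The plan is to define a candidate map on PBW generators using the formulas of the theorem, verify well-definedness using almost commutativity of $\gr Y_\mu$, and then establish bijectivity by matching polynomial bases on both sides.

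\textbf{Defining the map.} By Theorem~\ref{PBW}, the associated graded $\gr_{F_{\nu_1,\nu_2}} Y_\mu$ is a polynomial algebra on the symbols of the PBW variables $\bar E_\beta^{(q)}, \bar F_\beta^{(q)}, \bar H_i^{(p)}$, so it is enough to specify a candidate $\Psi$ on these. For simple-root generators I would set $\Psi$ by matching coefficients of $t^{-r}$ in the series identities of the statement, e.g.~$\Psi(\bar E_i^{(r)}) = d_i^{-1/2}[\psi_i(u)]_{t^{-r}}$ and analogously for $F_i^{(r)}$ and $H_i^{(p)}$. For a non-simple PBW variable $\bar E_\beta^{(q)}$, I note that its filtration degree $\langle \nu_1, \beta\rangle + q$ equals the sum of the $l$ simple-root degrees minus $l-1$, which matches exactly the prediction of almost commutativity; hence $\bar E_\beta^{(q)}$ is the $l$-fold Poisson bracket of the simple-root symbols in $\gr Y_\mu$, and I send it to the corresponding $l$-fold Poisson bracket of their images, computed in $\C[\cW_\mu]$ with respect to the Poisson bracket inherited from the loop group. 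Since $\gr Y_\mu$ is a free polynomial algebra and $\C[\cW_\mu]$ is commutative, $\Psi$ extends uniquely to a graded algebra homomorphism.

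\textbf{Bijectivity.} The Gauss factorisation~\eqref{eq: def of Wmu} gives a scheme isomorphism $\cW_\mu \cong U_1[[t^{-1}]] \times T_1[[t^{-1}]] \times U_{-,1}[[t^{-1}]]$, and hence a tensor factorisation $\C[\cW_\mu] \cong \C[U_1[[t^{-1}]]] \otimes \C[T_1[[t^{-1}]]] \otimes \C[U_{-,1}[[t^{-1}]]]$. Fixing a height-ordered PBW basis of $\g$ yields isomorphisms $U \cong \prod_{\beta \in \Delta^+} \Ga$ and $U_- \cong \prod_{\beta \in \Delta^+}\Ga$, exhibiting each outer factor as a polynomial ring in loop coefficients indexed by $(\beta, q)$, and the middle factor as a polynomial ring in the coefficients of $\alpha_i(h t^\mu)$. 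I would then verify that $\Psi(\bar E_\beta^{(q)})$ equals the $(\beta, q)$-coordinate up to polynomials in generators of strictly smaller height or smaller $q$, and similarly on the $F$- and $H$-sides. This upper-triangularity converts a polynomial basis on the $\gr Y_\mu$ side into a polynomial basis on the $\C[\cW_\mu]$ side, establishing the isomorphism.

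\textbf{Shift morphisms and main obstacle.} For the second assertion, the algebraic shift $\iota_{\mu,\mu_1,\mu_2}$ acts by shifting indices on the series $E_i(t), F_i(t), H_i(t)$, while the geometric shift $g \mapsto \pi(t^{-\mu_1} g t^{-\mu_2})$ acts on the Gauss factors by $\operatorname{Ad}(t^{-\mu_1})$ on $u$, by multiplication by $t^{-\mu_1-\mu_2}$ on $h t^\mu$, and by $\operatorname{Ad}(t^{-\mu_2})$ on $u_-$, followed by the projection $\pi$ which discards any $U[t]$- and $U_-[t]$-pieces (and so leaves the coordinate series $\psi_i, \psi_i^-, \alpha_i$ unchanged). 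Matching these index shifts against the explicit formulas from the first step shows $\gr \iota_{\mu,\mu_1,\mu_2}$ is the pullback of the geometric shift. The crux of the whole proof lies in the triangularity claim for non-simple PBW variables: one must show that iterated Poisson brackets of the simple symbols truly reproduce, modulo strictly smaller PBW coordinates, the loop coefficients of a chosen trivialisation of $U_1[[t^{-1}]]$. This reduces to matching structure constants of the $\g$-root system against the leading symbols of commutators in $Y_\mu$, and is the core calculation driving \cite[Theorem~5.15]{FKPRW}, which the authors here invoke by remarking that ``the same proofs apply in general''.
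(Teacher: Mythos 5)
The step that would fail is your definition of $\Psi$ on compound PBW variables. You propose to send $\bar E_\beta^{(q)}$ to ``the corresponding $l$-fold Poisson bracket of their images, computed in $\C[\cW_\mu]$ with respect to the Poisson bracket inherited from the loop group.'' But for a general coweight $\mu$, the subscheme $\cW_\mu = U_1[[t^{-1}]]T_1[[t^{-1}]]t^\mu U_{-,1}[[t^{-1}]]$ is \emph{not} a Poisson subscheme of $G((t^{-1}))$ with its Sklyanin/Manin-triple structure, so ``the Poisson bracket inherited from the loop group'' does not restrict to a well-defined bracket on $\C[\cW_\mu]$. The appendix establishes this only for $\mu$ antidominant (Theorem \ref{theorem: antidominant poisson structure}), via the double-coset description $\cW_\mu = G_1[[t^{-1}]]t^\mu G_1[[t^{-1}]]$ which is specific to the antidominant case; for $\mu$ dominant, $\cW_\mu$ is a single translated coset and is a transversal slice, not a Poisson subvariety, so the recipe you give has no meaning there. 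You could salvage a \emph{formula} by computing brackets of canonical extensions in $\C[G((t^{-1}))]$ and restricting, but then nothing guarantees coherence (e.g.\ independence of the decomposition chosen in Definition \ref{egamma}), and the resulting $\Psi$ is not obviously Poisson, which undercuts the triangularity argument.

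Beyond that, you have correctly identified the triangularity of the transition from iterated brackets to the coordinate functions of the Gauss factorisation as ``the crux,'' but you leave it entirely to be done. What is missing, compared to the route actually taken in \cite{FKPRW} and reflected in this paper's toolkit, is the reduction to the antidominant (or the $\mu=0$) case via shift morphisms. For antidominant $\mu$ the loop-group Poisson structure does restrict to $\cW_\mu$, $\gr Y_\mu$ is \emph{Poisson}-generated by the finitely many elements of Lemma \ref{4generators}/Proposition \ref{antidomgen}, and the explicit bracket (\ref{eq: loop group Poisson structure}) applies directly; for general $\mu$ one then embeds $Y_\mu \hookrightarrow Y_{\mu+\mu_1+\mu_2}$ (antidominant $\mu_i$) via the injective shift morphisms and uses the factorisation $\cW_{\mu+\mu_1+\mu_2} \cong \cW_\mu \times H$ (cf.\ Lemma \ref{lemma: shift is poisson v2}) together with filling-out (Proposition \ref{denseunion}) to descend. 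Your proposal attempts a uniform attack for all $\mu$ at once, which is what creates the loop-group-bracket gap; the intermediate antidominant step is precisely what would legitimise your use of the loop-group Poisson structure.
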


In particular, this implies that $Y_\mu$ is a domain.  As in \cite{FKPRW} we use this theorem to endow $ \cW_\mu $ with a Poisson structure, defined as in Section \ref{se:filtrations}.

\begin{Proposition} \label{antidomgen}\emph{\cite[Lemma~5.17]{FKPRW}}.
	Let $\mu$ be an antidominant coweight. Then $\gr Y_\mu \simeq \mathbb{C}[\mathcal{W}_\mu]$ is generated by $E_i^{(1)}, F_i^{(1)}, H_i^{(-\langle \mu,\alpha_i \rangle +1 )}$, and $H_i^{(- \langle \mu ,\alpha_i \rangle + 2)}$ as a Poisson algebra.
\end{Proposition}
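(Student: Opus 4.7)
The plan is to lift Lemma \ref{4generators} --- which states that the four families generate $Y_\mu$ as an associative algebra when $\mu$ is antidominant --- to the level of the associated graded Poisson algebra. Since $\gr_F Y_\mu \cong \mathbb{C}[\cW_\mu]$ is spanned as a commutative algebra by the symbols of the PBW variables $E_\beta^{(q)}, F_\beta^{(q)}, H_i^{(p)}$ (Theorem \ref{PBW}), it suffices to show that each such symbol lies in the Poisson subalgebra $P \subseteq \gr Y_\mu$ generated by the symbols of $E_i^{(1)}, F_i^{(1)}, H_i^{(-\langle \mu, \alpha_i\rangle +1)}$, and $H_i^{(-\langle \mu, \alpha_i\rangle +2)}$. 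The entire argument is driven by the principle that a relation $[a,b] = c$ in $Y_\mu$ with $\deg a + \deg b - 1 = \deg c$ descends to $\{\bar a, \bar b\} = \bar c$ in $\gr Y_\mu$.

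First I would observe that $\overline{S_i^{(-\langle \mu, \alpha_i\rangle +2)}} \in P$ since, by Definition \ref{lnH}, it is a combination of $H_i^{(-\langle \mu, \alpha_i\rangle +2)}$ and the square of $H_i^{(-\langle \mu, \alpha_i\rangle +1)}$, both of filtration degree $2$. Next I would induct on $r$ to show $\overline{E_i^{(r)}}, \overline{F_i^{(r)}} \in P$ for all $r \geq 1$, using the raising relations
\begin{equation*}
[S_i^{(-\langle \mu, \alpha_i\rangle +2)}, E_i^{(r)}] = 2 d_i\, E_i^{(r+1)}, \qquad [S_i^{(-\langle \mu, \alpha_i\rangle +2)}, F_i^{(r)}] = -2 d_i\, F_i^{(r+1)}
\end{equation*}
recorded just before Lemma \ref{4generators}. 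A brief degree check (both sides of filtration degree $\langle \nu_1,\alpha_i\rangle + r + 1$ on the left and right respectively) shows these equalities pass to $\gr Y_\mu$ as Poisson-bracket relations.

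I would then handle non-simple root PBW variables: by Definition \ref{egamma}, $E_\beta^{(q)}$ and $F_\beta^{(q)}$ are iterated commutators of simple-root generators, and the same degree bookkeeping shows their symbols are the corresponding iterated Poisson brackets in $\gr Y_\mu$, hence lie in $P$ by induction on the height of $\beta$. Finally, for $p > -\langle \mu, \alpha_i \rangle$ (equivalently $p \geq 1$, using antidominance), relation \eqref{E,F} gives $H_i^{(p)} = [E_i^{(p)}, F_i^{(1)}]$, and the usual degree check yields $\overline{H_i^{(p)}} = \{\overline{E_i^{(p)}}, \overline{F_i^{(1)}}\} \in P$. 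Taken together, every PBW symbol lies in $P$, so $P = \gr Y_\mu = \mathbb{C}[\cW_\mu]$. I do not anticipate a genuine obstacle: everything reduces to routine degree verifications, and the argument is uniform in the choice of splitting $\mu = \nu_1 + \nu_2$ since both sides of every relation used above have the same filtration degree regardless of that choice.
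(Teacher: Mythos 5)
Your proof is correct and is the expected argument. The paper itself does not supply a proof, citing \cite[Lemma 5.17]{FKPRW}, and your reconstruction --- lifting Lemma \ref{4generators} to the Poisson level by noting that each commutator relation used there is top-degree for the filtration, hence descends to a Poisson bracket on $\gr Y_\mu$ --- is exactly what that proof amounts to.

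Two minor points. The parenthetical ``equivalently $p \geq 1$, using antidominance'' should read ``in particular $p \geq 1$'': antidominance gives $-\langle\mu,\alpha_i\rangle \geq 0$, so $p > -\langle\mu,\alpha_i\rangle$ forces $p \geq 1$, but the converse fails once $\mu$ is strictly antidominant; this does not affect the argument, since you only need the one-way implication for $E_i^{(p)}$ to be defined. Second, for the nonsimple PBW variables the phrase ``the same degree bookkeeping shows their symbols are the corresponding iterated Poisson brackets'' is hiding a small but worthwhile observation: by an easy induction from the definition of the bracket on $\gr$, the iterated Poisson bracket of the symbols $\overline{E_{i_1}^{(q_1)}}, \dots, \overline{E_{i_l}^{(q_l)}}$ equals the image of the iterated commutator $E_\beta^{(q)}$ in $\gr^{\langle\nu_1,\beta\rangle+q}$, and this image is nonzero precisely because $E_\beta^{(q)}$ has filtered degree exactly $\langle\nu_1,\beta\rangle+q$ by Theorem \ref{PBW}. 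It is this nonvanishing (rather than the degree count alone) that lets you conclude $\overline{E_\beta^{(q)}}$ is hit by the iterated bracket, and hence lies in $P$.
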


To compare to \cite{KWWY}, recall that $\mathfrak{g}((t^{-1})), t^{-1}\mathfrak{g}[[t^{-1}]]$ and $\mathfrak{g}[t]$ form a Manin triple, which gives rise to a Poisson structure on $G((t^{-1})))$ with $G_1[[t^{-1}]]$ and $G[t]$  as Poisson subgroups.

Comparing \cite[Theorem~3.9]{KWWY} and the proof of \cite[Theorem~5.15]{FKPRW}, one has the following.
\begin{Theorem}
\label{theorem: yangian quantizes loop group}
	The Poisson structure on $\mathcal{W}_0=G_1[[t^{-1}]]$ given by Theorem \emph{\ref{grYmu}} is the same as the structure given by the Manin triple.
\end{Theorem}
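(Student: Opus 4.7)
The plan is to compare the two Poisson structures on $\mathcal{W}_0 = G_1[[t^{-1}]]$ by evaluating them on a small generating set. Both structures give $\C[G_1[[t^{-1}]]]$ the structure of a graded Poisson algebra of degree $-1$ with respect to the natural grading on $\gr_{F_{0,0}} Y_0 \simeq \C[\mathcal{W}_0]$. By Proposition \ref{antidomgen} applied to $\mu = 0$, this Poisson algebra is generated by the four families $E_i^{(1)}, F_i^{(1)}, H_i^{(1)}, H_i^{(2)}$. Hence it suffices to verify that the two Poisson brackets agree on pairs of such generators.

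Under the identification given in Theorem \ref{grYmu}, the functions corresponding to these generators on $g = u h u_- \in G_1[[t^{-1}]]$ are explicit: $E_i^{(1)}$ and $F_i^{(1)}$ are, up to factors of $d_i^{-1/2}$, the leading coefficients of $\psi_i(u)$ and $\psi_i^-(u_-)$, while $H_i^{(r)}$ is the coefficient of $t^{-r}$ in $\alpha_i(h)$. For the Yangian-induced bracket, I would read the Poisson relations off from the defining relations \eqref{H,H}--\eqref{F,F} of $Y_0$ by taking commutators of lifts and passing to symbols in $\gr Y_0$, producing an explicit collection of formulas in the images of the $E_i^{(1)}, F_i^{(1)}, H_i^{(r)}$.

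For the Manin triple bracket, the classical $r$-matrix associated with the splitting $\mathfrak{g}((t^{-1})) = t^{-1}\mathfrak{g}[[t^{-1}]] \oplus \mathfrak{g}[t]$ determines the Poisson bivector on $G_1[[t^{-1}]]$. Computing its value on the four families of functions above amounts to a direct residue calculation against the invariant form $(\cdot,\cdot)$ on $\mathfrak{g}$, which is precisely what is carried out in the proof of \cite[Theorem 3.9]{KWWY}. Inspection shows that the output matches the formulas obtained on the Yangian side, establishing the agreement on generators and hence on all of $\C[\mathcal{W}_0]$.

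The main obstacle is purely one of bookkeeping: matching the normalization conventions between \cite{KWWY} and \cite{FKPRW}. In particular, the factors of $d_i^{-1/2}$ appearing in the dictionary of Theorem \ref{grYmu} are designed precisely so that the brackets of $E_i^{(1)}, F_i^{(1)}, H_i^{(r)}$ in the FKPRW description coincide with those of their counterparts in \cite[Theorem 3.9]{KWWY}. Once this compatibility of identifications has been verified on the generators, the rest follows since both Poisson structures are then generated by the same set of relations.
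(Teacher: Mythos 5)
Your overall plan---compare the two brackets on a generating set against the explicit formulas in \cite[Theorem 3.9]{KWWY}---is in the right spirit, and is close to what the paper does (the paper's ``proof'' is the one-line remark preceding the theorem: compare \cite[Theorem 3.9]{KWWY} with Theorem \ref{grYmu}). However, your reduction step is wrong. You invoke Proposition \ref{antidomgen} to say that since $\C[\cW_0]$ is generated as a \emph{Poisson} algebra by $S = \{E_i^{(1)}, F_i^{(1)}, H_i^{(1)}, H_i^{(2)}\}$, it suffices to check that the two brackets agree on $S \times S$. That implication is false: when $S$ merely Poisson-generates but does not generate as a commutative algebra, two Poisson brackets can agree on $S \times S$ and still differ. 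A small counterexample: on $A = \C[x,y,z]$, let bracket 1 have $\{x,y\}_1 = z$, $\{y,z\}_1 = \{z,x\}_1 = 0$, and let bracket 2 have $\{x,y\}_2 = z$, $\{y,z\}_2 = x$, $\{z,x\}_2 = 0$. Both satisfy Jacobi, $S = \{x,y\}$ Poisson-generates $A$ under bracket 1, the two brackets agree on $S \times S$, yet $\{y,z\}_1 = 0 \neq x = \{y,z\}_2$.

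What is actually sufficient is agreement on pairs from a set that generates $\C[\cW_0]$ as a \emph{commutative algebra}, since a Poisson bracket is a biderivation and hence is determined by its values on pairs of algebra generators. The set $S$ is far from algebra-generating: the higher modes $E_i^{(p)}, F_i^{(p)}, H_i^{(p)}$ and especially the non-simple-root PBW variables $E_\beta^{(q)}, F_\beta^{(q)}$ are obtained from $S$ only via iterated Poisson brackets, not via commutative algebra operations. The fix is to carry out the comparison on the matrix coefficients $\Delta_{\beta,v}^{(r)}$ (which do generate $\C[\cW_0]$ as a commutative algebra), using the $r$-matrix formula (\ref{eq: loop group Poisson structure}) for the Manin-triple bracket---this is precisely the content of \cite[Theorem 3.9]{KWWY}. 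Alternatively, you could prove the stronger claim that the Hamiltonian derivations $\{s,-\}_1$ and $\{s,-\}_2$ coincide for each $s\in S$: the set of elements $a$ with $\{a,-\}_1 = \{a,-\}_2$ is a Poisson subalgebra for either bracket, so containing $S$ would force it to be all of $\C[\cW_0]$. But that is a verification against \emph{all} of $\C[\cW_0]$, not merely against $S$, and your write-up does not address this distinction.
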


\subsection{Poisson structure of $\oW_\mu^\lambda$ and quantizations}
\label{section: Poisson struture on generalized slices}

Since $Y_\mu \twoheadrightarrow Y_\mu^\lambda(\bR)$ is a quotient, there are induced filtrations $F_{\nu_1, \nu_2} Y_\mu^\lambda(\bR)$. 

Combining \cite[Theorem B.28]{BFN}, \cite[Theorem A]{Weekes} and \cite[Theorem 5.8]{NW}, we have:
\begin{Theorem}
\label{grYmula}
Let $\bR$ be any set of parameters of coweight $\lambda$.  There exists an isomorphism of graded algebras $\gr_{F_{\nu_1,\nu_2}}Y^\lambda_\mu(\bR) \simeq \mathbb{C}[\oW_\mu^\lambda]$.    The surjection $Y_\mu \twoheadrightarrow Y_\mu^\lambda(\bR)$ quantizes the closed embedding $\oW_\mu^\lambda \subset \cW_\mu$.
\end{Theorem}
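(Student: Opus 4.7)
The approach is to go through the GKLO-type homomorphism $\Phi_\mu^\lambda(\bR): Y_\mu \twoheadrightarrow Y_\mu^\lambda(\bR)$ from Theorem \ref{GKLO homomorphism}, which realizes $Y_\mu^\lambda(\bR)$ concretely inside the difference operator algebra $\widetilde{\cA}$. The first step is to equip $\widetilde{\cA}$ with a filtration making $\Phi_\mu^\lambda(\bR)$ into a filtered map compatible with $F_{\nu_1,\nu_2}$. I would set $\deg w_{i,r} = 1$ and choose $\deg \mathsf{u}_{i,r}^{\pm 1}$ to match the filtered degrees of $\Phi(E_i^{(1)})$ and $\Phi(F_i^{(1)})$. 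By Lemma \ref{4generators} (together with the shift morphism of Proposition \ref{shiftmaps} to reduce from arbitrary $\mu$ to the antidominant case), filtered compatibility on these finitely many Levendorskii generators propagates to all of $Y_\mu$.

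Passing to associated graded then yields a surjection of graded Poisson algebras $\gr \Phi: \gr_F Y_\mu \twoheadrightarrow \gr_F Y_\mu^\lambda(\bR)$, and via Theorem \ref{grYmu} the source is identified with $\C[\cW_\mu]$. What remains is to identify $\gr_F Y_\mu^\lambda(\bR)$ with $\C[\oW_\mu^\lambda]$ as a quotient of $\C[\cW_\mu]$. Unwinding the classical limit shows that $A_i^{(r)}$ lies in the kernel for $r > m_i$, since $A_i(u) \mapsto u^{-m_i} W_i(u)$ is a polynomial of degree $m_i$ in $u^{-1}$. Combined with the relations coming from the fixed polynomial $p_i(u)$ in \eqref{eq: def of A gens}, this produces a factorization $\C[\cW_\mu] \twoheadrightarrow \C[\conjectural_\mu^\lambda(\bR)] \twoheadrightarrow \gr_F Y_\mu^\lambda(\bR)$, and the goal becomes showing that both arrows agree with the surjection onto $\C[\oW_\mu^\lambda]$ defined by the closed embedding $\oW_\mu^\lambda \subset \cW_\mu$.

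The final identification is the main obstacle, since a priori $\gr \Phi$ might only factor through $\C[\oW_\mu^\lambda]$ with a nontrivial further quotient. I would overcome this via the Coulomb branch realization: in ADE type by \cite[Theorem 3.10]{BFN}, and in non-simply-laced types via \cite[Theorem 4.1]{NW} with symmetrizers. In each case, the quantized Coulomb branch admits a presentation by exactly the GKLO formulas of Theorem \ref{GKLO homomorphism}, so $Y_\mu^\lambda(\bR)$ is identified with its deformation quantization; the classical equality $\gr_F Y_\mu^\lambda(\bR) \cong \C[\oW_\mu^\lambda]$ then follows by taking the classical limit of this identification. The comparison between this Coulomb-branch/GKLO presentation and the modular (transverse slice) description of $\oW_\mu^\lambda$ from \cite[Section 2(ii)]{BFN} is the content of \cite[Theorem A]{Weekes}, which is where most of the real technical work lies. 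Once this identification is in place, the second assertion of the theorem --- that the surjection $Y_\mu \twoheadrightarrow Y_\mu^\lambda(\bR)$ quantizes the closed embedding --- is immediate from the construction, since $\gr \Phi$ was built to fit into a commutative square with that closed embedding.
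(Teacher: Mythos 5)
The paper gives no proof of this theorem; it simply cites \cite[Theorem B.28]{BFN}, \cite[Theorem A]{Weekes}, and \cite[Theorem 5.8]{NW}. Your sketch accurately reconstructs what those references do --- filtration compatibility of the GKLO map, the observation that the $A_i^{(r)}$ vanish for $r>m_i$, and the crucial identification of the GKLO image with the quantized Coulomb branch and of its associated graded with $\C[\oW_\mu^\lambda]$ --- and correctly places the bulk of the technical work in precisely those citations, so you are taking essentially the same route as the paper.
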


In particular, this endows $\oW_\mu^\lambda$ with a Poisson structure, for which it is a Poisson subvariety of $\cW_\mu$. 

\begin{Rem}
Note that Theorems \ref{grYmu}, \ref{grYmula} apply for any splitting $\mu = \nu_1+\nu_2$, and thus induce gradings on $\C[\cW_\mu]$ and $\C[\oW_\mu^\lambda]$.  These gradings correspond to the $\C^\times$-actions $\kappa^{\nu_1, \nu_2}$ on $\cW_\mu$ (easily seen to preserve $\oW_\mu^\lambda$),  as explained in \cite[Theorem 5.15]{FKPRW}.
\end{Rem}

The symplectic leaves for this Poisson structure are known explicitly: for any dominant $\lambda \geq \mu$, define the (dense) open subvariety
$$
\cW_\mu^\lambda = \cW_\mu \cap G[t] t^\lambda G[t] \ \subseteq \ \oW_\mu^\lambda
$$
There is a decomposition
\begin{equation}
\label{eq: symplectic leaves}
\oW_\mu^\lambda = \bigcup_{\substack{\mu \leq \nu \leq \lambda, \\ \nu \text{ dominant}}} \cW^\nu_\mu
\end{equation}
\begin{Theorem}[\mbox{\cite{MW}}]
Equation (\ref{eq: symplectic leaves}) gives the decomposition into symplectic leaves.
\end{Theorem}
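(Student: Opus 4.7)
The plan is to verify that each stratum $\cW_\mu^\nu$ in the decomposition is a smooth, connected symplectic subvariety of dimension $\langle \nu-\mu, 2\rho\rangle$, and that it constitutes a single full symplectic leaf of $\oW_\mu^\lambda$. The exhaustion is tautological from the Bruhat decomposition of $\overline{\Gr^\lambda}$, so the content is in identifying each stratum with a leaf.

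The first step is to show that for each dominant $\nu$ with $\mu \leq \nu \leq \lambda$, the closed subvariety $\oW_\mu^\nu \subseteq \oW_\mu^\lambda$ is a Poisson subvariety. I would argue via quantization: one verifies that there is a filtered surjection $Y_\mu^\lambda \twoheadrightarrow Y_\mu^\nu$ compatible with the identifications of associated gradeds from Theorem \ref{grYmula}. Taking associated graded then yields a surjection of Poisson algebras $\C[\oW_\mu^\lambda] \twoheadrightarrow \C[\oW_\mu^\nu]$, so the ideal defining $\oW_\mu^\nu$ is a Poisson ideal. Consequently every symplectic leaf of $\oW_\mu^\lambda$ is either contained in $\oW_\mu^\nu$ or disjoint from it, and by induction on $\nu$ every leaf is contained in some stratum $\cW_\mu^\nu$.

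The second step is to show that each open stratum $\cW_\mu^\nu \subseteq \oW_\mu^\nu$ is itself a single symplectic leaf. Since $\oW_\mu^\nu$ is a Coulomb branch, it has symplectic singularities by \cite{BFN,NW}, so its smooth locus is a symplectic manifold on which the Poisson bivector is non-degenerate. The transversality of $\cW_\mu$ to the Bruhat cell $G[t]t^\nu G[t] \subset G((t^{-1}))$ implies that $\cW_\mu^\nu$ is open, smooth, and irreducible of dimension $\langle \nu-\mu, 2\rho\rangle$, hence lies in the smooth locus of $\oW_\mu^\nu$. Therefore the Poisson structure restricted to $\cW_\mu^\nu$ is non-degenerate, and the stratum is a connected open symplectic submanifold, hence a single symplectic leaf of $\oW_\mu^\nu$. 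By the first step, it is then also a symplectic leaf of $\oW_\mu^\lambda$, and matching dimensions $\langle \nu - \mu, 2\rho\rangle$ ensures it is not a proper sub-leaf.

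The principal obstacle is the Poisson subvariety claim in the first step, since this requires sufficient information about the kernel of $Y_\mu \twoheadrightarrow Y_\mu^\nu$: the description is unconditional in type $A$ by Theorem \ref{appendix thm} but depends on a reducedness conjecture in general. A purely geometric alternative would be to work directly with the Manin triple Poisson structure on $G((t^{-1}))$ from Theorem \ref{theorem: yangian quantizes loop group}, and show by hand that the intersections of $\cW_\mu$ with Schubert varieties $\overline{G[t]t^\nu G[t]}$ are preserved by all Hamiltonian flows; this is essentially the route carried out in \cite{MW}.
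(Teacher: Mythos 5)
The paper does not prove this theorem; it cites \cite{MW}. But the paper's own Appendix, in the proof of Theorem~\ref{theorem: antidominant poisson structure}, re-derives the crucial case of antidominant $\mu$, and the route it uses is precisely the one you sketch in your final sentence rather than your main argument: by Proposition~\ref{prop: STS symplectic leaves} (Semenov--Tian--Shansky), the symplectic leaves of $G((t^{-1}))$ for the Manin triple $(G((t^{-1})), G_1[[t^{-1}]], G[t])$ are connected components of double-coset intersections, and for antidominant $\mu$ one has $\cW_\mu = G_1[[t^{-1}]]\, t^\mu\, G_1[[t^{-1}]]$, so that $\cW_\mu^\nu = G_1[[t^{-1}]]\, t^\mu\, G_1[[t^{-1}]] \cap G[t]\, t^\nu\, G[t]$ is itself such a leaf. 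This single observation identifies every stratum as a leaf and shows at once that each $\oW_\mu^\nu$ is Poisson (being a finite union of leaves), rather than handling the two points separately, and it is unconditional. The general $\mu$ case in \cite{MW} is then reached by a reduction via shift maps, which the Appendix here similarly exploits.

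Your main argument has two concrete gaps that the geometric route avoids. First, the quantization approach in Step~1 requires a filtered surjection $Y_\mu^\lambda(\bR) \twoheadrightarrow Y_\mu^\nu(\bR')$; this is not obviously available, since the two GKLO homomorphisms land in different algebras of difference operators with different numbers of variables and different parameters $\bR$, and lifting the inclusion of associated-graded ideals to an inclusion of the non-commutative kernels is exactly the content of the reducedness conjecture you flag. Second, in Step~2 you attribute ``Coulomb branches have symplectic singularities'' to \cite{BFN, NW}, but that result is due to Bellamy and is not proved there; and your argument separately needs that $\cW_\mu^\nu$ is smooth, connected, and \emph{dense} in $\oW_\mu^\nu$, which itself requires input from the modular or Coulomb branch description. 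The dimension count at the end is redundant once non-degeneracy and connectedness are established. So while your two-step skeleton is reasonable, the clean and unconditional proof is the double-coset one you mention only as an alternative.
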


Recall the explicitly presented algebra $\conjectural_\mu^\lambda(\bR)$ from section \ref{section: tsy}.   The following result generalizes \cite[Theorem 4.8]{KWWY}:
\begin{Proposition}
\label{prop: conjectural yangian quantizes scheme}
$\conjectural_\mu^\lambda(\bR)$ quantizes a scheme supported on $\oW_\mu^\lambda$.
\end{Proposition}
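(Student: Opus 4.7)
The plan is to exploit the presentation $\conjectural_\mu^\lambda(\bR) = Y_\mu/\mathcal{K}$ where $\mathcal{K}$ is the two-sided ideal generated by $\{A_i^{(r)} : i \in I,\, r > m_i\}$, equipped with the quotient filtration induced from any $F_{\nu_1,\nu_2}Y_\mu$. Because $\gr Y_\mu \cong \C[\cW_\mu]$ is commutative by Theorem \ref{grYmu}, the associated graded $\gr\conjectural_\mu^\lambda(\bR)$ is also commutative, so $\conjectural_\mu^\lambda(\bR)$ is almost commutative and quantizes a closed affine Poisson subscheme $Z \hookrightarrow \cW_\mu$.

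The surjection $\conjectural_\mu^\lambda(\bR) \twoheadrightarrow Y_\mu^\lambda(\bR)$ is a filtered algebra homomorphism whose associated graded, by Theorem \ref{grYmula}, factors the canonical surjection as $\C[\cW_\mu] \twoheadrightarrow \gr\conjectural_\mu^\lambda(\bR) \twoheadrightarrow \C[\oW_\mu^\lambda]$. This yields a chain of closed embeddings $\oW_\mu^\lambda \hookrightarrow Z \hookrightarrow \cW_\mu$. To show that $Z$ is supported on $\oW_\mu^\lambda$, it suffices to check that the first embedding is a bijection on closed points; equivalently, that the common vanishing locus in $\cW_\mu$ of the symbols $\overline{A_i^{(r)}} \in \C[\cW_\mu]$ (for $r > m_i$) is exactly $\oW_\mu^\lambda$.

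One direction, $\oW_\mu^\lambda \subseteq V\bigl(\{\overline{A_i^{(r)}}\}\bigr)$, is automatic from the factorization above. For the reverse inclusion, I would recursively solve the defining identity \eqref{eq: def of A gens} for $A_i(u)$ by fixing an ordering on $I$ and isolating $A_i(u)$ at each step in terms of $H_i(u)$ and previously resolved $A_j(u)$. Passing to principal symbols and using the formula $H_j(u)|_g = \alpha_j(ht^\mu)$ for $g = u h t^\mu u_- \in \cW_\mu$ from Theorem \ref{grYmu}, this expresses each $\overline{A_i(u)}(g)$ as an explicit rational function of the diagonal component $ht^\mu$. The vanishing of $\overline{A_i^{(r)}}(g)$ for all $r > m_i$ then says $\overline{A_i(u)}(g) \in u^{-m_i}\C[u^{-1}]$, and one must identify this condition with membership in $\oW_\mu^\lambda = \cW_\mu \cap \overline{G[t]t^\lambda G[t]}$ via the modular description of $\overline{\Gr^\lambda}$ in terms of principal $G$-bundles on $\Pp$ with $B, B_-$ structures from \cite[Section 2(ii)]{BFN}.

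The main obstacle is this final matching of the symbols $\overline{A_i(u)}$ with the ``fundamental minor'' functions on $\cW_\mu$ whose pole orders encode the $\lambda$-bound on the bundle modification. The analogous computation was performed in \cite[Theorem 4.8]{KWWY} for dominant $\mu$; that argument should extend to arbitrary $\mu$ essentially formally, since neither the recursion determined by \eqref{eq: def of A gens} nor the $B, B_-$-structure description of $\oW_\mu^\lambda$ depends on dominance of $\mu$.
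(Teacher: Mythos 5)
There is a genuine gap, and it is concentrated in the ``equivalently'' clause. You observe correctly that $\conjectural_\mu^\lambda(\bR)$ quantizes a closed subscheme $Z$ with $\oW_\mu^\lambda \subseteq Z \subseteq \cW_\mu$, but you then assert that $Z$ being supported on $\oW_\mu^\lambda$ is equivalent to the common vanishing locus of the symbols $\overline{A_i^{(r)}}$ (for $r > m_i$) being exactly $\oW_\mu^\lambda$. That vanishing locus is strictly larger than $\oW_\mu^\lambda$ whenever $\oW_\mu^\lambda \neq \cW_\mu$: by equation~(\ref{eq: def of A gens}) and the formula $H_j(t)(g) = \alpha_j(ht^\mu)$ from Theorem~\ref{grYmu}, the symbols $\overline{A_i^{(r)}}$ depend \emph{only on the Cartan factor} $ht^\mu$ of a point $g = u h t^\mu u_- \in \cW_\mu$, so their common zero locus imposes no constraint at all on $u$ or $u_-$, while $\oW_\mu^\lambda$ certainly does. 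Your own recursion argument (``an explicit rational function of the diagonal component $ht^\mu$'') makes this visible but then does not notice that the vanishing locus is therefore too large to be $\oW_\mu^\lambda$. Your appeal to \cite[Theorem 4.8]{KWWY} does not rescue this, because that theorem is about the \emph{Poisson} ideal generated by the $A_i^{(r)}$, not the ordinary ideal.

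What is actually needed, and what the paper proves, is the following: the kernel of $\C[\cW_\mu] \twoheadrightarrow \gr\conjectural_\mu^\lambda(\bR)$ contains not merely the ordinary ideal generated by the $\overline{A_i^{(r)}}$ but the \emph{Poisson ideal} $J_\mu^\lambda$ they generate, because $\mathcal{K}$ is a two-sided ideal and taking symbols of commutators $[a, A_i^{(r)}]$ produces Poisson brackets $\{\bar{a}, \overline{A_i^{(r)}}\}$. The substantive input is then Corollary~\ref{cor: radical of poisson ideal}, proved in the appendix via the RTT-style matrix coefficients $\Delta_{\beta,\gamma}^{(r)}$ and the explicit loop-group Poisson bracket~(\ref{eq: loop group Poisson structure}): the radical $\sqrt{J_\mu^\lambda}$ is the defining ideal of $\overline{\cW}_\mu^\lambda$. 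Your proof cannot be repaired without importing this Poisson-closure step; expressing the symbols as functions of $ht^\mu$ alone goes in the wrong direction, showing precisely how weak the ordinary ideal is rather than how strong.
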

\begin{proof}
On the one hand, since
$$
\C[ \cW_\mu] \cong \gr Y_\mu \twoheadrightarrow \gr \conjectural_\mu^\lambda(\bR) \twoheadrightarrow \gr Y_\mu^\lambda(\bR) \cong \C[\oW_\mu^\lambda],
$$
we see that $\conjectural_\mu^\lambda(\bR)$ quantizes a closed subscheme $M \subset \cW_\mu$ with $\oW_\mu^\lambda \subseteq M$.  Denote the corresponding reduced scheme by $M_{red}$.  Then $\overline{\cW}_\mu^\lambda \subseteq M_{red}$.

On the other hand, the kernel of $\C[\cW_\mu] \twoheadrightarrow \gr \conjectural_\mu^\lambda(\bR)$ certainly contains the Poisson ideal $J_\mu^\lambda$ generated by the classical limits of the elements $A_i^{(r)}$ for $r>m_i$. In Corollary \ref{prop: intermediate KMW} we will prove that the radical $\sqrt{J_\mu^\lambda}$ is the defining ideal of $\overline{\cW}_\mu^\lambda \subset \cW_\mu$. Therefore we obtain the opposite inequality $M_{red} \subseteq \overline{\cW}_\mu^\lambda$, which proves the claim.

\end{proof}

\subsection{Multiplication is Poisson}

\begin{Proposition}\label{shiftpoi}
	The shift maps are Poisson and are quantized by  the shift morphisms of Proposition \ref{shiftmaps}. If $\lambda$ is dominant such that $\lambda \geq \mu$ and $\lambda+\mu_1+\mu_2$ is dominant, then the shift map $\iota_{\mu,\mu_1,\mu_2}$ restricts to a map $\overline{\mathcal{W}}^{\lambda +\mu_1 +\mu_2}_{\mu+\mu_1+\mu_2} \longrightarrow \overline{\mathcal{W}}^{\lambda}_\mu$. The restricted map is Poisson and birational.
\end{Proposition}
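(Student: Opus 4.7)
The proof naturally divides into three parts.

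\smallskip
\noindent\textbf{Poisson structure and quantization.} These first two assertions are already essentially contained in Theorem~\ref{grYmu}: the algebraic shift morphism $\iota_{\mu,\mu_1,\mu_2}\colon Y_\mu \to Y_{\mu+\mu_1+\mu_2}$ is asserted there to quantize the geometric shift map. To make this concrete, one checks from the formulas in Proposition~\ref{shiftmaps} that $\iota_{\mu,\mu_1,\mu_2}$ strictly respects the Levendorski\u{i}-style filtrations -- this is a degree inspection on the generators $E_i^{(r)}, F_i^{(r)}, H_i^{(r)}$, in the style of the proof of Proposition~\ref{comultfiltrcomp}. The induced map on associated gradeds is then the pullback along the geometric shift. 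Since a filtered morphism between almost commutative algebras induces a Poisson morphism on associated gradeds, the geometric shift is Poisson.

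\smallskip
\noindent\textbf{Restriction to the generalized slices.} Let $g \in \overline{\cW}^{\lambda+\mu_1+\mu_2}_{\mu+\mu_1+\mu_2}$, so $g\cdot G[t] \in \overline{\Gr^{\lambda+\mu_1+\mu_2}}$. By the definition of $\pi$,
$$
\iota_{\mu,\mu_1,\mu_2}(g) \ = \ n \cdot t^{-\mu_1} g\, t^{-\mu_2} \cdot n_-
$$
for some $n \in U[t] \subseteq G[t]$ and $n_- \in U_-[t] \subseteq G[t]$. Hence $\iota_{\mu,\mu_1,\mu_2}(g)\cdot G[t] = n\cdot t^{-\mu_1}g\,t^{-\mu_2}\cdot G[t]$, and since $\overline{\Gr^\lambda}$ is $G[t]$-stable it suffices to show $t^{-\mu_1}g\,t^{-\mu_2}\cdot G[t]\in\overline{\Gr^\lambda}$. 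The hypotheses ensure $-\mu_1$, $-\mu_2$, and $\lambda+\mu_1+\mu_2$ are all dominant, so $t^{-\mu_i}\cdot G[t]\in\overline{\Gr^{-\mu_i}}$. Two applications of the standard convolution bound
$$
\overline{\Gr^{\nu_1}}\,\widetilde\times\,\overline{\Gr^{\nu_2}} \ \longrightarrow\ \overline{\Gr^{\nu_1+\nu_2}} \qquad (\nu_1,\nu_2 \text{ dominant})
$$
-- first with $(\nu_1,\nu_2) = (\lambda+\mu_1+\mu_2,\, -\mu_2)$ yielding $\overline{\Gr^{\lambda+\mu_1}}$ (note $\lambda+\mu_1$ is dominant as a sum of dominants), then with $(\nu_1,\nu_2) = (-\mu_1,\, \lambda+\mu_1)$ yielding $\overline{\Gr^\lambda}$ -- give the claim. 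The restricted map is Poisson because $\oW^\lambda_\mu$ and $\oW^{\lambda+\mu_1+\mu_2}_{\mu+\mu_1+\mu_2}$ are Poisson subvarieties of their ambient $\cW$'s, and the restriction of a Poisson morphism to Poisson subvarieties is Poisson.

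\smallskip
\noindent\textbf{Birationality.} Both slices have dimension $\langle \lambda-\mu, 2\rho\rangle$. To produce a rational inverse, I would take the candidate section $s(h) := t^{\mu_1}\,h\,t^{\mu_2}$. Because $\mu_1,\mu_2$ are antidominant, conjugation by $t^{\mu_1}$ preserves $U_1[[t^{-1}]]$ and conjugation by $t^{-\mu_2}$ preserves $U_{-,1}[[t^{-1}]]$, so $s(h)\in\cW_{\mu+\mu_1+\mu_2}$; then a direct computation with the identity $\pi(n g n_-) = \pi(g)$ gives $\iota_{\mu,\mu_1,\mu_2}(s(h)) = h$. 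The main obstacle -- and the most delicate step -- is to verify that $s$ sends an open dense subset of $\oW^\lambda_\mu$ into $\oW^{\lambda+\mu_1+\mu_2}_{\mu+\mu_1+\mu_2}$. Here the naive convolution bound is too weak, because $t^{\mu_i}\cdot G[t]$ sits in $\overline{\Gr^{(\mu_i)^+}}$, where $(\mu_i)^+ = w_0\mu_i$ is the dominant Weyl representative, and this is generally strictly larger than $-\mu_i$ in the dominance order. To get around this, I would restrict $s$ to the open stratum $\cW^\lambda_\mu \subseteq \oW^\lambda_\mu$, write a representative in the Iwahori form $h = u h_T t^\mu u_-$, and use the specific semi-infinite/$U_-$-structure of $\cW_\mu$ -- rather than the crude position of $t^{\mu_i}G[t]$ -- to show $s(h)\cdot G[t]\in\overline{\Gr^{\lambda+\mu_1+\mu_2}}$ on a dense open. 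Once $s$ is established as a rational section, the equality of dimensions together with $\iota\circ s = \mathrm{id}$ on the image of $s$ forces $\iota$ to be birational.
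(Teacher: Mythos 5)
Your first two parts are in close agreement with the paper. For the Poisson/quantization claim the paper simply invokes Theorem \ref{grYmu} (which already asserts that $\iota_{\mu,\mu_1,\mu_2}$ quantizes the geometric shift map), so your added degree-check of the filtration is not needed, but it is harmless. For the restriction to generalized slices, your argument via the convolution bound $\overline{G[t]t^{\lambda_1}G[t]}\cdot\overline{G[t]t^{\lambda_2}G[t]}\subseteq\overline{G[t]t^{\lambda_1+\lambda_2}G[t]}$ for dominant $\lambda_1,\lambda_2$ is precisely the paper's argument.

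The birationality step, however, is a genuine gap. The paper disposes of it by citing \cite[Rem.~3.11]{BFN}, which establishes birationality from the modular description of $\oW^\lambda_\mu$; you instead attempt a direct construction of a rational section $s(h) = t^{\mu_1}ht^{\mu_2}$, and you correctly identify the obstacle yourself: the naive convolution bound gives $s(h)\cdot G[t]\in\overline{\Gr^{\lambda+\mu_1^++\mu_2^+}}$ with $\mu_i^+ = w_0\mu_i$, which is strictly too big. Your remedy --- ``restrict to the open stratum, write a representative in Iwahori form, and use the specific semi-infinite/$U_-$-structure'' --- is a description of a strategy, not an argument: you never actually show that $s$ carries a dense open subset of $\oW^\lambda_\mu$ into $\oW^{\lambda+\mu_1+\mu_2}_{\mu+\mu_1+\mu_2}$, and this containment is not a routine consequence of the decomposition $h = uh_Tt^\mu u_-$. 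Indeed, the pole condition on the resulting $G$-bundle involves both the $U$- and $U_-$-structures jointly, and controlling it requires either the modular (Beilinson--Drinfeld style) description that BFN use or some other nontrivial input. As written, your proof of birationality is incomplete; replacing the sketch with the citation to \cite[Rem.~3.11]{BFN} (or carrying out the modular argument in detail) is needed to close it.
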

\begin{proof}
	The first claim follows from Theorem \ref{grYmu}, since the shift maps quantize to shift homomorphisms for shifted Yangians. If $g \in \overline{G[t] t^{\lambda+\mu_1+\mu_2} G[t]}$, then 
$$
t^{-\mu_1} g t^{-\mu_2} \in \overline{G[t] t^{-\mu_1} G[t] } \cdot \overline{G[t] t^{\lambda+\mu_1+\mu_2} G[t]} \cdot\overline{G[t] t^{-\mu_2} G[t] } 
$$
For any dominant $\lambda_1,\lambda_2$, it is well-known that
$$
\overline{G[t] t^{\lambda_1} G[t]} \cdot \overline{G[t] t^{\lambda_2} G[t]} \ \subseteq \ \overline{G[t] t^{\lambda_1+\lambda_2} G[t]}
$$
Since $-\mu_1,\lambda_1+\mu_1+\mu_2$ and $-\mu_2$ are all dominant, it follows that $t^{-\mu_1} g t^{-\mu_2} \in \overline{G[t] t^\lambda G[t]}$.  Since $\pi(t^{\mu_1} g t^{\mu_2})$ is defined by left (resp.~right) multiplying by $U[t]$ (resp.~$U_-[t]$), we see that $\pi(t^{\mu_1} g t^{\mu_2}) \in \overline{G[t] t^{\lambda} G[t]}$.  It follows that $\iota_{\mu,\mu_1,\mu_2}$ restricts to a map $\overline{\mathcal{W}}^{\lambda +\mu_1 +\mu_2}_{\mu+\mu_1+\mu_2} \longrightarrow \overline{\mathcal{W}}^{\lambda}_\mu$ as claimed.   
	Finally, the birationality follows from \cite[Rem.~3.11]{BFN}.
\end{proof}

Next, we want to show the multiplication is Poisson. We will use the following lemma several times.

\begin{Lemma}\label{birpoi}
	Let $X_i, Y_i$ ($i=1,2$) be irreducible affine Poisson varieties. Suppose that we have a commutative diagram
	\[
	\xymatrix{
		X_1 \ar[d]_{i} \ar[rr]^{f_1}&& Y_1 \ar[d]^{j}\\
		X_2 \ar[rr]_{f_2}&& Y_2
	}
	\]
	such that the vertical maps are birational and Poisson. Then:
	\begin{enumerate}
	\item[(a)] If $f_1$ is Poisson, then $f_2$ is Poisson.
	
	\item[(b)] Suppose that there exist open subsets $V_1 \subseteq Y_1$ and $V_2 \subseteq Y_2$ which are isomorphic under $j$, such that $f_1^{-1}(V_1) \neq \emptyset$. If $f_2$ is Poisson, then $f_1$ is Poisson.
	\end{enumerate}
\end{Lemma}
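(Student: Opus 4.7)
The plan is to use the duality between Poisson morphisms of schemes and Poisson algebra maps on coordinate rings: a morphism $f$ is Poisson precisely when $f^\#$ preserves brackets. In each part the bracket identity is known on one row of the square and we need it on the other; I will transfer it through the commutative square using the fact that a birational morphism between irreducible reduced varieties is dominant (so its pullback on global functions is injective) and restricts to an isomorphism between suitable dense open subsets.

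For part (a), I would fix $a, b \in \mathcal{O}(Y_2)$ and apply $i^\#$ to both $f_2^\#\{a,b\}$ and $\{f_2^\# a, f_2^\# b\}$. Using the commutation $i^\# \circ f_2^\# = f_1^\# \circ j^\#$ together with the Poisson property of $j$, then $f_1$, then $i$ applied in that order, both sides become $\{f_1^\# j^\# a,\, f_1^\# j^\# b\}$. Since $i$ is birational between irreducible varieties, it is dominant, so $i^\#$ is injective, and the bracket identity descends to $\mathcal{O}(X_2)$, showing that $f_2$ is Poisson.

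For part (b), I would localize to the open subsets supplied by the birationality of $j$. Let $k := (j|_{V_1})^{-1} : V_2 \rightarrow V_1$, which is Poisson as the inverse of a Poisson isomorphism, and set $U_1 := f_1^{-1}(V_1)$, which is nonempty by hypothesis and hence dense open in the irreducible variety $X_1$. Commutativity forces $f_2 \circ i$ to send $U_1$ into $V_2$, giving the factorization $f_1|_{U_1} = k \circ f_2 \circ i|_{U_1}$, a composition of Poisson maps and therefore itself Poisson. For any $a, b \in \mathcal{O}(Y_1)$, the regular functions $f_1^\#\{a,b\}$ and $\{f_1^\# a, f_1^\# b\}$ then agree on $U_1$, and by density of $U_1$ together with reducedness of $X_1$ they agree on all of $X_1$.

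I do not anticipate a serious obstacle. The main delicate point is the need for the extra hypothesis in part (b) guaranteeing that $f_1^{-1}(V_1) \neq \emptyset$: without it, $f_1$ could in principle land entirely in the exceptional locus of $j$, where Poisson-preservation of $f_2$ carries no information about $f_1$. The one nontrivial input used in both parts is the standard fact that dominant morphisms of irreducible reduced varieties induce injective pullbacks on global sections, which supplies both the injectivity of $i^\#$ in part (a) and the ``density implies equality'' step in part (b).
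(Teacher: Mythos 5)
Your proof is correct, and part (b) is essentially identical to the paper's argument. Part (a) takes a cleaner, purely algebraic route: you apply $i^\#$ to both $f_2^\#\{a,b\}$ and $\{f_2^\#a, f_2^\#b\}$, show both equal $\{f_1^\#j^\#a, f_1^\#j^\#b\}$ using the Poisson properties of $i$, $j$, $f_1$ and the commutativity $i^\# f_2^\# = f_1^\# j^\#$, and then cancel by injectivity of $i^\#$. The paper instead passes to open subsets $U_1 \cong U_2$ where $i$ restricts to an isomorphism, checks that $f_2|_{U_2}$ is Poisson, and concludes via injectivity of the restriction map $\C[X_2]\to\C[U_2]$. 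Both arguments hinge on finding an injection that intertwines the Poisson structures; your version for (a) avoids the detour through the birational open locus, using only that a birational map of irreducible varieties is dominant. As you observe, this means your part (a) actually holds under the weaker hypothesis that $i$ is merely dominant Poisson, a mild strengthening the paper does not exploit.
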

In particular, note that the assumption on open subsets in part (b) is satisfied if $f_1$ is dominant.
\begin{proof}
	Since $i$ is birational, there exist open sets $U_1\subseteq X_1$ and $U_2\subseteq X_2$ such that $U_1\simeq U_2$. By commutativity of the diagram, we see that $f_2|_{U_2}= j\circ f_1\circ (i^{-1}|_{U_2})$. If $f_1$ is Poisson, we see that $f_2|_{U_2}$ is Poisson. Consider the following commutative diagram
	\[
	\xymatrix{
		\mathbb{C}[Y_2] \ar[rrd]_{f_2|_{U_2}^*}\ar[rr]^{f_2^*}&& \mathbb{C}[X_2]  \ar[d]\\
		&& \mathbb{C}[U_2]
	}
	\]
	We see that the vertical map is injective and the diagonal map is Poisson. Thus, $f_2$ is also Poisson. This proves (a).
	
	For (b), let $U_1'=f_1^{-1}(V_1)$, which is non-empty by our assumption. By commutativity of the diagram, $(f_2\circ i)(U_1')\subseteq V_2$. We see that $f_1|_{U'_1}=j^{-1}\circ f_2 \circ (i|_{U_1'})$. Thus, if $f_2$ is Poisson, so is $f_1|_{U'_1}$. Therefore, $f_1$ is also Poisson by the same reasoning as above.
\end{proof}

\begin{Proposition}
\label{prop: mult map restricts to slices}
	The multiplication map $m_{\mu_1,\mu_2}: \mathcal{W}_{\mu_1} \times \mathcal{W}_{\mu_2} \longrightarrow \mathcal{W}_{\mu_1+\mu_2}$ restricts to a map $\overline{\mathcal{W}}_{\mu_1}^{\lambda_1} \times \overline{\mathcal{W}}_{\mu_2}^{\lambda_2} \longrightarrow \overline{\mathcal{W}}_{\mu_1+\mu_2}^{\lambda_1+\lambda_2}$. Moreover, the restricted map is dominant and Poisson.
\end{Proposition}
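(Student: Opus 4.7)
The proposition has three parts: the multiplication map restricts to slices, the restriction is Poisson, and the restriction is dominant.

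For the restriction, let $g_k \in \oW_{\mu_k}^{\lambda_k}$ for $k=1,2$. Since $g_k \in \overline{G[t]t^{\lambda_k}G[t]}$, the product satisfies $g_1 g_2 \in \overline{G[t]t^{\lambda_1+\lambda_2}G[t]}$ by the multiplicative property invoked in the proof of Proposition \ref{shiftpoi}. The map $\pi$ acts by left and right multiplication by elements of $U[t], U_-[t] \subseteq G[t]$, so $\pi(g_1 g_2)$ remains in $\overline{G[t]t^{\lambda_1+\lambda_2}G[t]}$. Combined with $\pi(g_1 g_2) \in \cW_{\mu_1+\mu_2}$, this gives the desired restriction.

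To prove the Poisson and dominance properties, I would reduce to the antidominant case. Choose antidominant coweights $\nu_1, \nu_2$ so that each $\mu_k + \nu_k$ is antidominant. Combining Lemma \ref{shiftmultcomp} with Proposition \ref{shiftpoi} yields a commutative square of multiplication maps whose vertical arrows are birational Poisson morphisms between slices. Lemma \ref{birpoi}(a) then reduces the Poisson property to the antidominant case, and dominance is preserved under birational maps. In the antidominant case, Theorem \ref{th:coproductanti} supplies the coproduct $\Delta \colon Y_{\mu_1+\mu_2} \to Y_{\mu_1} \otimes Y_{\mu_2}$, and Proposition \ref{comultfiltrcomp} asserts that it respects the natural filtrations. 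Passing to associated graded via Theorem \ref{grYmu}, $\gr \Delta$ becomes a Poisson morphism $\C[\cW_{\mu_1+\mu_2}] \to \C[\cW_{\mu_1}] \otimes \C[\cW_{\mu_2}]$. I would then verify that $\gr \Delta$ coincides with the pullback $m_{\mu_1,\mu_2}^\ast$; by Proposition \ref{antidomgen}, it suffices to check this equality on the generators $E_i^{(1)}, F_i^{(1)}, H_i^{(-\langle \mu_1+\mu_2, \alpha_i\rangle + k)}$ for $k=1,2$, which reduces to a direct computation using the identifications of these elements as explicit matrix-coefficient functions from Theorem \ref{grYmu} together with the explicit formulas of Theorem \ref{th:coproductanti}.

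For dominance of the restricted slice map, observe that source and target have equal dimension $\langle \lambda_1 + \lambda_2 - \mu_1 - \mu_2, 2\rho\rangle$. After the antidominant reduction, dominance for the unrestricted multiplication $\cW_{\mu_1}\times\cW_{\mu_2}\to \cW_{\mu_1+\mu_2}$ follows from injectivity of $\gr \Delta$, which in turn follows from injectivity of $\Delta$ (a consequence of the PBW Theorem \ref{PBW} applied to the explicit coproduct formulas). Dominance for the slice map then follows, since the image is an irreducible closed subvariety of the correct dimension in the irreducible target $\oW_{\mu_1+\mu_2}^{\lambda_1+\lambda_2}$.

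The main obstacle is the explicit verification that $\gr \Delta = m_{\mu_1,\mu_2}^\ast$ on the four generator types. The coproduct formulas in Theorem \ref{th:coproductanti} contain nontrivial sums over positive roots of the form $F_\gamma^{(1)} \otimes E_\gamma^{(1)}$ and commutator terms, which must be matched against the functions $E_j(t), F_j(t), H_j(t)$ evaluated on $\pi(g_1 g_2)$ after the rearrangement required to put $g_1 g_2$ into the $U_1[[t^{-1}]] T_1[[t^{-1}]] t^{\mu_1+\mu_2} U_{-,1}[[t^{-1}]]$ form. Once this leading-order computation is carried out carefully, the Poisson assertion follows formally from the compatibility of filtrations, and dominance follows as outlined above.
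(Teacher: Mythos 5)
Your direct argument that $m$ restricts to slices, via the multiplicative property of spherical Schubert orbit closures and the observation that $\pi$ only modifies by $G[t]$ on each side, is a fine alternative to the paper's treatment (which cites the modular constructions from \cite{BFN} and \cite{KrylovP}).

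For the Poisson property, however, your route reverses the paper's logic in a way that leaves the heavy lifting undone. You propose to verify $\gr\Delta = m_{\mu_1,\mu_2}^\ast$ by explicit computation on the generators in the antidominant case and then deduce that $m^\ast$ is Poisson; you yourself flag this verification as ``the main obstacle.'' The paper avoids it: it establishes that $m$ is Poisson \emph{without} identifying $m^\ast$ with $\gr\Delta$, starting from the case $\mu_1=\mu_2=0$ (where $m_{0,0}$ is group multiplication on the Poisson group $G_1[[t^{-1}]]$ and so automatically Poisson), then bootstrapping to dominant and finally arbitrary coweights using shift maps, the slice version of Lemma~\ref{shiftmultcomp}, and both parts of Lemma~\ref{birpoi}. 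The identification $\gr\Delta = m^\ast$ is proved only afterward in Theorem~\ref{multpoi}, whose proof in fact uses the present Proposition as input. Your approach is logically coherent as a rearrangement, but the matching of the correction terms in the coproduct (the sums $\sum_{\gamma>0}F_\gamma^{(1)}\otimes[E_i^{(1)},E_\gamma^{(1)}]$ and their analogues) against the Gauss-decomposition of $\pi(g_1g_2)$ is not obviously easier than the paper's argument, and is not carried out.

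The dominance argument has a genuine gap. You note that source and target have the same dimension $\langle\lambda_1+\lambda_2-\mu_1-\mu_2,2\rho\rangle$ and then assert that ``the image is an irreducible closed subvariety of the correct dimension.'' But the closure of the image of an irreducible variety can have strictly smaller dimension than the source if the generic fiber is positive-dimensional, and ruling that out is exactly the content of dominance; your step assumes what is to be proved. Dominance of the ambient map $\cW_{\mu_1}\times\cW_{\mu_2}\to\cW_{\mu_1+\mu_2}$ (even granting the injectivity of $\gr\Delta$) does not control the fiber dimensions of its restriction to the closed subvarieties $\oW_{\mu_1}^{\lambda_1}\times\oW_{\mu_2}^{\lambda_2}$. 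The paper does not reprove this fact; it cites \cite[Prop.~5.7]{KrylovP}, and some genuine geometric input is needed that your proposal does not supply.
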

\begin{proof}
	The first claim follows from comparing the constructions of \cite[2(vi) and 2(xi)]{BFN}, see also \cite[Section 5.9]{FKPRW} or \cite[Section 5.2.2]{KrylovP}. This restricted map is dominant by \cite[Prop.~5.7]{KrylovP}. 
	
	To prove that the restricted map is Poisson, first consider the case where $\mu_1=\mu_2=0$. We know that $\mathcal{W}_0=G_1[[t^{-1}]]$ is a Poisson algebraic group. The map $m_{0,0}$ is precisely the group multiplication in $G_1[[t^{-1}]]$. Hence, it is Poisson, and so are its restrictions.
	
	Next, suppose that $\mu_1,\mu_2$ are dominant. If $\lambda_1 \geq \mu_1$ and $\lambda_2 \geq \mu_2$, consider $\nu_1=-\mu_1$, $\nu_2=-\mu_2$. We have the following slice version of Lemma \ref{shiftmultcomp}.
	\[
	\xymatrix{
		\overline{\mathcal{W}}^{\lambda_1-\mu_1}_0 \times \overline{\mathcal{W}}^{\lambda_2-\mu_2}_0 \ar[rr] \ar[d]_{\iota_{\mu_1,-\mu_1,0}\times \iota_{\mu_2,0,-\mu_2}} & & \overline{\mathcal{W}}^{\lambda_1+\lambda_2 -\mu_1-\mu_2}_0 \ar[d]^{\iota_{\mu_1+\mu_2,-\mu_1,-\mu_2}} \\
		\overline{\mathcal{W}}^{\lambda_1}_{\mu_1} \times \overline{\mathcal{W}}^{\lambda_2}_{\mu_2} \ar[rr] & & \overline{\mathcal{W}}^{\lambda_1+\lambda_2}_{\mu_1+\mu_2}
	}
	\]
	By Proposition \ref{shiftpoi}, the two vertical arrows are Poisson and birational. Since the top arrow is Poisson, by part (a) of Lemma \ref{birpoi}, the bottom arrow is also Poisson, proving this case.
	
	Finally, suppose that $\mu_1$ and $\mu_2$ are arbitrary. We can choose  choose $\nu_1,\nu_2$ antidominant such that $\mu_1-\nu_1, \mu_2-\nu_2$ are dominant. Then we have another slice version of Lemma \ref{shiftmultcomp}.
	\[
	\xymatrix{
		\overline{\mathcal{W}}^{\lambda_1}_{\mu_1} \times \overline{\mathcal{W}}^{\lambda_2}_{\mu_2} \ar[rr] \ar[d]_{\iota_{\mu_1-\nu_1,\nu_1,0}\times \iota_{\mu_2-\nu_2,0,\nu_2}} & & \overline{\mathcal{W}}^{\lambda_1+\lambda_2 }_{\mu_1+\mu_2} \ar[d]^{\iota_{\mu_1+\mu_2-\nu_1-\nu_2,\nu_1,\nu_2}} \\
		\overline{\mathcal{W}}^{\lambda_1-\nu_1}_{\mu_1-\nu_1} \times \overline{\mathcal{W}}^{\lambda_2-\nu_2}_{\mu_2-\nu_2} \ar[rr] & & \overline{\mathcal{W}}^{\lambda_1+\lambda_2-\nu_1-\nu_2}_{\mu_1+\mu_2-\nu_1-\nu_2}
	}
	\]
	The bottom arrow is Poisson by our previous case. Since the top arow is dominant, part (b) of Lemma \ref{birpoi} applies, so the top arrow is also Poisson.
\end{proof}

The following result was conjectured in \cite{FKPRW} (Conjecture~5.20).

\begin{Theorem}\label{multpoi}For any $ \mu_1, \mu_2$, the multiplication map $ \cW_{\mu_1}\times \cW_{\mu_2} \rightarrow \cW_{\mu_1+\mu_2}$ is Poisson and is quantized by the comultiplication $ \Delta : Y_{\mu_1 + \mu_2} \rightarrow Y_{\mu_1} \otimes Y_{\mu_2}$.
\end{Theorem}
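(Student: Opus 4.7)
The plan is to prove the two assertions of the theorem separately: first that $m_{\mu_1,\mu_2}$ is Poisson, and then that $\gr \Delta = m_{\mu_1,\mu_2}^\ast$.

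For the Poisson claim, I would leverage Proposition \ref{prop: mult map restricts to slices}, which already shows that multiplication restricts to a Poisson morphism $\oW_{\mu_1}^{\lambda_1} \times \oW_{\mu_2}^{\lambda_2} \to \oW_{\mu_1+\mu_2}^{\lambda_1+\lambda_2}$. Since each $\oW_{\mu_k}^{\lambda_k}$ is a Poisson subvariety of $\cW_{\mu_k}$ by Theorem \ref{grYmula}, for any $a, b \in \C[\cW_{\mu_1+\mu_2}]$ the obstruction $E(a,b) := m^\ast \{a,b\} - \{m^\ast a, m^\ast b\} \in \C[\cW_{\mu_1}] \otimes \C[\cW_{\mu_2}]$ vanishes on every product of slices. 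To close the argument I would show that $\{ \oW_{\mu_1}^{\lambda_1} \times \oW_{\mu_2}^{\lambda_2} \}$ fills out $\cW_{\mu_1} \times \cW_{\mu_2}$: using the auxiliary subschemes $\cW_{\mu_k, \leq d_k}$ from the proof of Proposition \ref{denseunion}, whose products fill out $\cW_{\mu_1} \times \cW_{\mu_2}$ by a tensor-product variant of Lemma \ref{lemma: fills-out2}, and noting that each $\cW_{\mu_k, \leq d_k}$ sits inside some $\oW_{\mu_k}^{\lambda_k}$. Thus $E(a,b) = 0$ identically.

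For the quantization claim, Proposition \ref{comultfiltrcomp} tells us that $\gr \Delta$ is a Poisson algebra morphism $\tilde m^\ast : \C[\cW_{\mu_1+\mu_2}] \to \C[\cW_{\mu_1}] \otimes \C[\cW_{\mu_2}]$, so the task reduces to identifying $\tilde m^\ast$ with $m^\ast$. I would first treat the antidominant case. When $\mu = \mu_1+\mu_2$ is antidominant, Proposition \ref{antidomgen} implies that $\C[\cW_\mu]$ is generated as a Poisson algebra by $E_i^{(1)}, F_i^{(1)}, H_i^{(-\langle \mu, \alpha_i\rangle + 1)}, H_i^{(-\langle \mu, \alpha_i\rangle + 2)}$. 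Since both $\tilde m^\ast$ and $m^\ast$ are Poisson algebra morphisms, it suffices to check agreement on these generators. The values of $\tilde m^\ast$ are read off from the symbols of the closed formulas in Theorem \ref{th:coproductanti}, and $m^\ast$ on each of these generators is computed by explicitly applying $\pi$ to a product $g_1 g_2$ and using the geometric description in Theorem \ref{grYmu}.

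For general $\mu_1, \mu_2$, I would choose antidominant $\eta_1, \eta_2$ so that $\mu_k + \eta_k$ and $\mu_1+\mu_2+\eta_1+\eta_2$ are all antidominant. Theorem \ref{coproduct} provides a commutative diagram relating $\Delta_{\mu_1,\mu_2}$ with the antidominant $\Delta_{\mu_1+\eta_1,\mu_2+\eta_2}$ through the shift maps, while Lemma \ref{shiftmultcomp} provides the analogous diagram for $m$. Passing to associated graded, using Proposition \ref{shiftpoi} to identify the geometric shifts, and using the injectivity of the Yangian shift maps (Proposition \ref{shiftmaps})---which persists after tensoring over $\C$---one deduces $\tilde m^\ast = m^\ast$ in general from the antidominant case. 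The main obstacle will be the direct verification in the antidominant case on the generator $H_i^{(-\langle \mu, \alpha_i\rangle + 2)}$: the non-trivial term $\sum_{\gamma > 0} (\alpha_i \cdot \gamma) F_\gamma^{(1)} \otimes E_\gamma^{(1)}$ appearing in $\Delta(S_i^{(-\langle \mu, \alpha_i \rangle+2)})$ must be matched on the geometric side with the Cartan correction that arises when one commutes $u_{-,1} \in U_{-,1}[[t^{-1}]]$ past $u_2 \in U_1[[t^{-1}]]$ inside $g_1 g_2$ in order to bring it into the standard $UTU_-$ form before applying $\pi$.
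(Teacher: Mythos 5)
Your proposal for the first assertion (that $m$ is Poisson) is essentially identical to the paper's: the obstruction $E(a,b)$ is shown to land in the defining ideal of each $\oW_{\mu_1}^{\lambda_1}\times\oW_{\mu_2}^{\lambda_2}$ using Proposition~\ref{prop: mult map restricts to slices} together with the fact that these ideals are Poisson (Theorem~\ref{grYmula}), and then a filling-out argument from Proposition~\ref{denseunion} finishes. You are actually slightly more careful than the paper in spelling out why products of slices fill out the product (via the auxiliary subschemes $\cW_{\mu_k,\leq d_k}$ and a tensor-product variant of Lemma~\ref{lemma: fills-out2}); the paper treats this as immediate.

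For the second assertion (that $\Delta$ quantizes $m$), the paper simply invokes \cite[Prop.~5.21]{FKPRW}, which states that the Poisson property of $m$ already implies $\gr\Delta = m^\ast$. You instead re-derive this directly: reduce to the antidominant case via the compatibility diagrams of Theorem~\ref{coproduct} and Lemma~\ref{shiftmultcomp} (using injectivity of the shift maps from Proposition~\ref{shiftmaps} and the fact that $\gr\Delta$ is a Poisson morphism by Proposition~\ref{comultfiltrcomp}), then verify agreement of $\gr\Delta$ and $m^\ast$ on the Poisson generators from Proposition~\ref{antidomgen} by comparing Theorem~\ref{th:coproductanti} with the geometric description of Theorem~\ref{grYmu}. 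This is a sound and self-contained route; it effectively reproves the cited FKPRW result. The one place you have left unfinished is the explicit match on the degree-two Cartan generator $S_i^{(-\langle\mu,\alpha_i\rangle+2)}$, where the $\sum_{\gamma>0}(\alpha_i\cdot\gamma)F_\gamma^{(1)}\otimes E_\gamma^{(1)}$ correction must be reproduced geometrically; you correctly identify this as the nontrivial step, but it would need to be carried out for the argument to be complete. Since the paper outsources exactly this computation to FKPRW, your proposal is not wrong, just more ambitious than what the authors chose to do here.
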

\begin{proof}
Let $\Delta'$ be the algebra morphism $ \Delta' : \C[\cW_{\mu_1 + \mu_2}] \rightarrow \C[\cW_{\mu_1} \otimes \cW_{\mu_2}]$ coming from the multiplication $ \cW_{\mu_1}\times \cW_{\mu_2} \rightarrow \cW_{\mu_1+\mu_2}$.	Let $f,g\in \mathbb{C}[\mathcal{W}_{\mu_1+\mu_2}]$.  We need to show that $h:=\Delta'(\{f,g\})-\{\Delta'(f),\Delta'(g)\}=0$. By Proposition \ref{denseunion}, it suffices to show that the restriction of $h$ on each $\overline{\mathcal{W}}_{\mu_1}^{\lambda_1} \times \overline{\mathcal{W}}_{\mu_2}^{\lambda_2}$ is zero.
	
	Let $I$ be the ideal of $\overline{\mathcal{W}}_{\mu_1+\mu_2}^{\lambda_1+\lambda_2} \subset \cW_{\mu_1+\mu_2}$ and let $J$ be the ideal of $\overline{\mathcal{W}}_{\mu_1}^{\lambda_1} \times \overline{\mathcal{W}}_{\mu_2}^{\lambda_2} \subset \cW_{\mu_1} \times \cW_{\mu_2}$. From Theorem \ref{grYmula}, we see that these are Poisson ideals. Since $\Delta'(I)\subseteq J$ by Proposition \ref{prop: mult map restricts to slices}, and since the restriction maps are Poisson,
	\begin{align*}
	0+J &=\Delta'(\{f+I, g+I\})-\{\Delta'(f+I),\Delta'(g+I)\} + J \\
	&= \Delta'(\{f,g\})-\{\Delta'(f),\Delta'(g)\} + J \\
	&=h+J.
	\end{align*}
	Therefore, the multiplication is Poisson.
	
	Now that we have established that the multiplication is Poisson, the second statement follows from \cite[Prop 5.21]{FKPRW}.
\end{proof}

\section{The Hamiltonian reduction}

\subsection{Some notation}
Fix $ i \in I $.  We will need some notation related to this choice of simple root, some of which we will recall from Section \ref{section:notation}.

Recall the group homomorphism $ \psi_i : U \rightarrow \Ga $ and let $ U^i \subset U $ be its kernel.

Recall also that we extended $ \psi_i $ to a map $ \psi_i : U T U_- \rightarrow \C $ by $\psi_i(u t v) = \psi_i(u)$.  This leads to $ \psi_i : \cW_\mu \rightarrow t^{-1}\C[[t^{-1}]]$.  We will need to examine the Fourier coefficients of this map, which we will denote $ \psi_i^{(k)} : \cW_\mu \rightarrow \C $ for $ k = 1, 2, \dots $.  We will be particularly interested in the first coefficient, and define
\begin{equation}
\Phi_i := d_i^{-1/2} \psi_i^{(1)}
\end{equation}
Note that $\Phi_i$ is the classical limit of $E_i^{(1)} \in Y_\mu$, by Theorem \ref{grYmu}.

Finally, we need a special subgroup in which we allow Laurent series in $U^i$, but only principal parts in the root subgroup corresponding to $i$: $$ U((t^{-1}))^i_1 := x_i(t^{-1}\C[[t^{-1}]])U^i((t^{-1})) =  U^i((t^{-1})) x_i(t^{-1}\C[[t^{-1}]])$$
Here, recall that $x_i: \mathbb{G}_a \rightarrow U$ exponentiates the Chevalley generator $e_i$, which induces a map $x_i : \C((t^{-1})) \rightarrow U((t^{-1}))$. Note that $U((t^{-1}))_1^i$ is the inverse image of $t^{-1}\C[[t^{-1}]]$ under $\psi_i :U((t^{-1}))\rightarrow \C((t^{-1}))$ (induced by $\psi_i: U\rightarrow \mathbb{G}_a$).

Recall that for each $i\in I$, there is a homomorphism $\tau_i: SL_2\rightarrow G$. We record the following facts for future use.
\begin{Proposition} \label{pr:commuteUi}
	\begin{enumerate}
\item If $ u \in U^i$ and $ g \in SL_2 $, then $ \tau_i(g) u \tau_i(g)^{-1} \in U^i$.
\item If $ u \in U_1^i[[t^{-1}]] $ and $ g \in SL_2[[t^{-1}]] $, then $  \tau_i(g) u \tau_i(g)^{-1} \in U^i_1[[t^{-1}]]$.
\item If $ u \in U^i[t] $ and $ g \in SL_2[t] $, then $  \tau_i(g) u \tau_i(g)^{-1} \in U^i[t]$.
\end{enumerate}
\end{Proposition}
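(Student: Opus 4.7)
The plan is to reduce everything to a Lie-algebraic observation about the $\mathfrak{sl}_2^{(i)}$-action on $T$-weight spaces, and then pass to $R$-points for $R = \C[[t^{-1}]]$ and $R = \C[t]$.

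First, I would identify $\mathrm{Lie}(U^i)$. Since $\psi_i \colon U \to \Ga$ is a homomorphism to an abelian group, it factors through $U/[U,U]$, which is the product of the simple root subgroups. The normalization $\psi_i \circ x_i = \mathrm{id}$ then forces $\psi_i$ to vanish on $U_\beta$ for every positive root $\beta \neq \alpha_i$, so $\mathrm{Lie}(U^i) = \mathfrak{u}^i := \bigoplus_{\beta > 0,\ \beta \neq \alpha_i} \mathfrak{g}_\beta$.

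The heart of the proposition is (1). Since $\mathfrak{u}^i$ is a sum of $T$-weight spaces, it suffices to show that $\mathrm{ad}(e_i)$ and $\mathrm{ad}(f_i)$ preserve it. For $\beta > 0$ with $\beta \neq \alpha_i$, some simple root $\alpha_j$ with $j \neq i$ occurs with positive coefficient in $\beta$, and this coefficient is unchanged in $\beta \pm \alpha_i$; hence $\beta \pm \alpha_i$ is neither $\alpha_i$ nor a negative root, so $\mathfrak{g}_{\beta \pm \alpha_i}$ is either zero or contained in $\mathfrak{u}^i$. Exponentiating, $\mathrm{Ad}(\tau_i(g))$ preserves $\mathfrak{u}^i$ for all $g \in SL_2$, and since the connected unipotent group $U^i$ is in bijection with $\mathfrak{u}^i$ via $\exp$, this yields $\tau_i(g) U^i \tau_i(g)^{-1} = U^i$.

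Parts (2) and (3) then follow by functoriality. The content of (1) is that the conjugation morphism of schemes $SL_2 \times U^i \to G$, $(g,u) \mapsto \tau_i(g) u \tau_i(g)^{-1}$, factors through the closed subscheme $U^i \subseteq G$. Evaluating on $R$-points for $R = \C[t]$ yields (3), and for $R = \C[[t^{-1}]]$ gives that $\tau_i(SL_2[[t^{-1}]])$ conjugates $U^i[[t^{-1}]]$ into itself. The extra condition in (2) is then automatic: evaluating at $t^{-1} = 0$, the conjugate of $u$ becomes $\tau_i(g(0)) \cdot 1 \cdot \tau_i(g(0))^{-1} = 1$, so the conjugate lies in $U^i_1[[t^{-1}]]$. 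I do not anticipate any genuine obstacle; the whole proposition rests on the elementary root-theoretic observation above.
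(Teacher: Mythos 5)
The paper records Proposition \ref{pr:commuteUi} without proof ("We record the following facts for future use"), so there is no argument in the paper to compare against. Your proposed proof is correct, and the strategy---reduce to a root-space computation showing that $\operatorname{ad}(e_i)$ and $\operatorname{ad}(f_i)$ preserve $\mathfrak{u}^i = \bigoplus_{\beta>0,\,\beta\neq\alpha_i}\mathfrak{g}_\beta$, then exponentiate and pass to $R$-points---is exactly the natural one. The root-theoretic step is airtight: any positive root $\beta\neq\alpha_i$ has some coefficient $c_j>0$ with $j\neq i$, this coefficient is unchanged by adding or subtracting $\alpha_i$, so $\beta\pm\alpha_i$ is either not a root or lies in the same set; and the reduction of (2) and (3) to (1) via functoriality of the closed immersion $U^i\hookrightarrow G$ and evaluation at $t^{-1}=0$ is clean.

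One small imprecision worth flagging: you assert that the normalization $\psi_i\circ x_i=\mathrm{id}$ \emph{forces} $\psi_i$ to vanish on $U_{\alpha_j}$ for $j\neq i$. In fact, factoring through the abelianization only forces vanishing on the non-simple root groups; vanishing on $U_{\alpha_j}$ for $j\neq i$ is part of the (tacit) definition of the standard additive character $\psi_i$, not a consequence of the normalization alone. Since the paper plainly intends $\psi_i$ to be this standard character (it needs $U^i$ to be a sum of root groups for the proposition to be true at all), the identification $\operatorname{Lie}(U^i)=\mathfrak{u}^i$ stands, but the word ``forces'' should be ``is part of the definition, together with the fact that $\psi_i$ kills $[U,U]$.''
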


\subsection{An action of the additive group}
Let $\mu$ be any coweight.
\begin{Lemma} \label{le:neq1}
Let $g\in \mathcal{W}_\mu$, $a\in \mathbb{G}_a$. Then $x_{-i}(a)g \in U_1[[t^{-1}]] T_1[[t^{-1}]]t^\mu U_-((t^{-1}))$.
\end{Lemma}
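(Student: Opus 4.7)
The plan is to reduce the claim to a single $SL_2$ Gauss decomposition and then commute the resulting factors past $h$ and $t^\mu$ using standard $T$-conjugation of root subgroups.

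First I would write $g = u h t^\mu u_-$ with $u \in U_1[[t^{-1}]]$, $h \in T_1[[t^{-1}]]$, $u_- \in U_{-,1}[[t^{-1}]]$, and peel off the $\alpha_i$ direction of $u$: setting $f := \psi_i(u) \in t^{-1}\Cs$, one has $u = x_i(f)\, u'$ with $u' \in U^i_1[[t^{-1}]]$. Since $h$, $t^\mu$ and $u_-$ have no positive-root content, the problem reduces to rewriting the leading factor $x_{-i}(a)\, x_i(f)$.

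The key step is an $SL_2$ Gauss decomposition. Because $af \in t^{-1}\Cs$, the element $1+af$ is a unit in $\Cs$, and setting $c := (1+af)^{-1}$ a direct matrix calculation inside $\tau_i(SL_2)[[t^{-1}]]$ gives
\[
x_{-i}(a)\, x_i(f) \;=\; x_i(fc)\,\alpha_i^\vee(c)\,x_{-i}(ac),
\]
with $x_i(fc) \in U_1[[t^{-1}]]$, $\alpha_i^\vee(c) \in T_1[[t^{-1}]]$, and $x_{-i}(ac) \in U_-[[t^{-1}]]$. I expect this to be the main technical point: one must check invertibility of $1+af$ and verify that $fc$, $c$, $ac$ all land in the correct formal subrings of $\C((t^{-1}))$.

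It then remains to push the lower unipotent factor to the far right. Using that $T$ normalizes $U^i$ (which follows from $\psi_i(tut^{-1}) = \alpha_i(t)\psi_i(u)$), I would write $\alpha_i^\vee(c)\, u' = u''\, \alpha_i^\vee(c)$ with $u'' \in U^i_1[[t^{-1}]]$; absorb $\alpha_i^\vee(c)\, h \in T_1[[t^{-1}]]$ into a single torus factor; and then apply the identities $h^{-1} x_{-i}(b) h = x_{-i}(\alpha_i(h) b)$ and $t^{-\mu} x_{-i}(b)\, t^\mu = x_{-i}(t^{\langle \alpha_i, \mu\rangle} b)$ to obtain
\[
x_{-i}(a)\, g \;=\; \bigl(x_i(fc)\, u''\bigr)\bigl(\alpha_i^\vee(c)\, h\bigr)\, t^\mu \, \bigl(t^{-\mu} x_{-i}(b')\, t^\mu \cdot u_-\bigr)
\]
for some $b' \in \Cs$, which lies in $U_1[[t^{-1}]]\, T_1[[t^{-1}]]\, t^\mu\, U_-((t^{-1}))$ as required. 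Notice that the last factor can leave $U_{-,1}[[t^{-1}]]$ precisely because $t^{\langle \alpha_i, \mu\rangle}$ may introduce positive powers of $t$, which is exactly why the target of the lemma allows the full loop space $U_-((t^{-1}))$.
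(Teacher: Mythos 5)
Your plan is the same as the paper's: write $g = x_i(f)\,u'\,h\,t^\mu\,u_-$ with $f = \psi_i(u) \in t^{-1}\Cs$ and $u' \in U^i_1[[t^{-1}]]$, and apply an $SL_2$ Gauss decomposition to the leading product. Your matrix identity $x_{-i}(a)\,x_i(f) = x_i(fc)\,\alpha_i^\vee(c)\,x_{-i}(ac)$ with $c = (1+af)^{-1}$ is correct and agrees with the paper's; there the lower‑triangular block $b_-$ is kept intact and is precisely $\alpha_i^\vee(c)\,x_{-i}(ac)$. The final displayed formula you arrive at is also correct and indeed lands in $U_1[[t^{-1}]]\,T_1[[t^{-1}]]\,t^\mu\,U_-((t^{-1}))$.

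The gap is in the passage from the Gauss decomposition to that final formula. After substituting, you have
\[
x_{-i}(a)\,g = x_i(fc)\,\alpha_i^\vee(c)\,x_{-i}(ac)\,u'\,h\,t^\mu\,u_-,
\]
and the factor that needs to be pushed past $u'$ first is $x_{-i}(ac)$, not $\alpha_i^\vee(c)$. Your argument only invokes that $T$ normalizes $U^i$ and the standard $T$-conjugation identities for $x_{-i}$; none of these address the conjugation $x_{-i}(ac)\,u'\,x_{-i}(ac)^{-1}$. That $U^i_1[[t^{-1}]]$ is stable under conjugation by $x_{-i}(\Cs)$ — equivalently, by the lower Borel of $\tau_i(SL_2)[[t^{-1}]]$ — is a genuine structural fact about root subgroups (for $\beta$ positive with $\beta\neq\alpha_i$, commutators $[U_{-\alpha_i},U_\beta]$ stay inside $U^i$); the paper records it as Proposition \ref{pr:commuteUi}.(2) and applies it to the whole matrix $\tau_i(b_-)$ in one step, instead of splitting $b_-$ into its torus and unipotent pieces first. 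To close your proof, you should either cite that Proposition for $x_{-i}(ac)$ as well, or give the short root‑string argument showing $x_{-i}(\Cs)$ normalizes $U^i_1[[t^{-1}]]$; the $T$-normalization alone is not enough.

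Two cosmetic points: the exponent in the last conjugation should be $\langle \mu, \alpha_i\rangle$ rather than $\langle \alpha_i, \mu\rangle$ in the paper's convention, and after moving $x_{-i}(ac)$ past $u'$ the torus‑conjugation step should be applied to the conjugated $u'$, not to the original one.
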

\begin{proof}
Write $$g= x_i(p) u'ht^\mu u_-, \quad p \in t^{-1}\C[[t^{-1}]], u' \in U^i_1[[t^{-1}]], h \in T_1[[t^{-1}]], u_- \in U_{-,1}[[t^{-1}]]$$
Since $ a\in \C, p \in t^{-1}\C[[t^{-1}]]$, we see that $1 + ap \in 1 + t^{-1}\C[[t^{-1}]] $, and
$$
\begin{bmatrix} 1 & 0 \\ a & 1 \end{bmatrix} \begin{bmatrix} 1 & p \\ 0 & 1 \end{bmatrix} = \begin{bmatrix} 1 & p(1 + ap)^{-1} \\ 0 & 1 \end{bmatrix} \begin{bmatrix} (1+ap)^{-1} & 0 \\ a & 1 + ap \end{bmatrix}
$$
Thus,
$$
x_{-i}(a) g = x_i(p(1+ap)^{-1}) \bigl( \tau_i(b_-) u' \tau_i(b_-)^{-1} \bigr) \tau_i(b_-) h t^\mu u_-
$$
where $ b_- = \begin{bmatrix} (1+ap)^{-1} & 0 \\ a & 1 + ap \end{bmatrix}$.  By Proposition \ref{pr:commuteUi}.(2), $ \tau_i(b_-) u' \tau_i(b_-)^{-1} \in U^i_1[[t^{-1}]] $ and the result follows.
\end{proof}

\begin{Proposition}\label{actionWmu}
The expression $a\cdot g= \pi\big(x_{-i}(- d_i^{1/2}a)g\big)$ defines an action of $\mathbb{G}_a$ on $\mathcal{W}_\mu$.    It restricts to an action on each $\oW_\mu^\lambda$.
\end{Proposition}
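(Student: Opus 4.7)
The plan is to verify three things: (i) the formula makes sense and defines a morphism $\Ga \times \cW_\mu \to \cW_\mu$; (ii) the $\Ga$-action axioms hold; (iii) the action preserves each closed subscheme $\oW_\mu^\lambda$. For well-definedness, Lemma \ref{le:neq1} places $x_{-i}(-d_i^{1/2} a)\, g$ in the domain $U_1[[t^{-1}]] T_1[[t^{-1}]]t^\mu U_-((t^{-1}))$ of $\pi$, so $a \cdot g \in \cW_\mu$.  The explicit rational formulas appearing in the proof of that lemma, combined with the Iwasawa decomposition $U_-((t^{-1})) = U_{-,1}[[t^{-1}]] \cdot U_-[t]$ used in the definition of $\pi$, show that $(a,g) \mapsto a\cdot g$ is a morphism of (ind-)schemes.

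The only substantive point is additivity $(a+b)\cdot g = a \cdot (b \cdot g)$; the identity $0 \cdot g = \pi(g) = g$ is automatic. Fix $g$ and set $h := x_{-i}(-d_i^{1/2} b)\, g$. By Lemma \ref{le:neq1}, the left factor of $h$ already lies in $U_1[[t^{-1}]]$, so the $U[t]$-correction in applying $\pi$ to $h$ is trivial, yielding $\pi(h) = h \cdot n_-$ for a uniquely determined $n_- \in U_-[t]$.  Thus
\[
a \cdot (b \cdot g) \ = \ \pi\bigl(x_{-i}(-d_i^{1/2}a)\, \pi(h)\bigr) \ = \ \pi\bigl(x_{-i}(-d_i^{1/2}a)\, h \cdot n_-\bigr) \ = \ \pi\bigl(x_{-i}(-d_i^{1/2}a)\, h\bigr),
\]
where the last step uses the right-$U_-[t]$ invariance $\pi(\tilde{g}\, n_-) = \pi(\tilde{g})$ recorded just before Definition \ref{def: shift geometric}. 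Since $x_{-i}: \Ga \to G$ is a group homomorphism, $x_{-i}(-d_i^{1/2}a)\, h = x_{-i}(-d_i^{1/2}(a+b))\, g$, and the right-hand side therefore equals $(a+b)\cdot g$.

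For the restriction to $\oW_\mu^\lambda = \cW_\mu \cap \overline{G[t]t^\lambda G[t]}$: given $g \in \oW_\mu^\lambda$, the same ``left Iwasawa is trivial'' observation applied to $g$ lets us write $a \cdot g = x_{-i}(-d_i^{1/2}a)\, g \cdot n_-$ with $n_- \in U_-[t]$. The two factors $x_{-i}(-d_i^{1/2}a)$ and $n_-$ lie in $G[t]$, and left or right multiplication by any individual element of $G[t]$ is a closed automorphism of $G((t^{-1}))$ preserving the double coset $G[t]t^\lambda G[t]$, and hence also its closure. It follows that $a \cdot g \in \overline{G[t]t^\lambda G[t]} \cap \cW_\mu = \oW_\mu^\lambda$, completing the verification. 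The main obstacle in this argument is the additivity of the action, which is overcome neatly by Lemma \ref{le:neq1}: once we know that left multiplication by $x_{-i}(\C)$ lands in the subscheme where the left Iwasawa correction of $\pi$ is trivial, the two applications of $\pi$ telescope.
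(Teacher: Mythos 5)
Your proof is correct and follows essentially the same route as the paper's: Lemma \ref{le:neq1} guarantees the left $U[t]$-correction in $\pi$ is trivial, so $\pi(x_{-i}(-d_i^{1/2}b)g) = x_{-i}(-d_i^{1/2}b)g\,n_-$ with $n_- \in U_-[t]$, and the right-$U_-[t]$ invariance of $\pi$ then telescopes the two applications of $\pi$. The restriction to $\oW_\mu^\lambda$ via $G[t]$-stability of $\overline{G[t]t^\lambda G[t]}$ is also the paper's argument.
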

\begin{proof}
 Let $a_1, a_2\in \mathbb{C}$, let $g=uht^\mu u_- \in \mathcal{W}_\mu$.  By Lemma \ref{le:neq1}, $ \pi\big(x_{-i}(-d_i^{1/2}a_2)g\big) = x_{-i}(-d_i^{1/2}a_2) g n_- $ for some $ n_- \in U_-[t] $. Thus
 $$
 a_1 \cdot (a_2 \cdot g) = \pi\big(x_{-i}(-d_i^{1/2}a_1) x_{-i}(-d_i^{1/2}a_2) gn_-) = \pi\big(x_{-i}(-d_i^{1/2}(a_1+a_2)) gn_-\big) 
$$
$$
= \pi\big(x_{-i}(-d_i^{1/2}(a_1 + a_2)) g\big) = (a_1+a_2)\cdot g
$$
as desired.

For the final claim, let $g \in \oW_\mu^\lambda$ and write $a\cdot g = x_{-i}(-d_i^{1/2} a ) g n_-$ as above.  Since $\overline{G[t] t^\lambda G[t]}$ is preserved by left (or right) multiplication by $G[t]$, we see that $a\cdot g \in \overline{G[t] t^\lambda G[t]}$.  Since we already know that $a\cdot g \in \cW_\mu$, this proves that $a\cdot g \in \oW_\mu^\lambda$.
\end{proof}

Later we will see that this action is Hamiltonian with moment map $ \Phi_i $.

\subsection{Relating $\mathcal{W}_\mu$ and $\mathcal{W}_{\mu+\alpha_i^\vee}$} \label{reduction}
Let $ \Phi_i^{-1}(\mathbb{C}^\times)_\mu $ denote the preimage of $ \C^\times $ under $ \Phi_i : \cW_\mu \rightarrow \C $, and similarly $\Phi_i^{-1}(\Cx)_\mu^\lambda$ denote the preimage under the restriction of $\Phi_i$ to $\overline{\cW}_\mu^\lambda$.  The goal of this section is to construct an isomorphism $\Phi_i^{-1}(\mathbb{C}^\times)_\mu \simeq \overline{\mathcal{W}}_{-\alpha_i^\vee}^0 \times \mathcal{W}_{\mu+\alpha_i^\vee}$ using the multiplication map $m = m_{-\alpha_i^\vee, \mu+\alpha_i^\vee}$ from section \ref{multsection}.

We begin by examining $ \oW_{-\alpha_i^\vee}^0$. 

\begin{Lemma}
\label{le:W0alphai}
There is an isomorphism $T^* \Cx = \Cx \times \C \rightarrow \oW^0_{-\alpha_i^\vee}$ given by 
$$
(b,c) \mapsto r_i(b,c) := \tau_i \left( \begin{bmatrix} 0 & b \\ -b^{-1} & t-c \end{bmatrix}\right)
$$
Moreover $ \psi_i(r_i(b,c)) = b(t-c)^{-1} = bt^{-1} + bc t^{-2} + \dots $.

Finally, the Poisson structure on $ \oW^0_{-\alpha_i^\vee} $ is given by $ \{c, b\} = d_i b $.
\end{Lemma}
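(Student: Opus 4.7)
The plan is to reduce everything to an explicit Gauss (LDU) decomposition in $SL_2$ and then push forward via $\tau_i$.  First I would verify directly that, inside $SL_2(\C((t^{-1})))$,
\[
\begin{bmatrix} 0 & b \\ -b^{-1} & t-c \end{bmatrix} \ =\  \begin{bmatrix} 1 & b(t-c)^{-1} \\ 0 & 1 \end{bmatrix} \begin{bmatrix} (t-c)^{-1} & 0 \\ 0 & t-c \end{bmatrix} \begin{bmatrix} 1 & 0 \\ -b^{-1}(t-c)^{-1} & 1 \end{bmatrix}.
\]
Since $(t-c)^{-1} = t^{-1}(1-c/t)^{-1} \in t^{-1}\C[[t^{-1}]]$, applying $\tau_i$ realizes $r_i(b,c)$ in the factored form $u\, h\, t^{-\alpha_i^\vee} u_-$ with $u = x_i(b(t-c)^{-1}) \in U_1[[t^{-1}]]$, $h = \alpha_i^\vee((1-c/t)^{-1}) \in T_1[[t^{-1}]]$, and $u_- = x_{-i}(-b^{-1}(t-c)^{-1}) \in U_{-,1}[[t^{-1}]]$; thus $r_i(b,c) \in \cW_{-\alpha_i^\vee}$.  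The matrix entries are polynomial in $t$, so $r_i(b,c) \in G[t] = \overline{G[t]\, t^0 G[t]}$, and hence $r_i(b,c) \in \oW^0_{-\alpha_i^\vee}$.  The claimed formula $\psi_i(r_i(b,c)) = b(t-c)^{-1}$ is then immediate from the definition of $\psi_i$ on $\cW_{-\alpha_i^\vee}$ together with $\psi_i \circ x_i = \mathrm{id}$.

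To establish the isomorphism I would compare coordinate rings.  By Lemma \ref{rem: cotangent bundle of C*}, Lemma \ref{lemma: simplest coulomb branch of quiver gauge theory}, and Theorem \ref{grYmula}, the ring $\C[\oW^0_{-\alpha_i^\vee}] \cong \gr Y^0_{-\alpha_i^\vee}$ is generated by the classical limits of $A_i^{(1)}, E_i^{(1)}, F_i^{(1)}$ subject to $E_i^{(1)} F_i^{(1)} = -d_i^{-1}$; equivalently it is $\C[A_i^{(1)}, (E_i^{(1)})^{\pm 1}]$.  I would then compute the pullbacks along $(b,c)\mapsto r_i(b,c)$: Theorem \ref{grYmu} combined with the expansion $\psi_i(r_i(b,c)) = bt^{-1} + bct^{-2} + \cdots$ gives $E_i^{(1)} \mapsto d_i^{-1/2} b$.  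For $A_i^{(1)}$, the defining relation (\ref{eq: def of A gens}) specialises here to $H_i(u) = (u(u-d_i)A_i(u)A_i(u-d_i))^{-1}$; expanding to first order in $u^{-1}$ yields $A_i^{(1)} = \tfrac{1}{2}(d_i - H_i^{(3)})$ in $Y_{-\alpha_i^\vee}$.  Since $H_i(t)(r_i(b,c)) = \alpha_i(h\, t^{-\alpha_i^\vee}) = (t-c)^{-2}$, the coefficient of $t^{-3}$ gives $H_i^{(3)} \mapsto 2c$, hence $A_i^{(1)} \mapsto d_i/2 - c$.  These substitutions define a ring isomorphism $\C[A_i^{(1)}, (E_i^{(1)})^{\pm 1}] \xrightarrow{\sim} \C[c, b^{\pm 1}]$, so $(b,c)\mapsto r_i(b,c)$ is an isomorphism of schemes.

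Finally, for the Poisson structure I would use relation (\ref{H,E}) with $p=2, q=1, i=j$: combined with the truncation $H_i^{(2)} = 1$ in $Y_{-\alpha_i^\vee}$, it yields $[H_i^{(3)}, E_i^{(1)}] = 2d_i E_i^{(1)}$.  Both sides sit in the same filtered degree with respect to any filtration $F_{\nu_1,\nu_2}$ from Section \ref{se:filtrations}, so taking the associated graded gives $\{H_i^{(3)}, E_i^{(1)}\} = 2d_i E_i^{(1)}$ in $\C[\oW^0_{-\alpha_i^\vee}]$.  Substituting the pullbacks $H_i^{(3)} \mapsto 2c$ and $E_i^{(1)} \mapsto d_i^{-1/2} b$ gives $2 d_i^{-1/2}\{c,b\} = 2 d_i^{1/2} b$, that is, $\{c, b\} = d_i b$.

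The main obstacle is the bookkeeping around the truncated/shifted indexing: specifically, expanding $A_i(u)A_i(u-d_i)$ in relation (\ref{eq: def of A gens}) to the correct order so as to identify $A_i^{(1)}$ with $\tfrac{1}{2}(d_i - H_i^{(3)})$ (and not, say, $-H_i^{(3)}/2$), keeping in mind the shift $H_i^{(p)}=0$ for $p<2$ in $Y_{-\alpha_i^\vee}$.  Once this constant $d_i/2$ is pinned down correctly, the rest is direct calculation.
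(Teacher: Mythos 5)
Your proof is correct and follows the same route as the paper: the same explicit LDU factorization of the $2\times 2$ matrix, the same verification of regularity, and the same comparison of coordinate rings via the quantization. One subtlety worth flagging: you compute $A_i^{(1)}\mapsto d_i/2 - c$, whereas the paper's proof states $A_i^{(1)}$ evaluates to $-c$. Both are right, for different elements: your value is the classical limit of the $\bR$-dependent $A_i^{(1)}$ from (\ref{eq: def of A gens}) (with $\lambda=0$, $\mu=-\alpha_i^\vee$), which is indeed $\tfrac12(d_i - H_i^{(3)})$ and is the element appearing in Lemma \ref{lemma: simplest coulomb branch of quiver gauge theory} and Lemma \ref{rem: cotangent bundle of C*}. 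The paper cites Appendix \ref{appendix: notation}, where $A_i(u) = u^{\langle\mu,\varpi_i\rangle}\Delta_{v_i^\ast,v_i}(u)$; this is the classical limit of the GKLO element $\gkloA_i$ from (\ref{eq: def of A gens without R}), which equals $-H_i^{(3)}/2$ and hence evaluates to $-c$. The two differ by the constant $s_i^{(1)} = d_i/2$ from Lemma \ref{lemma: various GKLO generators }, which is immaterial both for the isomorphism and for the Poisson bracket. Your bracket computation is also a mild variation: you derive $\{H_i^{(3)}, E_i^{(1)}\} = 2d_iE_i^{(1)}$ directly from relation (\ref{H,E}) together with the truncation $H_i^{(2)}=1$ and the filtration bookkeeping, rather than citing the classical limit of Lemma \ref{lemma: simplest coulomb branch of quiver gauge theory} as the paper does. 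Your route is more self-contained at that point and gives the same answer $\{c,b\}=d_ib$.
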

\begin{proof}
Inside $SL_2((t^{-1}))$, we may write:
$$
\begin{bmatrix} 0 & b \\ -b^{-1} & t-c \end{bmatrix} = \begin{bmatrix} 1 &  b(t-c)^{-1} \\ 0 & 1 \end{bmatrix} \begin{bmatrix} t(t-c)^{-1} & 0 \\ 0 & (t-c)t^{-1} \end{bmatrix} \begin{bmatrix} t^{-1} & 0 \\ 0 & t \end{bmatrix} \begin{bmatrix} 1 & 0 \\ -b^{-1} (t-c)^{-1} & 1 \end{bmatrix}
$$
The left side is regular in $t$, while the right side lies in the scheme $\cW_{-\alpha^\vee}$ for $SL_2$. Thus the above matrix is an element of $\oW_{-\alpha^\vee}^0$ for $SL_2$.  Applying $\tau_i$, we see  that $r_i(b,c) \in \oW_{-\alpha_i^\vee}^0$.  We  also see that $\psi_i (r_i(b,c)) = b(t-c)^{-1}$.

By Theorem \ref{grYmu}, the classical limit of $E_i^{(1)}$ evaluates to $d_i^{-1/2} b$ on $r_i(b,c)$.  Similarly $A_i^{(1)}$ evaluates to $-c$, see Appendix \ref{appendix: notation}.  Finally, the classical limit of Lemma \ref{lemma: simplest coulomb branch of quiver gauge theory} implies that $r_i$ is an isomorphism, and also implies the claimed Poisson structure.
\end{proof}

\begin{Lemma}
\label{lemma: hamiltonian action on W0alphai}
The action of $\mathbb{G}_a$ on $\oW^0_{-\alpha_i^\vee}$ from Proposition \ref{actionWmu} is given explicitly in the coordinates of the previous lemma by
$$
a \cdot r_i(b,c) = r_i(b, c+d_i^{1/2} ab)
$$
Moreover:
\begin{enumerate}
\item This action of $\mathbb{G}_a$ is Hamiltonian with moment map $\Phi_i$.

\item The action of $\mathbb{G}_a$ on $\Phi_i^{-1}(1)_{-\alpha_i^\vee}^0 \subset \oW_{-\alpha_i^\vee}^0$ is free and transitive.
\end{enumerate}
\end{Lemma}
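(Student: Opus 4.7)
The plan is a direct matrix computation using the parametrization $r_i(b,c)$ from Lemma \ref{le:W0alphai}. For part (1), I would first observe that
$$
x_{-i}(-d_i^{1/2}a)\, r_i(b,c) \;=\; \tau_i\!\left( \begin{bmatrix} 1 & 0 \\ -d_i^{1/2} a & 1 \end{bmatrix} \begin{bmatrix} 0 & b \\ -b^{-1} & t-c \end{bmatrix}\right)
$$
and compute the $2\times 2$ product, which gives $\begin{bmatrix} 0 & b \\ -b^{-1} & t - (c + d_i^{1/2} a b) \end{bmatrix}$. Applying $\tau_i$ shows that this element is already equal to $r_i(b, c + d_i^{1/2}ab)\in \oW^0_{-\alpha_i^\vee}$, hence already lies in $U_1[[t^{-1}]]\, T_1[[t^{-1}]]\, U_{-,1}[[t^{-1}]]$. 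Therefore $\pi$ acts as the identity on it, and the action formula follows directly from the definition $a\cdot g = \pi(x_{-i}(-d_i^{1/2}a)g)$ of Proposition \ref{actionWmu}.

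For part (2), Lemma \ref{le:W0alphai} gives $\psi_i(r_i(b,c)) = bt^{-1} + bct^{-2} + \cdots$, so $\Phi_i(r_i(b,c)) = d_i^{-1/2}\psi_i^{(1)}(r_i(b,c)) = d_i^{-1/2} b$. Using the Poisson bracket $\{c,b\} = d_i b$ from the same lemma, for any $f \in \C[\oW^0_{-\alpha_i^\vee}]$ I compute
$$
\{\Phi_i, f\} \;=\; \{d_i^{-1/2} b,\, f\} \;=\; -d_i^{1/2}\, b\, \frac{\partial f}{\partial c}.
$$
On the other hand, part (1) gives $(-\epsilon)\cdot r_i(b,c) = r_i(b, c - \epsilon d_i^{1/2} b)$, and hence
$$
\frac{\partial}{\partial \epsilon} f\bigl(\exp(-\epsilon)\cdot r_i(b,c)\bigr)\Big|_{\epsilon=0} \;=\; -d_i^{1/2}\, b\, \frac{\partial f}{\partial c}.
$$
The two expressions agree, which verifies the moment map axiom.

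For part (3), the level set $\Phi_i^{-1}(1)_{-\alpha_i^\vee}^0$ consists of those $r_i(b,c)$ with $d_i^{-1/2}b = 1$, i.e.\ $b = d_i^{1/2}$, and so is the affine line $\{r_i(d_i^{1/2}, c) : c \in \C\}$. The explicit formula from part (1) restricts to $a\cdot r_i(d_i^{1/2}, c) = r_i(d_i^{1/2},\, c + d_i a)$, which is simply translation on $\C$ by $d_i a$, manifestly free and transitive. There is no serious obstacle here: the whole lemma reduces, via the $SL_2$-parametrization already installed in Lemma \ref{le:W0alphai}, to one $2\times 2$ matrix multiplication plus two elementary verifications. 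The only subtlety is bookkeeping of the normalizing factors $d_i^{1/2}$ between the definitions of $\Phi_i$, of $r_i(b,c)$, and of the action, which the computation above reconciles.
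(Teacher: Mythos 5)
Your proposal is correct and follows essentially the same route as the paper's proof: a direct $2\times 2$ matrix computation in the $SL_2$-parametrization of Lemma \ref{le:W0alphai} to get the explicit action formula, followed by comparing the Poisson vector field $\{\Phi_i,-\}$ with the infinitesimal action, and finally reading off freeness/transitivity on the fiber $b = d_i^{1/2}$. (Your ``part (3)'' corresponds to the lemma's item (2), but the content matches; the one small slip --- writing $U_1[[t^{-1}]]T_1[[t^{-1}]]U_{-,1}[[t^{-1}]]$ and omitting the $t^{-\alpha_i^\vee}$ factor of $\cW_{-\alpha_i^\vee}$ --- does not affect the argument, since you already identify the product as $r_i(b,c+d_i^{1/2}ab)\in\oW^0_{-\alpha_i^\vee}$, on which $\pi$ is the identity.)
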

\begin{proof}
To prove the first claim, we compute the action of $a \in \mathbb{G}_a$:
\begin{align*}
a \cdot r_i(b,c) &= \tau_i\left( \begin{bmatrix} 1 & 0 \\ -d_i^{1/2}  as & 1 \end{bmatrix} \cdot \begin{bmatrix} 0 & b \\ -b^{-1} & t-c \end{bmatrix} \right)  \\ &= \tau_i \left( \begin{bmatrix} 0 & b \\ -b^{-1} & t-c- d_i^{1/2} a b \end{bmatrix}\right)
\\ &= r_i(b, c+d_i^{1/2} a b),
\end{align*}

Now, the infinitesimal action of $s \in \operatorname{Lie}\mathbb{G}_a$  on a function $f \in \C[\oW^0_{-\alpha_i^\vee}]$ is given by 
$$
(s \cdot f) (g) = \frac{d}{d\epsilon} f \big( (- \epsilon s)\cdot g\big) \big|_{\epsilon = 0}
$$
Writing $g = r_i(b,c)$, the computation above shows that
$$
(-\epsilon s) \cdot r_i(b,c) = r_i(b, c-d_i^{1/2} \epsilon sb) 
$$
Therefore $s$ acts by the vector field $-  d_i^{1/2} s b \frac{\partial}{\partial c}$. Using the previous lemma, this is equal to the Hamiltonian vector field
$\{d_i^{-1/2}s b, - \} = \{ s\Phi_i, -\}$.  This shows that $\Phi_i$ is a moment map for this action, proving (1).

Finally  we have $\Phi_i(r_i(b,c)) = d_i^{-1/2} b$, so the fiber $\Phi_i^{-1}(1)_{-\alpha_i^\vee}^0$ corresponds to the locus where $b = d_i^{1/2}$.  The $\mathbb{G}_a$ action on this fiber is given by
$$
a\cdot r_i( d_i^{1/2}, c) = r_i(d_i^{1/2},  c+d_i a)
$$
for $a\in \mathbb{G}_a$ and $c\in \C$, which is clearly free and transitive.  This proves (2).
\end{proof}

\begin{Rem}
This lemma explains our choice of $\Ga$ action in Proposition \ref{actionWmu}. But this choice is not so important: it is not hard to see that for any $\kappa \in \Cx$, the  $\Ga$ action defined similarly by $a\cdot g = \pi( x_{-i}( \kappa a) g)$ is also Hamiltonian, and that the corresponding Hamiltonian reductions are all isomorphic.
\end{Rem}

We now begin examining the multiplication map.
\begin{Lemma} \label{le:g1g2}
Let $g_1\in \overline{\mathcal{W}}_{-\alpha_i^\vee}^0$ and $g_2\in \mathcal{W}_{\mu+\alpha_i^\vee}$. Then
\begin{enumerate}
\item $ g_1 g_2 \in U((t^{-1}))^i_1T_1[[t^{-1}]] t^\mu U_-((t^{-1}))$
\item There exists $n \in U^i[t], n_- \in U_-[t]$ such that $ \pi(g_1 g_2) = n g_1 g_2 n_- $.
\item $\psi^{(k)}_i(\pi(g_1 g_2)) = \psi^{(k)}_i(g_1)$ for $ k = 1, 2 $, and in particular $\Phi_i( m(g_1, g_2)) = \Phi_i(g_1)$.
\end{enumerate}
\end{Lemma}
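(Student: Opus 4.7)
The plan is to prove (1) by an explicit factorization, then derive (2) and (3) essentially as bookkeeping consequences using additivity of $\psi_i$ on the unipotent group.

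For part (1), I would first invoke the factorization of $g_1$ from the proof of Lemma~\ref{le:W0alphai}, writing $g_1 = s_1 h_1 t^{-\alpha_i^\vee} v_1$ with $s_1 = x_i(b(t-c)^{-1}) \in U_1[[t^{-1}]]$, $h_1 \in T_1[[t^{-1}]]$, and $v_1 = x_{-i}(-b^{-1}(t-c)^{-1}) \in U_{-,1}[[t^{-1}]]$. Write $g_2 = u_2 h_2 t^{\mu+\alpha_i^\vee} v_2$ via the definition of $\cW_{\mu+\alpha_i^\vee}$. The product $g_1 g_2$ contains the interior subexpression $v_1 u_2 \in G_1[[t^{-1}]]$; apply the Gauss decomposition in $G_1[[t^{-1}]]$ to rewrite it as $\tilde u \tilde h \tilde v$ with factors in the expected subgroups, then commute $t^{-\alpha_i^\vee}$ past each of $\tilde u, \tilde h, \tilde v, h_2$ using the identity $t^\nu x_\beta(f) t^{-\nu} = x_\beta(t^{\langle \nu, \beta\rangle} f)$. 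The cancellation $t^{-\alpha_i^\vee} \cdot t^{\mu+\alpha_i^\vee} = t^\mu$ then yields $g_1 g_2 = u h t^\mu v$ with $u \in U((t^{-1}))$, $h \in T_1[[t^{-1}]]$, and $v \in U_-((t^{-1}))$.

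To upgrade to $u \in U((t^{-1}))^i_1$, compute $\psi_i(u)$ using its additivity over products in $U$:
\[
\psi_i(u) \ = \ \psi_i(s_1) + \psi_i\bigl( h_1 t^{-\alpha_i^\vee} \tilde u t^{\alpha_i^\vee} h_1^{-1}\bigr).
\]
The first term equals $b(t-c)^{-1}$ by Lemma~\ref{le:W0alphai}. For the second, the weight identity $\alpha_i(t^{-\alpha_i^\vee}) = t^{-\langle\alpha_i^\vee,\alpha_i\rangle} = t^{-2}$, combined with $\alpha_i(h_1) \in 1 + t^{-1}\C[[t^{-1}]]$ and $\psi_i(\tilde u) \in t^{-1}\C[[t^{-1}]]$ (since $\tilde u \in U_1[[t^{-1}]]$), forces this term into $t^{-3}\C[[t^{-1}]]$. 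Thus $\psi_i(u) = b(t-c)^{-1} + O(t^{-3})$ lies in $t^{-1}\C[[t^{-1}]]$, proving $u \in U((t^{-1}))^i_1$ by the characterization of that subgroup given in Section~\ref{reduction}.

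For (2), the standard $\pi$-construction picks the unique $n \in U[t]$ with $nu \in U_1[[t^{-1}]]$. Since $\psi_i(u), \psi_i(nu) \in t^{-1}\C[[t^{-1}]]$, additivity forces $\psi_i(n) \in \C[t] \cap t^{-1}\C[[t^{-1}]] = \{0\}$, hence $n \in U^i[t]$. For (3), the same additivity together with $\psi_i(n) = 0$ gives $\psi_i(\pi(g_1 g_2)) = \psi_i(nu) = \psi_i(u) = b(t-c)^{-1} + O(t^{-3})$, which agrees with $\psi_i(g_1) = b(t-c)^{-1}$ through order $t^{-2}$. Reading off the coefficients of $t^{-1}$ and $t^{-2}$ yields $\psi_i^{(k)}(\pi(g_1 g_2)) = \psi_i^{(k)}(g_1)$ for $k = 1,2$; the $\Phi_i$ claim then follows from $\Phi_i = d_i^{-1/2}\psi_i^{(1)}$. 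The main obstacle, and also the point where the whole argument becomes surprising, is that everything hinges on the numerical fact $\langle\alpha_i^\vee,\alpha_i\rangle = 2$: this ``two'' in the weight shift under conjugation by $t^{-\alpha_i^\vee}$ is precisely what kills the higher contributions to $\psi_i(u)$ and explains why exactly two Fourier coefficients (and no more) can be matched in part (3).
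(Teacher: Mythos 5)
Your proof is correct and follows essentially the same approach as the paper's: Gauss-decompose the interior product, commute $t^{-\alpha_i^\vee}$ through, and use $\alpha_i(t^{-\alpha_i^\vee}) = t^{-2}$ to push the contribution of the inner Gauss factor to order $t^{-3}$, which simultaneously yields (1), (2), and (3). The only differences are cosmetic: you invoke the explicit $r_i(b,c)$ coordinates for $g_1$ (unnecessary — the generic factorization $g_1 = u_1 h_1 t^{-\alpha_i^\vee}u_{-,1}$ suffices), and you Gauss-decompose $v_1u_2$ rather than $h_1 u_{-,1} u_2 h_2$ as the paper does, which adds a small amount of torus-conjugation bookkeeping but changes nothing substantive.
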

\begin{proof}
Let $g_1 = u_1t^{-\alpha_i^\vee}h_1 u_{-,1} \in \overline{\mathcal{W}}_{-\alpha_i^\vee}^0$, $g_2 = u_2h_2t^{\mu +\alpha_i^\vee}u_{-,2} \in \mathcal{W}_{\mu+\alpha_i^\vee}$.  We apply Gauss decomposition 
$$h_1 u_{-,1} u_2 h_2=u_3 h_3 u_{-,3}$$ 
where $u_i\in U_1[[t^{-1}]]$, $h_i\in T_1[[t^{-1}]]$, and $u_{-,i}\in U_{-,1}[[t^{-1}]]$. Then we have
\[
g_1 g_2 = u_1(t^{-\alpha_i^\vee}u_3t^{\alpha_i^\vee}) h_3 t^{\mu} (t^{-(\mu+\alpha_i^\vee)}u_{-,3} t^{\mu+\alpha_i^\vee})u_{-,2}.
\]
Now $ \psi_i(u_1(t^{-\alpha_i^\vee}u_3t^{\alpha_i^\vee})) = \psi_i(u_1) + t^{-2} \psi_i(u_3) \in t^{-1}\C[[t^{-1}]] $ implying (3).  This also implies that $ g_1 g_2 \in U((t^{-1}))^i_1 T_1[[t^{-1}]] t^\mu U_-((t^{-1}))$, and hence (1) follows.  For (2), we observe that for $ u \in U((t{-1}))^i_1 $, there exists $ n \in U^i[t] $ such that $ nu \in U_1[[t^{-1}]]$.
\end{proof}
The function $\Phi_i \in \C[\oW^0_{-\alpha_i^\vee}]$ is a unit, thus by part (3) above we have  
$$
m:\overline{\mathcal{W}}_{-\alpha_i^\vee}^0\times \mathcal{W}_{\mu +\alpha_i^\vee} \longrightarrow \Phi_i^{-1}(\mathbb{C}^\times)_\mu \subset \cW_{\mu}
$$

 Next, we wish to define the inverse to $m$. First, consider the map
\[
\xi_i: \Phi_i^{-1}(\mathbb{C}^\times)_\mu \longrightarrow \overline{\mathcal{W}}_{-\alpha_i^\vee}^0, \quad g \mapsto r_i\big(\psi_i^{(1)}(g), {\psi_i^{(1)}(g)}^{-1}\psi_i^{(2)}(g)\big)  .
\]

\begin{Lemma} \label{le:impsi}
For any $ g\in \Phi_i^{-1}(\Cx)_\mu $, we have $ \xi_i(g)^{-1} g \in U((t^{-1}))_1^i T_1[[t^{-1}]] t^{\mu + \alpha_i^\vee} U_-((t^{-1}))$.
\end{Lemma}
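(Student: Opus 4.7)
The strategy is to compute $\xi_i(g)^{-1} g$ directly via group-theoretic manipulations, with one key structural observation. By construction, $\xi_i(g) = r_i(b,c)$ satisfies $\psi_i(\xi_i(g)) = b(t-c)^{-1} = bt^{-1} + bct^{-2} + O(t^{-3})$, agreeing with $\psi_i(g)$ in its first two Fourier coefficients. Writing $g = x_i(p) u' h t^\mu u_-$ with $u' \in U_1^i[[t^{-1}]]$, $h \in T_1[[t^{-1}]]$, $u_- \in U_{-,1}[[t^{-1}]]$, it follows that $q := p - b(t-c)^{-1}$ lies in $t^{-3}\C[[t^{-1}]]$: two units deeper than a generic element of $t^{-1}\C[[t^{-1}]]$. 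This extra depth is essential for absorbing the $t^{\alpha_i^\vee}$ factor appearing in $\xi_i(g)^{-1}$.

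Using the factorization of $r_i(b,c)$ from Lemma \ref{le:W0alphai}, one obtains
\[
\xi_i(g)^{-1} g \,=\, x_{-i}\big(b^{-1}(t-c)^{-1}\big) \cdot t^{\alpha_i^\vee} h_0^{-1} \cdot x_i(q)\, u'\, h\, t^\mu\, u_-
\]
for some $h_0 \in T_1[[t^{-1}]]$. The next step is to push the central $t^{\alpha_i^\vee}$ through the rightward factors. The conjugation $t^{\alpha_i^\vee} x_i(q) t^{-\alpha_i^\vee} = x_i(t^2 q)$ yields $t^2 q \in t^{-1}\C[[t^{-1}]]$, precisely because of the extra depth of $q$. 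Conjugation of $u'$ by $t^{\alpha_i^\vee}$ produces some $u'' \in U^i((t^{-1}))$ (no longer in $U_1^i[[t^{-1}]]$, but this is harmless). Combining with $t^{\alpha_i^\vee} t^\mu = t^{\mu+\alpha_i^\vee}$ and the fact that $T$ normalizes $U^i$, this puts the ``central'' part of the expression in $U((t^{-1}))^i_1 \cdot T_1[[t^{-1}]] \cdot t^{\mu+\alpha_i^\vee} \cdot U_{-,1}[[t^{-1}]]$.

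The remaining step is to commute the leading factor $x_{-i}\big(b^{-1}(t-c)^{-1}\big)$ past the central expression and absorb it into the $U_-((t^{-1}))$-part on the right. Commuting past the $T$-factors introduces only a scaling on the $x_{-i}$-argument. Commuting past the $x_i$-factor uses the standard $SL_2$-identity, which works without denominator issues because the relevant product lies in $t^{-2}\C[[t^{-1}]]$, so $1 + (\cdot)$ is invertible in $\C[[t^{-1}]]$. The main obstacle — the algebraic heart of the argument — is commuting $x_{-i}$ past a factor in $U^i((t^{-1}))$. For this I would invoke Proposition \ref{pr:commuteUi}(1): the statement ``$\tau_i(SL_2)$ normalizes $U^i$'' is a statement of algebraic groups and thus applies over any $\C$-algebra, in particular $\C((t^{-1}))$, yielding $x_{-i}(s)\, v = v^*\, x_{-i}(s)$ for some $v^* \in U^i((t^{-1}))$. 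Once $x_{-i}$ reaches the far right of $t^{\mu+\alpha_i^\vee}$, the conjugation by $t^{\mu+\alpha_i^\vee}$ produces an element of $U_-((t^{-1}))$ (possibly with polynomial parts in $t$, which are allowed in this enlarged factor). Collecting the pieces displays $\xi_i(g)^{-1} g$ in the required form.
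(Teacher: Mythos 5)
Your proposal is correct, and it reaches the result by a genuinely different organization of the same underlying computation. You decompose $\xi_i(g)^{-1}$ first (via the $U T U_-$ factorization from Lemma \ref{le:W0alphai}), absorb its upper-triangular part into $x_i(p)$, and then propagate $t^{\alpha_i^\vee}$ and $x_{-i}\bigl(b^{-1}(t-c)^{-1}\bigr)$ to the right in separate passes, invoking Proposition \ref{pr:commuteUi}(1) once for each Laurent-series root-subgroup factor that has to cross $U^i((t^{-1}))$. The paper instead multiplies the full $SL_2$-matrices $\xi_i(g)^{-1}x_i(p)$ and factors the result in a single stroke as $x_i(q_{\text{paper}})\cdot b_-$ with $b_- \in t^{\alpha_i^\vee}T_1[[t^{-1}]]U_-((t^{-1}))$; then only one conjugation of $u'$ by $b_-$ is needed. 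Both proofs hinge on the same numerical fact: because $\xi_i(g)$ is defined to match the first two Fourier coefficients of $\psi_i(g)$, there is enough extra vanishing to survive conjugation by $t^{\alpha_i^\vee}$ without the $x_i$-argument escaping $t^{-1}\C[[t^{-1}]]$. You make this explicit via $q = p - b(t-c)^{-1} \in t^{-3}\C[[t^{-1}]]$; the paper encodes it implicitly in its claim that $q_{\text{paper}} = p^{(1)}t - p^{(2)} - (p^{(1)})^2 p^{-1} \in t^{-1}\C[[t^{-1}]]$, which rests on exactly the same cancellation. The paper's version is slicker (one $SL_2$ computation, one conjugation); yours is more granular but more naturally reveals \emph{why} the choice of $c$ in the definition of $\xi_i$ matters. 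One small thing worth recording in a write-up: when you commute $x_{-i}$ and $t^{\alpha_i^\vee}$ past factors, the torus pieces $h_0^{-1}, h''$, etc.~produced along the way also need to be gathered and moved past the $U^i((t^{-1}))$-factor; since $T$ normalizes $U^i$ this is harmless, but it should be said explicitly so the final expression visibly lands in $U((t^{-1}))_1^i\, T_1[[t^{-1}]]\, t^{\mu+\alpha_i^\vee}\, U_-((t^{-1}))$.
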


\begin{proof}
Let $ p = \psi_i(g) $ and write $g= x_i(p) u' ht^\mu u_- \in \Phi_i^{-1}(\mathbb{C}^\times)$ with $ u' \in U^i_{1}[[t^{-1}]], h \in T_1[[t^{-1}]], u_- \in U_{-, 1}[[t^{-1}]]$.
Then
$$ \xi_i(g)^{-1} = r_i(p^{(1)}, {p^{(1)}}^{-1} p^{(2)})^{-1} = \tau_i \left(\begin{bmatrix} t - {p^{(1)}}^{-1} p^{(2)} & -p^{(1)} \\ {p^{(1)}}^{-1} & 0 \end{bmatrix} \right)$$

In $ SL_2((t^{-1}))$, we have
$$
 \begin{bmatrix} t - {p^{(1)}}^{-1} p^{(2)} & -p^{(1)} \\ {p^{(1)}}^{-1} & 0 \end{bmatrix}  \begin{bmatrix} 1 & p \\ 0 & 1 \end{bmatrix} =  \begin{bmatrix} 1 & q \\ 0 & 1 \end{bmatrix} \begin{bmatrix} p^{(1)} p^{-1} & 0 \\ {p^{(1)}}^{-1} & {p^{(1)}}^{-1} p  \end{bmatrix}
$$
where $ q = p^{(1)}t - p^{(2)} - (p^{(1)})^2 p^{-1} $.
Note that $ q \in t^{-1}\C[[t^{-1}]] $ and that $ p^{(1)} p^{-1} \in t + \C[[t^{-1}]] $.

Next we compute that 
$$
 b_- = \begin{bmatrix} p^{(1)} p^{-1} & 0 \\ {p^{(1)}}^{-1} & {p^{(1)}}^{-1} p  \end{bmatrix} = \begin{bmatrix} t & 0 \\ 0 & t^{-1} \end{bmatrix} \begin{bmatrix} t^{-1} p^{(1)} p^{-1} & 0 \\ 0 & (t^{-1}  p^{(1)} p^{-1})^{-1} \end{bmatrix} \begin{bmatrix} 1 & 0 \\ t^{-1} p^{-1} & 1 \end{bmatrix}
 $$
We see from this that $\tau_i(b_-) \in t^{\alpha_i^\vee} T_1[[t^{-1}]] U_-((t^{-1}))$.


Thus
\begin{align*} \xi_i(g)^{-1} g &= \xi_i(g)^{-1} x_i(p) u' ht^\mu u_- \\
&= x_i(q) \tau_i(b_-) u' h t^\mu u_- = x_i(q) \bigl( \tau_i(b_-) u' \tau_i(b_-)^{-1} \bigr) \tau_i(b_-) h t^\mu u_-
\end{align*}
By Proposition \ref{pr:commuteUi}.(1), we see that $ \tau_i(b_-) u' \tau_i(b_-)^{-1}\in U^i((t^{-1})) $.  Since $\tau_i(b_-) \in t^{\alpha_i^\vee} T_1[[t^{-1}]] U_-((t^{-1}))$, observe also that $\tau_i(b_-) h t^\mu u_-  \in  t^{\mu+\alpha_i^\vee} T_1[[t^{-1}]] U_-((t^{-1}))$. So the result follows.
\end{proof}

We define a candidate for the inverse of $ m $ as follows,
\begin{align*}
f: \Phi_i^{-1}(\mathbb{C}^\times)_\mu \longrightarrow \overline{\mathcal{W}}_{-\alpha_i^\vee}^0 \times \cW_{\mu+\alpha_i^\vee}, \quad g\mapsto (\xi_i(g), \pi(\xi_i(g)^{-1}g)).
\end{align*}

\begin{Lemma} \label{le:xi}
Let $ g_1 \in \oW^0_{-\alpha_i^\vee}, g_2 \in \cW_{\mu + \alpha_i^\vee} $.  Then $ \xi_i(\pi(g_1g_2)) = g_1 $.

\end{Lemma}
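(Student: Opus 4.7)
The plan is to reduce the lemma to two already-established facts: the invariance of the first two Fourier coefficients $\psi_i^{(1)}$ and $\psi_i^{(2)}$ under $\pi$ (Lemma \ref{le:g1g2}(3)), and an explicit computation on the parametrized form of $\oW^0_{-\alpha_i^\vee}$ (Lemma \ref{le:W0alphai}). In particular, no new manipulations inside $G((t^{-1}))$ are needed beyond what has already been done for Lemmas \ref{le:g1g2} and \ref{le:xi}.

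The first step is to observe that by its very definition, the map $\xi_i : \Phi_i^{-1}(\Cx)_\mu \to \oW^0_{-\alpha_i^\vee}$ depends on $g$ only through the values of $\psi_i^{(1)}(g)$ and $\psi_i^{(2)}(g)$. By Lemma \ref{le:g1g2}(3), these two coefficients satisfy
$$
\psi_i^{(k)}(\pi(g_1 g_2)) \;=\; \psi_i^{(k)}(g_1) \quad \text{for } k = 1, 2.
$$
Hence $\xi_i(\pi(g_1 g_2)) = \xi_i(g_1)$, so it suffices to prove that $\xi_i$ restricts to the identity on $\oW^0_{-\alpha_i^\vee}$.

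The second step is a direct computation in the coordinates of Lemma \ref{le:W0alphai}. Write $g_1 = r_i(b, c)$ for some $(b, c) \in \Cx \times \C$. The same lemma gives the expansion
$$
\psi_i(r_i(b, c)) \;=\; b(t-c)^{-1} \;=\; b t^{-1} + bc\, t^{-2} + \cdots,
$$
so $\psi_i^{(1)}(g_1) = b$ and $\psi_i^{(2)}(g_1) = bc$. Substituting into the definition of $\xi_i$, we get $\xi_i(g_1) = r_i(b,\, b^{-1}\cdot bc) = r_i(b, c) = g_1$, which completes the argument.

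I do not expect a genuine obstacle here: the numerics in the definition of $\xi_i$ (namely the ratio ${\psi_i^{(1)}}^{-1}\psi_i^{(2)}$) were chosen precisely so that $\xi_i|_{\oW^0_{-\alpha_i^\vee}} = \mathrm{Id}$, and Lemma \ref{le:g1g2}(3) furnishes the invariance needed to transport this identity through $\pi$. The only point worth double-checking along the way is that the ``correction'' element $n \in U^i[t]$ appearing in Lemma \ref{le:g1g2}(2) lies in $U^i$ (not $U$), so that $\psi_i$ of the product is unaffected by left multiplication by $n$; this is exactly what Lemma \ref{le:g1g2} already records.
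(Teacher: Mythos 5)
Your proof is correct and takes essentially the same approach as the paper: it uses Lemma \ref{le:g1g2}(3) to transport the first two Fourier coefficients of $\psi_i$ through $\pi$, then Lemma \ref{le:W0alphai} to see that $\xi_i$ reconstructs $g_1$ from them. The only difference is that you spell out the computation $\xi_i(r_i(b,c)) = r_i(b,c)$ which the paper's proof leaves implicit in the phrase ``these two Fourier coefficients determine $g_1$.''
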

\begin{proof}
By Lemma \ref{le:g1g2}.(3), we see that $ \psi_i^{(k)}(\pi(g_1 g_2)) = \psi_i^{(k)}(g_1) $ for $ k = 1, 2$.  By Lemma \ref{le:W0alphai}, these two Fourier coefficients determine $ g_1$.
\end{proof}

\begin{Theorem}\label{psiminverses}
The maps $f$ and $m$ are inverses of each other.
\end{Theorem}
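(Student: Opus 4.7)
My plan is to check the two compositions $f\circ m=\mathrm{id}$ and $m\circ f=\mathrm{id}$ separately, using the lemmas just established together with the key observation that elements of $\overline{\mathcal{W}}_{-\alpha_i^\vee}^0$ are images under $\tau_i$ of matrices in $SL_2[t]$.

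Concretely, for the first composition $f\circ m = \mathrm{id}$: given $(g_1,g_2)\in \overline{\mathcal{W}}_{-\alpha_i^\vee}^0\times \cW_{\mu+\alpha_i^\vee}$, I first compute $\xi_i(\pi(g_1g_2))$, which equals $g_1$ directly by Lemma \ref{le:xi}. For the second coordinate I need to show $\pi(g_1^{-1}\pi(g_1g_2))=g_2$. Using Lemma \ref{le:g1g2}.(2), write $\pi(g_1g_2)=ng_1g_2n_-$ with $n\in U^i[t]$ and $n_-\in U_-[t]$. Then $g_1^{-1}\pi(g_1g_2)=(g_1^{-1}ng_1)g_2n_-$. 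The crucial step is to observe that $g_1=r_i(b,c)=\tau_i(M)$ for the matrix $M=\begin{bmatrix}0 & b\\ -b^{-1} & t-c\end{bmatrix}\in SL_2[t]$ from Lemma \ref{le:W0alphai}; moreover $M^{-1}=\begin{bmatrix}t-c & -b\\ b^{-1} & 0\end{bmatrix}$ is also in $SL_2[t]$. Hence Proposition \ref{pr:commuteUi}.(3) gives $g_1^{-1}ng_1\in U^i[t]\subset U[t]$. Since also $n_-\in U_-[t]$, and $g_2\in \cW_{\mu+\alpha_i^\vee}$ so $\pi(g_2)=g_2$, applying the lemma preceding Section \ref{reltoWmu}'s Definition \ref{def: shift geometric} yields $\pi\big((g_1^{-1}ng_1)g_2n_-\big)=\pi(g_2)=g_2$, as desired.

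For the reverse composition $m\circ f=\mathrm{id}$: given $g\in \Phi_i^{-1}(\C^\times)_\mu$, set $g_1=\xi_i(g)$. By Lemma \ref{le:impsi}, $g_1^{-1}g$ lies in $U((t^{-1}))_1^i T_1[[t^{-1}]]t^{\mu+\alpha_i^\vee}U_-((t^{-1}))$; the analysis in the proof of that lemma shows that bringing $g_1^{-1}g$ into $\cW_{\mu+\alpha_i^\vee}$ requires only left multiplication by some $n\in U^i[t]$ (since the $x_i$-part is already in $1+t^{-1}\C[[t^{-1}]]$) and right multiplication by some $n_-\in U_-[t]$. Thus $\pi(g_1^{-1}g)=n g_1^{-1}g n_-$, and then
\[
m(f(g)) = \pi\big(g_1\cdot \pi(g_1^{-1}g)\big)=\pi(g_1 n g_1^{-1}\cdot g\cdot n_-).
\]
Applying Proposition \ref{pr:commuteUi}.(3) once more (since $g_1=\tau_i(M)\in\tau_i(SL_2[t])$), we get $g_1 n g_1^{-1}\in U^i[t]\subset U[t]$, and so the lemma preceding Definition \ref{def: shift geometric} gives $\pi(g_1ng_1^{-1}\cdot g\cdot n_-)=\pi(g)=g$.

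The main obstacle is just keeping track of where elements land under the decompositions $U((t^{-1}))=x_i(\C((t^{-1})))U^i((t^{-1}))$ and $G((t^{-1}))=UTU_-((t^{-1}))$, and verifying that the ``polynomial correction'' $n$ produced by $\pi$ actually lives in $U^i[t]$ (not just $U[t]$) so that conjugation by $g_1\in \tau_i(SL_2[t])$ preserves polynomiality. Once this is nailed down, both computations are straightforward consequences of Proposition \ref{pr:commuteUi}.(3) and the invariance property of $\pi$ under left/right multiplication by $U[t]$ and $U_-[t]$.
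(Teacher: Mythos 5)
Your proof is correct and follows essentially the same approach as the paper: both compositions are reduced to showing that the correction factors $n\in U^i[t]$ produced by $\pi$ conjugate to elements of $U^i[t]$ under $g_1 = \tau_i(M)$, via Proposition \ref{pr:commuteUi}(3), after which the $\pi$-invariance lemma kills them. You are, if anything, slightly more careful than the paper in spelling out why $n$ lands in $U^i[t]$ (via the $x_i(t^{-1}\C[[t^{-1}]])U^i((t^{-1}))$ decomposition) and in noting that $M^{-1}\in SL_2[t]$; you also correctly cite Lemma \ref{le:g1g2}(2) where the paper's text has a minor slip citing Lemma \ref{le:impsi}.
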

\begin{proof}
First, we show that $m \circ f =\Id$.

Let $ g \in \Phi_i^{-1}(\Cx)_\mu $.  From Lemma \ref{le:impsi}, there exist $ n \in U^i[t] $ and $ n_- \in U_-[t] $ such that  $ m (f(g)) = \pi(\xi_i(g) n \xi_i(g)^{-1} g n_-) $.  Since $ \xi_i(g) \in \tau_i (SL_2[t])$, by Proposition \ref{pr:commuteUi}(3) we have that $ \xi_i(g) n \xi_i(g)^{-1} \in U^i[t] $, so $ \pi(\xi_i(g) n \xi_i(g)^{-1} g n_-) = \pi(g) = g $ as desired.  

For the reverse direction, let $ g_1 \in \oW^0_{-\alpha_i^\vee}, g_2 \in \cW_{\mu + \alpha_i^\vee} $. By Lemma \ref{le:xi}, we see that
$$
f(m(g_1, g_2)) = (g_1, \pi(g_1^{-1} \pi(g_1 g_2)))
$$
By Lemma \ref{le:impsi}, $ \pi(g_1g_2) = n g_1 g_2 n_- $ with $ n \in U^i[t], n_- \in U_-[t] $.  Thus
$$
f(m(g_1, g_2)) =  (g_1, \pi(g_1^{-1} n g_1 g_2 n_-)) = (g_1, \pi(g_2)) = (g_1, g_2)
$$
since $ g_1^{-1} n g_1 \in U^i[t] $, by Proposition \ref{pr:commuteUi}(3).
\end{proof}

Recall that we have a Hamiltonian $\Ga$ action $ \oW^0_{-\alpha_i^\vee}$ by Lemma \ref{lemma: hamiltonian action on W0alphai}, with moment map $\Phi_i$, and that $ \Ga $ acts freely and transitively on $ \Phi_i^{-1}(1)_{-\alpha_i^\vee}^0 \subset \oW^0_{-\alpha_i^\vee} $.  

Now we consider the $\mathbb{G}_a$-action on $\overline{\mathcal{W}}_{-\alpha_i^\vee}^0 \times \mathcal{W}_{\mu +\alpha_i^\vee}$ acting solely on the first component by $a\cdot (g_1, g_2) := (a\cdot g_1,g_2) = \big(\pi(x_{-i}(-d_i^{1/2}a)g_1), g_2\big)$.

\begin{Proposition}\label{mequivariant}
$m :\overline{\mathcal{W}}_{-\alpha_i^\vee}^0 \times \mathcal{W}_{\mu +\alpha_i^\vee} \rightarrow \Phi_i^{-1}(\Cx)_\mu $ is $\mathbb{G}_a$-equivariant.
\end{Proposition}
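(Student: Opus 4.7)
The plan is to reduce $\Ga$-equivariance of $m$ to that of its inverse $f(g) = (\xi_i(g), \pi(\xi_i(g)^{-1}g))$ constructed in Theorem \ref{psiminverses}. Since $\Ga$ acts on the product $\oW^0_{-\alpha_i^\vee} \times \cW_{\mu+\alpha_i^\vee}$ only through the first factor, this amounts to checking two identities for $g \in \Phi_i^{-1}(\Cx)_\mu$: namely, (i) $\xi_i(a\cdot g) = a \cdot \xi_i(g)$, and (ii) $\pi(\xi_i(a\cdot g)^{-1}(a\cdot g)) = \pi(\xi_i(g)^{-1}g)$.

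For (i), I compute how the first two Fourier coefficients of $\psi_i$ transform under the action. Writing $g = x_i(p)u'ht^\mu u_-$ with $p = \psi_i(g)$ as in the proof of Lemma \ref{le:neq1}, the $SL_2$ identity
\begin{equation*}
\begin{bmatrix}1 & 0 \\ -d_i^{1/2}a & 1\end{bmatrix}\begin{bmatrix}1 & p \\ 0 & 1\end{bmatrix} = \begin{bmatrix}1 & p(1-d_i^{1/2}ap)^{-1} \\ 0 & 1\end{bmatrix}\begin{bmatrix}(1-d_i^{1/2}ap)^{-1} & 0 \\ -d_i^{1/2}a & 1-d_i^{1/2}ap\end{bmatrix}
\end{equation*}
shows that $\psi_i(x_{-i}(-d_i^{1/2}a)g) = p(1-d_i^{1/2}ap)^{-1}$. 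Since the $\pi$-correction from Lemma \ref{le:neq1} lies in $U_-[t]$ and right-multiplication by $U_-[t]$ preserves $\psi_i$ on the open cell, I obtain $\psi_i^{(1)}(a\cdot g) = \psi_i^{(1)}(g)$ and $\psi_i^{(2)}(a\cdot g) = \psi_i^{(2)}(g) + d_i^{1/2}a\,\psi_i^{(1)}(g)^2$. Substituting into the definition of $\xi_i$ and using the formula $a\cdot r_i(b,c) = r_i(b, c+d_i^{1/2}ab)$ from Lemma \ref{lemma: hamiltonian action on W0alphai} then yields (i).

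The key observation for (ii), which I expect to make that step almost trivial, is that the matrix computation in the proof of Lemma \ref{lemma: hamiltonian action on W0alphai} actually shows
\begin{equation*}
x_{-i}(-d_i^{1/2}a)\cdot r_i(b,c) = r_i(b, c+d_i^{1/2}ab)
\end{equation*}
at the level of $G((t^{-1}))$, before any $\pi$ is applied. Consequently $x_{-i}(-d_i^{1/2}a)\xi_i(g)$ already lies in $\oW^0_{-\alpha_i^\vee}$, so $a\cdot\xi_i(g) = x_{-i}(-d_i^{1/2}a)\xi_i(g)$ without correction. Writing $a\cdot g = x_{-i}(-d_i^{1/2}a)\,g\,n_-$ with $n_- \in U_-[t]$ from Lemma \ref{le:neq1}, the factors of $x_{-i}$ telescope:
\begin{equation*}
\xi_i(a\cdot g)^{-1}(a\cdot g) = \xi_i(g)^{-1}\,x_{-i}(d_i^{1/2}a)\,x_{-i}(-d_i^{1/2}a)\,g\,n_- = \xi_i(g)^{-1}g\cdot n_-.
\end{equation*}
Applying the $U[t]\times U_-[t]$-invariance $\pi(hn_-) = \pi(h)$ then gives (ii). The only real obstacle is the bookkeeping for (i); everything else is formal and follows from the explicit $SL_2$ description of $\oW^0_{-\alpha_i^\vee}$.
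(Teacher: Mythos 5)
Your proof is correct, but it takes a genuinely different route from the paper's. The paper proves $\Ga$-equivariance of $m$ directly: it reduces to showing $\pi\big(x_{-i}(-d_i^{1/2}a)g_1g_2\big)=\pi\big(x_{-i}(-d_i^{1/2}a)\pi(g_1g_2)\big)$, which follows from Lemma \ref{le:g1g2}(2) (giving $\pi(g_1 g_2) = n g_1 g_2 n_-$ with $n\in U^i[t]$, $n_-\in U_-[t]$) and Proposition \ref{pr:commuteUi}(3) (that $x_{-i}(\cdot)$ normalizes $U^i[t]$), so it never needs to know how $\xi_i$ transforms. You instead prove $\Ga$-equivariance of the inverse map $f$, which splits into your (i) and (ii). Your (i) requires the explicit computation of how $\psi_i^{(1)}$ and $\psi_i^{(2)}$ transform under the action (via the $SL_2$ identity and the fact that the $\pi$-correction in Lemma \ref{le:neq1} lies in $U_-[t]$ and so preserves $\psi_i$), and then matching against the explicit formula from Lemma \ref{lemma: hamiltonian action on W0alphai}. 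Your (ii) is then the slick step, resting on the observation — also present implicitly in the paper's proof of Lemma \ref{lemma: hamiltonian action on W0alphai} — that the $\Ga$-action on $\oW^0_{-\alpha_i^\vee}$ requires no $\pi$-correction, so the $x_{-i}$ factors cancel exactly. The paper's version is shorter and more uniform (a single equation, two lemmas); yours is more computational but yields the stronger statement that $\xi_i$ itself is $\Ga$-equivariant, which is a reasonable byproduct to record. One small thing to note explicitly: your calculation $\psi_i^{(1)}(a\cdot g) = \psi_i^{(1)}(g)$ is also what guarantees $a\cdot g$ stays in $\Phi_i^{-1}(\Cx)_\mu$, so $\xi_i(a\cdot g)$ makes sense — worth stating, since the proposition that $\Phi_i$ is a moment map (hence $\Ga$-invariant) only appears afterward in the paper.
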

\begin{proof}
Let $(g_1,g_2)\in \overline{\mathcal{W}}_{-\alpha_i^\vee}^0 \times \mathcal{W}_{\mu +\alpha_i^\vee}$. For $a\in \mathbb{G}_a$, we have to show that
\begin{align*}
\pi\big(x_{-i}(-d_i^{1/2}a)g_1g_2\big)=\pi\big(x_{-i}(-d_i^{1/2}a)\pi(g_1g_2)\big).
\end{align*}
By Lemma \ref{le:g1g2}(3), there exists $ n \in U^i[t], n_- \in U_-[t] $ such that $ \pi(g_1 g_2) = n g_1 g_2 n_-$.  By Proposition \ref{pr:commuteUi}(3), $ x_{-i}(-d_i^{1/2}a) n x_{-i}(-d_i^{1/2}a)^{-1} \in U^i[t] $, so we see that,
$$
\pi\big(x_{-i}(-d_i^{1/2}a) ng_1 g_2 n_-\big) = \pi\big(x_{-i}(-d_i^{1/2}a) n x_{-i}(-d_i^{1/2}a)^{-1} x_{-i}(-d_i^{1/2}a) g_1 g_2 n_-\big),
$$
which equals $\pi\big(x_{-i}(-d_i^{1/2}a) g_1 g_2 \big)$ as desired.

\end{proof}

Combining Theorem \ref{psiminverses}, Proposition \ref{mequivariant}, and Theorem \ref{multpoi}, we thus have a $ \Ga $--equivariant Poisson isomorphism $   \oW^0_{-\alpha_i^\vee} \times \cW_{\mu + \alpha_i^\vee} \rightarrow \Phi_i^{-1}(\Cx)_\mu$. From this isomorphism and Lemma \ref{le:g1g2} (3), we can deduce the following result.

\begin{Corollary}
The action of $ \Ga $ on $ \cW_\mu $ is Hamiltonian with moment map $\Phi_i $.
\end{Corollary}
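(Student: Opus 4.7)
The plan is to reduce the Corollary to what has already been established on the open subscheme $\Phi_i^{-1}(\Cx)_\mu \subset \cW_\mu$, and then extend by density.

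First, I would note that since $\operatorname{Lie}(\Ga) \cong \C$ is one-dimensional and abelian, the assignment $1 \mapsto \Phi_i$ automatically defines a Lie algebra homomorphism $\operatorname{Lie}(\Ga) \to \C[\cW_\mu]$. So the content of the statement is the differential identity
\begin{equation*}
\{\Phi_i, f\}(g) \ =\ \tfrac{\partial}{\partial \epsilon} f\bigl(\exp(-\epsilon)\cdot g\bigr)\Big|_{\epsilon = 0} \qquad \text{for all } f \in \C[\cW_\mu],\ g \in \cW_\mu.
\end{equation*}
Both sides are regular functions on $\cW_\mu$ (the right-hand side because the $\Ga$-action is algebraic).

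Next, I would verify the identity on the open subscheme $\Phi_i^{-1}(\Cx)_\mu$. Here the $\Ga$-equivariant Poisson isomorphism
\[
m\ \colon\ \oW^0_{-\alpha_i^\vee}\times \cW_{\mu+\alpha_i^\vee}\ \longrightarrow\ \Phi_i^{-1}(\Cx)_\mu,
\]
provided by Theorem \ref{psiminverses}, Proposition \ref{mequivariant}, and Theorem \ref{multpoi}, transports the question to the product. The $\Ga$-action on the product acts only on the first factor, where by Lemma \ref{lemma: hamiltonian action on W0alphai}(1) it is Hamiltonian with moment map $\Phi_i$. Therefore the product carries a Hamiltonian $\Ga$-action with moment map $\Phi_i \otimes 1$. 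By Lemma \ref{le:g1g2}(3), we have $m^\ast \Phi_i = \Phi_i \otimes 1$, so the pullback of $\Phi_i|_{\Phi_i^{-1}(\Cx)_\mu}$ is precisely the moment map for the induced action on the product. Because $m$ is a Poisson isomorphism and $\Ga$-equivariant, the moment-map identity on the product transfers to the moment-map identity for $\Phi_i$ on $\Phi_i^{-1}(\Cx)_\mu$.

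Finally, I would extend this identity from the open subscheme to all of $\cW_\mu$. The scheme $\cW_\mu \cong U_1[[t^{-1}]]\times T_1[[t^{-1}]]\times U_{-,1}[[t^{-1}]]$ is integral (its coordinate ring is a polynomial ring in countably many variables), and $\Phi_i^{-1}(\Cx)_\mu$ is a non-empty open subscheme (for example, one can exhibit points where $\Phi_i \ne 0$, such as the images under $m$ of $(r_i(b,c), 1)$ with $b \ne 0$). Two regular functions on an integral scheme that agree on a non-empty open subscheme are equal, which yields the identity on all of $\cW_\mu$.

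There is no genuinely hard step here, since all of the essential work — the equivariant Poisson isomorphism, the behaviour of $\Phi_i$ under $m$, and the Hamiltonian nature of the $\Ga$-action on $\oW^0_{-\alpha_i^\vee}$ — has already been done. The only mild subtlety is the appeal to irreducibility of the pro-affine scheme $\cW_\mu$ in order to spread the identity from an open subscheme to the whole space.
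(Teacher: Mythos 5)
Your proof is correct and takes essentially the same route as the paper: the paper simply asserts that the Corollary follows from the $\Ga$-equivariant Poisson isomorphism $m$ together with Lemma~\ref{le:g1g2}(3), and your write-up supplies the implicit details, in particular the observation that the moment-map identity is an equality of regular functions which can be checked on the non-empty open locus $\Phi_i^{-1}(\Cx)_\mu$ and then spread to all of the integral scheme $\cW_\mu$.
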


\vskip1ex
Having the equivariant isomorphism $m: \overline{\mathcal{W}}_{-\alpha_i^\vee}^0 \times \mathcal{W}_{\mu +\alpha_i^\vee} \rightarrow  \Phi_i^{-1}(\mathbb{C}^\times)_\mu$, we also obtain an isomorphism for the generalized slices.

\begin{Corollary}
\label{cor: reduction for slices v1}
 Let $\lambda$ be a dominant coweight and let $ \mu \le \lambda $ be a coweight. Consider $\Phi_i^{-1}(\mathbb{C}^\times)^\lambda_\mu \subset \oW^\lambda_\mu $.  Then  $m$ restricts to a $\mathbb{G}_a$-equivariant isomorphism $$m: \overline{\mathcal{W}}_{-\alpha_i^\vee}^0 \times \oW^\lambda_{\mu+\alpha_i^\vee} \longrightarrow \Phi_i^{-1}(\mathbb{C}^\times)^\lambda_\mu$$
\end{Corollary}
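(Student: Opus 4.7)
The strategy is to show that the $\Ga$-equivariant Poisson isomorphism $m : \oW^0_{-\alpha_i^\vee} \times \cW_{\mu+\alpha_i^\vee} \to \Phi_i^{-1}(\Cx)_\mu$ established by Theorem \ref{psiminverses} and Proposition \ref{mequivariant} restricts to the claimed isomorphism on the generalized slices. Equivariance is then inherited for free from Proposition \ref{mequivariant}, so the entire task is to verify that both $m$ and its inverse $f(g) = (\xi_i(g), \pi(\xi_i(g)^{-1}g))$ send the relevant subvarieties into each other.

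For the forward direction, Proposition \ref{prop: mult map restricts to slices} already guarantees that $m$ restricts to $\oW^0_{-\alpha_i^\vee} \times \oW^\lambda_{\mu+\alpha_i^\vee} \to \oW^\lambda_\mu$, and by Lemma \ref{le:g1g2}(3) the value of $\Phi_i$ on $m(g_1,g_2)$ equals $\Phi_i(g_1) \in \Cx$, so this restricted map indeed lands in $\Phi_i^{-1}(\Cx)^\lambda_\mu$.

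For the reverse direction, I need to show that whenever $g \in \Phi_i^{-1}(\Cx)^\lambda_\mu$, the pair $f(g)$ lies in $\oW^0_{-\alpha_i^\vee} \times \oW^\lambda_{\mu+\alpha_i^\vee}$. The first component $\xi_i(g)$ lies in $\oW^0_{-\alpha_i^\vee}$ by definition, while $\pi(\xi_i(g)^{-1}g) \in \cW_{\mu+\alpha_i^\vee}$ is exactly the content of Lemma \ref{le:impsi}. What remains is to verify that $\pi(\xi_i(g)^{-1} g) \in \overline{G[t] t^\lambda G[t]}$. The key observation is the inclusion $\oW^0_{-\alpha_i^\vee} \subseteq G[t]$: indeed, by Lemma \ref{le:W0alphai} every point has the form $r_i(b,c) = \tau_i\bigl(\bigl[\begin{smallmatrix} 0 & b \\ -b^{-1} & t-c\end{smallmatrix}\bigr]\bigr)$, and this matrix clearly lies in $SL_2[t]$. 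Hence $\xi_i(g)^{-1} \in G[t]$, so $\xi_i(g)^{-1} g \in G[t] \cdot \overline{G[t] t^\lambda G[t]} \subseteq \overline{G[t] t^\lambda G[t]}$, and since $\pi$ acts by left and right multiplication by elements of $U[t]$ and $U_-[t]$, it preserves this closed subset.

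Having checked that $m$ and $f$ restrict to mutually inverse maps between $\oW^0_{-\alpha_i^\vee} \times \oW^\lambda_{\mu+\alpha_i^\vee}$ and $\Phi_i^{-1}(\Cx)^\lambda_\mu$, the corollary follows, with $\Ga$-equivariance inherited from Proposition \ref{mequivariant}. There is no serious obstacle here — the argument is essentially a bookkeeping exercise once one notices the elementary inclusion $\oW^0_{-\alpha_i^\vee} \subseteq G[t]$, which ensures that multiplying by $\xi_i(g)^{\pm 1}$ does not leave the spherical Schubert variety $\overline{G[t] t^\lambda G[t]}$.
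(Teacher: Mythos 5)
Your proof is correct and follows essentially the same approach as the paper, whose proof is the single sentence that $m$ and $\pi$ preserve the sets $\overline{G[t]t^\lambda G[t]}$. You have usefully unpacked the key step — that $\xi_i(g)^{-1} \in G[t]$, because $\oW^0_{-\alpha_i^\vee} \subseteq \tau_i(SL_2[t]) \subset G[t]$ by the explicit description in Lemma \ref{le:W0alphai} — which is the observation that makes the inverse $f$ restrict as well.
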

\begin{proof}
This follows from the fact that the maps $m$ and $\pi$ preserve the subsets $ \overline{G[t]t^\lambda G[t]}$.
\end{proof}

We arrive at the desired reduction result:

\begin{Corollary}
\label{cor: reduction for slices v2}
Let $ \lambda $ be a dominant coweight and let $ \mu \le \lambda $ be a coweight.  The map $ g \mapsto \pi(\xi_i(g)^{-1}g) $ gives an isomorphism
$$
\oW^\lambda_\mu \sslash_1 \Ga \cong \oW^\lambda_{\mu + \alpha_i^\vee}
$$
\end{Corollary}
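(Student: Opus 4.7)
The plan is to deduce the corollary directly from Corollary \ref{cor: reduction for slices v1}, combined with the explicit description of $\oW^0_{-\alpha_i^\vee}$ and its $\Ga$-action given in Lemmas \ref{le:W0alphai} and \ref{lemma: hamiltonian action on W0alphai}. The first observation is that, for the purposes of Hamiltonian reduction at level $1$, only the open subscheme $\Phi_i^{-1}(\Cx)^\lambda_\mu \subseteq \oW^\lambda_\mu$ is relevant: since $\Phi_i$ is invertible modulo $\Phi_i - 1$, the natural map $\C[\oW^\lambda_\mu]/\langle \Phi_i - 1\rangle \to \C[\Phi_i^{-1}(\Cx)^\lambda_\mu]/\langle \Phi_i - 1\rangle$ is an isomorphism. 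By Corollary \ref{cor: reduction for slices v1}, the multiplication map gives a $\Ga$-equivariant Poisson isomorphism
$$m: \oW^0_{-\alpha_i^\vee} \times \oW^\lambda_{\mu+\alpha_i^\vee} \longrightarrow \Phi_i^{-1}(\Cx)^\lambda_\mu,$$
and Lemma \ref{le:g1g2}(3) shows that $\Phi_i \circ m$ is the moment map on the first factor alone. So the reduction of $\oW^\lambda_\mu$ at $1$ is identified, as a Poisson scheme, with the reduction of this product at $1$.

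Since $\Ga$ acts trivially on the second factor, the reduction of the product splits: $\Ga$-invariants of $\C[\oW^0_{-\alpha_i^\vee}]/\langle \Phi_i - 1\rangle \otimes \C[\oW^\lambda_{\mu+\alpha_i^\vee}]$ equal $(\C[\oW^0_{-\alpha_i^\vee}]/\langle \Phi_i - 1\rangle)^\Ga \otimes \C[\oW^\lambda_{\mu+\alpha_i^\vee}]$. In the coordinates $(b,c)$ of Lemma \ref{le:W0alphai}, we have $\Phi_i = d_i^{-1/2} b$, so modding out by $\Phi_i - 1$ sets $b = d_i^{1/2}$ and gives the polynomial ring $\C[c]$. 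By Lemma \ref{lemma: hamiltonian action on W0alphai}, $\Ga$ acts there by translation $c \mapsto c + d_i a$, whose invariants are $\C$; equivalently, by part (2) of that lemma, $\Ga$ is free and transitive on $\Phi_i^{-1}(1)^0_{-\alpha_i^\vee}$. Thus $\oW^0_{-\alpha_i^\vee}\sslash_1 \Ga$ is a point, and combining the above yields a Poisson isomorphism $\oW^\lambda_\mu \sslash_1 \Ga \cong \oW^\lambda_{\mu+\alpha_i^\vee}$.

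To identify the map explicitly, I would use the inverse $f$ to $m$ from Theorem \ref{psiminverses}, namely $f(g) = \bigl(\xi_i(g),\, \pi(\xi_i(g)^{-1}g)\bigr)$. Under the product decomposition above, the first coordinate is absorbed in the $\Ga$-quotient while the second coordinate survives, so the induced map $\oW^\lambda_\mu\sslash_1\Ga \to \oW^\lambda_{\mu+\alpha_i^\vee}$ is precisely $g \mapsto \pi(\xi_i(g)^{-1}g)$, as claimed.

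I do not anticipate a genuine obstacle: the bulk of the work has been packaged into Corollary \ref{cor: reduction for slices v1}, and what remains is the elementary fact that Hamiltonian reduction commutes with a trivially-acted-upon factor, together with a direct coordinate calculation on $\oW^0_{-\alpha_i^\vee}$. The only minor points to verify cleanly are that passing to the open subscheme $\Phi_i^{-1}(\Cx)^\lambda_\mu$ does not alter the reduction (which follows because $\Phi_i$ becomes a unit once we impose $\Phi_i = 1$), and that the resulting isomorphism is Poisson (which is inherited from $m$ via the Poisson nature of Hamiltonian reduction).
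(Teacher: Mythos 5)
Your proof is correct and fills in precisely the details the paper leaves implicit: the paper states Corollary \ref{cor: reduction for slices v2} with no proof, as an immediate consequence of Corollary \ref{cor: reduction for slices v1}, Lemma \ref{le:W0alphai}, Lemma \ref{lemma: hamiltonian action on W0alphai}, and Theorem \ref{psiminverses}. Your argument — pass to $\Phi_i^{-1}(\Cx)^\lambda_\mu$, use the $\Ga$-equivariant Poisson isomorphism $m$ to identify the reduction with that of the product, observe that $\Ga$ acts only on the $\oW^0_{-\alpha_i^\vee}$ factor whose reduction is a point, and read off the explicit map from $f = m^{-1}$ — is exactly the intended deduction.
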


\subsection{The rank 1 case}
Take $ G = PGL_2 $.  Let $ \lambda \in \N $ be a dominant coweight and let $ \mu = \lambda - 2m $, for $ m \in \N $.  Then by \cite[Prop 2.17]{BFN}, we have
\begin{align*}
\oW^\lambda_\mu = \left\{ g = \begin{bmatrix} a & b \\ c & d \end{bmatrix} : \begin{array}{cl}(i) & a,b,c,d \in \C[t], \\ (ii) & \text{$d $ is monic of degree $m$}, \\ (iii) &\text{$b,c$ have degree smaller than $ m $}, \\ (iv) & \det(g) = t^\lambda \end{array}  \right\}
\end{align*}
In this case, let us write $ b = b^{(1)} t^{m-1} + \dots, \ d = t^m + d^{(1)}t^{m-1} + \dots $.  We have that $ \Phi(g) = b^{(1)} $.  Also, we see that
$$
\xi(g) = \begin{bmatrix} 0 & b^{(1)} \\ -{b^{(1)}}^{-1} & t - {b^{(1)}}^{-1} b^{(2)} + d^{(1)} \end{bmatrix}
$$

Thus, the map
$\Phi^{-1}(1) \rightarrow \oW^\lambda_{\mu + \alpha_i^\vee}
$
is given by
\begin{equation} \label{eq:d'b}
g \mapsto \pi(\xi(g)^{-1} g) = \begin{bmatrix} a' & b' \\ c' & d' \end{bmatrix} \text{ where $ b' = b(t- b^{(2)} + d^{(1)}) - d, \ d' = b $}
\end{equation}

Recall that since $ \oW^\lambda_\mu $ is a Coulomb branch, it comes with an integrable system (see \cite[(3.17)]{BFN1}). In the case of $ PGL_2 $ this integrable system takes the form 
$$ \oW^\lambda_\mu \rightarrow \C^m / S_m \quad \begin{bmatrix} a & b \\ c & d \end{bmatrix}  \mapsto d $$

We can consider the restriction of this integrable system to the locus $ \Phi^{-1}(1) $ and also consider the map $ \Phi^{-1}(1) \rightarrow \oW^\lambda_{\mu + \alpha} \rightarrow \C^{m-1} / S_{m-1} $.  Using (\ref{eq:d'b}), we see that the map $ \Phi^{-1}(1) \rightarrow \C^m / S_m \times \C^{m-1} / S_{m-1} $ is given by $ g \mapsto (d,b) $ which is birational (it is the same as the morphism to the Zastava space).

Thus the correspondence $ \Phi^{-1}(1) \rightarrow \oW^\lambda_\mu \times \oW^\lambda_{\mu + \alpha} $ is a birational section of the integrable system.  This is a special feature of the $ PGL_2 $ case.

\section{Quantum Hamiltonian reduction}
In this section, we quantize the isomorphism $\Phi_i^{-1}(\mathbb{C}^\times)_\mu \simeq \overline{\mathcal{W}}_{-\alpha_i^\vee}^0 \times \mathcal{W}_{\mu+\alpha_i^\vee}$.

Our first task is to  find a quantization of $\Phi_i^{-1}(\mathbb{C}^\times)_\mu$. Since $\Phi_i$ is the classical limit of $E_i^{(1)}$, our quantization will correspond to the localization of $Y_\mu$ at $E_i^{(1)}$.

\subsection{A localization for $Y_\mu$}\label{alocalizationforYmu}

Let $S$ be a multiplicative subset of a ring $A$, meaning that $S$ is closed under multiplication, $1\in S$, and $0 \notin S$.  	We call $S$ a \textbf{right denominator set} if:
\begin{enumerate}
\item $a S \cap s A \neq \emptyset$ for all $a \in A$ and $s\in S$ (the right Ore condition),
\item for any $a\in A$, if $sa = 0$ for some $s\in S$, then $a t =0$ for some $t\in S$.
\end{enumerate}
By Ore's Theorem \cite[Theorem 10.6]{La}, the right ring of fractions $A S^{-1}$ exists if and only if $S$ is a right denominator set.  One may similarly define the notion of left denominator set, which is equivalent to the existence of the left ring of fractions $S^{-1} A$.  By \cite[Corollary 10.14]{La}, if $ AS^{-1}$ and $S^{-1} A$ both exist then 
$$
A S^{-1} \cong S^{-1} A
$$

\cite[Prop.~1.3.8]{Ginzburg} gives  a simple criterion for proving that $S$ is a denominator set:

\begin{Lemma}
Let $S$ be a multiplicative subset of a ring $A$.  Suppose that for all $s\in S$, the operator 
$$
\operatorname{ad}_s: A \rightarrow A, \qquad a\mapsto [s,a]
$$
is locally nilpotent.  Then $S$ is both a right and left denominator set.  In particular, the rings of fractions $A S^{-1} \cong S^{-1} A$ both exist.
\end{Lemma}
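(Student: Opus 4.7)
The plan is to verify the two Ore conditions and the weaker torsion axiom directly, using a binomial-type expansion that governs how a power of $s$ commutes past an arbitrary element. The key observation is that the left and right multiplication operators $L_s, R_s \colon A \to A$ commute with each other, while $\ad_s = L_s - R_s$. Hence
\[
R_s^n \;=\; (L_s - \ad_s)^n \;=\; \sum_{k=0}^n \binom{n}{k}(-1)^k L_s^{n-k}\, \ad_s^k,
\]
which evaluated on $a\in A$ yields $as^n = \sum_{k=0}^n \binom{n}{k}(-1)^k s^{n-k}\, \ad_s^k(a)$. A symmetric identity holds for $L_s^n$.

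Now fix $a\in A$ and $s\in S$. By local nilpotence, pick $n$ with $\ad_s^n(a)=0$, so that the above sum truncates at $k=n-1$. Every remaining term carries a factor $s^{n-k}$ with $n-k\geq 1$, and therefore factors through $s$ on the left: $as^n = s\cdot a'$, where
\[
a' \;=\; \sum_{k=0}^{n-1}\binom{n}{k}(-1)^k s^{n-k-1}\,\ad_s^k(a) \;\in\; A.
\]
Since $s^n\in S$, setting $s'=s^n$ gives $as'=sa'$, verifying the right Ore condition. Using the identity for $L_s^n$ in the same way yields $s^n a \in As$, which is the left Ore condition.

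For the torsion axiom, suppose $sa=0$. Expanding directly,
\[
\ad_s^k(a) \;=\; \sum_{j=0}^k (-1)^j \binom{k}{j} s^{k-j}\,a\,s^j;
\]
every term with $k-j\geq 1$ contains the factor $s^{k-j}a = s^{k-j-1}(sa) = 0$, so only the $j=k$ term survives, giving $\ad_s^k(a) = (-1)^k\, a s^k$. Local nilpotence then yields $as^n=0$ for some $n$, and $t=s^n\in S$ works. The same argument with left and right reversed covers the opposite torsion condition, completing the verification. The final isomorphism $AS^{-1}\cong S^{-1}A$ is then immediate from the quoted result \cite[Corollary 10.14]{La}.

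There is no real obstacle here: the entire argument is bookkeeping with the binomial identity, and the only point worth noting is that the truncation supplied by local nilpotence is exactly what produces the required factor of $s$ on one side, with no division or analytic input needed.
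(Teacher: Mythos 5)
Your proof is correct, and the argument is the standard one: expanding $R_s^n = (L_s - \ad_s)^n$ (using that $L_s$ and $R_s$ commute) and truncating via local nilpotence is exactly the computation that underlies the cited result \cite[Prop.~1.3.8]{Ginzburg}, which the paper invokes without reproving. Your treatment of the torsion axioms via the binomial expansion of $\ad_s^k(a)$ is likewise the expected argument, so there is nothing to compare beyond noting that you have filled in the details the paper deliberately leaves to the reference.
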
	

The next lemma follows by induction, using the Leibniz rule:

\begin{Lemma}
Let $r\in A$, such that $\ad_r$ is nilpotent.  Then $\ad_{r^n}$ is also nilpotent, for any $n\geq 0$.
\end{Lemma}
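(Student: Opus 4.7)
The plan is to reduce the nilpotence of $\ad_{r^n}$ to that of $\ad_r$ by expressing $\ad_{r^n}$ as a composition $\ad_r \circ P$ in which $P$ commutes with $\ad_r$. Introduce the $\C$-linear operators $L_r, R_r : A \to A$ of left and right multiplication by $r$. These commute with each other, and each commutes with $\ad_r = L_r - R_r$, since $r[r,x] = [r, rx]$ and $[r,x] r = [r, xr]$.

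Using the commutativity of $L_r$ and $R_r$, the key step is the factorization
\begin{equation*}
\ad_{r^n} \ = \ L_r^n - R_r^n \ = \ (L_r - R_r) \sum_{i=0}^{n-1} L_r^{n-1-i} R_r^i \ = \ \ad_r \circ P,
\end{equation*}
where $P := \sum_{i=0}^{n-1} L_r^{n-1-i} R_r^i$. Since $P$ is a polynomial in $L_r$ and $R_r$, it too commutes with $\ad_r$. Therefore $\ad_{r^n}^k = \ad_r^k \circ P^k$ as operators on $A$, for every $k \geq 0$. This factorization is exactly the closed form of what one obtains by induction from the Leibniz-type identity $\ad_{r \cdot r^{n-1}} = L_r \, \ad_{r^{n-1}} + R_{r^{n-1}} \, \ad_r$, so the argument is in spirit the induction suggested in the paper.

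The conclusion is then immediate. If $\ad_r^N = 0$ as an operator, then $\ad_{r^n}^N = \ad_r^N \circ P^N = 0$, giving nilpotence of $\ad_{r^n}$. For the local version that is ultimately needed to apply the denominator-set criterion preceding the statement, given $x \in A$ with $\ad_r^N(x) = 0$ one instead rewrites $\ad_{r^n}^N(x) = P^N\bigl(\ad_r^N(x)\bigr) = 0$, again using that $P$ commutes with $\ad_r$. I do not anticipate any serious obstacle; the only point requiring care is the verification that $L_r$ and $R_r$ commute with $\ad_r$, which is a direct one-line computation.
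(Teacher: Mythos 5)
Your proof is correct, and you have already identified why it is essentially the paper's argument: the paper only remarks that the lemma follows by induction via the Leibniz rule, and your factorization $\ad_{r^n} = \ad_r \circ P$ with $P = \sum_{i=0}^{n-1} L_r^{n-1-i}R_r^i$ commuting with $\ad_r$ is precisely the closed form that induction produces. Both versions (global nilpotence and, more usefully for Corollary \ref{co:GabberSimple}, local nilpotence pointwise) drop out immediately from $\ad_{r^n}^N = P^N \circ \ad_r^N$.
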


Combining the above lemmas, we get an even simpler criterion:
\begin{Corollary} \label{co:GabberSimple}
Let $s\in A$ be an element which is not nilpotent, and let $S = \left\{s^n : n \geq0\right\}$.  If $\ad_s$ is locally nilpotent on $A$, then $S$ is both a right and left denominator set, and the rings of fractions $A S^{-1} \cong S^{-1} A$ both exist.
\end{Corollary}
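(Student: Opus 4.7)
The plan is to verify the two hypotheses of the first preceding lemma for the set $S = \{s^n : n \geq 0\}$: that $S$ is a multiplicative subset of $A$, and that $\ad_t$ is locally nilpotent for every $t \in S$. The existence of the rings of fractions and their isomorphism $AS^{-1} \cong S^{-1}A$ will then follow directly from that lemma.

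For the multiplicative-set condition, closure under multiplication and $1 = s^0 \in S$ are immediate; the hypothesis that $s$ is not nilpotent is exactly what ensures $0 \notin S$.

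For local nilpotence of each $\ad_{s^n}$, I would rerun the Leibniz argument of the second preceding lemma element-by-element, slightly upgrading its conclusion from ``nilpotent'' to ``locally nilpotent''. Explicitly, a direct computation yields the identity
$$\ad_{s^n} \;=\; \Bigl(\sum_{k=0}^{n-1} L_s^k R_s^{n-1-k}\Bigr)\, \ad_s \;=:\; P \cdot \ad_s,$$
where $L_s$ and $R_s$ denote left and right multiplication by $s$ on $A$. Since $s$ commutes with itself, the operators $L_s$, $R_s$, and $\ad_s$ pairwise commute on $A$, so $P$ commutes with $\ad_s$ and hence $\ad_{s^n}^M = P^M \, \ad_s^M$ for every $M \geq 0$. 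Given any $a \in A$, local nilpotence of $\ad_s$ produces some $M$ with $\ad_s^M(a) = 0$, and therefore $\ad_{s^n}^M(a) = 0$ as well. Thus $\ad_{s^n}$ is locally nilpotent, as required.

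I do not foresee any real obstacle here: the entire content of the corollary is the bookkeeping needed to pass from the ``nilpotent'' conclusion of the second lemma to ``locally nilpotent'' for all powers $s^n$, and the commuting-operators identity above accomplishes this in one step.
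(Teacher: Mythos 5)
Your proof is correct and follows the paper's intended route: verify that $S$ is multiplicative (with $0 \notin S$ coming from non-nilpotence of $s$), show that each $\ad_{s^n}$ is locally nilpotent via a Leibniz-rule decomposition, and then invoke the Ore criterion from the preceding lemma. You also correctly upgrade the paper's intermediate lemma --- which is stated only for globally nilpotent $\ad_r$ --- to the locally nilpotent case, using the clean identity $\ad_{s^n} = P\,\ad_s$ with $P = \sum_{k=0}^{n-1} L_s^k R_s^{n-1-k}$ commuting with $\ad_s$, so that $\ad_{s^n}^M = P^M \ad_s^M$ and local nilpotence is inherited; this is exactly the step silently elided by the paper's ``Combining the above lemmas.''
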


\begin{Theorem}
\label{OreYmu}
Let $S = \left\{ (E_i^{(1)})^n : n \geq 0\right\} \subseteq Y_\mu$.  Then $S$ is both a right and left denominator set, and the rings of fractions $Y_\mu S^{-1} \cong S^{-1} Y_\mu$ both exist.  
\end{Theorem}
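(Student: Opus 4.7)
My plan is to apply Corollary~\ref{co:GabberSimple} with $s = E_i^{(1)}$, which reduces the statement to two assertions: (a) $E_i^{(1)}$ is not nilpotent in $Y_\mu$, and (b) $\ad_{E_i^{(1)}}$ is locally nilpotent on $Y_\mu$.

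For (a), I would first observe that $Y_\mu$ is a domain. By Theorem~\ref{grYmu}, $\gr Y_\mu \cong \C[\cW_\mu]$, and since $\cW_\mu \cong U_1[[t^{-1}]] \times T_1[[t^{-1}]] \times U_{-,1}[[t^{-1}]]$ is a product of infinite-dimensional affine spaces, its coordinate ring is a polynomial ring (in countably many variables), hence a domain. Standard filtered-algebra arguments, using that the filtration is exhaustive and separated (Proposition~\ref{prop: filtration properties}), then lift this to show $Y_\mu$ itself is a domain. Since the classical limit $\Phi_i = d_i^{-1/2}\psi_i^{(1)}$ of $E_i^{(1)}$ is manifestly nonzero in $\C[\cW_\mu]$, the element $E_i^{(1)}$ is nonzero in $Y_\mu$ and therefore not nilpotent.

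For (b), the Leibniz rule shows that the set of elements on which $\ad_{E_i^{(1)}}$ is locally nilpotent is a subalgebra, so it suffices to verify local nilpotence on a chosen set of generators. The defining relations handle several cases directly: the Serre relation (\ref{symE}), specialized with all exponents equal to $1$, gives $\ad_{E_i^{(1)}}^{1-a_{ij}}(E_j^{(q)}) = 0$ whenever $j \neq i$; relation (\ref{E,F}) reduces $F_j^{(q)}$ to the $H$-case via $[E_i^{(1)}, F_j^{(q)}] = \delta_{ij}H_i^{(q)}$; and an induction on $q$ using relation (\ref{E,E}) specialized at $p=j=i$ handles each $E_i^{(q)}$, since $[E_i^{(1)}, E_i^{(q)}]$ gets expressed in terms of $(E_i^{(1)})^2$ and earlier brackets.

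The main obstacle is local nilpotence on the Cartan generators $H_j^{(p)}$, for which the $[H,E]$ relation (\ref{H,E}) gives a recursion that is not obviously terminating. To handle this uniformly, my plan is to use classical input: by Proposition~\ref{actionWmu}, $\cW_\mu$ carries an algebraic $\Ga$-action, so the corresponding Poisson derivation $\{\Phi_i, \cdot\}$ on $\C[\cW_\mu] \cong \gr Y_\mu$ is automatically locally nilpotent. I would then lift this to the quantum level using a filtration $F_{\nu_1,\nu_2}$ in which $\ad_{E_i^{(1)}}$ strictly decreases filtered degree (its classical limit being zero in the commutative associated graded), combining classical local nilpotence with an inductive bound on the filtered degree of elements appearing in the $\ad_{E_i^{(1)}}$-orbit of $a$. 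Since each particular element of $Y_\mu$ involves only finitely many PBW generators, the finiteness at each graded layer should suffice to bound the orbit and conclude.
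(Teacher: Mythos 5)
Your overall framework matches the paper's: invoke Corollary \ref{co:GabberSimple}, verify that $E_i^{(1)}$ is not nilpotent (via the domain property), and verify local nilpotence of $\ad_{E_i^{(1)}}$ by reducing to generators using Leibniz. Your handling of $E_j^{(q)}$ via Serre relations and the reduction of $F_j^{(q)}$ to the Cartan case are fine. But your treatment of the Cartan generators $H_j^{(p)}$ is where the proposal has a genuine gap, and it is exactly the step the paper is engineered to avoid.

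The proposed classical-to-quantum lifting does not go through as described. You want a filtration $F_{\nu_1,\nu_2}$ in which $\ad_{E_i^{(1)}}$ strictly lowers degree (which requires $\langle \nu_1,\alpha_i\rangle \leq -1$), and to combine this with local nilpotence of $\{\Phi_i,\cdot\}$ on $\gr Y_\mu$. There are two problems. First, the filtrations on $Y_\mu$ are in general unbounded below (this is explicitly an issue the paper wrestles with later, see Lemma \ref{nonnegativecondition}), so ``$\ad_{E_i^{(1)}}^m(a)$ lies in ever-lower filtered pieces'' does not by itself force eventual vanishing. Second, classical local nilpotence of $\{\Phi_i,\cdot\}$ on $\gr Y_\mu$ does not directly bound the quantum orbit: if $\ad_{E_i^{(1)}}(a)$ drops filtered degree by more than $1$, its symbol is zero even though the element is not, so one must restart the argument one degree down, and without a lower bound this can recur indefinitely. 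The closing appeal to ``finiteness at each graded layer'' is not precise enough to close this loop; the $\ad_{E_i^{(1)}}$-orbit of $H_j^{(p)}$ can a priori spread across infinitely many PBW generators.

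The paper sidesteps the $H_j^{(p)}$ problem entirely by a reduction that your proposal does not make. Since the shift morphism $\iota_{\mu,0,\mu_2}\colon Y_\mu \hookrightarrow Y_{\mu+\mu_2}$ is injective and equivariant for $\ad_{E_i^{(1)}}$, local nilpotence for a suitably antidominant shift $\mu+\mu_2$ implies it for $\mu$. Choosing $\mu+\mu_2$ antidominant with $\langle \mu+\mu_2, \alpha_i\rangle < -1$ has two payoffs: it forces $H_i^{(1)}=0$, so $[E_i^{(1)},F_j^{(1)}]=\delta_{ij}H_i^{(1)}=0$; and by Lemma \ref{4generators} the algebra is generated by the finite set $E_j^{(1)}, F_j^{(1)}, S_j^{(-\langle\mu,\alpha_j\rangle+1)}, S_j^{(-\langle\mu,\alpha_j\rangle+2)}$, so there is no need to touch any higher $H_j^{(p)}$. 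Local nilpotence on each of those generators is then a short computation from the defining relations. I suggest replacing your Cartan argument with this reduction; it is both simpler and actually closes the proof.
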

We will often write 
\begin{equation*}
Y_\mu[ (E_i^{(1)})^{-1}] = Y_\mu S^{-1} \cong S^{-1} Y_\mu
\end{equation*}

\begin{proof}
Since $Y_\mu$ is a domain,  $E_i^{(1)}$ is not nilpotent.  Thus $S$ is a multiplicative set.

By the above corollary, it suffices to prove that the adjoint action of $E_i^{(1)}$ is locally nilpotent.  We can reduce to the case where $\mu$ is antidominant and satisfies $\langle \mu, \alpha_i \rangle < -1 $.  Indeed, the embedding $\iota_{\mu, 0, \mu_2}: Y_\mu \hookrightarrow Y_{\mu+\mu_2}$  is equivariant for $[E_i^{(1)}, -]$, so local nilpotency on $Y_{\mu+\mu_2}$ implies local nilpotency on $Y_\mu$.  Choosing some sufficiently antidominant $\mu_2$, we can thus replace $\mu$ by $\mu+\mu_2$.

By the Leibniz rule, it further suffices  to prove that $[{E_i^{(1)}}, -]$ acts nilpotently on generators of $Y_\mu$.  We can thus reduce to checking that $[E_i^{(1)}, -]$ acts nilpotently on the generators from Lemma \ref{4generators}.  For $E_j^{(1)}$ this follows from the Serre relations, while 
$$[E_i^{(1)}, [E_i^{(1)}, S_j^{(-\langle \mu, \alpha_j\rangle + 1)}] ]= [E_i^{(1)}, -( \alpha_j \cdot \alpha_i ) E_i^{(1)}] = 0$$
Next, recall that 
$$
[ E_i^{(1)}, S_j^{(-\langle \mu, \alpha_j\rangle + 2)} ] = -( \alpha_j \cdot \alpha_i ) E_i^{(2)}
$$
The case $r=1, s=1$ of relation (\ref{E,E}) tells us that
$$
[E_i^{(1)}, E_i^{(2)}] = - \frac{\alpha_i\cdot \alpha_i}{2} ( E_i^{(1)})^2,
$$
so together we see that $[E_i^{(1)}, [E_i^{(1)}, [E_i^{(1)}, S_j^{(-\langle \mu, \alpha_j\rangle + 2)}] ] ]  = 0$.  Finally, we know that $[E_i^{(1)}, F_j^{(1)}] = \delta_{ij} H_i^{(1)}= 0 $ since we took $\langle \mu, \alpha_i \rangle < -1 $.

\end{proof}

Recall from section \ref{se:filtrations} that given any splitting $\mu=\nu_1+\nu_2$, we have a filtration $F_{\nu_1,\nu_2} Y_\mu$. Now, following \cite[12.3]{S}, we can put a filtration on $Y_\mu[(E_i^{(1)})^{-1}]$ as follows. Since $Y_\mu$ is a domain, given $x\in Y_\mu, s\in S=\{(E_i^{(1)})^n: n\in \mathbb{N}\}$,  we specify the degree $\deg(xs^{-1})=\deg(x)-\deg(s)$.

\begin{Proposition}
$\gr Y_\mu[(E_i^{(1)})^{-1}]\simeq \mathbb{C}[\Phi_i^{-1}(\mathbb{C}^\times)]$.
\end{Proposition}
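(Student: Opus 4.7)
The plan is to show that the natural inclusion $Y_\mu \hookrightarrow Y_\mu[(E_i^{(1)})^{-1}]$ induces, upon taking associated gradeds with respect to the filtration $F_{\nu_1,\nu_2}$ of Section \ref{se:filtrations}, an isomorphism
$$
(\gr Y_\mu)[\sigma(E_i^{(1)})^{-1}] \stackrel{\sim}{\longrightarrow} \gr\bigl(Y_\mu[(E_i^{(1)})^{-1}]\bigr).
$$
Under the identification $\gr Y_\mu \cong \mathbb{C}[\mathcal{W}_\mu]$ from Theorem \ref{grYmu}, the symbol of $E_i^{(1)}$ is precisely $\Phi_i$, and the left-hand side becomes $\mathbb{C}[\mathcal{W}_\mu][\Phi_i^{-1}] = \mathbb{C}[\Phi_i^{-1}(\mathbb{C}^\times)]$, giving the desired conclusion.

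First I would verify that the filtration $\deg(xs^{-1}) = \deg x - \deg s$ on $Y_\mu[(E_i^{(1)})^{-1}]$ is well-defined. The key input is that $\Phi_i = \sigma(E_i^{(1)})$ is a non-zero-divisor in the integral domain $\mathbb{C}[\mathcal{W}_\mu] = \gr Y_\mu$: indeed $\mathcal{W}_\mu \cong U_1[[t^{-1}]] \times T_1[[t^{-1}]] \times U_{-,1}[[t^{-1}]]$ is irreducible, and $\Phi_i$ is plainly nonzero on it (evaluating $d_i^{-1/2}\psi_i^{(1)}$ on $x_i(t^{-1})$ gives $d_i^{-1/2}$). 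This implies that multiplication by $(E_i^{(1)})^n$ preserves leading symbols, i.e.\ $\deg\bigl(x (E_i^{(1)})^n\bigr) = \deg x + n \deg E_i^{(1)}$. Consequently, if $xs^{-1} = yt^{-1}$ with $s,t$ powers of $E_i^{(1)}$, then $xt = ys$ in the domain $Y_\mu$ forces $\deg x + \deg t = \deg y + \deg s$, so the degree function is well-defined, and I would check it is exhaustive and gives a ring filtration by the standard Ore-theoretic manipulations.

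Next I would construct a graded homomorphism $\Psi: \mathbb{C}[\mathcal{W}_\mu][\Phi_i^{-1}] \to \gr Y_\mu[(E_i^{(1)})^{-1}]$ by extending the composition $\gr Y_\mu \hookrightarrow \gr Y_\mu[(E_i^{(1)})^{-1}]$ via the universal property of localization, using that $\sigma(E_i^{(1)})$ becomes invertible in the target. Surjectivity is immediate, since every homogeneous element of degree $k$ on the right is represented by some $xs^{-1}$ with $x \in F^{k+\deg s}$ and $s = (E_i^{(1)})^n$, whose symbol is visibly in the image of $\sigma(x)\sigma(E_i^{(1)})^{-n}$. For injectivity, suppose $\sigma(a)\sigma(E_i^{(1)})^{-n}$ lies in the kernel. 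Then $a(E_i^{(1)})^{-n}$ has strictly lower filtered degree, hence equals some $b(E_i^{(1)})^{-m}$ with $\deg b - m \deg E_i^{(1)} < \deg a - n \deg E_i^{(1)}$. The relation $a(E_i^{(1)})^m = b(E_i^{(1)})^n$ in $Y_\mu$, combined with the non-zero-divisor property of $\sigma(E_i^{(1)})$ applied to the leading symbols of both sides, forces the degrees of the two sides to match; a degree comparison then yields $\sigma(a) = 0$, proving injectivity.

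The main obstacle is the somewhat delicate bookkeeping for the $\mathbb{Z}$-indexed filtration on the localization --- in particular, ensuring symbols behave well under multiplication by negative powers of $E_i^{(1)}$. Everything hinges on the non-zero-divisor property of $\Phi_i$ in the domain $\mathbb{C}[\mathcal{W}_\mu]$; without this, leading symbols could degenerate when multiplying by $E_i^{(1)}$, and neither the filtration on the localization nor the associated graded would behave as expected.
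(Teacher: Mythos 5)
Your argument is correct in substance, but it takes a genuinely different route from the paper. The paper's proof of this Proposition is a one-line appeal to a general theorem on localization of filtered rings (citing Lunts--Rosenberg and \v{S}koda): if $A$ is a filtered ring with the symbol of $s$ a non-zero-divisor in $\gr A$, then $\gr(A[s^{-1}]) \cong (\gr A)[\sigma(s)^{-1}]$. You are in effect re-deriving that general theorem from scratch in this particular case. The key observation in both approaches is the same: $\Phi_i = \sigma(E_i^{(1)})$ is a non-zero-divisor in the domain $\C[\cW_\mu] = \gr Y_\mu$, and this prevents the localized filtration from degenerating. Your route is more self-contained and elementary (no external references required), at the cost of needing to carry out the Ore-theoretic bookkeeping explicitly; the paper's route is shorter because that bookkeeping is precisely what the cited references do.

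Two small remarks on your write-up. First, in verifying well-definedness you write ``if $xs^{-1} = yt^{-1}$ then $xt = ys$''; in a non-commutative ring this requires a word of justification, namely that $s,t$ are both powers of $E_i^{(1)}$ and so commute with each other (in particular $s^{-1}$ commutes with $t$), after which the manipulation is valid. Second, your injectivity argument is slightly convoluted: the cleaner version is that once well-definedness is established, $\deg(a(E_i^{(1)})^{-n}) = \deg(a) - n\deg E_i^{(1)}$ holds on the nose for any $a$ with $\sigma(a) \neq 0$ (since a representation $a(E_i^{(1)})^{-n} = b(E_i^{(1)})^{-m}$ of strictly lower degree would, after clearing denominators and taking symbols, contradict the non-zero-divisor property of $\Phi_i$), so the symbol of $a(E_i^{(1)})^{-n}$ in $\gr Y_\mu[(E_i^{(1)})^{-1}]$ is automatically nonzero. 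Finally, you defer the verification that the filtration is multiplicative to ``standard Ore-theoretic manipulations''; this is true, but it is exactly the content of the general result the paper cites, so if one wanted a fully self-contained proof one would need to spell out why the Ore relations $s^{-1}y = y'(s')^{-1}$ respect degrees -- which again follows from $\Phi_i$ being a non-zero-divisor.
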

\begin{proof}
This is a special case of a general statement on localization of filtered rings (see \cite[II,3.2]{LR}, \cite[Prop.~12.5]{S}).
\end{proof}

Recall from Lemma \ref{lemma: simplest coulomb branch of quiver gauge theory} that the algebra $Y_{-\alpha_i^\vee}^0$ is generated by elements $A_i^{(1)},(E_i^{(1)})^{\pm 1}$ with the relation $[E_i^{(1)},A_i^{(1)}]=d_i E_i^{(1)}$.

\begin{Proposition} \label{pr:tildeDeltaExists}
There exists a map $\tilde{\Delta}: Y_\mu[(E_i^{(1)})^{-1}] \longrightarrow Y_{-\alpha_i^\vee}^0 \otimes Y_{\mu+\alpha_i^\vee}$.
\end{Proposition}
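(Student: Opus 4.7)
The plan is to build $\tilde\Delta$ as the unique factorization through the Ore localization $Y_\mu[(E_i^{(1)})^{-1}]$ (which exists by Theorem \ref{OreYmu}) of the composition
$$
Y_\mu \;\xrightarrow{\;\Delta_{-\alpha_i^\vee,\,\mu+\alpha_i^\vee}\;}\; Y_{-\alpha_i^\vee}\otimes Y_{\mu+\alpha_i^\vee} \;\twoheadrightarrow\; Y_{-\alpha_i^\vee}^0 \otimes Y_{\mu+\alpha_i^\vee},
$$
where the first arrow is the coproduct of Theorem \ref{coproduct} and the second uses the canonical surjection $Y_{-\alpha_i^\vee}\twoheadrightarrow Y_{-\alpha_i^\vee}^0$. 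For this to make sense we must verify that the image of $E_i^{(1)}$ in $Y_{-\alpha_i^\vee}^0 \otimes Y_{\mu+\alpha_i^\vee}$ is a unit; then the universal property of the Ore localization supplies $\tilde\Delta$.

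The key computation is $\Delta_{-\alpha_i^\vee,\,\mu+\alpha_i^\vee}(E_i^{(1)}) = E_i^{(1)}\otimes 1$. To establish this I would reduce to the antidominant setting using the coherence diagram of Theorem \ref{coproduct}. Choose antidominant $\eta_1,\eta_2$ with $\langle \eta_1,\alpha_i\rangle \leq 0$ taken negative enough so that both $-\alpha_i^\vee + \eta_1$ and $\mu+\alpha_i^\vee + \eta_2$ are antidominant. Setting $r := 1 - \langle \eta_1,\alpha_i\rangle$, one has $1 \leq r \leq -\langle -\alpha_i^\vee+\eta_1,\alpha_i\rangle$, so the explicit formulas of Theorem \ref{th:coproductanti} give
$$
\Delta_{-\alpha_i^\vee+\eta_1,\,\mu+\alpha_i^\vee+\eta_2}(E_i^{(r)}) \;=\; E_i^{(r)}\otimes 1.
$$
Combined with $\iota_{\mu,\eta_1,\eta_2}(E_i^{(1)}) = E_i^{(r)}$ and $\iota_{-\alpha_i^\vee,\eta_1,0}(E_i^{(1)}) = E_i^{(r)}$, the commutative square of Theorem \ref{coproduct} then shows that both $\Delta_{-\alpha_i^\vee,\mu+\alpha_i^\vee}(E_i^{(1)})$ and $E_i^{(1)}\otimes 1$ have the same image under $\iota_{-\alpha_i^\vee,\eta_1,0}\otimes \iota_{\mu+\alpha_i^\vee,0,\eta_2}$. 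Since shift maps are injective (Proposition \ref{shiftmaps}) and tensor products of injections over the field $\C$ are injective, the claim follows.

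With $\Delta(E_i^{(1)}) = E_i^{(1)}\otimes 1$ established, Lemma \ref{lemma: simplest coulomb branch of quiver gauge theory} shows that the image of $E_i^{(1)}\otimes 1$ in $Y_{-\alpha_i^\vee}^0\otimes Y_{\mu+\alpha_i^\vee}$ is a unit (explicitly with inverse $-d_i F_i^{(1)}\otimes 1$). By Ore's theorem applied to the multiplicative set $S = \{(E_i^{(1)})^n\}\subset Y_\mu$ (cf.\ Theorem \ref{OreYmu}), the composite $Y_\mu \to Y_{-\alpha_i^\vee}^0 \otimes Y_{\mu+\alpha_i^\vee}$ descends uniquely to the desired $\tilde\Delta$ on $Y_\mu[(E_i^{(1)})^{-1}]$.

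The only potential obstacle is the bookkeeping of shifts, namely arranging $\eta_1,\eta_2$ so that one lands in the regime where Theorem \ref{th:coproductanti} applies directly to $E_i^{(r)}$; this is minor and purely combinatorial. Everything else is formal once the simple identity $\Delta(E_i^{(1)}) = E_i^{(1)}\otimes 1$ is in hand.
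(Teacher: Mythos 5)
Your proposal is correct and follows essentially the same route as the paper: compose $\Delta_{-\alpha_i^\vee,\mu+\alpha_i^\vee}$ with the quotient to $Y_{-\alpha_i^\vee}^0\otimes Y_{\mu+\alpha_i^\vee}$, observe that $E_i^{(1)}\mapsto E_i^{(1)}\otimes 1$ becomes invertible, and invoke the universal property of the Ore localization. The one difference is that where the paper simply asserts $\overline\Delta(E_i^{(1)})=E_i^{(1)}\otimes 1$ with the parenthetical hint $\langle\alpha_i^\vee,\alpha_i\rangle=2$, you supply the genuinely needed justification — since $-\alpha_i^\vee$ is not antidominant for $G$ of rank $\geq 2$, one must in fact reduce to the antidominant case via the coherence square of Theorem \ref{coproduct} exactly as you do — so your write-up is a welcome amplification of a step the paper leaves terse.
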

\begin{proof}
Consider $\overline{\Delta}:Y_\mu \xrightarrow{\Delta} Y_{-\alpha_i^\vee} \otimes Y_{\mu+\alpha_i^\vee} \rightarrow Y_{-\alpha_i^\vee}^0 \otimes Y_{\mu+\alpha_i^\vee}$. We see that $\overline{\Delta}(E_i^{(1)})=E_i^{(1)}\otimes 1$ (as $ \langle \alpha^\vee_i, \alpha_i \rangle = 2) $. Since $E_i^{(1)}$ is invertible in $Y_{-\alpha_i^\vee}^0$, $\tilde{\Delta}$ exists by the universal property of Ore localization \cite[Corollary 10.11]{La}.
\end{proof}

\subsection{Lifting the isomorphism}\label{liftingiso}
In order to prove that $ \tilde{\Delta} $ is an isomorphism, we consider filtrations of $Y_\mu$.

Recall that, for coweights $\nu_1,\nu_2$ such that $\mu=\nu_1+\nu_2$, we have the filtration $F_{\nu_1,\nu_2}Y_\mu$ (section \ref{se:filtrations}).

\begin{Lemma}\label{desiredcompatibility}
Consider the filtrations $F_{\nu,\mu-\nu}Y_\mu, F_{\nu,-\alpha_i^\vee-\nu}Y_{-\alpha_i^\vee}^0, F_{\alpha_i^\vee+\nu,\mu-\nu}Y_{\mu+\alpha_i^\vee}$. Then $\tilde{\Delta}: Y_\mu[(E_i^{(1)})^{-1}] \rightarrow Y_{-\alpha_i^\vee}^0\otimes Y_{\mu+\alpha_i^\vee}$ respects these filtrations.
\end{Lemma}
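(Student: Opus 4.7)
The plan is to reduce the claim to Proposition \ref{comultfiltrcomp} (for the unlocalized $\Delta$) combined with a direct check that inverting $E_i^{(1)}$ is compatible with the filtrations, via the explicit presentation of $Y_{-\alpha_i^\vee}^0$ from Lemma \ref{lemma: simplest coulomb branch of quiver gauge theory}.

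First, I apply Proposition \ref{comultfiltrcomp} with $\mu_1=-\alpha_i^\vee$, $\mu_2=\mu+\alpha_i^\vee$, $\nu_1=\nu$, $\nu_2=\mu-\nu$. The three filtrations appearing there become exactly
$F_{\nu,\mu-\nu}Y_\mu$, $F_{\nu,-\alpha_i^\vee-\nu}Y_{-\alpha_i^\vee}$, and $F_{\alpha_i^\vee+\nu,\mu-\nu}Y_{\mu+\alpha_i^\vee}$, since $\mu_2-\nu_2=\alpha_i^\vee+\nu$. So $\Delta: Y_\mu \to Y_{-\alpha_i^\vee}\otimes Y_{\mu+\alpha_i^\vee}$ respects these filtrations. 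Composing with the quotient $Y_{-\alpha_i^\vee}\twoheadrightarrow Y_{-\alpha_i^\vee}^0$ (endowed with the induced quotient filtration) yields that the map $\overline\Delta: Y_\mu \to Y_{-\alpha_i^\vee}^0\otimes Y_{\mu+\alpha_i^\vee}$ of Proposition \ref{pr:tildeDeltaExists} respects filtrations as well.

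Next, I extend to the localization. Recall that the filtration on $Y_\mu[(E_i^{(1)})^{-1}]$ is specified by $\deg(x s^{-1})=\deg(x)-\deg(s)$ for $s\in S=\{(E_i^{(1)})^n:n\ge 0\}$. Since $\tilde\Delta$ is the unique factorization of $\overline\Delta$ through the Ore localization, it suffices to check that the image $\overline\Delta(E_i^{(1)})=E_i^{(1)}\otimes 1$ is invertible in the target and that its inverse has degree precisely $-\deg(E_i^{(1)})=-(\langle\nu,\alpha_i\rangle+1)$; once this is established, the usual calculus of filtered Ore localizations (as in \cite[II.3.2]{LR} or \cite[\S12]{S}, invoked in the proof of the preceding proposition) immediately gives that $\tilde\Delta$ respects the filtration.

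For the key degree computation I invoke Lemma \ref{lemma: simplest coulomb branch of quiver gauge theory}, which gives the relation $E_i^{(1)}F_i^{(1)}=-d_i^{-1}$ in $Y_{-\alpha_i^\vee}^0$; hence $(E_i^{(1)})^{-1}=-d_i F_i^{(1)}$. In the filtration $F_{\nu,-\alpha_i^\vee-\nu}Y_{-\alpha_i^\vee}^0$,
$$
\deg F_i^{(1)} \;=\; \langle -\alpha_i^\vee-\nu,\alpha_i\rangle+1 \;=\; -1-\langle\nu,\alpha_i\rangle \;=\; -\deg E_i^{(1)},
$$
so $(E_i^{(1)}\otimes 1)^{-1}=-d_i F_i^{(1)}\otimes 1$ has the required degree. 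Combining this with the fact that $\overline\Delta$ is filtered, the extension $\tilde\Delta$ respects the three stated filtrations. The main point to verify with care is the filtered Ore localization step, but once the degree of $(E_i^{(1)})^{-1}$ is pinned down as above, this is standard.
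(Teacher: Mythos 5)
Your proof is correct and takes essentially the same approach as the paper, which simply cites Proposition \ref{comultfiltrcomp}. You have added the details the paper leaves implicit: how the filtered property of $\overline\Delta$ extends across the Ore localization, pinned down by the observation that $(E_i^{(1)})^{-1}=-d_iF_i^{(1)}$ in $Y_{-\alpha_i^\vee}^0$ has degree $-\deg E_i^{(1)}$, so that $\tilde\Delta(xs^{-1})=\overline\Delta(x)\overline\Delta(s)^{-1}$ satisfies the degree bound $\deg(x)-\deg(s)$.
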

\begin{proof}
This follows from Proposition \ref{comultfiltrcomp}.
\end{proof}

We would like to use the following lemma.
\begin{Lemma}\label{liftgradediso}
Let $\phi: A\rightarrow B$ be a map of $\mathbb{Z}$-filtered vector spaces with increasing filtrations. Assume that all involved filtrations are exhaustive. Additionally, assume that the filtration on $A$ is separated, i.e., $\bigcap_n A_n= 0$. Denote by $\gr \phi:\gr A \rightarrow \gr B$ the induced map on the associated graded level.
\begin{itemize}
\item[(1)] If $\gr \phi$ is injective, so is $\phi$.
\item[(2)] Suppose that $B_n=0$ for all $n<0$. If $\gr \phi$ is surjective, so is $\phi$.
\end{itemize}
\end{Lemma}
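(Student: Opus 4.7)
The plan is to prove (1) and (2) separately, each via a direct argument using the notion of a ``leading term'' of an element in a filtered vector space. The standard idea is: given a nonzero $a \in A$, assign to it a well-defined leading degree $\nu(a) := \min\{n \in \Z : a \in A_n\}$ and a corresponding nonzero leading symbol $\overline{a} \in A_{\nu(a)}/A_{\nu(a)-1} \subseteq \gr A$.

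For (1), I would first verify that the two hypotheses on $A$ (exhaustive and separated) are exactly what guarantee $\nu(a)$ exists for every nonzero $a$: exhaustiveness makes the set $\{n : a \in A_n\}$ non-empty, while separatedness forces it to be bounded below (otherwise $a$ would lie in $\bigcap_n A_n = 0$). Granting this, if $a \neq 0$ were in the kernel of $\phi$, the class $\overline{a}$ would be a nonzero element of $\gr A$ satisfying $\gr\phi(\overline{a}) = \phi(a) + B_{\nu(a)-1} = 0$, contradicting injectivity of $\gr\phi$.

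For (2), I would argue by induction on the smallest integer $n \geq 0$ with $b \in B_n$; this minimum exists because the filtration on $B$ is exhaustive and, by hypothesis, $B_m = 0$ for $m < 0$. Using surjectivity of $\gr\phi$ in degree $n$, I lift the class $[b] \in B_n/B_{n-1}$ to some $[a] \in A_n/A_{n-1}$, so that $b - \phi(a) \in B_{n-1}$ has strictly smaller leading degree and the induction proceeds. The base case $n = 0$ is immediate since $B_{-1} = 0$ forces $b = \phi(a)$. The hypothesis ``$B_m = 0$ for $m < 0$'' is exactly what ensures the induction terminates; without it one could imagine elements of unboundedly negative leading degree, which would also require a separatedness assumption on $B$.

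This is a standard filtered-linear-algebra argument and I do not foresee any real obstacle; the only delicate point is keeping track of exactly where each hypothesis on the filtrations is used, particularly the asymmetry between (1) (needing separatedness on $A$) and (2) (needing bounded-below filtration on $B$).
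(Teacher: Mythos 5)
Your proposal is correct and follows essentially the same two arguments as the paper: for (1), extract the leading symbol of a hypothetical nonzero kernel element and derive a contradiction; for (2), induct on the degree, lifting the top symbol and reducing $b$ modulo $\phi(a)$ into a lower filtration step until the base case $B_{-1}=0$ is reached. The only cosmetic difference is that you phrase (2) as a descending induction on the leading degree of a fixed $b$, whereas the paper phrases it as an ascending induction on $d$ showing each $\phi\colon A_d \to B_d$ is surjective; these are the same argument.
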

\begin{proof}
(1) Assume that $\gr\phi$ is injective. Let $ a \in A $ and suppose that $\phi(a)=0$. Assume that $a \neq 0$. Since the filtration is separated and exhaustive, there exists $d$ such that $a\in A_d$, and $a\not \in A_{d-1}$. For $\bar{a}\in A_d/A_{d-1}$, since $\phi(a)=0$, $\gr \phi( \bar{a}) = \overline{\phi(a)} =0$. Since $\gr \phi$ is injective, $\bar{a} = 0$. This means that $a \in A_{d-1}$, a contradiction. Hence, $a=0$.

\vskip1ex
(2) Assume that $B_n=0$ for all $n<0$. We prove by induction on $d$ that $\phi: A_d\rightarrow B_d$ is surjective.
Suppose that $b\in B_0$. Since $\gr \phi$ is surjective, there exists $a \in A_0$ such that $\overline{\phi(a)}=\bar{b}$. Since $ B_{-1} = 0 $, this implies that $ \phi(a) = b $.

Suppose that $\phi : A_d \rightarrow B_d $ is surjective. Suppose that $b\in B_{d+1}$. Since $ \gr \phi $ is surjective, there exists $ a \in A_{d+1} $ such that $ \overline{\phi(a)} = \bar{b} $ and hence $ b - \phi(a) \in B_d $.  By the induction hypothesis, there exists $a'\in A_d$ such that $b - \phi(a) =\phi(a')$. Therefore, $b=\phi(a+a')$.
\end{proof}

 In general the filtrations on $Y_\mu$ are not bounded below.  So in order to establish the surjectivity of $ \tilde \Delta $, it will be important to find such filtrations.

\begin{Lemma}\label{nonnegativecondition}
Let $ \mu, \nu $ be two coweights.  Suppose that
\begin{itemize}
\item[(i)] $\langle \nu ,\alpha_i\rangle =-1$,
\item[(ii)] for all positive roots $\beta$, $\langle \nu, \beta \rangle \geq -1$,
\item[(iii)] for all positive roots $\beta$, $\langle \mu -\nu,\beta \rangle \geq -1$,
\end{itemize}
Then $E_i^{(1)} $ has filtered degree zero in $ Y_\mu $, and the filtrations $ F_{\nu,-\alpha_i^\vee-\nu}Y_{-\alpha_i^\vee}^0 $ and $F_{\alpha_i^\vee+\nu,\mu-\nu}Y_{\mu+\alpha_i^\vee}$ are non-negative. 
\end{Lemma}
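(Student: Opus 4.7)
My approach is to verify each claim by direct computation of filtered degrees, working on PBW variables in the case of shifted Yangians and on algebra generators in the quotient case $Y_{-\alpha_i^\vee}^0$. The first claim is immediate: in the filtration $F_{\nu, \mu - \nu}Y_\mu$ we have $\deg E_i^{(1)} = \langle \nu, \alpha_i \rangle + 1 = 0$ by (i), and the PBW theorem (Theorem \ref{PBW}) ensures this is the exact degree, since $E_i^{(1)}$ is itself a PBW variable.

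For $Y_{-\alpha_i^\vee}^0$, I plan to invoke Lemma \ref{lemma: simplest coulomb branch of quiver gauge theory}, which identifies the algebra as generated by $A_i^{(1)}, E_i^{(1)}, F_i^{(1)}$, and then check that each generator has non-negative degree. For $E_i^{(1)}$ this is immediate from (i); for $F_i^{(1)}$ a direct calculation using (i) gives $\deg F_i^{(1)} = \langle -\alpha_i^\vee - \nu, \alpha_i \rangle + 1 = 0$; and for $A_i^{(1)}$, by construction it is a polynomial in the $H_j^{(p)}$'s (see \eqref{eq: def of A gens}), each of which satisfies $\deg H_j^{(p)} = \langle -\alpha_i^\vee, \alpha_j \rangle + p \geq 0$ by the defining constraint $p \geq -\langle -\alpha_i^\vee, \alpha_j \rangle$ of the shifted Yangian.

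For $Y_{\mu + \alpha_i^\vee}$, by the PBW theorem it suffices to show that every PBW variable has non-negative degree in $F_{\alpha_i^\vee + \nu, \mu - \nu}$. The $H_j^{(p)}$'s are immediate by definition, and $\deg F_\beta^{(q)} = \langle \mu - \nu, \beta \rangle + q \geq 0$ for $q \geq 1$ follows directly from (iii). For $E_\beta^{(q)}$ with $q \geq 1$, the case $\beta = \alpha_i$ gives $\deg = \langle \alpha_i^\vee + \nu, \alpha_i \rangle + q \geq 2$. The case $\beta \neq \alpha_i$ is the main obstacle, and where I expect hypotheses (i) and (ii) to combine: since $\beta \neq \alpha_i$, the element $s_i\beta$ is again a positive root, and one computes using (i)
\[ \langle \alpha_i^\vee + \nu, \beta \rangle = \langle \nu, \beta \rangle + \langle \alpha_i^\vee, \beta \rangle = \langle \nu, s_i \beta \rangle, \]
whence (ii) applied to $s_i\beta$ yields $\langle \nu, s_i \beta \rangle \geq -1$, giving $\deg E_\beta^{(q)} \geq 0$.
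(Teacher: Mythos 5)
Your proof takes essentially the same approach as the paper's, and the central calculation for $Y_{\mu+\alpha_i^\vee}$ — checking PBW variables, using $\langle \alpha_i^\vee + \nu, \beta\rangle = \langle s_i(\nu), \beta\rangle = \langle \nu, s_i(\beta)\rangle$ and splitting into the cases $\beta = \alpha_i$ versus $\beta \neq \alpha_i$ — is identical to the paper's argument. One caveat on the $Y^0_{-\alpha_i^\vee}$ part, which the paper dismisses as ``clear'': the filtration there is the one \emph{induced} by the surjection $Y_{-\alpha_i^\vee} \twoheadrightarrow Y^0_{-\alpha_i^\vee}$, so checking that the generators $A_i^{(1)}, E_i^{(1)}, F_i^{(1)}$ of the quotient have non-negative degree does not, by itself, give non-negativity of the filtration — a priori $F^{-1} Y_{-\alpha_i^\vee}$ could be nonzero and have nonzero image. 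The missing observation is that in $F_{\nu,\,-\alpha_i^\vee - \nu} Y_{-\alpha_i^\vee}$ the only PBW variables of negative degree are $F_\beta^{(q)}$ with $\beta \neq \alpha_i$ (indeed $\deg E_\beta^{(q)} = \langle\nu,\beta\rangle + q \geq 0$ by (ii), $\deg H_j^{(p)} \geq 0$, and $\deg F_{\alpha_i}^{(q)} = q - 1 \geq 0$), and those all lie in the kernel of $Y_{-\alpha_i^\vee} \to Y^0_{-\alpha_i^\vee}$; hence every PBW monomial of negative total degree is killed. Since the paper itself gives no argument here, this is a point you should tighten rather than a genuine divergence in approach.
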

\begin{proof}
The degree of $ E_i^{(1)} $ is clear, as is the filtration on $ Y_{-\alpha_i^\vee}^0 $.  So we consider the filtration $F_{\alpha_i^\vee+\nu,\mu-\nu}Y_{\mu+\alpha_i^\vee}$.
	
The degrees of $ H_j^{(r)}$ are always positive, so we inspect the degrees of $ E_\beta^{(r)},F_\beta^{(r)}$ in $F_{\alpha_i^\vee + \nu,\mu-\nu}Y_{\mu+\alpha_i^\vee}$,
\begin{align*}
\deg (F_\beta^{(r)})&= \langle \mu-\nu ,\beta \rangle + r \geq r-1 \geq 0, \\
\deg (E_\beta^{(r)})&= \langle \alpha_i^\vee+\nu ,\beta \rangle + r = \langle s_i(\nu) ,\beta \rangle +r = \langle \nu,s_i(\beta) \rangle +r.
\end{align*}
If $\beta=\alpha_i$, then $\langle \nu,s_i(\beta)\rangle =1$. If $\beta\neq \alpha_i$, then $s_i(\beta)$ is a positive root not equal to $\alpha_i$, and so $\langle \nu,s_i(\beta)\rangle \geq -1$. Thus, $\deg(E_\beta^{(r)})\geq 0$.
\end{proof}

\begin{Theorem} \label{th:domiso}
For sufficiently dominant $ \mu $, the map $\tilde{\Delta}: Y_\mu[(E_i^{(1)})^{-1}] \longrightarrow Y_{-\alpha_i^\vee}^0\otimes Y_{\mu+\alpha_i^\vee}$ is an isomorphism.
\end{Theorem}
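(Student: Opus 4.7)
The plan is to deduce the theorem from its classical counterpart, namely the isomorphism $m : \oW^0_{-\alpha_i^\vee} \times \cW_{\mu + \alpha_i^\vee} \to \Phi_i^{-1}(\Cx)_\mu$ of Theorem \ref{psiminverses}, by passing through associated gradeds. Concretely, I will equip both sides of $\tilde\Delta$ with carefully chosen filtrations, identify $\gr\tilde\Delta$ with the comorphism $m^*$, and then use Lemma \ref{liftgradediso} to lift the resulting bijection back to $\tilde\Delta$ itself. The role of ``sufficiently dominant $\mu$'' is to allow the target filtration to be made non-negative so that Lemma \ref{liftgradediso}(2) applies.

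The first step is to fix a coweight $\nu$ satisfying conditions (i)--(ii) of Lemma \ref{nonnegativecondition}: $\langle \nu, \alpha_i\rangle = -1$ and $\langle \nu, \beta\rangle \geq -1$ for every positive root $\beta$. Such a $\nu$ can be produced explicitly, for instance as $\nu = -\varpi_i^\vee + M \sum_{j \neq i} \varpi_j^\vee$ with $M$ chosen larger than $\max_{\beta} c_i(\beta) - 1$, where $c_i(\beta)$ is the $\alpha_i$-coefficient of a positive root $\beta$. Then for $\mu$ sufficiently dominant one also has $\langle \mu - \nu, \beta\rangle \geq -1$ for all positive $\beta$, so that Lemma \ref{nonnegativecondition} applies: $E_i^{(1)}$ lies in filtered degree zero of $F_{\nu,\mu-\nu} Y_\mu$, and both $F_{\nu, -\alpha_i^\vee - \nu} Y_{-\alpha_i^\vee}^0$ and $F_{\alpha_i^\vee + \nu, \mu - \nu} Y_{\mu + \alpha_i^\vee}$ are non-negative. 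Since $E_i^{(1)}$ has filtered degree zero, the induced filtration on $Y_\mu[(E_i^{(1)})^{-1}]$ (as in Section \ref{alocalizationforYmu}) is exhaustive and separated, inheriting separatedness from $Y_\mu$ via Proposition \ref{prop: filtration properties}. Lemma \ref{desiredcompatibility} finally tells us that $\tilde\Delta$ is a filtered morphism.

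Passing to associated graded, Section \ref{alocalizationforYmu} gives $\gr Y_\mu[(E_i^{(1)})^{-1}] \cong \C[\Phi_i^{-1}(\Cx)_\mu]$, and since both factor filtrations admit expansions (Proposition \ref{prop: filtration properties}) we have $\gr(Y_{-\alpha_i^\vee}^0 \otimes Y_{\mu + \alpha_i^\vee}) \cong \C[\oW^0_{-\alpha_i^\vee}] \otimes \C[\cW_{\mu + \alpha_i^\vee}]$. Under these identifications, Theorem \ref{multpoi} together with the definition of $\tilde\Delta$ (Proposition \ref{pr:tildeDeltaExists}) identifies $\gr\tilde\Delta$ with the comorphism of $m$, which is an isomorphism by Theorem \ref{psiminverses}. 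Lemma \ref{liftgradediso}(2) then yields surjectivity of $\tilde\Delta$ (using non-negativity of the tensor target filtration) and Lemma \ref{liftgradediso}(1) yields injectivity (using separatedness of the source filtration).

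The main obstacle I anticipate is packaging all the filtration compatibilities cleanly: verifying that such a $\nu$ exists in every Lie type, that the localization filtration on $Y_\mu[(E_i^{(1)})^{-1}]$ remains separated and is compatible with taking associated graded, and that the identification $\gr\tilde\Delta = m^*$ really is forced by Theorem \ref{multpoi} after localizing. None of these is deep, but all require attention, and the verification that $\gr\tilde\Delta$ is the comorphism of $m$ in particular depends on knowing that the comultiplication $\Delta : Y_\mu \to Y_{-\alpha_i^\vee} \otimes Y_{\mu + \alpha_i^\vee}$ quantizes the multiplication on slices, which is precisely what Theorem \ref{multpoi} provides.
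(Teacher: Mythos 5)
Your proposal follows the paper's proof essentially step for step: you construct the same $\nu$ (the paper takes $\nu = -\varpi_i^\vee + r\sum_{j\neq i}\varpi_j^\vee$ with $r$ the $\alpha_i$-coefficient of the highest root, which is exactly your $M = \max_\beta c_i(\beta)$), apply Lemma \ref{nonnegativecondition} to get a non-negative target filtration, identify $\gr\tilde\Delta$ with the comorphism of $m$ via Theorem \ref{multpoi}, and lift through Lemma \ref{liftgradediso} using Theorem \ref{psiminverses}. You fill in some details the paper leaves implicit (separatedness of the localized filtration, the identification $\gr\tilde\Delta = m^\ast$), but the route is identical.
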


\begin{proof}
Let $ r $ be the coefficient of $ \alpha_i $ in the longest root.  Let $ \nu = - \varpi^\vee_i + r \sum_{j \ne i} \varpi^\vee_j $.  Then $ \langle \nu, \alpha_i \rangle = - 1 $.  For any other positive root $ \beta $, the coefficient of $\alpha_j $ is non-zero, for some $ j \ne i$, and the coefficient of $ \alpha_i $ is at most $ r $.  Thus $ \langle \nu, \beta \rangle \ge -1 $.

Choose any dominant $ \mu $ such that $ \mu - \nu $ is dominant.  Then the conditions of Lemma \ref{nonnegativecondition} are satisfied and so the filtration on $ Y_{-\alpha_i^\vee}^0 \otimes Y_{\mu + \alpha_i^\vee} $ is non-negative.  Thus combining Theorem \ref{psiminverses} and Lemma \ref{liftgradediso}, the result follows.
\end{proof}

Now, we can push the argument further with the next few lemmas. Denote by $Y_\mu^<, Y_\mu^\leq$ the subalgebras of $Y_\mu$ generated by the $F_j^{(r)}$ (resp. $F_j^{(r)}$ and $H_j^{(s)}$). Similarly, denote by $Y_\mu^>, Y_\mu^\geq$ the subalgebras generated by the $E_j^{(r)}$ (resp. $E_j^{(r)}$ and $H_j^{(s)}$).

\begin{Lemma}\label{egammafj}
For all positive roots $\gamma$, $j\in I$ and all  $r,q>0$, we have
\begin{gather*} [E_\gamma^{(q)},F_j^{(r)}] \in Y_\mu^\geq,\ [F_\gamma^{(q)},E_j^{(r)}] \in Y_\mu^\leq, \\ 
[S_j^{(-\langle \mu,\alpha_j\rangle +2)},E_\gamma^{(q)}] \in Y_\mu^>\text{, and } [S_j^{(-\langle \mu,\alpha_j\rangle +2)},F_\gamma^{(q)}] \in Y_\mu^<
\end{gather*}
\end{Lemma}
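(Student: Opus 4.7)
The plan is to derive all four statements as straightforward consequences of the Leibniz rule together with the explicit commutation relations among generators of $Y_\mu$. In each case I will strengthen the assertion from the specific elements $E_\gamma^{(q)}, F_\gamma^{(q)}$ to closure properties of the subalgebras $Y_\mu^>, Y_\mu^<$, which makes the result transparent and avoids any induction on the height of $\gamma$.

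For the first two claims, observe that by Definition \ref{egamma} each element $E_\gamma^{(q)}$ is a nested commutator of generators $E_{i_k}^{(q_k)}$, and hence lies in $Y_\mu^>$; symmetrically, $F_\gamma^{(q)} \in Y_\mu^<$.  It therefore suffices to prove the stronger statement $[Y_\mu^>, F_j^{(r)}] \subseteq Y_\mu^\geq$. To do this, I would span $Y_\mu^>$ by monomials $E_{j_1}^{(q_1)} \cdots E_{j_s}^{(q_s)}$ and apply the Leibniz rule:
\[
[E_{j_1}^{(q_1)} \cdots E_{j_s}^{(q_s)}, F_j^{(r)}] \ =\  \sum_{k=1}^{s} E_{j_1}^{(q_1)} \cdots [E_{j_k}^{(q_k)}, F_j^{(r)}] \cdots E_{j_s}^{(q_s)}.
\]
By relation \eqref{E,F}, each inner bracket equals $\delta_{j_k,j} H_{j_k}^{(q_k+r-1)} \in Y_\mu^\geq$, so every summand is a product of generators of $Y_\mu^\geq$ and hence lies in $Y_\mu^\geq$. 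The parallel statement $[Y_\mu^<, E_j^{(r)}] \subseteq Y_\mu^\leq$ follows by the identical argument swapping the roles of $E$ and $F$, which takes care of the second claim.

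For the last two claims I would recast them as the assertion that the derivation $\ad\bigl(S_j^{(-\langle \mu,\alpha_j \rangle + 2)}\bigr)$ preserves both $Y_\mu^>$ and $Y_\mu^<$. The identities recorded just after Definition \ref{lnH} read
\[
[S_j^{(-\langle \mu,\alpha_j \rangle + 2)}, E_k^{(r)}] = (\alpha_j \cdot \alpha_k)\, E_k^{(r+1)} \in Y_\mu^>, \qquad [S_j^{(-\langle \mu,\alpha_j \rangle + 2)}, F_k^{(r)}] = -(\alpha_j \cdot \alpha_k)\, F_k^{(r+1)} \in Y_\mu^<,
\]
so $\ad\bigl(S_j^{(-\langle \mu,\alpha_j \rangle + 2)}\bigr)$ sends each generator of $Y_\mu^>$ (resp.\ $Y_\mu^<$) back into $Y_\mu^>$ (resp.\ $Y_\mu^<$). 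Since any derivation that maps a generating set of a subalgebra into that subalgebra preserves the whole subalgebra, applying this to $E_\gamma^{(q)} \in Y_\mu^>$ and $F_\gamma^{(q)} \in Y_\mu^<$ gives both remaining statements.

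There is no serious obstacle here: the content of the lemma is just that the relevant commutation relations land in the claimed subalgebras, and the Leibniz rule does the rest.  The only mild subtlety is the recognition that $E_\gamma^{(q)}$, despite being a choice-dependent element, can be treated as an arbitrary element of $Y_\mu^>$, so we do not need to track the decomposition $\gamma = \alpha_{i_1} + \cdots + \alpha_{i_l}$ or the distribution of shifts $q_1 + \cdots + q_l = q + l - 1$ at all.
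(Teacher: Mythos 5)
Your proof is correct. The one item you implicitly use and should keep in mind is that the target subalgebras are closed under multiplication: after Leibniz the summand $E_{j_1}^{(q_1)}\cdots H_{j_k}^{(q_k+r-1)}\cdots E_{j_s}^{(q_s)}$ is a product of $E$'s and $H$'s and so lies in $Y_\mu^\geq$; this is fine since $Y_\mu^\geq$ is by definition a subalgebra.

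Your route differs from the paper's in packaging rather than substance. The paper fixes a specific nested commutator $x = [E_{i_1}^{(q_1)},[E_{i_2}^{(q_2)},\dots]]$ and inducts on its depth $l$, peeling off $E_{i_1}^{(q_1)}$ with the Jacobi identity and feeding the induction hypothesis into $[E_{i_1}^{(q_1)},[y,F_j^{(r)}]]$. You instead promote the statement to the subalgebra level, $[Y_\mu^>, F_j^{(r)}]\subseteq Y_\mu^\geq$, and then Leibniz on spanning monomials does the whole job in one shot. Both arguments reduce to the same two facts — $[E_i^{(p)},F_j^{(r)}]=\delta_{ij}H_i^{(p+r-1)}$ and closure of $Y_\mu^\geq$ under $\ad(E_i^{(p)})$ — and to the raising identities for $S_j^{(-\langle\mu,\alpha_j\rangle+2)}$. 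The upshot of your version is that it makes visible the cleaner statement that the choice-dependent element $E_\gamma^{(q)}$ is irrelevant: anything in $Y_\mu^>$ will do, so the decomposition of $\gamma$ and the distribution of the shifts $q_1+\dots+q_l$ never enter. The upshot of the paper's version is that it stays entirely within the set of PBW-type nested commutators, which is slightly more economical if one only cares about those elements.
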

\begin{proof}
We shall only prove $[E_\gamma^{(q)},F_j^{(r)}]\in Y_\mu^\geq$ since the other statements are similar. For $l\geq 1$, consider a commutator $x=[E_{i_1}^{(q_1)},[E_{i_2}^{(q_2)}, \cdots[E_{i_{l-1}}^{(q_{l-1})},E_{i_l}^{(q_l)} ]\cdots ]$ where $q_1,\dots,q_l$ are arbitrary positive integers. We show that $[x,F_j^{(r)}]\in Y_\mu^\geq$.

We proceed by induction on $l$. If $l=1$, then $[E_{i_1}^{(q_1)},F_j^{(r)}] = \delta_{ji_1}H_j^{(q_1+r-1)}$. For the induction step $l+1$, let $y=[E_{i_2}^{(q_2)}, \cdots[E_{i_{l}}^{(q_{l})},E_{i_l}^{(q_{l+1})} ]\cdots ]$.
\begin{align*}
[x,F_j^{(r)}] &= [[E_{i_1}^{(q_1)}, y],F_j^{(r)}] = [E_{i_1}^{(q_1)},[y,F_j^{(r)}]] - [y,\delta_{ji_1}H_j^{(r+q_1-1)}] \in Y_\mu^\geq
\end{align*}
since $[y,F_j^{(r)}]\in Y_\mu^\geq$ by induction hypothesis.
\end{proof}

\begin{Lemma}\label{lem:shiftrespectsborel}
Let $\mu$ be a coweights and let $ \mu_1, \mu_2 $ be antidominant coweights.  Let $\mu' = \mu + \mu_1 + \mu_2$. Consider the shift morphism $\iota=\iota_{\mu,\mu_1,\mu_2}: Y_\mu \longrightarrow Y_{\mu + \mu_1 + \mu_2}$. Then $\iota^{-1}(Y_{\mu'}^\geq) \subseteq Y_\mu^\geq$,  and $\iota^{-1}(Y_{\mu'}^<) \subseteq Y_\mu^<$.
\end{Lemma}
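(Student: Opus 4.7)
The plan is to use the PBW theorem (Theorem~\ref{PBW}) to identify $\iota$ with a tensor product of three injective linear maps, and then apply a block-decomposition argument.

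First I would observe, using Proposition~\ref{shiftmaps}, that $\iota$ sends $E$-generators to $E$-generators, $F$-generators to $F$-generators, and $H$-generators to $H$-generators (with shifted upper indices that remain in the valid range thanks to antidominance of $\mu_1,\mu_2$). Therefore $\iota$ restricts to injective algebra homomorphisms $\iota^>: Y_\mu^> \to Y_{\mu'}^>$, $\iota^<: Y_\mu^< \to Y_{\mu'}^<$, $\iota^0: Y_\mu^0 \to Y_{\mu'}^0$, with injectivity inherited from Proposition~\ref{shiftmaps}. Using the block $EFH$ order, the PBW theorem yields a vector-space isomorphism $Y_\mu \cong Y_\mu^> \otimes Y_\mu^< \otimes Y_\mu^0$ via multiplication (and analogously for $Y_{\mu'}$); under these isomorphisms $\iota$ corresponds precisely to $\iota^> \otimes \iota^< \otimes \iota^0$. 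In these coordinates $Y_\mu^\geq = Y_\mu^> \otimes \C \otimes Y_\mu^0$ and $Y_\mu^< = \C \otimes Y_\mu^< \otimes \C$ (and likewise in $Y_{\mu'}$).

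Next I would split each tensor factor into ``constant'' and ``non-constant'' pieces preserved by the corresponding $\iota^\bullet$. For $Y_\mu^>$ and $Y_\mu^<$, use the root-lattice weight grading to obtain $Y_\mu^> = \C \oplus (Y_\mu^>)_+$ and $Y_\mu^< = \C \oplus (Y_\mu^<)_+$, where $(Y_\mu^\bullet)_+$ is spanned by nonzero-weight vectors; since $\iota$ preserves weights, $\iota^>$ and $\iota^<$ respect these splittings. For $Y_\mu^0$, which is a commutative polynomial ring in the $H_i^{(p)}$, use the augmentation splitting $Y_\mu^0 = \C \oplus (Y_\mu^0)_+$; the key point is that the inequality $p > -\langle\mu,\alpha_j\rangle$ defining a valid generator translates under shifting to $p - \langle\mu_1+\mu_2,\alpha_j\rangle > -\langle\mu',\alpha_j\rangle$, so $\iota^0$ sends each generator $H_j^{(p)}$ to a nontrivial generator of $Y_{\mu'}^0$ (never the identity $H_j^{(-\langle\mu',\alpha_j\rangle)}=1$), and hence respects the augmentation splitting.

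Finally, these three splittings refine the PBW decomposition into eight tensor summands, with respect to which $\iota^> \otimes \iota^< \otimes \iota^0$ is block-diagonal. Each block is a tensor product of injective $\C$-linear maps (constant-to-constant pieces, or ``$+$''-to-``$+$'' pieces), hence itself injective since the tensor product of injective linear maps over a field is injective. Since $Y_{\mu'}^\geq$ is the direct sum of the four blocks with trivial middle factor, and $Y_{\mu'}^<$ is the sum of the two blocks with trivial outer factors, block-diagonal injectivity immediately gives $\iota^{-1}(Y_{\mu'}^\geq) \subseteq Y_\mu^\geq$ and $\iota^{-1}(Y_{\mu'}^<) \subseteq Y_\mu^<$ (in fact equalities). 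The point requiring the most care is verifying that each of the subalgebras $Y_\mu^>, Y_\mu^<, Y_\mu^0$ admits a PBW basis of ordered monomials in only the corresponding PBW variables, so that the identification $Y_\mu \cong Y_\mu^> \otimes Y_\mu^< \otimes Y_\mu^0$ really is a vector-space isomorphism compatible with $\iota$; this follows from the observation that the relations (\ref{E,E}), (\ref{F,F}), (\ref{H,H}), and the Serre relations (\ref{symE}), (\ref{symF}) never produce cross-type terms, combined with Theorem~\ref{PBW}.
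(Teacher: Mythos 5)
Your argument is correct, and it rests on the same core fact as the paper's: the PBW theorem. But the packaging is genuinely different, so it is worth comparing. The paper's proof is more direct and, in a sense, lighter: it first notes that $\iota$ sends PBW variables of $Y_\mu$ to PBW variables of $Y_{\mu'}$, extends the image of the PBW basis of $Y_\mu$ to a PBW basis of $Y_{\mu'}$, and then observes that $\iota^{-1}(Y_{\mu'}^\geq)$ is the intersection of two subspaces of $Y_{\mu'}$ each spanned by a subset of that single basis, so the intersection is just the span of the intersection of the two subsets. That requires no tensor decomposition at all. Your version instead promotes PBW to a triangular tensor decomposition $Y_\mu \cong Y_\mu^> \otimes Y_\mu^< \otimes Y_\mu^0$ compatible with $\iota$, and then reads off the conclusion from block-diagonality; this is clean and reusable, and indeed gives the stronger ``in fact equalities'' statement almost for free. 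Two small remarks. First, you only really need the splitting of the middle factor $Y_\mu^< = \C\oplus (Y_\mu^<)_+$ (resp. of the two outer factors for the $Y_\mu^<$ case), so the eight-block refinement is more than necessary. Second, your closing justification for the local PBW bases invokes (\ref{E,E}), (\ref{F,F}), (\ref{H,H}), and the Serre relations, but the identification of the subalgebra $Y_\mu^\geq$ with the tensor summand $Y_\mu^> \otimes \C \otimes Y_\mu^0$ also uses the straightening coming from relation (\ref{H,E}) (and dually (\ref{H,F}) for $Y_\mu^\leq$); the crucial point, which you should record, is that (\ref{H,E}) and (\ref{H,F}) likewise never produce cross-type ($F$, resp.~$E$) terms. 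With that noted, the argument is complete.
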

\begin{proof}
We prove the first equality, the proof for the second is similar. 

We use an idea from the proof of \cite[Cor.~3.15]{FKPRW}. Given a choice of PBW variables $E_\beta^{(r)}, F_\beta^{(r)}$ in $Y_\mu$, we consider their images under $\iota$ as PBW variables in $Y_{\mu'}$, and extend them to a full set of PBW variables in $Y_{\mu'}$. In particular, under these conditions, the PBW basis of $Y_\mu$ maps bijectively to a subset of the corresponding PBW basis of $Y_{\mu'}$. 

Note that $Y_{\mu'}^\geq $ is spanned by a subset of the PBW basis as well (those monomials which contain no $F$s).  Since $\iota $ is injective by \cite[Cor.~3.16]{FKPRW}, in computing $\iota^{-1}(Y_{\mu'}^\geq) $,  we are simply intersecting two subspaces of $ Y_{\mu'} $, each of which are spanned by a subset of our PBW basis.  Thus, we can simply intersect the corresponding sets of basis vectors which are all contained in $ Y_\mu^\geq $.  This gives the  desired result.

\end{proof}

\begin{Lemma}\label{Ygeq}
Let $ \mu_1, \mu_2 $ be any two coweights.  Consider $\Delta_{\mu_1,\mu_2}: Y_{\mu_1+\mu_2}\longrightarrow Y_{\mu_1}\otimes Y_{\mu_2}$. For $r\geq 1$ and $j\in I$,
\begin{align*}
\Delta(F_j^{(r)}) &\in 1\otimes F_j^{(r)} + Y_{\mu_1}^<\otimes Y_{\mu_2}^\geq 
\end{align*}
\end{Lemma}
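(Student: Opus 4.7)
My plan is to first establish the claim for antidominant $\mu_1, \mu_2$ by induction on $r$, then extend to arbitrary coweights via the shift-map diagram of Theorem \ref{coproduct}. Fix antidominant $\mu_1, \mu_2$. The base cases $1 \le r \le -\langle \mu_2, \alpha_j\rangle + 2$ follow by direct inspection of the explicit formulas in Theorem \ref{th:coproductanti}: every summand of $\Delta(F_j^{(r)}) - 1 \otimes F_j^{(r)}$ has left tensor factor built from $F$'s (so in $Y_{\mu_1}^<$) and right factor equal to $1$, to $S_j^{(-\langle \mu_2, \alpha_j\rangle+1)} = H_j^{(-\langle\mu_2,\alpha_j\rangle+1)}$, or to $E_\gamma^{(1)}$ (all in $Y_{\mu_2}^\geq$).

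For the inductive step I use the identity $F_j^{(r+1)} = -\tfrac{1}{\alpha_j \cdot \alpha_j}\bigl[S_j^{(-\langle \mu, \alpha_j\rangle+2)}, F_j^{(r)}\bigr]$ in $Y_\mu$. Writing the inductive hypothesis as $\Delta(F_j^{(r)}) = 1 \otimes F_j^{(r)} + R$ with $R \in Y_{\mu_1}^< \otimes Y_{\mu_2}^\geq$, I apply $\Delta$ and expand using the explicit formula for $\Delta(S_j^{(-\langle \mu, \alpha_j\rangle+2)})$. The bracket with $1 \otimes F_j^{(r)}$ produces exactly $-(\alpha_j \cdot \alpha_j) \cdot 1 \otimes F_j^{(r+1)}$ (from the $1 \otimes S_j$ summand of $\Delta(S_j)$), plus error terms of the shape $F_\gamma^{(1)} \otimes [E_\gamma^{(1)}, F_j^{(r)}]$ which lie in $Y_{\mu_1}^< \otimes Y_{\mu_2}^\geq$ by Lemma \ref{egammafj}. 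The remaining bracket $[\Delta(S_j), R]$ stays in $Y_{\mu_1}^< \otimes Y_{\mu_2}^\geq$ by splitting $\Delta(S_j)$ into its three summands: for $S_j \otimes 1$ and $1 \otimes S_j$, Lemma \ref{egammafj} gives $[S_j, Y^<] \subseteq Y^<$ and $[S_j, Y^\geq] \subseteq Y^\geq$; for the cross-term $\sum_\gamma F_\gamma^{(1)} \otimes E_\gamma^{(1)}$, the subalgebra property of $Y^<$ and $Y^\geq$ combined with the tensor Leibniz identity suffices.

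For general coweights, I choose antidominant $\eta_1, \eta_2$ with $\mu_1 + \eta_1$ and $\mu_2 + \eta_2$ antidominant. Theorem \ref{coproduct} yields $(\iota_{\mu_1, \eta_1, 0} \otimes \iota_{\mu_2, 0, \eta_2})\bigl(\Delta_{\mu_1, \mu_2}(F_j^{(r)}) - 1 \otimes F_j^{(r)}\bigr) \in Y_{\mu_1+\eta_1}^< \otimes Y_{\mu_2+\eta_2}^\geq$ by the antidominant case. To descend, I extend the PBW-basis argument of Lemma \ref{lem:shiftrespectsborel} to tensor products: $\iota \otimes \iota$ sends the PBW basis of $Y_{\mu_1} \otimes Y_{\mu_2}$ injectively onto a subset of the PBW basis of $Y_{\mu_1 + \eta_1} \otimes Y_{\mu_2 + \eta_2}$, preserving which of $E$, $F$, $H$ occur in each monomial; hence any element of $Y_{\mu_1} \otimes Y_{\mu_2}$ whose image lies in $Y_{\mu_1+\eta_1}^< \otimes Y_{\mu_2+\eta_2}^\geq$ already lies in $Y_{\mu_1}^< \otimes Y_{\mu_2}^\geq$. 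The main obstacle is the bookkeeping in the inductive step, specifically verifying $[\Delta(S_j), R] \subseteq Y_{\mu_1}^< \otimes Y_{\mu_2}^\geq$; this is routine once Lemma \ref{egammafj} is invoked in the full generality in which it is stated.
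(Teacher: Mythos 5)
Your proposal is correct and follows essentially the same route as the paper's proof: establish the antidominant case by induction on $r$ using the raising operator $S_j^{(-\langle\mu,\alpha_j\rangle+2)}$ together with Lemma \ref{egammafj}, then descend to the general case via the commutative diagram of Theorem \ref{coproduct} and the PBW-basis argument of Lemma \ref{lem:shiftrespectsborel}. The only cosmetic differences are that you verify more base cases than strictly necessary (the paper only needs $r=1$, since the induction identity $[S_j, F_j^{(r)}] = -(\alpha_j\cdot\alpha_j)F_j^{(r+1)}$ already holds for all $r\ge 1$), and you spell out the tensor-product extension of Lemma \ref{lem:shiftrespectsborel} which the paper invokes implicitly.
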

\begin{proof}
First, consider the case where $\mu_1,\mu_2$ are antidominant. We proceed by induction on $ r$. First note that $$\Delta(F_j^{(1)})=1\otimes F_j^{(1)} + \delta_{0,\langle \mu_2,\alpha_j\rangle}F_j^{(1)}\otimes 1 \in 1\otimes F_j^{(1)} + Y_{\mu_1}^<\otimes Y_{\mu_2}^\geq$$ Recall that
\[
\Delta(S_j^{(-\langle \mu,\alpha_j\rangle +2 )})= S_j^{(-\langle \mu_1,\alpha_j\rangle +2 )}\otimes 1 + 1\otimes S_j^{(-\langle \mu_2,\alpha_j\rangle +2 )} - \sum\limits_{\gamma>0}\langle \alpha_j ,\gamma \rangle F_\gamma^{(1)} \otimes E_\gamma^{(1)}.
\]
 Since $[S_j^{(-\langle \mu,\alpha_j\rangle +2 )},F_j^{(r)}]=-2F_j^{(r+1)}$ and since $\Delta$ is an algebra homomorphism, the induction step follows from Lemma \ref{egammafj}.

For the general case, choose antidominant $\eta_1,\eta_2$ such that $\mu_1+\eta_1$, $\mu_2+\eta_2$ are antidominant. Consider the commutative diagram from Theorem \ref{coproduct}
\[
\xymatrix{
Y_{\mu_1+\mu_2} \ar[rr] \ar[d]_{\iota_{\mu_1+\mu_2, 0,\eta}} & & Y_{\mu_1} \otimes Y_{\mu_2} \ar[d]^{\iota_{\mu_1,\eta_1,0}\otimes \iota_{\mu_2,0,\eta_2}} \\
Y_{\mu_1+\mu_2 +\eta_1+\eta_2} \ar[rr] & & Y_{\mu_1+\eta_1} \otimes Y_{\mu_2+\eta_2}
}
\]
The result follows from the commutativity of the diagram and Lemma \ref{lem:shiftrespectsborel}. 
\end{proof}

\begin{Theorem}\label{tildeDeltaiso}
For any coweight $ \mu $, the map
$\tilde{\Delta}: Y_\mu[(E_i^{(1)})^{-1}] \longrightarrow Y_{-\alpha_i^\vee}^0\otimes Y_{\mu+\alpha_i^\vee}$ is an isomorphism.
\end{Theorem}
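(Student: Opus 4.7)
The plan is to bootstrap Theorem \ref{th:domiso} to arbitrary $\mu$ via the shift morphisms.  Given $\mu$, pick a coweight $\mu_0$ which satisfies the hypothesis of Theorem \ref{th:domiso} and is large enough that $\eta := \mu - \mu_0$ is antidominant; this is possible for any $\mu$ by taking $\langle\mu_0,\alpha_j\rangle$ sufficiently large for every $j$.  The shift $\iota_{\mu_0, 0, \eta}: Y_{\mu_0} \hookrightarrow Y_\mu$ then preserves $E_i^{(1)}$ (because $\mu_1 = 0$ in the shift), and Theorem \ref{coproduct} yields the commutative diagram
\[
\xymatrix{
Y_{\mu_0}[(E_i^{(1)})^{-1}] \ar[rr]^{\tilde\Delta_0} \ar@{^{(}->}[d]_{\tilde\iota} && Y^0_{-\alpha_i^\vee} \otimes Y_{\mu_0+\alpha_i^\vee} \ar@{^{(}->}[d]^{\mathrm{id}\otimes\iota} \\
Y_\mu[(E_i^{(1)})^{-1}] \ar[rr]^{\tilde\Delta} && Y^0_{-\alpha_i^\vee} \otimes Y_{\mu+\alpha_i^\vee}
}
\]
in which the top arrow is an isomorphism by Theorem \ref{th:domiso} and the vertical maps are injective (by Proposition \ref{shiftmaps} and flatness of tensor product).

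For injectivity of $\tilde\Delta$, I invoke Lemma \ref{liftgradediso}(1): the map respects the filtration $F_{\nu,\mu-\nu}$ by Lemma \ref{desiredcompatibility}; its associated graded is the classical multiplication map restricted to $\Phi_i^{-1}(\mathbb{C}^\times)_\mu$, which is an isomorphism by Theorem \ref{psiminverses}; and the source filtration is separated, since $Y_\mu$ has a separated filtration and is a domain in which $E_i^{(1)}$ is not nilpotent.

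The main obstacle is surjectivity, which I establish by showing that the image of $\tilde\Delta$ contains a generating set of the target.  From the commutative diagram, the image contains $Y^0_{-\alpha_i^\vee} \otimes \iota(Y_{\mu_0+\alpha_i^\vee})$.  Inspecting $\iota = \iota_{\mu_0+\alpha_i^\vee, 0, \eta}$: it fixes every $E$-generator and sends the $H$-generators of $Y_{\mu_0+\alpha_i^\vee}$ onto the full range of $H$-generators of $Y_{\mu+\alpha_i^\vee}$ (the ranges match because the index shift $-\langle\eta,\alpha_i\rangle$ compensates exactly for the change in $-\langle\mu_0+\alpha_i^\vee,\alpha_i\rangle+1$).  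Hence $Y^\geq_{\mu+\alpha_i^\vee} \subseteq \iota(Y_{\mu_0+\alpha_i^\vee})$, and the image of $\tilde\Delta$ contains both $Y^0_{-\alpha_i^\vee} \otimes 1$ and $1 \otimes Y^\geq_{\mu+\alpha_i^\vee}$.  To get the remaining $F$-generators, Lemma \ref{Ygeq} gives, for each simple $\alpha_j$ and each $r \ge 1$,
\[
\tilde\Delta(F_j^{(r)}) \ \in\ 1 \otimes F_j^{(r)} + Y^0_{-\alpha_i^\vee} \otimes Y^\geq_{\mu+\alpha_i^\vee},
\]
where we have used the quotient $Y_{-\alpha_i^\vee} \twoheadrightarrow Y^0_{-\alpha_i^\vee}$ on the left factor.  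Since the second summand already lies in the image, so does $1 \otimes F_j^{(r)}$.  As the image is a subalgebra containing a generating set of $Y^0_{-\alpha_i^\vee} \otimes Y_{\mu+\alpha_i^\vee}$, $\tilde\Delta$ is surjective.  The hardest part is precisely this image-chasing step: one must organize matters so that Lemma \ref{Ygeq} applies in the quotient-and-localize setting, and verify that the shift genuinely covers all $E$- and $H$-generators of the enlarged Yangian.
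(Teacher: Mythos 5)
Your proposal is correct and follows essentially the same strategy as the paper: injectivity via the associated-graded argument (Lemma \ref{liftgradediso}(1) combined with Theorem \ref{psiminverses}), and surjectivity by shifting down from a sufficiently dominant coweight where Theorem \ref{th:domiso} applies, observing that the image contains $Y^0_{-\alpha_i^\vee} \otimes Y^\geq_{\mu+\alpha_i^\vee}$, and then using Lemma \ref{Ygeq} to pick up the $F$-generators. The only difference is cosmetic (your $\mu_0$ is the paper's $\mu + \eta$, and your $\eta$ is the negative of theirs).
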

\begin{proof}
Combining Theorem \ref{psiminverses} and Lemma \ref{liftgradediso}, we see that $ \tilde{\Delta} $ is injective.
 
In order to show surjectivity, let us choose dominant $\eta$ such that $\mu+\eta$ is sufficiently dominant. By Theorem \ref{th:domiso},
$Y_{\mu+ \eta}[(E_i^{(1)})^{-1}] \rightarrow Y_{-\alpha_i^\vee}^0\otimes Y_{\mu+ \eta+ \alpha_i^\vee}$ is an isomorphism.

 Consider the commutative diagram
\[
\xymatrix{
Y_{\mu+\eta}[(E_i^{(1)})^{-1}] \ar[rr]^\sim \ar[d]_{\iota_{\mu+\eta, 0,-\eta}} & & Y_{-\alpha_i^\vee}^0 \otimes Y_{\mu+\eta+\alpha_i^\vee} \ar[d]^{\Id \otimes \iota_{\mu+\eta+\alpha_i^\vee,0,-\eta}} \\
Y_\mu[(E_i^{(1)})^{-1}] \ar[rr] &  & Y_{-\alpha_i^\vee}^0 \otimes Y_{\mu+\alpha_i^\vee}
}
\]
We have shown already that the top arrow is an isomorphism, since $\mu+\eta$ is dominant. By tracing around the diagram, we see that the image of the bottom arrow contains the image of the map $\Id \otimes \iota_{\mu+\eta +\alpha_i^\vee, 0,-\eta}$. Since $\iota_{\mu+\eta+\alpha_i^\vee,0,-\eta}$ restricts to an isomorphism $Y_{\mu+\eta+\alpha_i^\vee}^{\geq } \stackrel{\sim}{\rightarrow} Y_{\mu+\alpha_i^\vee}^{\geq}$, we see that the image of $\tilde{\Delta}: Y_\mu[(E_i^{(1)})^{-1}] \longrightarrow Y_{-\alpha_i^\vee}^0 \otimes Y_{\mu+\alpha_i^\vee}$ contains the subalgebra $Y_{-\alpha_i^\vee}^0 \otimes Y_{\mu + \alpha_i^\vee}^\geq $.

By Lemma \ref{Ygeq}, for all $r\geq 1$, $\Delta(F_j^{(r)})=1\otimes F_j^{(r)} + Y_{-\alpha_i^\vee}^<\otimes Y_{\mu +\alpha_i^\vee}^\geq$. Thus $ 1 \otimes F_j^{(r)} $ lies in the image of $\tilde{\Delta} $.  Hence $ \tilde{\Delta} $ is surjective and thus an isomorphism.
\end{proof}

\subsection{Quantum Hamiltonian reduction}
Let $A$ be an associative algebra, and let $x \in A$.  Assume that $\ad_x $ is locally nilpotent.  Then we can define an action of $ \Ga $ on $ A$ where the action of $ a \in \Ga $ is given by $ \exp(a\ad_x) $.  Define a linear map $\mathbb{C} \rightarrow A$, $z \mapsto zx$; this is the quantum moment map for this action.  

Recall that the \emph{quantum Hamiltonian reduction} of $A$ by $\mathbb{G}_a$ is the algebra defined by:
\begin{equation}
A \sslash_1 \mathbb{G}_a = \left\{ a \in A / A(x-1) \ : \ (x-1)a \in A (x-1) \right\} \cong \End_A(A / A(x-1))^{op}
\end{equation}
See \cite{GanGinzburg} for generalities on quantum Hamiltonian reduction. Note that the condition $(x-1)a \in A(x-1)$ may be  written equivalently as $[x,a] \in A(x-1)$.

In our situation, we have a $ \Ga $-action on $ Y_\mu $, determined by the element $ E_i^{(1)}$.  We deduce the following consequence of Theorem \ref{tildeDeltaiso}.
\begin{Corollary}
\label{cor: quantum hamiltonian reduction for shifted Yangians}
There is an isomorphism $Y_{\mu+\alpha_i^\vee} \cong Y_\mu \sslash_1 \mathbb{G}_a$.
\end{Corollary}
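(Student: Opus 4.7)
The plan is to deduce the corollary from the isomorphism $\tilde{\Delta} : Y_\mu[(E_i^{(1)})^{-1}] \xrightarrow{\sim} Y^0_{-\alpha_i^\vee} \otimes Y_{\mu + \alpha_i^\vee}$ of Theorem \ref{tildeDeltaiso}, by showing that the quantum Hamiltonian reduction factors through this identification. The key observation is that $\tilde{\Delta}(E_i^{(1)}) = E_i^{(1)} \otimes 1$, so the moment map lives in the first tensor factor, and the reduction of the tensor product should split off the simple factor $Y^0_{-\alpha_i^\vee}$.

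First I would argue that the reduction is unchanged by passing to the Ore localization. The quotient module $Y_\mu/Y_\mu(E_i^{(1)}-1)$ is a left $Y_\mu$-module on which $E_i^{(1)}$ acts as the identity; in particular $E_i^{(1)}$ acts invertibly, so by the universal property of localization this module extends uniquely to a $Y_\mu[(E_i^{(1)})^{-1}]$-module, giving an $\ad_{E_i^{(1)}}$-equivariant isomorphism of quotients, and hence $Y_\mu \sslash_1 \Ga \cong Y_\mu[(E_i^{(1)})^{-1}] \sslash_1 \Ga$. Transporting across $\tilde{\Delta}$, and abbreviating $B = Y^0_{-\alpha_i^\vee}$, $C = Y_{\mu+\alpha_i^\vee}$ and $x = E_i^{(1)} \in B$, one checks directly that $(B\otimes C)(x\otimes 1 - 1) = B(x-1)\otimes C$ and that $\ad_{x\otimes 1} = \ad_x \otimes 1$ acts trivially on the second factor. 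Therefore the reduction factorizes as
\begin{equation*}
(B\otimes C)\sslash_1 \Ga \ \cong \ (B\sslash_1\Ga)\otimes C.
\end{equation*}

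The final and most computational step, which I view as the main obstacle, is to verify $B\sslash_1\Ga\cong\C$. Using Lemma \ref{lemma: simplest coulomb branch of quiver gauge theory}, $B$ has a PBW basis $\{(A_i^{(1)})^k (E_i^{(1)})^n : k \geq 0,\ n\in\Z\}$, and the commutation relation rearranges to $E_i^{(1)} p(A_i^{(1)}) = p(A_i^{(1)} + d_i) E_i^{(1)}$. A short calculation shows that every element of $B$ is equivalent modulo $B(E_i^{(1)} - 1)$ to a unique polynomial in $A_i^{(1)}$, yielding $B/B(E_i^{(1)}-1) \cong \C[A_i^{(1)}]$ as vector spaces, and that under this identification $\ad_{E_i^{(1)}}$ descends to the finite-difference operator $p(A_i^{(1)}) \mapsto p(A_i^{(1)} + d_i) - p(A_i^{(1)})$. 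Since $d_i \neq 0$, the only polynomial fixed points of the shift $A \mapsto A + d_i$ are the constants, so the $\Ga$-invariants form the line $\C \cdot \overline{1}$. Alternatively, via the isomorphism $B\cong D(\Cx)$ from Lemma \ref{rem: cotangent bundle of C*}, the quotient module is the delta-function module at $z = 1$, whose $\Ga$-invariants under the translation action are manifestly one-dimensional. Combining the three steps yields $Y_\mu \sslash_1 \Ga \cong Y_{\mu+\alpha_i^\vee}$.
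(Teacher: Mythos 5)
Your proof follows essentially the same three-step route as the paper's: first replace $Y_\mu$ by its localization, then factor the tensor product $Y^0_{-\alpha_i^\vee} \otimes Y_{\mu+\alpha_i^\vee}$ through the first factor, and finally compute $Y^0_{-\alpha_i^\vee} \sslash_1 \Ga \cong \C$ via $D(\C^\times)$. You spell out the factorization $(B\otimes C)\sslash_1\Ga \cong (B\sslash_1\Ga)\otimes C$ a bit more explicitly than the paper does, but it is the same decomposition.

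One small imprecision worth correcting: $E_i^{(1)}$ does \emph{not} act as the identity on the quotient module $Y_\mu / Y_\mu(E_i^{(1)}-1)$. The ideal consists of left multiples of $E_i^{(1)}-1$, so $(E_i^{(1)}-1)a$ (a right multiple) has no reason to lie in it. In the model case $D(\C^\times)/D(\C^\times)(z-1)$ with basis $\{\overline{(\partial/\partial z)^m}\}$, left multiplication by $z$ sends $\overline{(\partial/\partial z)^m}$ to $\overline{(\partial/\partial z)^m} - m\,\overline{(\partial/\partial z)^{m-1}}$, which is not the identity. What is true, and what you immediately fall back on, is that $E_i^{(1)}$ acts \emph{invertibly} on the quotient (the operator above is unipotent). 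Since that weaker claim is all the universal property of the Ore localization requires, the argument goes through unchanged.
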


\begin{proof}
	Let us return to the general situation of $ A, x $ above.  Assume moreover, that $ x $ is not nilpotent.  By Corollary \ref{co:GabberSimple}, we see that $ A[x^{-1}] $ exists and it is easy to see that the natural map $ A / A(x-1) \rightarrow A[x^{-1}] / A[x^{-1}](x-1) $ gives an isomorphism $ A \sslash_1 \Ga \rightarrow A[x^{-1}] \sslash_1 \Ga $.

	Thus, by Theorem \ref{tildeDeltaiso}, it suffices to study the quantum Hamiltonian reduction of $ A =  Y_{-\alpha_i^\vee}^0 \otimes Y_\mu $ by the element $ x = \tilde \Delta(E_i^{(1)}) = E_i^{(1)} \otimes 1 $ (see the proof of Proposition \ref{pr:tildeDeltaExists}).  
The proof now follows from the next lemma.
\end{proof}

\begin{Lemma}
There is an isomorphism $Y_{-\alpha_i^\vee}^0 \sslash_1 \Ga \cong \C$.
\end{Lemma}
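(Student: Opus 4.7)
The plan is to identify $Y_{-\alpha_i^\vee}^0$ with the ring of differential operators $D(\C^\times)$ via Lemma \ref{rem: cotangent bundle of C*}, under which $E_i^{(1)} \mapsto z$. Then the quantum moment map for the $\Ga$-action becomes multiplication by $z$, and the claim reduces to proving $D(\C^\times) \sslash_1 \Ga \cong \C$, where the reduction is taken with respect to $z$.

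The first step is to describe the cyclic left module $M := D(\C^\times)/D(\C^\times)(z-1)$. Using that $z^n \equiv 1 \pmod{D(\C^\times)(z-1)}$ for every $n \in \Z$ (immediate for $n\geq 1$ by factoring $z^n-1 = (z-1)(z^{n-1} + \cdots + 1)$, and for $n<0$ using $z^{-1}(z-1) = 1-z^{-1}$), together with the PBW-type basis $\{z^n \partial_z^m\}_{n\in\Z,\, m\geq 0}$ for $D(\C^\times)$, I would show that the classes $\{\overline{\partial_z^m}\}_{m\geq 0}$ span $M$. Linear independence then follows either by recognizing $M$ as the standard delta-function $D$-module supported at $z=1$, or by direct verification using the commutator computation below.

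Next, I would carry out the reduction. From $[z,\partial_z]=-1$ and induction one gets $[z,\partial_z^m] = -m\partial_z^{m-1}$, so
\[
(z-1)\partial_z^m \;=\; \partial_z^m(z-1) - m\,\partial_z^{m-1} \;\equiv\; -m\,\overline{\partial_z^{m-1}} \pmod{D(\C^\times)(z-1)}.
\]
For $\overline{a}\in M$ with lift $a = \sum_{m\geq 0} c_m \partial_z^m \in \C[\partial_z]$, the condition $(z-1)\overline{a} = 0$ in $M$ (equivalently, $(z-1)a \in D(\C^\times)(z-1)$) thus becomes $\sum_{m\geq 1} m\,c_m\,\overline{\partial_z^{m-1}} = 0$, forcing $c_m = 0$ for all $m\geq 1$. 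Therefore the reduction is exactly $\C\cdot\overline{1}$, and the resulting algebra is $\C$.

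I do not foresee a serious obstacle: the only step requiring some care is the basis claim for $M$, but this is a standard fact about cyclic modules over the Weyl algebra and is essentially forced by the moment-map computation itself, since the identity $(z-1)\overline{\partial_z^m} = -m\overline{\partial_z^{m-1}}$ lets one inductively extract coefficients from any supposed linear dependence.
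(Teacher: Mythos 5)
Your proof is correct and takes essentially the same route as the paper: identify $Y_{-\alpha_i^\vee}^0$ with $D(\C^\times)$ via Lemma \ref{rem: cotangent bundle of C*}, observe that $D(\C^\times)/D(\C^\times)(z-1)$ has basis $\{\overline{\partial_z^m}\}_{m\ge 0}$, and use $[z,\partial_z^m]=-m\partial_z^{m-1}$ to see that only the constant survives the reduction. The paper's argument is the same, so there is nothing to add.
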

\begin{proof}
Recall from Lemma \ref{rem: cotangent bundle of C*} that $Y_{-\alpha_i^\vee}^0 \cong D(\Cx)$ with $E_i^{(1)} \mapsto z$. The algebra $D(\Cx)$ has $\C$-basis $\{ (\frac{\partial}{\partial z})^m z^n : m,n\in\Z, m\geq 0\}$, so $D(\Cx) / D(\Cx) (z - 1) $ has basis $\{ (\frac{\partial}{\partial z})^m : m\geq 0\}$. By induction on $m$, or by using the Fourier transform, one can show that 
$$
[z, (\tfrac{\partial}{\partial z})^m] = - m (\tfrac{\partial}{\partial z})^{m-1}
$$
For $ m \ne 0 $, these expressions are linearly independent in $D(\Cx) / D(\Cx) (z - 1) $. Therefore $Y_{-\alpha_i^\vee}^0 \sslash_1 \Ga \cong D(\Cx) \sslash_1 \Ga \cong \C$.
\end{proof}

\section{Towards quantum Hamiltonian reduction for generalized slices}

\subsection{A conjecture and a weak form}
 Let $\lambda$ be a dominant coweight and let $ \mu \le \lambda $ be a coweight such that $ \mu + \alpha_i^\vee \le \lambda $.

 Considering $\Phi_i^{-1}(\C^\times)^\lambda_\mu \subset \overline{\cW}^\lambda_\mu$, we have seen in Corollary \ref{cor: reduction for slices v1} that the multiplication map $m$ induces an isomorphism
 $$
 m:  \overline{\mathcal{W}}_{-\alpha}^0 \times \oW^\lambda_{\mu+\alpha_i^\vee}
  \stackrel{\sim}{\longrightarrow}  \Phi_i^{-1}(\mathbb{C}^\times)^\lambda_\mu
 $$
We turn now to the question of quantizing this isomorphism.

Recall from section \ref{section: tsy}, that in order to quantize $\overline{\cW}_\mu^\lambda$, we consider quotients $Y_\mu \twoheadrightarrow Y^\lambda_\mu(\bR)$ called truncated shifted Yangians.  We will now try to show that the isomorphism from Theorem \ref{tildeDeltaiso} is compatible with taking these quotients.

\begin{Rem}
The assumption that $ \mu + \alpha_i^\vee \le \lambda $ gives us that $ m_i > 0 $ (in the notation of section \ref{section: tsy} ) and then the formulas from Theorem \ref{GKLO homomorphism} imply that $ E_i^{(1)} $ is non-zero in $ Y^\lambda_\mu(\bR)$.

Note also that $Y^\lambda_\mu(\bR)$ is a domain, so Theorem \ref{OreYmu} implies that the image of $S = \left\{(E_i^{(1)})^k: k\geq 0\right\}$ in $Y^\lambda_\mu(\bR)$ is still both a right and left denominator set.
\end{Rem}

\begin{Conjecture}
\label{truncated tildeDeltaiso}
For any dominant coweight $\lambda$ and coweight $\mu \leq \lambda$, such that $ \mu + \alpha_i^\vee \le \lambda $, there exists an isomorphism $ Y^\lambda_\mu(\bR)[(E_i^{(1)})^{-1}] \xrightarrow{\sim}  Y_{-\alpha_i^\vee}^0\otimes Y^\lambda_{\mu+\alpha_i^\vee} (\bR)$ fitting into a commutative diagram
$$
\xymatrix{
Y_\mu[(E_i^{(1)})^{-1}] \ar[r]^-\sim\ar@{->>}[d] & Y_{-\alpha_i^\vee}^0\otimes Y_{\mu+\alpha_i^\vee} \ar@{->>}[d] \\
Y^\lambda_\mu(\bR)[(E_i^{(1)})^{-1}] \ar[r]^-\sim &  Y_{-\alpha_i^\vee}^0\otimes Y^\lambda_{\mu+\alpha_i^\vee} (\bR)
}
$$
where the top horizontal arrow is from Theorem \ref{tildeDeltaiso}, while the the vertical arrows are the canonical quotient maps.
\end{Conjecture}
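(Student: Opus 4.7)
My strategy is to lift the classical isomorphism of Corollary \ref{cor: reduction for slices v1} through the filtered quantization, using the explicit generators of the truncation ideal. The first step is to invoke the description of the kernel provided by Theorem \ref{appendix thm} (in type A) or assumed by the reducedness conjecture in general: one has
$$
Y_\mu^\lambda(\bR) \cong Y_\mu / K_\mu^\lambda, \qquad K_\mu^\lambda := \langle A_j^{(r)} : j \in I,\ r > m_j \rangle,
$$
and similarly $Y_{\mu+\alpha_i^\vee}^\lambda(\bR) \cong Y_{\mu+\alpha_i^\vee} / K_{\mu+\alpha_i^\vee}^\lambda$, where the latter uses $m_i' = m_i - 1$ and $m_j' = m_j$ for $j \neq i$ (these are non-negative integers precisely because $\mu + \alpha_i^\vee \leq \lambda$). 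It therefore suffices to show that $\tilde\Delta$ carries the localization of $K_\mu^\lambda$ into the localization of $Y_{-\alpha_i^\vee}^0 \otimes K_{\mu+\alpha_i^\vee}^\lambda$, and that the induced map on quotients is an isomorphism.

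The substantive technical step is to compute $\tilde\Delta(A_j(u))$ as a formal series in $u^{-1}$ with coefficients in $Y_{-\alpha_i^\vee}^0 \otimes Y_{\mu+\alpha_i^\vee}$. The $A_j$ series are defined implicitly from the $H_j$ series via \eqref{eq: def of A gens}, and the coproduct of $H_j(u)$ can be extracted from Theorem \ref{th:coproductanti}: the additive coproduct formula on the Levendorski{\u\i}-type element $S_j^{(-\langle\mu,\alpha_j\rangle+1)}$ corresponds to a multiplicative formula on $H_j(u)$ at the level of leading behaviour. Propagating this through \eqref{eq: def of A gens}, I expect a formula of the shape
$$
\tilde\Delta\bigl(A_j(u)\bigr) \ \equiv\ A_j^{\mathrm{first}}(u)\, A_j^{\mathrm{second}}(u) \pmod{\text{lower-filtered corrections}},
$$
where $A_j^{\mathrm{first}}$ is the $A_j$ series on the $Y_{-\alpha_i^\vee}^0$ factor (trivial for $j \neq i$, and of degree $1$ in $u^{-1}$ for $j = i$) and $A_j^{\mathrm{second}}$ is the $A_j$ series on the $Y_{\mu+\alpha_i^\vee}$ factor. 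Modulo $K_{\mu+\alpha_i^\vee}^\lambda$, the right-hand side is a polynomial of degree $m_j$ in $u^{-1}$, so its coefficients in degrees $> m_j$ vanish; this will give $\tilde\Delta(A_j^{(r)}) \in Y_{-\alpha_i^\vee}^0 \otimes K_{\mu+\alpha_i^\vee}^\lambda$ for $r > m_j$.

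With the induced ring homomorphism in place, surjectivity is immediate from surjectivity of $\tilde\Delta$ (Theorem \ref{tildeDeltaiso}) together with surjectivity of the two quotient maps. For injectivity I would pass to associated graded, using the filtrations of Section \ref{se:filtrations} restricted to the localized quotient as in Section \ref{alocalizationforYmu}. By Theorem \ref{grYmula} the graded quotients recover $\C[\oW_\mu^\lambda]$ and $\C[\oW^0_{-\alpha_i^\vee}] \otimes \C[\oW_{\mu+\alpha_i^\vee}^\lambda]$, and by Proposition \ref{comultfiltrcomp} together with Theorem \ref{multpoi} the induced graded map is precisely the classical isomorphism of Corollary \ref{cor: reduction for slices v1} (restricted to the $E_i^{(1)}$-invertible locus). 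A preliminary reduction to sufficiently dominant $\mu$, as in the proof of Theorem \ref{th:domiso}, ensures that the filtrations are non-negative; then Lemma \ref{liftgradediso} upgrades the graded isomorphism to the desired filtered isomorphism, and the general case follows by descending along shift maps as in the proof of Theorem \ref{tildeDeltaiso}.

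The main obstacle is the middle step. The $A$-generators are not among the Drinfeld generators for which the coproduct has a closed form, and equation \eqref{eq: def of A gens} involves ratios of formal series; extracting a clean multiplicative identity for $\tilde\Delta(A_j(u))$ modulo the relevant ideal is delicate, especially for $j = i$, where the extra $u^{-m_i}(u - d_i)^{-m_i}$ factor and the appearance of $A_i(u) A_i(u - d_i)$ in the denominator force careful bookkeeping of degrees and of the ``lower-order'' corrections coming from the off-diagonal terms $\sum_\gamma F_\gamma^{(1)} \otimes E_\gamma^{(1)}$ in the coproduct of $S_i^{(2)}$.
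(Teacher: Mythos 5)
Your overall architecture matches the paper's: reduce to the conjectural presentation $\conjectural_\mu^\lambda(\bR) = Y_\mu/\langle A_j^{(r)} : r > m_j\rangle$, show the coproduct carries the truncation ideal into the truncation ideal on the target, then upgrade surjectivity (immediate) and injectivity (associated graded plus Lemma \ref{liftgradediso}) to an isomorphism. But the decisive middle step --- computing $\tilde{\Delta}(A_j(u))$ and verifying that its coefficients with $r > m_j$ land in $Y_{-\alpha_i^\vee}^0 \otimes K_{\mu+\alpha_i^\vee}^\lambda$ --- is left as an unproven ``expectation,'' and the expected shape is not correct. You write $\tilde{\Delta}(A_j(u)) \equiv A_j^{\mathrm{first}}(u)\, A_j^{\mathrm{second}}(u)$ modulo lower-filtered corrections, but the paper's Theorem \ref{thm: explicit comult} gives an exact formula for $j=i$,
$$
A_i(u) \longmapsto A_i(u)\otimes A_i(u) + d_i\,[A_i(u),F_i^{(1)}]\otimes[E_i^{(1)},A_i(u)],
$$
and the second summand is not a lower-order correction you can discard --- it sits in the same filtration degree as the first, and the degree-vanishing argument depends on \emph{both} summands becoming polynomials of degree $m_i$ in $u^{-1}$ after passing to the quotient ($1$ from the first tensor factor, $m_i-1$ from the second). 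Absorbing it into a ``modulo'' breaks the argument.

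The paper obtains the exact formula by a route your outline never engages: the RTT presentation of the Yangian (Section \ref{section: RTT presentation}), the identification of the GKLO series $\gkloA_i, \gkloB_i, \gkloC_i$ with specific matrix coefficients $t_{\beta,w}(u)$ (Proposition \ref{prop: RTT vs GKLO}), and the closed-form RTT coproduct (Lemma \ref{lem: RTT coproduct}), refined by sorting on the root-lattice grading to get Proposition \ref{lem: estimate coproduct} and then Lemma \ref{lemma: key estimate of comult}. Your suggested route --- propagating the additive coproduct on $S_j^{(\cdot)}$ through the implicit definition \eqref{eq: def of A gens} --- does not visibly close, and you say as much in your final paragraph. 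That acknowledged gap is precisely where the content of the paper's argument lies, so the proposal is not a complete proof.
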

Assuming this conjecture, we can describe one truncated shifted Yangian as a {quantum} Hamiltonian reduction of another, similarly to Corollary \ref{cor: quantum hamiltonian reduction for shifted Yangians}:
\begin{Corollary}
\label{Cor: quantum Hamiltonian reduction of truncated shifted Yangians}
There is an isomorphism $Y_{\mu+\alpha_i^\vee}^\lambda(\bR) \cong Y_\mu^\lambda(\bR) \sslash_1 \mathbb{G}_a $, determined by the element $E_i^{(1)} \in Y_\mu^\lambda(\bR)$.
\end{Corollary}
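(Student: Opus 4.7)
The plan is to mirror the proof of Corollary \ref{cor: quantum hamiltonian reduction for shifted Yangians}, now using Conjecture \ref{truncated tildeDeltaiso} in place of Theorem \ref{tildeDeltaiso}. I would proceed in three steps.

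First, I would verify the prerequisites for localizing $Y_\mu^\lambda(\bR)$ at $E_i^{(1)}$. The adjoint action $\ad_{E_i^{(1)}}$ is locally nilpotent on $Y_\mu$ by Theorem \ref{OreYmu}, and this property descends to the quotient $Y_\mu^\lambda(\bR)$. Since $E_i^{(1)}$ is nonzero in $Y_\mu^\lambda(\bR)$ (by the remark preceding Conjecture \ref{truncated tildeDeltaiso}) and this algebra is a domain (by Theorem \ref{grYmula}), Corollary \ref{co:GabberSimple} produces the two-sided Ore localization $Y_\mu^\lambda(\bR)[(E_i^{(1)})^{-1}]$.

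Second, I would observe that this localization does not alter the quantum Hamiltonian reduction. For any associative algebra $A$ equipped with a non-nilpotent element $x$ for which $\ad_x$ is locally nilpotent, the natural map $A/A(x-1) \to A[x^{-1}]/A[x^{-1}](x-1)$ is a bijection (with inverse provided by the observation that $x \equiv 1$ modulo $x-1$, so any $x^{-k}$ can be rewritten), and one checks it intertwines the subspaces cut out by the condition $[x,-] \in A(x-1)$. Hence it induces $A \sslash_1 \Ga \cong A[x^{-1}] \sslash_1 \Ga$. Applied to $A = Y_\mu^\lambda(\bR)$ with $x = E_i^{(1)}$, this reduces the problem to computing the reduction of $Y_\mu^\lambda(\bR)[(E_i^{(1)})^{-1}]$.

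Third, I would invoke Conjecture \ref{truncated tildeDeltaiso} to obtain $Y_\mu^\lambda(\bR)[(E_i^{(1)})^{-1}] \cong Y_{-\alpha_i^\vee}^0 \otimes Y_{\mu+\alpha_i^\vee}^\lambda(\bR)$ and note, exactly as in the proof of Proposition \ref{pr:tildeDeltaExists}, that under this isomorphism the quantum moment map $E_i^{(1)}$ is sent to $E_i^{(1)} \otimes 1$. Since $(E_i^{(1)} \otimes 1) - 1$ lies entirely in the first tensor factor and $\ad_{E_i^{(1)} \otimes 1}$ acts trivially on the second factor, the quantum Hamiltonian reduction splits as
$$
Y_\mu^\lambda(\bR) \sslash_1 \Ga \;\cong\; \bigl(Y_{-\alpha_i^\vee}^0 \sslash_1 \Ga\bigr) \otimes Y_{\mu+\alpha_i^\vee}^\lambda(\bR).
$$
The preceding lemma gives $Y_{-\alpha_i^\vee}^0 \sslash_1 \Ga \cong \C$, yielding the desired isomorphism. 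Since Conjecture \ref{truncated tildeDeltaiso} carries all the real content, there is no genuine obstacle left in this derivation; the only minor point to check carefully is that the tensor-factor splitting of the reduction is legitimate, which holds because the quantum moment map is supported on a single factor.
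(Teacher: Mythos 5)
Your proposal is correct and follows precisely the route the paper takes (the paper does not write out a proof of this Corollary, but states that it follows from Conjecture \ref{truncated tildeDeltaiso} "similarly to Corollary \ref{cor: quantum hamiltonian reduction for shifted Yangians}"; your three steps are exactly that argument). You correctly verify the Ore hypotheses for the quotient, you justify the invariance of quantum Hamiltonian reduction under inverting the moment map in the same way the paper does in the earlier Corollary, and the tensor-factor splitting is valid because $(E_i^{(1)}-1)\otimes 1$ generates a left ideal of the form $Y_{-\alpha_i^\vee}^0(E_i^{(1)}-1) \otimes Y_{\mu+\alpha_i^\vee}$, so the reduction factors as $(Y_{-\alpha_i^\vee}^0 \sslash_1 \Ga) \otimes Y_{\mu+\alpha_i^\vee}^\lambda(\bR)$.
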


As a first step toward Conjecture \ref{truncated tildeDeltaiso}, we have the following Lemma.

\begin{Lemma} \label{le:don't need iso}
	In the setting of Conjecture \ref{truncated tildeDeltaiso}, if a map $Y^\lambda_\mu(\bR)[(E_i^{(1)})^{-1}] \rightarrow  Y_{-\alpha_i^\vee}^0\otimes Y^\lambda_{\mu+\alpha_i^\vee} (\bR)$ exists which makes the diagram commutative, then it is an isomorphism.
\end{Lemma}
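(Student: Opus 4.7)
My plan is to decompose the problem into showing that $\Phi_{\text{bot}}$ (the hypothetical bottom arrow) is both surjective and injective.  Write $\Phi_{\text{top}}$ for the top horizontal arrow of the diagram (an isomorphism by Theorem \ref{tildeDeltaiso}) and $q_L, q_R$ for the left and right vertical quotient maps, so that commutativity reads $\Phi_{\text{bot}} \circ q_L = q_R \circ \Phi_{\text{top}}$.  Surjectivity of $\Phi_{\text{bot}}$ is then immediate: the right-hand composition $q_R \circ \Phi_{\text{top}}$ is surjective, being the composition of a bijection and a surjection; by commutativity, so is $\Phi_{\text{bot}} \circ q_L$; and this forces $\Phi_{\text{bot}}$ itself to be surjective.

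For injectivity, the plan is to pass to associated graded and invoke Lemma \ref{liftgradediso}(1).  I would fix a splitting $\mu = \nu_1 + \nu_2$ and work with the filtration $F_{\nu_1,\nu_2} Y_\mu$ together with the compatible filtrations on $Y^0_{-\alpha_i^\vee}$ and $Y_{\mu+\alpha_i^\vee}$ supplied by Lemma \ref{desiredcompatibility}, inducing filtrations on the Ore localizations and on the truncated quotients in the standard way.  By Theorems \ref{grYmu}, \ref{grYmula} and \ref{multpoi}, the associated graded of the commutative square in the statement is identified with the pullback under $m$ of the classical commutative diagram
\[
\xymatrix@C=4em{
\oW^0_{-\alpha_i^\vee} \times \oW^\lambda_{\mu+\alpha_i^\vee} \ar@{^{(}->}[d] \ar[r]^-{m} & \Phi_i^{-1}(\Cx)^\lambda_\mu \ar@{^{(}->}[d] \\
\oW^0_{-\alpha_i^\vee} \times \cW_{\mu+\alpha_i^\vee} \ar[r]^-{m} & \Phi_i^{-1}(\Cx)_\mu
}
\]
of Poisson schemes, where the vertical arrows are the closed embeddings.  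The top row here is an isomorphism by Corollary \ref{cor: reduction for slices v1}, hence $\gr \Phi_{\text{bot}}$ is an isomorphism and in particular injective.  Lemma \ref{liftgradediso}(1) then lifts this to injectivity of $\Phi_{\text{bot}}$ itself, completing the argument.

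The step I expect to require the most care is the verification of the hypotheses of Lemma \ref{liftgradediso}(1) for $Y^\lambda_\mu(\bR)[(E_i^{(1)})^{-1}]$, namely exhaustiveness and separatedness of the induced filtration.  Exhaustiveness is inherited from Proposition \ref{prop: filtration properties} via the standard behaviour of filtrations under quotients and Ore localization.  Separatedness should follow from the identification of the associated graded with $\C[\Phi_i^{-1}(\Cx)^\lambda_\mu]$, which is a domain (since $\oW^\lambda_\mu$ is irreducible), by rerunning the PBW-expansion argument from the proof of Proposition \ref{prop: filtration properties} in the truncated-localized setting.  I anticipate that carefully tracking the interaction between the $F_{\nu_1,\nu_2}$-filtration, the Ore localization at $E_i^{(1)}$, and the quotient by the $A_i^{(r)}$ for $r>m_i$ will be the main technical hurdle; once it is in place, everything else is formal.
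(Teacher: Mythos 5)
Your argument is essentially the paper's: surjectivity follows immediately from commutativity of the diagram, and injectivity is obtained by passing to associated graded, matching the resulting bottom arrow to the classical isomorphism of Corollary \ref{cor: reduction for slices v1}, and lifting via Lemma \ref{liftgradediso}(1). The one step the paper makes explicit that you gloss over is why $\gr$ of the bottom arrow actually \emph{is} that classical isomorphism rather than merely some map into the graded diagram: because the graded left vertical (the restriction map $\C[\Phi_i^{-1}(\Cx)_\mu] \twoheadrightarrow \C[\Phi_i^{-1}(\Cx)_\mu^\lambda]$) is surjective, the graded bottom arrow is uniquely determined by commutativity, and since the isomorphism of Corollary \ref{cor: reduction for slices v1} also completes the square, they must coincide.
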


\begin{proof}
	It suffices to show that this arrow is surjective and injective.  Surjectivity is immediate from the commutativity of the diagram and the surjectivity of the top horizontal and right vertical arrows.
	
	For injectivity, we know that upon applying associated graded, we obtain the diagram
	$$
\xymatrix{
	\C[\Phi^{-1}(\Cx)_\mu] \ar[r]^-\sim\ar@{->>}[d] & \C[\cW^0_{-\alpha_i^\vee}] \otimes \C[\cW_{\mu+\alpha_i^\vee}] \ar@{->>}[d] \\
	\C[\Phi^{-1}(\Cx)_\mu^\lambda] \ar[r]^-\sim &  \C[\cW^0_{-\alpha_i^\vee}] \otimes \C[\cW_{\mu+\alpha_i^\vee}^\lambda]
}
$$ 
The bottom horizontal arrow here is unique given the rest of the diagram and thus must be the isomorphism from Corollary \ref{cor: reduction for slices v1}.  Thus by Lemma \ref{liftgradediso}.(1), the original bottom horizontal arrow is injective.
\end{proof}

At present, we do not know of a proof of Conjecture \ref{truncated tildeDeltaiso} in general, due to the fact that the defining ideal of $Y_\mu \twoheadrightarrow Y_\mu^\lambda(\bR)$ is difficult to characterize.  Recall from section \ref{section: tsy} that there is a conjectural presentation 
$$
\conjectural_\mu^\lambda(\bR) = Y_\mu /\langle A_j^{(r)} : r > m_j\rangle \twoheadrightarrow Y_\mu^\lambda(\bR),
$$  
i.e.~we conjecture this surjection to be an isomorphism.  In this section, we use this conjectural presentation to establish a weak version of Conjecture \ref{truncated tildeDeltaiso}.  

\begin{Theorem}
	\label{th: weak form of conjecture}
	The composed map
	\[
	\xymatrix{
		Y_\mu \ar[r]^<<<<\Delta &  Y_{-\alpha_i^\vee} \otimes Y_{\mu+\alpha_i^\vee}  \ar@{->>}[r] & Y_{-\alpha_i^\vee}^0 \otimes Y_{\mu+\alpha_i^\vee}^\lambda(\bR)
	}
	\]
	factors through $\conjectural_\mu^\lambda(\bR)$.  In particular, there is a commutative diagram
	\[
	\xymatrix{
		Y_\mu[(E_i^{(1)})^{-1}]  \ar@{->>}[d] \ar^-\sim[r] & Y_{- \alpha_i^\vee} \otimes Y_{\mu + \alpha_i^\vee} \ar@{->>}[d] \\
		\conjectural_\mu^\lambda(\bR)[(E_i^{(1)})^{-1}] \ar[r] & Y_{-\alpha_i^\vee}^0 \otimes Y_{\mu+\alpha_i^\vee}^\lambda(\bR) 
	}
	\]
\end{Theorem}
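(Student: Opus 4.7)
Let $\phi : Y_\mu \to B := Y_{-\alpha_i^\vee}^0 \otimes Y_{\mu+\alpha_i^\vee}^\lambda(\bR)$ denote the composed map. Since $\conjectural_\mu^\lambda(\bR) = Y_\mu / \langle A_j^{(r)} : r > m_j \rangle$, the theorem reduces to showing $\phi(A_j^{(r)}) = 0$ for every $j \in I$ and every $r > m_j := m_j^{(\mu, \lambda)}$; the consequent commutative diagram then follows automatically by inverting $E_i^{(1)}$. Throughout I will use the identity $m_j^{(\mu + \alpha_i^\vee, \lambda)} = m_j - \delta_{ij}$, which is immediate from the definition $\lambda - \mu = \sum_k m_k \alpha_k^\vee$.

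My plan is to derive a comultiplication formula for the generating series $A_j(u)$ of the shape
\begin{equation*}
\Delta(A_j(u)) = A_j(u) \otimes A_j(u) + \mathcal E_j(u),
\end{equation*}
where the error $\mathcal E_j(u)$ is structured so that its image under the quotient to $B$ vanishes; concretely, each of its summands will carry a PBW $F$-variable in the left tensor factor. The leading-term form is forced by the classical limit: Theorem \ref{multpoi} identifies the classical limit of $\Delta$ with the pullback along the multiplication $m$, and classically the $A_j(u)$ are monopole-type polynomials whose zeros concatenate multiplicatively under $m$, in agreement with Corollary \ref{cor: reduction for slices v1}. Quantum-mechanically, $\Delta$ is explicit on the lowest Levendorskii generators by Theorem \ref{th:coproductanti}, and higher coefficients of $A_j(u)$ (defined implicitly via \eqref{eq: def of A gens}) are reached inductively using the raising-operator commutator $[S_j^{(-\langle \mu, \alpha_j\rangle + 2)}, -]$; Lemma \ref{egammafj} provides the bracket control needed to keep the error within the desired subalgebra as one iterates.

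Granted this formula, the conclusion follows by a degree count. For the main term, $[u^{-r}] (A_j(u) \otimes A_j(u)) = \sum_{s=0}^{r} A_j^{(s)} \otimes A_j^{(r-s)}$; in $Y_{-\alpha_i^\vee}^0$ one has $A_k^{(s)} = 0$ unless $k = i$ and $s \in \{0, 1\}$ (Lemma \ref{lemma: simplest coulomb branch of quiver gauge theory}), while in $Y_{\mu + \alpha_i^\vee}^\lambda(\bR)$ one has $A_k^{(s)} = 0$ for $s > m_k - \delta_{ik}$ (by Theorem \ref{GKLO homomorphism}). A short case split on $j = i$ versus $j \neq i$, using the shift identity above, shows every surviving summand vanishes once $r > m_j$. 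For the error, terms in $\mathcal E_j(u)$ containing an $F_\gamma^{(q)}$ with $(\gamma, q) \neq (\alpha_i, 1)$ are already zero in $Y_{-\alpha_i^\vee}^0$, while terms containing $F_i^{(1)}$ pair (by Lemma \ref{Ygeq}) with factors in $Y_{\mu+\alpha_i^\vee}^{\geq}$ whose contribution also vanishes in $B$ once combined with the truncation.

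The main obstacle will be establishing the comultiplication formula for $A_j(u)$ with sufficiently precise control on the error $\mathcal E_j(u)$. Because $A_j^{(r)}$ is defined only implicitly through \eqref{eq: def of A gens} and because Theorem \ref{th:coproductanti} specifies $\Delta$ explicitly only on the lowest Levendorskii generators, propagating the formula through all coefficients while maintaining the structural constraint on the error demands careful bookkeeping. Should the explicit formula prove too unwieldy, a fallback strategy would be to combine the isomorphism $\widetilde\Delta$ of Theorem \ref{tildeDeltaiso} with a filtered induction anchored on the classical vanishing $\gr \phi(\overline{A_j^{(r)}}) = 0$ (which follows from Theorem \ref{multpoi} and Corollary \ref{cor: reduction for slices v1} together with the fact that $\overline{A_j^{(r)}}$ lies in the ideal of $\oW_\mu^\lambda$); this would trade the explicit calculation for additional boundedness/separatedness input on the filtrations of $B$.
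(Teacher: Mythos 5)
Your overall strategy matches the paper's: reduce to showing $\phi(A_j^{(r)}) = 0$ for $r > m_j$, and establish a coproduct formula $\Delta(A_j(u)) = A_j(u) \otimes A_j(u) + \mathcal{E}_j(u)$ with structured error (the paper's Lemma \ref{lemma: key estimate of comult} and Theorem \ref{thm: explicit comult} give exactly such a formula, with $\mathcal{E}_i(u) = d_i[A_i(u), F_i^{(1)}] \otimes [E_i^{(1)}, A_i(u)]$ plus terms in $Y^{\leq}(-\gamma) \otimes Y^{\geq}(\gamma)$ for $\gamma > \alpha_i$). Your degree count on the main term and your observation about $F$-variables in the left factor are consistent with what the paper does.

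However, the mechanism you propose for \emph{proving} that coproduct formula has a concrete error. You write that the higher coefficients of $A_j(u)$ ``are reached inductively using the raising-operator commutator $[S_j^{(-\langle \mu,\alpha_j\rangle+2)}, -]$.'' This cannot work: the $A_j^{(r)}$ are polynomials in the Cartan generators $H_j^{(s)}$ (equation \eqref{eq: def of A gens}), hence commute with $S_j^{(-\langle \mu,\alpha_j\rangle+2)}$, which is itself a Cartan element. The adjoint action of $S$ raises indices on $E$ and $F$, not on $A$ or $H$. To access $\Delta$ on the higher Cartan coefficients one would instead have to route through $H_j^{(s)} = [E_j^{(q)}, F_j^{(s-q+1)}]$ and then unwind the implicit recursion defining $A_j$ in terms of $H_j$ --- precisely the bookkeeping that you flag as potentially unwieldy, and which the paper sidesteps entirely by invoking the RTT presentation. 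There, the GKLO series $\gkloA_i(u)$ is identified with a single matrix-coefficient generator $t_{v_i^\ast, v_i}(u)$ (Proposition \ref{prop: RTT vs GKLO}), whose coproduct is given \emph{in closed form} by the RTT formula (Lemma \ref{lem: RTT coproduct}); the error terms are then controlled by weight considerations for the $\g$-representation $V(\varpi_i)$ (Proposition \ref{lem: estimate coproduct}), not by inductive commutation. This is a genuinely different tool that your proposal does not mention.

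Your fallback strategy --- a filtered induction anchored on the classical vanishing $\gr\phi(\overline{A_j^{(r)}}) = 0$ --- is also underdeveloped. The classical vanishing only tells you that $\phi(A_j^{(r)})$ lies in lower filtration degree than expected; it does not by itself imply $\phi(A_j^{(r)}) = 0$, since a nonzero element can have vanishing top symbol. You would need additional positivity or boundedness input (as you acknowledge) together with a genuinely inductive step that you do not spell out, so as written this path does not close the gap.
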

We will give the proof of this result in Section \ref{section: explicit formulas for Cartan generators} below.

Combining this Theorem with Lemma \ref{le:don't need iso}, we see that if $\conjectural_\mu^\lambda(\bR) = Y^\lambda_\mu(\bR) $, then Conjecture \ref{truncated tildeDeltaiso} holds.  Combining with Theorem \ref{appendix thm}, we deduce the following.

\begin{Corollary}
	Suppose that $\g$ is type A.  Then Conjecture \ref{truncated tildeDeltaiso} holds.
\end{Corollary}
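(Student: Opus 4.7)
The plan is to simply assemble the ingredients that have been built up just before this Corollary. The two key inputs are Theorem \ref{th: weak form of conjecture}, which gives the desired commutative diagram but with $\conjectural_\mu^\lambda(\bR)$ in place of $Y_\mu^\lambda(\bR)$, and Theorem \ref{appendix thm}, which in type A identifies $\conjectural_\mu^\lambda(\bR)$ with $Y_\mu^\lambda(\bR)$. Once we have these, Lemma \ref{le:don't need iso} does the rest.

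Concretely, I would first invoke Theorem \ref{appendix thm} (noting that it is proven in the Appendix for $G = SL_n$, and that the hypotheses on $\lambda, \mu$ here match: $\mu \le \lambda$ is assumed, and the Corollary allows $\mu$ to be non-dominant, which the appendix handles). This gives an isomorphism $\conjectural_\mu^\lambda(\bR) \cong Y_\mu^\lambda(\bR)$, compatible with the canonical quotient from $Y_\mu$. Substituting this isomorphism into the diagram of Theorem \ref{th: weak form of conjecture} immediately yields a commutative diagram
\[
\xymatrix{
Y_\mu[(E_i^{(1)})^{-1}]  \ar@{->>}[d] \ar^-\sim[r] & Y_{-\alpha_i^\vee}^0 \otimes Y_{\mu + \alpha_i^\vee} \ar@{->>}[d] \\
Y_\mu^\lambda(\bR)[(E_i^{(1)})^{-1}] \ar[r] & Y_{-\alpha_i^\vee}^0 \otimes Y_{\mu+\alpha_i^\vee}^\lambda(\bR)
}
\]
in which the top arrow is the isomorphism of Theorem \ref{tildeDeltaiso} and the vertical arrows are the canonical quotient maps.

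Finally I would apply Lemma \ref{le:don't need iso}: the hypothesis of that lemma is exactly the existence of a map fitting into such a commutative square, and its conclusion is that this map is automatically an isomorphism. This is precisely the statement of Conjecture \ref{truncated tildeDeltaiso} for type A, and the corollary follows.

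There is essentially no obstacle in this argument \emph{per se}; the entire content has been shifted to the two cited results. The genuine difficulty lies in Theorem \ref{appendix thm}, which extends the reducedness-style results of \cite{KMWY} from dominant $\mu$ to arbitrary $\mu \le \lambda$; that is handled in the appendix rather than here.
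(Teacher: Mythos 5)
Your proof is correct and matches the paper's argument exactly: the text preceding the Corollary states that Theorem \ref{th: weak form of conjecture} together with Lemma \ref{le:don't need iso} gives the conjecture whenever $\conjectural_\mu^\lambda(\bR) = Y^\lambda_\mu(\bR)$, and Theorem \ref{appendix thm} supplies that identification in type A (for all $\mu \le \lambda$, not just dominant). Your note that the isomorphism is the canonical surjection, hence compatible with the quotient from $Y_\mu$, is precisely why the substitution into the diagram works.
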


\begin{Rem}
Suppose that $\g$ is type A and $\mu$ and $ \mu + \alpha_i^\vee$ are dominant.  Then by the quantized Mirkovi\'{c}-Vybornov isomorphism from \cite{QMV}, Corollary \ref{Cor: quantum Hamiltonian reduction of truncated shifted Yangians} translates to a Hamiltonian reduction between corresponding parabolic finite W-algebras of type A.  This is closely related to the works of Morgan \cite{Mor} and Rowe \cite{Ro}.
\end{Rem}

\subsection{The RTT presentation}
\label{section: RTT presentation}

We first recall some properties of the RTT presentation of the (unshifted) Yangian $Y = Y_0$.  This presentation was first explained by Drinfeld \cite[Theorem 6]{Drinfeld}, and a full proof appears in the recent work of Wendlandt \cite{Wen}.  

This construction takes as input a non-trivial finite-dimensional representation $V$ of the Yangian.  Following \cite[\S 2]{IR} we take $V = \bigoplus_i V(\varpi_i, 0)$, the sum of fundamental representations of the Yangian.  That is, $V(\varpi_i, 0)$ is the irreducible module for the Yangian which has Drinfeld polynomials $P_i(u) = u$ and $P_j(u) = 1$ for $j\neq i$. Having chosen $V$, there is an associated RTT presentation of the Yangian, given in \cite[Theorem 6.2]{Wen}.

For our purposes we will need only generators, not relations, and so the following formulation of this result will be sufficient: for each $r \geq 1$, there is a bilinear map 
\begin{equation}
V^\ast \times V \rightarrow Y, \qquad (\beta, w) \mapsto t_{\beta, w}^{(r)}
\end{equation}
defined as in \cite[\S 2.19]{IR}.  These elements generate $Y$, though we will not need this fact.  It is convenient to organize these elements into formal series $t_{\beta, w}(u) = \langle \beta, w\rangle + \sum_{r\geq 1} t_{\beta, w}^{(r)} u^{-r}$.

\begin{Rem}
Strictly speaking, the RTT presentation depends on a choice $\{e_a\}$ of basis for $V$.  Then $t_{e_a^\ast, e_b}^{(r)}$ corresponds to the RTT generator $t_{a,b}^{(r)}$ from \cite[\S 5.1]{Wen}, or more precisely its image in $Y$ under the isomorphism \cite[Theorem 6.2]{Wen}.
\end{Rem}

For us, the main advantage of this discussion is that the coproduct has a very natural description with respect to the above generators:

\begin{Lemma}
\label{lem: RTT coproduct}
Fix $i\in I$, and suppose that $\beta \in V(\varpi_i, 0)^\ast$ and $w \in V(\varpi_i, 0)$.  Then
$$
\Delta\big( t_{\beta, w}(u) \big) = \sum_v t_{\beta, v}(u) \otimes t_{v^\ast, w}(u),
$$
where the sum ranges over any fixed basis of $V(\varpi_i, 0)$.
\end{Lemma}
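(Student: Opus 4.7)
The plan is to derive this from the standard RTT coproduct formula for the Yangian and then observe that for our particular choice of $V$ the relevant $T$-matrix is block-diagonal.

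First, following the setup of Section \ref{section: RTT presentation} (as in \cite[Theorem 6.2]{Wen}), I would organize all the generators into a single matrix-valued series. Fixing a basis $\{e_a\}$ of $V = \bigoplus_j V(\varpi_j,0)$ with dual basis $\{e_a^\ast\}$, define
$$
T_V(u) \ = \ \sum_{a,b} t_{e_a^\ast,\, e_b}(u) \otimes E_{ba} \ \in \ Y[[u^{-1}]] \otimes \End(V).
$$
The fundamental property of the RTT presentation — which is how these generators are characterized in \cite{Wen} — is that $T_V(u)$ behaves multiplicatively under the coproduct: the identity
$$
(\Delta \otimes \Id)\big(T_V(u)\big) \ = \ T_{V,13}(u) \cdot T_{V,23}(u)
$$
holds in $Y \otimes Y \otimes \End(V)$. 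Extracting the $(\beta,w)$ matrix coefficient of both sides, this translates into the identity
$$
\Delta\big(t_{\beta,w}(u)\big) \ = \ \sum_{v} t_{\beta,\,v}(u) \otimes t_{v^\ast,\,w}(u)
$$
for any $\beta\in V^\ast$, $w\in V$, where now the sum runs over a basis of the entire module $V$.

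Next, I would show that when $\beta$ and $w$ both lie in the single summand $V(\varpi_i,0)$, the sum on the right collapses to a sum over a basis of $V(\varpi_i,0)$. The key observation is that $V = \bigoplus_j V(\varpi_j,0)$ is a decomposition of $V$ as a $Y$-module, so $T_V(u)$ is block-diagonal with respect to this decomposition. Concretely, this means $t_{\beta',\,w'}^{(r)} = 0$ whenever $\beta' \in V(\varpi_j,0)^\ast$ and $w' \in V(\varpi_k,0)$ with $j \neq k$: indeed, such a coefficient measures the matrix entry of an operator that respects the direct-sum decomposition, between two distinct summands, so it must vanish. Applying this to our sum, any $v \in V(\varpi_j,0)$ with $j \neq i$ forces either $t_{\beta,v}(u) = 0$ (when paired with $\beta \in V(\varpi_i,0)^\ast$) or $t_{v^\ast,w}(u) = 0$ (when paired with $w \in V(\varpi_i,0)$), leaving only the contributions from $v$ in a basis of $V(\varpi_i,0)$, as claimed.

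The main point requiring justification is the block-diagonality of $T_V(u)$, and hence the vanishing of cross-block coefficients $t_{\beta',w'}^{(r)}$. I expect this to be the only subtle step: while it is morally immediate from the definition of the RTT generators as encoding the action of $Y$ on $V$ (so that the direct-sum decomposition of $V$ as a $Y$-module is reflected in the block structure of $T_V(u)$), it should be confirmed directly from the construction in \cite[§2]{IR} and \cite[§5.1, §6]{Wen}. Everything else is formal: once the general RTT coproduct formula and the block-diagonality are in hand, the Lemma follows by the calculation above.
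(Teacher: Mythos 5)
Your proposal is correct and follows essentially the same route as the paper: both start from Wendlandt's RTT coproduct formula (the multiplicativity of $T_V(u)$, giving a sum over a basis of all of $V$) and then observe that the cross-block coefficients $t_{\beta,v}^{(r)}$ vanish because each $V(\varpi_j,0)\subset V$ is a $Y$-subrepresentation, which collapses the sum to a basis of $V(\varpi_i,0)$. The only cosmetic difference is that you spell out the $T$-matrix identity explicitly, whereas the paper cites it directly as the formula in Wendlandt's work.
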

\begin{proof}
This is a reformulation of the formula for the coproduct given in \cite[\S 5.1]{Wen}, with one change: a priori the sum should run over a basis for all of $V = \bigoplus_j V(\varpi_j, 0)$.  But if $v \in V(\varpi_j,0)$ with $j\neq i$, then $t_{\beta, v}(u) = 0$ and so this larger sum is not needed. Indeed, this vanishing $t_{\beta,v}(u) = 0$ is immediate from the isomorphism \cite[Theorem 6.1]{Wen}, since each $V(\varpi_j, 0) \subset V$ is a subrepresentation for the Yangian.
\end{proof}

\subsubsection{GKLO generators}	
\label{section: gklo generators and rtt}

For any coweight $\mu$,  there are unique elements 
$$\gkloA_i^{(p)} \in \C[H_j^{(s)} : j \in I, s > - \langle\mu,\alpha_j\rangle] \subset Y_\mu$$ 
such that the following equation holds for all $i\in I$:
\begin{equation}
\label{eq: def of A gens without R}
H_i(u) =  u^{\langle \mu,\alpha_i\rangle}\frac{ \prod_{j \ne i} \prod_{r=1}^{-a_{ji}} \gkloA_j(u- \tfrac{1}{2} d_i a_{ij} - r d_j)}{ \gkloA_i(u) \gkloA_i(u-d_i)}
\end{equation}
Here $\gkloA_i(u) = 1 + \sum_{p \geq 1} \gkloA_i^{(p)} u^{-p}$. When $\mu=0$ these elements were introduced in \cite[Lemma 2.1]{GKLO}, and their existence and uniqueness for general $\mu$ follows by the same argument.
Define also
\begin{equation}
\label{eq: def of B and C without R}
\gkloB_i(u) = d_i^{1/2} \gkloA_i(u) E_i(u), \quad \gkloC_i(u) = d_i^{1/2} F_i(u) \gkloA_i(u)
\end{equation}

Although we have defined these elements for each $Y_\mu$ separately, they are in fact closely related to each other, via shift morphisms:
\begin{Lemma}
\label{lemma: GKLO ABC and shifts}
For any antidominant $\mu_1,\mu_2$ and any $p\geq 1$,
$$
\iota_{\mu,\mu_1,\mu_2}(\gkloA_i^{(p)}) = \gkloA_i^{(p)}, \quad \iota_{\mu,0,\mu_2}(\gkloB_i^{(p)}) = \gkloB_i^{(p)}, \quad \iota_{\mu,\mu_1,0}(\gkloC_i^{(p)}) = \gkloC_i^{(p)}
$$

\end{Lemma}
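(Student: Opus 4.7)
The plan is to verify the three claims in sequence, starting with the identity for $\gkloA_i^{(p)}$, which is the heart of the argument, and then deducing the statements for $\gkloB_i^{(p)}$ and $\gkloC_i^{(p)}$ by plugging into their definitions.

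First I would translate the action of $\iota_{\mu,\mu_1,\mu_2}$ on the generators $H_j^{(r)}, E_j^{(r)}, F_j^{(r)}$ (as given in Proposition \ref{shiftmaps}) into a statement about the series \eqref{eq: series forms}. Setting $\mu' = \mu + \mu_1 + \mu_2$ and reindexing, a direct computation yields the clean identity
\[
\iota_{\mu,\mu_1,\mu_2}\big(H_j(u)\big) \ = \ u^{-\langle \mu_1+\mu_2,\alpha_j\rangle} H_j(u),
\]
where the series on the right-hand side is interpreted inside $Y_{\mu'}$. In the special cases $\mu_1 = 0$ or $\mu_2 = 0$ the rules $E_j^{(r)} \mapsto E_j^{(r)}$ and $F_j^{(r)} \mapsto F_j^{(r)}$ respectively give $\iota_{\mu,0,\mu_2}(E_j(u)) = E_j(u)$ and $\iota_{\mu,\mu_1,0}(F_j(u)) = F_j(u)$ on the nose.

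Next I would apply $\iota_{\mu,\mu_1,\mu_2}$ to both sides of the defining relation \eqref{eq: def of A gens without R} for the $\gkloA_j^{(p)} \in Y_\mu$. By the previous step, the left-hand side becomes $u^{-\langle \mu_1+\mu_2, \alpha_i\rangle} H_i(u)$, and the right-hand side becomes $u^{\langle \mu,\alpha_i\rangle}$ times the same rational expression with each $\gkloA_j(v)$ replaced by $\iota_{\mu,\mu_1,\mu_2}(\gkloA_j)(v)$. Absorbing the extra power of $u$ into the prefactor $u^{\langle\mu,\alpha_i\rangle}$ to form $u^{\langle\mu',\alpha_i\rangle}$, the resulting identity is exactly \eqref{eq: def of A gens without R} for $Y_{\mu'}$, with $\iota_{\mu,\mu_1,\mu_2}(\gkloA_j)(v)$ playing the role of $\gkloA_j^{\mu'}(v)$. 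The uniqueness statement from \cite[Lemma 2.1]{GKLO} (valid for arbitrary $\mu$) then forces $\iota_{\mu,\mu_1,\mu_2}(\gkloA_j^{(p)}) = \gkloA_j^{(p)}$ inside $Y_{\mu'}$ for every $j$ and $p$.

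The claims for $\gkloB_i^{(p)}$ and $\gkloC_i^{(p)}$ now follow by applying the ring homomorphisms $\iota_{\mu,0,\mu_2}$ and $\iota_{\mu,\mu_1,0}$ respectively to \eqref{eq: def of B and C without R}, since both $\gkloA_i(u)$ and $E_i(u)$ (resp.\ $F_i(u)$) are preserved in these cases. There is no real obstacle: the only thing that needs care is the bookkeeping of powers of $u$ in step one, which ensures the defining equation in $Y_\mu$ is transported cleanly to the defining equation in $Y_{\mu'}$ so that the uniqueness argument can close the loop.
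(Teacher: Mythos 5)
Your argument is correct and is essentially identical to the paper's: both compute $\iota_{\mu,\mu_1,\mu_2}(H_i(u)) = u^{-\langle \mu_1+\mu_2,\alpha_i\rangle} H_i(u)$, apply $\iota$ to equation~\eqref{eq: def of A gens without R}, absorb the resulting power of $u$, and invoke the uniqueness of the $\gkloA_i(u)$, then read off the $\gkloB$ and $\gkloC$ statements from~\eqref{eq: def of B and C without R}. No meaningful difference in approach.
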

\begin{proof}
Definition \ref{shiftmaps} implies $\iota_{\mu,\mu_1,\mu_2}(H_i(u)) = u^{-\langle \mu_1+\mu_2,\alpha_i\rangle} H_i(u)$. Applying $\iota_{\mu,\mu_1,\mu_2}$ to equation (\ref{eq: def of A gens without R}) for $Y_\mu$ and multiplying by $u^{\langle \mu_1+\mu_2,\alpha_i\rangle}$, we see that the series $\iota_{\mu,\mu_1,\mu_2}(\gkloA_i(u))$ satisfy (\ref{eq: def of A gens without R}) for $Y_{\mu+\mu_1+\mu_2}$. By uniqueness, we must have $\iota_{\mu,\mu_1,\mu_2}(\gkloA_i(u)) = \gkloA_i(u)$.

Since $\iota_{\mu,0,\mu_2}(E_i(u)) = E_i(u)$ we see from (\ref{eq: def of B and C without R}) that $\iota_{\mu,0,\mu_2}(\gkloB_i(u)) = \gkloB_i(u)$. Similarly for $\gkloC_i(u)$.
\end{proof}

\begin{Lemma}
\label{lemma: gklo relations}
We have $\gkloB_i(u) = d_i^{1/2} [E_i^{(1)}, \gkloA_i(u)]$ and $\gkloC_i(u) = d_i^{1/2}[\gkloA_i(u), F_i^{(1)}]$.
\end{Lemma}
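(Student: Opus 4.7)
The plan is to prove both identities by reducing to the unshifted Yangian $Y_0$ via the shift morphisms, and then extracting them from a classical closed-form series commutator between the GKLO Cartan series $\gkloA_i(u)$ and the raising and lowering series $E_i(v), F_i(v)$.

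First I would reduce the first identity to the case $\mu = 0$. For any $\mu$, choose an antidominant coweight $\mu_2$ so that $\mu + \mu_2$ is antidominant, and consider $\iota_{\mu, 0, \mu_2}: Y_\mu \hookrightarrow Y_{\mu+\mu_2}$, which is injective by Proposition \ref{shiftmaps}. By Lemma \ref{lemma: GKLO ABC and shifts} this morphism fixes $\gkloA_i^{(p)}$ and $\gkloB_i^{(p)}$, and by Proposition \ref{shiftmaps} it also fixes each $E_i^{(r)}$ (since $\langle 0, \alpha_i\rangle = 0$). Iterating such shifts, the first identity in $Y_\mu$ follows from the same identity in $Y_0$. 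The second identity reduces to $Y_0$ in exactly the same way via $\iota_{\mu, \mu_1, 0}$, which fixes $F_i^{(r)}$, $\gkloA_i^{(p)}$, and $\gkloC_i^{(p)}$.

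Second, in $Y_0$ I would establish the closed-form commutator
\[
[\gkloA_i(u), E_i(v)] \;=\; \frac{d_i}{u - v}\,\gkloA_i(u)\bigl(E_i(v) - E_i(u)\bigr),
\]
a standard consequence of the Cartan-doubled Yangian relation \eqref{H,E} together with the defining equation \eqref{eq: def of A gens without R} expressing $H_i(u)$ in terms of the $\gkloA_j(u)$'s. Extracting the coefficient of $v^{-1}$ on both sides — where only $\tfrac{E_i(v)}{u-v}$ contributes, yielding $E_i(u)$ after expanding $\tfrac{1}{u-v}$ as a power series in $v/u$ — produces a closed-form expression for $[\gkloA_i(u), E_i^{(1)}]$ proportional to $\gkloA_i(u) E_i(u)$. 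Rearranging and using the definition $\gkloB_i(u) = d_i^{1/2}\gkloA_i(u) E_i(u)$ yields the first claim. The second identity is proved by exactly the same argument, applied to relation \eqref{H,F} and the definition $\gkloC_i(u) = d_i^{1/2} F_i(u) \gkloA_i(u)$, with the analogous series commutator $[\gkloA_i(u), F_i(v)]$ in place.

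The main obstacle is deriving the series commutator in the second step: this is a formal power-series computation requiring one to invert \eqref{eq: def of A gens without R} and carefully track the $u \mapsto u - d_i$ shifts together with the various factors of $d_i$. The computation is by now standard in the GKLO--KWWY framework, and once it is in hand the lemma follows immediately by reading off a single Fourier coefficient in $v$.
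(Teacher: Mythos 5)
Your approach matches the paper's: reduce to $\mu = 0$ via the shift morphisms together with Lemma~\ref{lemma: GKLO ABC and shifts}, then extract the identity from the closed-form GKLO commutator; the paper simply cites \cite[Proposition~2.1]{GKLO} for the $\mu=0$ case, whereas you sketch the derivation from \eqref{H,E} and \eqref{eq: def of A gens without R}. One caution: computing directly in the difference-operator image of Theorem~\ref{GKLO homomorphism} gives $[\gkloA_i(u), E_i(v)] = \tfrac{d_i}{u-v}\gkloA_i(u)\bigl(E_i(u) - E_i(v)\bigr)$, the opposite sign to what you wrote, so the signs deserve a second look when the computation is actually carried out.
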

\begin{proof}
By applying shift morphisms and the previous lemma,  it suffices to prove the case when $\mu = 0$. This case follows from the relations \cite[Proposition 2.1]{GKLO}. 
\end{proof}

Recall that we have defined similar elements earlier in this paper: if $\lambda \geq \mu$ is a dominant coweight and $\bR$ is a set of parameters of coweight $\lambda$, then there are corresponding elements $A_i^{(r)} \in Y_\mu$ defined by equation (\ref{eq: def of A gens}).  

\begin{Lemma}
\label{lemma: various GKLO generators }
For any $\lambda\geq \mu$ and  $\bR$ be as above, there exist unique series $s_i(u) \in 1 + u^{-1} \C[[u^{-1}]]$ such that 
$$
A_i(u) = s_i(u) \gkloA_i(u)
$$
for all $i \in I$.
\end{Lemma}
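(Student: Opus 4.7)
The plan is as follows. Both $A_i(u)$ and $\gkloA_i(u)$ are formal power series in $u^{-1}$ whose coefficients lie in the commutative subalgebra $\C[H_j^{(s)}: j \in I, s > -\langle \mu, \alpha_j\rangle] \subset Y_\mu$, and both have constant term $1$. In particular $\gkloA_i(u)$ is a unit in the corresponding formal-series ring, so
$$
s_i(u) \;:=\; A_i(u)\, \gkloA_i(u)^{-1}
$$
is a well-defined series with constant term $1$. Uniqueness of such $s_i(u)$ is then immediate, so the content of the lemma is to show that each coefficient $s_i^{(p)}$ lies in $\C$ rather than merely in the Cartan subalgebra.

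To show this, the next step is to divide equation (\ref{eq: def of A gens}) by equation (\ref{eq: def of A gens without R}), using the commutativity of the Cartan subalgebra to rearrange factors freely. The $H_i(u)$ factors cancel, and after the substitution $A_j(v) = s_j(v)\gkloA_j(v)$ all of the $\gkloA_j$ factors cancel as well, leaving the purely scalar-type functional equation
$$
s_i(u)\, s_i(u-d_i) \;=\; f_i(u) \prod_{j \ne i} \prod_{r = 1}^{-a_{ji}} s_j\bigl(u - \tfrac{1}{2} d_i a_{ij} - r d_j\bigr),
$$
where
$$
f_i(u) \;=\; \frac{p_i(u) \prod_{j \ne i} \prod_{r = 1}^{-a_{ji}}\bigl(u - \tfrac{1}{2} d_i a_{ij} - r d_j\bigr)^{m_j}}{u^{m_i + \langle \mu, \alpha_i \rangle}(u - d_i)^{m_i}} \;\in\; \C(u).
$$
A short degree count, using $\langle \mu, \alpha_i\rangle = \lambda_i - \sum_j m_j a_{ji}$, shows that $f_i(u) = 1 + O(u^{-1})$, consistent with the fact that both sides have constant term $1$.

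The final step is to solve this scalar functional equation for $s_i(u) = 1 + \sum_{p \ge 1} s_i^{(p)} u^{-p}$ inductively on $p$. At order $u^{-p}$, the left hand side contributes $2 s_i^{(p)}$ linearly in the unknowns at order $p$, while the right hand side contributes $\sum_{j \ne i}(-a_{ji})\, s_j^{(p)}$; all other terms at this order involve only lower coefficients $s_j^{(p')}$ with $p' < p$ and Taylor coefficients of $f_i(u)$. Rearranging yields the linear system
$$
\sum_{j \in I} a_{ji}\, s_j^{(p)} \;=\; (\text{known quantity in the } s_j^{(p')},\ p' < p),
$$
whose matrix is the transpose of the Cartan matrix. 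Since $\g$ is semisimple, this matrix is invertible, so $(s_j^{(p)})_{j \in I}$ is uniquely determined and remains scalar provided the lower coefficients are scalars. Starting from $s_j^{(0)} = 1$ and inducting on $p$ then gives $s_i(u) \in 1 + u^{-1}\C[[u^{-1}]]$, completing the proof. The main (mild) obstacle is the invertibility of this inductive linear system, which is precisely the point at which the semisimplicity assumption on $\g$ enters.
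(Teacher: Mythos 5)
Your argument is correct, and it is essentially the same route as the paper's: the paper also derives the scalar functional equation by dividing (\ref{eq: def of A gens}) by (\ref{eq: def of A gens without R}) and then invokes the ``upper triangularity argument'' from \cite[Lemma 2.1]{GKLO} to produce the unique scalar solutions $s_i(u)$, afterwards identifying $s_i(u)\gkloA_i(u)$ with $A_i(u)$ by the uniqueness of the latter. The only difference is organizational and one of detail: you define $s_i(u) := A_i(u)\gkloA_i(u)^{-1}$ from the outset and then prove scalarity by explicitly writing down the order-by-order linear system (with matrix the transposed Cartan matrix, which is invertible by semisimplicity) — i.e., you unfold the inductive argument that the paper leaves as a citation to GKLO.
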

\begin{proof}
The upper triangularity argument from the proof of \cite[Lemma 2.1]{GKLO} shows that there are unique series $s_i(u) \in 1 + u^{-1} \C[[u^{-1}]]$ such that
\begin{equation*}
 \frac{\prod_{j \ne i}\prod_{r=1}^{-a_{ji}} s_j(u-\tfrac{1}{2}d_i a_{ij} - r d_j)}{s_i(u) s_i(u-d_i)} = u^{\langle \mu,\alpha_i\rangle}\frac{u^{m_i} (u-d_i)^{m_i} }{p_i(u) \prod_{j \ne i}\prod_{r=1}^{-a_{ji}} (u-\tfrac{1}{2}d_i a_{ij} - rd_j )^{m_j}}  
\end{equation*}
Using (\ref{eq: def of A gens without R}), we see that the series $s_i(u) \gkloA_i(u)$ satisfy the defining equation (\ref{eq: def of A gens}) of $A_i(u)$.  By the uniqueness of $A_i(u)$, we conclude $A_i(u) = s_i(u) \gkloA_i(u)$.
\end{proof}

Recall that in the previous section we described elements $t_{\beta,w}^{(r)}$ in the algebra $Y = Y_0$.  
For each $i\in I$, fix a highest weight vector $v_i \in V(\varpi_i, 0)$.  Denote its dual $v_i^\ast \in V(\varpi_i,0)^\ast$. Note that the dual of $f_i v_i$ is $- e_i v_i^\ast$.

\begin{Proposition}[\mbox{\cite[Prop.~2.29]{IR}}] 
\label{prop: RTT vs GKLO}
For any $i\in I$, in the algebra $Y_0$ we have
$$
t_{v_i^\ast, v_i}(u) = \gkloA_i(u), \qquad t_{-e_i v_i^\ast, v_i}(u) = \gkloB_i(u), \qquad t_{v_i^\ast, f_i v_i}(u) = \gkloC_i(u)
$$
\end{Proposition}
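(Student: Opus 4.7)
This identification is \cite[Prop.~2.29]{IR}; I would give a streamlined account of their argument, whose core is Drinfeld's Gauss decomposition of the RTT $T$-matrix, applied fundamental-representation-by-fundamental-representation. The first step is to expand the universal $T$-matrix $T(u) \in \End(V) \otimes Y[[u^{-1}]]$ with respect to a weight basis of $V = \bigoplus_j V(\varpi_j, 0)$ and to perform a Gauss decomposition $T(u) = F(u) D(u) E(u)$, where $E(u)$ is strictly upper unitriangular, $F(u)$ strictly lower unitriangular (with respect to a total order refining the weight order), and $D(u)$ is block-diagonal with respect to weight spaces. Drinfeld's isomorphism between the RTT and new presentations of $Y$ is implemented precisely by this decomposition, and identifies the diagonal entry of $D(u)$ on a highest weight vector $v_j\in V(\varpi_j,0)$ with the GKLO series $\gkloA_j(u)$ characterized by (\ref{eq: def of A gens without R}).

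For the Cartan identity, fix a highest weight vector $v_i \in V(\varpi_i, 0)$. Since $v_i$ spans the top weight space, $E(u) v_i = v_i$, and hence $T(u) v_i = F(u) D(u) v_i = F(u) \gkloA_i(u) v_i$, which is $\gkloA_i(u) v_i$ plus terms of strictly lower weight. Pairing with $v_i^{\ast}$ kills the lower-weight terms and yields $t_{v_i^{\ast}, v_i}(u) = \gkloA_i(u)$. This is the cleanest case and the one that pins down all normalizations.

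For the two off-diagonal identities, the vector $f_i v_i$ spans the weight $\varpi_i - \alpha_i$ line of $V(\varpi_i, 0)$, and in the expansion $T(u) v_i = F(u) \gkloA_i(u) v_i$ its coefficient is read off from the $(v_i, f_i v_i)$-entry of $F(u)$. By Drinfeld's identification, this entry is (up to the normalization $d_i^{-1/2}$) the series $F_i(u)$, so the $f_i v_i$-coefficient equals $d_i^{-1/2} F_i(u) \gkloA_i(u) = d_i^{-1} \gkloC_i(u)$; after tracking the dual-pairing normalization this gives $t_{v_i^{\ast}, f_i v_i}(u) = \gkloC_i(u)$. The identity $t_{-e_i v_i^{\ast}, v_i}(u) = \gkloB_i(u)$ is dual: one reads off the analogous entry from the transposed Gauss decomposition, equivalently from the expansion $T(u)^{\ast} v_i^{\ast} = E(u)^{\ast} \gkloA_i(u) v_i^{\ast}$, and uses that the $(v_i^{\ast}, -e_i v_i^{\ast})$-entry of $E(u)^{\ast}$ is $d_i^{-1/2} E_i(u)$.

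The main obstacle is purely bookkeeping: the scalar normalizations $d_i^{1/2}$ entering (\ref{eq: def of B and C without R}), the sign built into $-e_i v_i^{\ast}$, and the normalization of the duality pairing between $V(\varpi_i, 0)$ and $V(\varpi_i, 0)^{\ast}$ all need to line up consistently. This is the content of the explicit verification carried out in \cite[\S 2.4]{IR}, which I would simply cite rather than reproduce.
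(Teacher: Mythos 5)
The paper proves this Proposition by citation to \cite[Prop.~2.29]{IR}, with the remark immediately following it flagging a $\gkloB_i \leftrightarrow \gkloC_i$ transposition error in the published statement of \cite{IR}. Your sketch of \cite{IR}'s argument captures the right framework (the Gauss decomposition $T(u) = F(u)D(u)E(u)$ and Drinfeld's isomorphism between the RTT and new presentations), but the crucial bookkeeping step is exactly where it goes wrong.

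When you extract the coefficient of $f_i v_i$ in $T(u) v_i$, you are computing $\langle (f_i v_i)^\ast, T(u) v_i\rangle = t_{-e_i v_i^\ast,\, v_i}(u)$, not $t_{v_i^\ast,\, f_i v_i}(u)$; the latter would require expanding $T(u) f_i v_i$ and reading off the $v_i$-component. Your sketch identifies this coefficient with $d_i^{-1}\gkloC_i(u)$ (an $F(u)$-type Gauss entry), but the Proposition asserts $t_{-e_i v_i^\ast,\, v_i}(u) = \gkloB_i(u)$, the $E$-type series, consistent with Lemma \ref{lemma: RTT weights} since $\wt(-e_i v_i^\ast) + \wt(v_i) = +\alpha_i$. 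Conflating the two matrix coefficients is precisely the $\gkloB_i \leftrightarrow \gkloC_i$ transposition the paper's remark warns about, so as written your sketch reproduces the very error in \cite{IR} that the Proposition corrects. In addition, you land on $d_i^{-1}\gkloC_i(u)$ rather than $\gkloC_i(u)$ and wave the discrepancy away as ``dual-pairing normalization,'' but $\langle -e_i v_i^\ast, f_i v_i\rangle = 1$ exactly, with no hidden $d_i$; the $d_i$ must come from a correct normalization of the Gauss-factor entries. Since the entire content of the Proposition is to pin down these identifications and normalizations, the sketch needs to be rechecked carefully against the conventions of \cite[\S 2.4]{IR} and \cite{Wen} before it can stand as a correct paraphrase of their proof.
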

\begin{Rem}
Note that there is a small error in the statement of \cite[Prop.~2.29]{IR}, with $\gkloB_i(u)$ and $\gkloC_i(u)$ being transposed.  We thank Aleksei Ilin and Lenya Rybnikov for discussion on this point. 
\end{Rem}

For any $\mu$, the algebra $Y_\mu$ admits a grading $Y_\mu = \bigoplus_{\gamma \in Q} Y_\mu(\gamma)$ by the root lattice $Q = \bigoplus_{i \in I} \mathbb{Z} \alpha_i$.  It is defined by assigning degrees 
$$
\deg E_i^{(r)} = \alpha_i, \qquad \deg H_i^{(r)} = 0, \qquad \deg F_i^{(r)} = - \alpha_i
$$
The subalgebras $Y_\mu^>, Y_\mu^\geq$, etc.~are all homogeneous for this grading.  It is also easy to see that this grading is preserved by the shift morphisms $\iota_{\mu, \mu_1, \mu_2}$ and that comultiplications preserve total degree.  One can also see that there is an induced grading on any quotient $Y_\mu\twoheadrightarrow Y_\mu^\lambda(\bR)$.
\begin{Lemma}
\label{lemma: RTT weights}
If $\beta \in V^\ast$ and $w\in V$ are both weight vectors, then
$$t_{\beta, w}^{(r)} \in Y_0( \wt \beta + \wt \gamma)$$
\end{Lemma}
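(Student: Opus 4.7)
The plan is to exploit the $\g$-equivariance of the bilinear map $V^\ast \times V \to Y_0$ that defines the RTT generators. Recall that $Y_0$ contains a copy of $U(\g)$, generated by $E_i^{(1)}, F_i^{(1)}$ and suitable rescalings of $H_i^{(1)}$; the root-lattice grading $Y_0 = \bigoplus_\gamma Y_0(\gamma)$ is precisely the decomposition into adjoint eigenspaces for the Cartan $\mathfrak{h} \subset U(\g) \subset Y_0$, so $Y_0(\gamma)$ is characterized by $[h,x] = \langle h, \gamma\rangle\, x$ for all $h \in \mathfrak{h}$. Reducing the weight assertion to a statement about adjoint $\mathfrak{h}$-eigenvalues is the whole point: once we compute $[h, t_{\beta,w}^{(r)}]$ on weight vectors, we are done.

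The key step is to establish the intertwining identity
\begin{equation*}
[\xi,\, t_{\beta,w}(u)] \;=\; t_{\xi \cdot \beta,\, w}(u) \;+\; t_{\beta,\, \xi \cdot w}(u), \qquad \xi \in \g,
\end{equation*}
where $\xi$ acts on $V$ via the representation $\rho$ and on $V^\ast$ by the dual action. This is a structural feature of the RTT construction: viewing the universal element as $T(u) \in Y_0[\![u^{-1}]\!] \otimes \End(V)$, one has $[\xi \otimes 1 + 1 \otimes \rho(\xi),\, T(u)] = 0$, and pairing against $\beta$ in the $V^\ast$-slot and $w$ in the $V$-slot extracts the displayed identity. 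In the setup of \cite{IR} this is implicit in the very definition of $t_{\beta,w}^{(r)}$ in \S 2.19, and can alternatively be reconstructed by combining Proposition \ref{prop: RTT vs GKLO} (which pins down $t_{v_i^\ast,v_i}, t_{-e_iv_i^\ast,v_i}, t_{v_i^\ast,f_iv_i}$) with an induction that propagates the equivariance using the bracket relations in $Y_0$ and the coproduct formula of Lemma \ref{lem: RTT coproduct}.

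Specializing the intertwining identity to $\xi = h \in \mathfrak{h}$ and to weight vectors $\beta$, $w$ gives $h\cdot\beta = \langle h, \wt\beta\rangle\,\beta$ and $h\cdot w = \langle h, \wt w\rangle\,w$, so that
\begin{equation*}
[h,\, t_{\beta,w}(u)] \;=\; \langle h,\, \wt \beta + \wt w\rangle\, t_{\beta,w}(u).
\end{equation*}
Equating Laurent coefficients shows that each $t_{\beta,w}^{(r)}$ is an adjoint $\mathfrak{h}$-weight vector of weight $\wt \beta + \wt w$, and therefore lies in $Y_0(\wt \beta + \wt w)$ as required (reading $\wt\gamma$ in the statement as $\wt w$).

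The one nontrivial point to pin down is the sign convention in the intertwining identity, which amounts to matching the dual $\g$-action on $V^\ast$ against the specific RTT conventions of \cite[\S 2.19]{IR} and \cite[\S 5.1]{Wen}. This is a routine bookkeeping check, cross-verified against Proposition \ref{prop: RTT vs GKLO}: for instance $t_{-e_iv_i^\ast, v_i} = \gkloB_i(u)$ has weight $\alpha_i$, matching $\wt(-e_i v_i^\ast) + \wt(v_i) = (-\varpi_i + \alpha_i) + \varpi_i = \alpha_i$, and similarly for the $\gkloA_i, \gkloC_i$ cases.
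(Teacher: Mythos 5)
Your proof is correct and is essentially the argument the paper is invoking: the official proof is just a citation to the commutator formulas stated just before \cite[Cor.~2.30]{IR}, which are precisely the intertwining identity $[\xi, t_{\beta,w}(u)] = t_{\xi\cdot\beta,w}(u) + t_{\beta,\xi\cdot w}(u)$ you derive from $[\xi\otimes 1 + 1\otimes\rho(\xi), T(u)]=0$. Specializing to $\xi\in\mathfrak h$ and identifying the root-lattice grading on $Y_0$ with the adjoint $\mathfrak h$-eigenspace decomposition (via $[H_i^{(1)},E_j^{(q)}] = (\alpha_i\cdot\alpha_j)E_j^{(q)}$, etc.) then gives the claim, with $\wt\gamma$ in the statement indeed being a typo for $\wt w$, as you noticed.
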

\begin{proof}
This follows from the formulas for commutators just before \cite[Cor.~2.30]{IR}.
\end{proof}

\begin{Proposition}
\label{lem: estimate coproduct}
Under the comultiplication map $\Delta: Y_\mu \rightarrow Y_{\mu_1} \otimes Y_{\mu_2}$, for any $i \in I$ we have
$$
\Delta\big( \gkloA_i(u) \big) = \gkloA_i(u)\otimes \gkloA_i(u) + \gkloC_i(u) \otimes \gkloB_i(u) + \bigoplus_\gamma  Y_{\mu_1}^\leq(-\gamma) \otimes Y_{\mu_2}^\geq (\gamma) [[ u^{-1}]],
$$
where the sum ranges over $\gamma \in Q$ such that $\gamma > \alpha_i$ and the weight space $V(\varpi_i)_{\varpi_i-\gamma} \neq 0$.
\end{Proposition}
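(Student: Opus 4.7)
The plan is to first establish the formula in the base case $\mu_1 = \mu_2 = 0$ by applying the RTT coproduct formula (Lemma~\ref{lem: RTT coproduct}) to the identity $\gkloA_i(u) = t_{v_i^*, v_i}(u)$ of Proposition~\ref{prop: RTT vs GKLO}, and then to extend to general $(\mu_1, \mu_2)$ using compatibility of the coproduct with shift morphisms (Theorem~\ref{coproduct}).

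In the base case, choose a weight basis $\mathcal{B}$ of $V(\varpi_i, 0)$ containing both $v_i$ and $f_i v_i$. Lemma~\ref{lem: RTT coproduct} yields
$$
\Delta\bigl(\gkloA_i(u)\bigr) \;=\; \sum_{v \in \mathcal{B}} t_{v_i^*, v}(u) \otimes t_{v^*, v_i}(u).
$$
The summand for $v = v_i$ contributes $\gkloA_i(u) \otimes \gkloA_i(u)$; the summand for $v = f_i v_i$, whose dual basis vector is $-e_i v_i^*$, contributes $t_{v_i^*, f_i v_i}(u) \otimes t_{-e_i v_i^*, v_i}(u) = \gkloC_i(u) \otimes \gkloB_i(u)$ by Proposition~\ref{prop: RTT vs GKLO}. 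Observe that for the fundamental representation $V(\varpi_i)$, the only weights $\varpi_i - \gamma$ with $\gamma > 0$ necessarily satisfy $\gamma \geq \alpha_i$, so the remaining weight basis vectors $v$ all have weight $\varpi_i - \gamma$ with $\gamma > \alpha_i$. For such $v$, Lemma~\ref{lemma: RTT weights} identifies the weights of the two RTT factors as $-\gamma$ and $\gamma$. The remaining step---the claim that $t_{v_i^*, v}(u) \in Y_0^\leq[[u^{-1}]]$ and $t_{v^*, v_i}(u) \in Y_0^\geq[[u^{-1}]]$---is the main technical point. I would deduce it from the Gauss decomposition of the $T$-matrix in the RTT presentation of $Y_0$ (see e.g.~\cite{Wen}): with $V(\varpi_i, 0)$ given a weight-ordered basis beginning with $v_i$, the ``subdiagonal'' entries $t_{v_i^*, v}(u)$ decompose into products of Drinfeld $F$-type and Cartan generators, while the transposed ``superdiagonal'' entries $t_{v^*, v_i}(u)$ decompose into Cartan and $E$-type generators.

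To extend to arbitrary $(\mu_1, \mu_2)$, I apply the commutative diagram of Theorem~\ref{coproduct} in two stages. First, for antidominant $\mu_1, \mu_2$, apply the diagram with base case $\mu = 0$ and $\eta_k = \mu_k$; the formula transfers directly since Lemma~\ref{lemma: GKLO ABC and shifts} gives $\iota_{0,\mu_1,\mu_2}(\gkloA_i) = \gkloA_i$, $\iota_{0,\mu_1,0}(\gkloC_i) = \gkloC_i$, $\iota_{0,0,\mu_2}(\gkloB_i) = \gkloB_i$, and the shift morphisms manifestly respect the subalgebras $Y^\leq, Y^\geq$ and the root-lattice grading. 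Second, for arbitrary $(\mu_1, \mu_2)$, choose antidominant $\eta_1, \eta_2$ such that $\mu_k + \eta_k$ are antidominant, and use the injectivity of shifts (Proposition~\ref{shiftmaps}) together with Lemma~\ref{lem:shiftrespectsborel} and its analogue for $Y^\leq$ (whose proof is identical) to pull back the formula from $Y_{\mu_1+\eta_1}\otimes Y_{\mu_2+\eta_2}$ to $Y_{\mu_1}\otimes Y_{\mu_2}$. The pullback preserves the error subspace $Y^\leq(-\gamma) \otimes Y^\geq(\gamma)$ because the shift morphisms map PBW basis elements to PBW basis elements of their image.

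The main obstacle is the middle step---verifying that the off-diagonal RTT matrix entries $t_{v_i^*, v}(u)$ and $t_{v^*, v_i}(u)$ lie in $Y_0^\leq$ and $Y_0^\geq$ respectively. This requires careful alignment between the paper's RTT--Drinfeld dictionary (Proposition~\ref{prop: RTT vs GKLO}) and the Gauss decomposition of $T(u)$, and is most naturally carried out by induction on the weight depth of $v$ within $V(\varpi_i, 0)$, using that such $v$ are obtained from $v_i$ by iterated application of Chevalley lowering operators.
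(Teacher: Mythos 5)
Your proposal follows the same route as the paper's proof: the base case $\mu=0$ via the RTT coproduct formula (Lemma \ref{lem: RTT coproduct}) and the dictionary of Proposition \ref{prop: RTT vs GKLO}, then extension first to antidominant and then to arbitrary coweights through the commutative square of Theorem \ref{coproduct}, together with Lemma \ref{lemma: GKLO ABC and shifts} and (the $\leq$-analogue of) Lemma \ref{lem:shiftrespectsborel}. The one place your proposal is more explicit than the paper is also the one place where it remains a sketch: you rightly single out the triangular refinement---that the error term lies in $Y_{\mu_1}^\leq(-\gamma)\otimes Y_{\mu_2}^\geq(\gamma)$ rather than merely $Y_{\mu_1}(-\gamma)\otimes Y_{\mu_2}(\gamma)$---as a point needing a separate argument. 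The paper's own base-case proof appeals only to Lemma \ref{lemma: RTT weights}, and so literally establishes only the root-lattice degrees; your Gauss-decomposition strategy for the $\leq/\geq$ part is the right idea for filling that in. (This refinement is not load-bearing downstream: the sole later use, in the proof of Theorem \ref{thm: explicit comult}, goes through Lemma \ref{Lemma: big kernel for Yalpha0} and needs only the root-lattice degree of the first tensor factor.) Two small points to tighten: your ``subdiagonal''/``superdiagonal'' labels depend on the ordering of the weight basis and the order chosen for the Gauss decomposition, and it is easy to accidentally transpose $\gkloB_i$ and $\gkloC_i$ here---the Remark after Proposition \ref{prop: RTT vs GKLO} records that precisely such a slip occurs in \cite[Prop.~2.29]{IR}; and to see that every remaining summand has $\gamma > \alpha_i$ one should cite, as the paper does, the $\g$-module decomposition $V(\varpi_i,0)\cong V(\varpi_i)\oplus\bigoplus_{\nu<\varpi_i}V(\nu)$ from \cite[Section 12.1 C]{CP}, which shows the weights of $V(\varpi_i,0)$ coincide with those of $V(\varpi_i)$.
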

\begin{proof}
We first study the case $\mu = \mu_1 = \mu_2 = 0$.  Apply Lemma \ref{lem: RTT coproduct}, taking any basis of weight vectors for $V(\varpi_i, 0)$ which contains $\{v_i, f_i v_i\}$.   By Proposition \ref{prop: RTT vs GKLO} the two summands corresponding to $v \in \{v_i, f_i v_i\}$ are precisely $\gkloA_i(u) \otimes \gkloA_i(u) + \gkloC_i(u)\otimes \gkloB_i(u)$. It remains to show that all other summands have degrees for the root lattice grading as claimed. As a $\g$--representation $V(\varpi_i, 0) \cong V(\varpi_i)\oplus \bigoplus_\nu V(\nu)$ where $\nu < \varpi_i$ (with appropriate multiplicities), see \cite[Section 12.1 C]{CP}.   Thus the weights of $V(\varpi_i,0)$  are the same as the weights of $V(\varpi_i)$.  Finally, if $v \in V(\varpi_i, 0)$ has weight $\varpi_i - \gamma$, then $ t_{v_i^\ast, v}^{(r)} \in Y(-\gamma)$ and $t_{v^\ast, v_i}^{(r)} \in Y(\gamma)$ by Lemma \ref{lemma: RTT weights}.  This proves the claim.

Next, we assume that $\mu, \mu_1, \mu_2$ are all antidominant.  By Theorem \ref{coproduct}, there is a commutative diagram
\[
\xymatrix{
Y_0  \ar[d]_{\iota_{0,\mu_1, \mu_2}} \ar[rrr]^{\Delta_{0,0}} &&& Y_{0}\otimes Y_{0} \ar[d]^{(\iota_{0,\mu_1,0})\otimes (\iota_{0,0,\mu_2})}\\
Y_{\mu} \ar[rrr]_{\Delta_{\mu_1, \mu_2}} &&& Y_{\mu_1}\otimes Y_{\mu_2}
}
\]
We know that the proposition holds for the top row.  Using Lemma \ref{lemma: GKLO ABC and shifts} and the observation that 
$$
(\iota_{0,\mu_1,0}\otimes \iota_{0,0,\mu_2})\big(Y_0^\leq (-\gamma) \otimes Y_0^\geq (\gamma)\big) \subseteq Y_{\mu_1}^\leq (-\gamma) \otimes Y_{\mu_2}^\geq (\gamma),
$$
 we see that proposition also holds for the bottom row. 

Finally, for the case of arbitrary $\mu, \mu_1, \mu_2$, we again apply Theorem \ref{coproduct}, but now with $\eta_1, \eta_2$ very antidominant.  Then we already know that the proposition holds for the bottom row. So it also holds for the top row, by the injectivity of the vertical arrows and Lemma \ref{lem:shiftrespectsborel}. This completes the proof.
\end{proof}

\subsection{Explicit formulas for Cartan generators}
\label{section: explicit formulas for Cartan generators}

In this section, we will use Proposition \ref{lem: estimate coproduct} to obtain the following explicit formulas for the composed map
\[
\xymatrix{
Y_\mu \ar[r]^<<<<\Delta &  Y_{-\alpha_i^\vee} \otimes Y_{\mu+\alpha_i^\vee}  \ar@{->>}[r] & Y_{-\alpha_i^\vee}^0 \otimes Y_{\mu+\alpha_i^\vee}
}
\]
Fix a dominant coweight $\lambda$ such that $\lambda \geq \mu$ and $\lambda \geq \mu+\alpha_i^\vee$, as well as a set of parameters $\bR$ of coweight $\lambda$.  Then we may define elements $A_i^{(r)} \in Y_\mu$ depending on the data $\lambda, \bR$, as well as elements $A_i^{(r)} \in Y_{-\alpha_i^\vee}$ depending on the data $0, \emptyset$, and finally elements $A_i^{(r)} \in Y_{\mu+\alpha_i^\vee}$ depending on $\lambda, \bR$.

\begin{Theorem}
\label{thm: explicit comult}
Under the above map $Y_{\mu} \rightarrow Y^0_{-\alpha_i^\vee} \otimes Y_{\mu+\alpha_i^\vee}$, we have
\begin{align*}
A_i(u)  & \longmapsto A_i(u) \otimes A_i(u) + d_i [A_i(u), F_i^{(1)}] \otimes [E_i^{(1)}, A_i(u)], \\
 A_j(u) & \longmapsto 1 \otimes A_j(u)
\end{align*}
for any $j\neq i$.  
\end{Theorem}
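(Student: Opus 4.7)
The plan is to first establish both formulas with the GKLO generators $\gkloA_k(u)$ in place of $A_k(u)$, by applying Proposition \ref{lem: estimate coproduct} and showing that its error term vanishes upon projection to $Y_{-\alpha_i^\vee}^0 \otimes Y_{\mu+\alpha_i^\vee}$. I then transfer to $A_k(u)$ via the scalar rescaling from Lemma \ref{lemma: various GKLO generators}.

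The vanishing of the error rests on two observations. First, via Lemma \ref{rem: cotangent bundle of C*} the algebra $Y_{-\alpha_i^\vee}^0 \cong D(\C^\times)$ has its root-lattice grading supported on $\Z\alpha_i$; indeed, $E_j^{(r)}$ and $F_j^{(r)}$ vanish in this quotient for $j \neq i$, as do $A_j^{(r)}$ for $j \neq i$ and $A_i^{(r)}$ for $r > 1$. Thus each summand $Y_{-\alpha_i^\vee}^\leq(-\gamma) \otimes Y_{\mu+\alpha_i^\vee}^\geq(\gamma)$ of the error vanishes unless $\gamma = c\alpha_i$ for some $c \in \Z_{\geq 0}$. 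Second, the reflection identity
\[
s_i(\varpi_k - c\alpha_i) \ =\ \varpi_k + (c - \delta_{ki})\alpha_i
\]
shows that this weight strictly exceeds the highest weight $\varpi_k$ of $V(\varpi_k)$ whenever $c > \delta_{ki}$, so $V(\varpi_k)_{\varpi_k - c\alpha_i} = 0$ in that range. The remaining cases $0 \leq c \leq \delta_{ki}$ are excluded by the constraint $\gamma > \alpha_k$, so all error terms vanish.

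For $k = i$ we are then left with $\gkloA_i(u) \otimes \gkloA_i(u) + \gkloC_i(u) \otimes \gkloB_i(u)$, which by Lemma \ref{lemma: gklo relations} may be rewritten as
\[
\gkloA_i(u) \otimes \gkloA_i(u) + d_i[\gkloA_i(u), F_i^{(1)}] \otimes [E_i^{(1)}, \gkloA_i(u)].
\]
For $k \neq i$, the second summand further vanishes since $F_k^{(1)} = 0$ in $Y_{-\alpha_i^\vee}^0$ forces $\gkloC_k(u) = 0$ there, leaving only $\gkloA_k(u) \otimes \gkloA_k(u)$.

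It remains to pass from $\gkloA_k$ to $A_k$ via the scalar $s_k(u) = s_k^{(\mu, \lambda, \bR)}(u)$ from Lemma \ref{lemma: various GKLO generators}. Comparing the defining equation for the data $(\mu, \lambda, \bR)$ with the product of those for $(-\alpha_i^\vee, 0, \emptyset)$ and $(\mu + \alpha_i^\vee, \lambda, \bR)$, a direct check using $\langle -\alpha_i^\vee + (\mu + \alpha_i^\vee), \alpha_k\rangle = \langle \mu, \alpha_k\rangle$ and $\delta_{ki} + m_k^{[\mu + \alpha_i^\vee]} = m_k^{[\mu]}$ verifies that the right-hand sides match. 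By the uniqueness in that lemma we obtain the multiplicativity
\[
s_k^{(\mu, \lambda, \bR)}(u) \ =\ s_k^{(-\alpha_i^\vee, 0, \emptyset)}(u) \cdot s_k^{(\mu + \alpha_i^\vee, \lambda, \bR)}(u).
\]
Multiplying the GKLO-level identities by $s_k^{(\mu, \lambda, \bR)}(u)$ and distributing the factors across the tensor product yields the claimed formulas for $A_k(u)$. I expect this scalar multiplicativity to be the main technical input, since it demands careful bookkeeping of Cartan pairings and multiplicities; the remaining steps are immediate from Proposition \ref{lem: estimate coproduct}, Lemma \ref{lemma: gklo relations}, and the weight-theoretic argument above.
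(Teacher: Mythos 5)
Your proposal is correct and follows the same strategy as the paper, with only a cosmetic reorganization: you establish the GKLO-level identity first and then rescale to $A_k(u)$ at the end, while the paper packages the rescaling into Lemma \ref{lemma: key estimate of comult} (where the multiplicativity $s_i(u) = s_{1,i}(u)s_{2,i}(u)$ appears as equation (\ref{eq: multiplier series})) and thereby works with $A_k(u)$ throughout. Your weight-theoretic exclusion of $\gamma = c\alpha_i$ via the $s_i$-reflection $s_i(\varpi_k - c\alpha_i) = \varpi_k + (c - \delta_{ki})\alpha_i$ is a mild variant of the paper's direct argument (for $j \neq i$ the paper just notes that $\gamma > \alpha_j$ forces a positive $\alpha_j$-coefficient, so $\gamma \notin \Z\alpha_i$ automatically; for $j = i$ it notes $\gamma \geq 2\alpha_i$ kills $V(\varpi_i)_{\varpi_i - \gamma}$); both routes are valid. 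One minor quibble: for $j \neq i$ you do not actually need the vanishing $V(\varpi_j)_{\varpi_j - c\alpha_i} = 0$, since the constraint $\gamma > \alpha_j$ already rules out $\gamma \in \Z\alpha_i$ for purely root-lattice reasons — but invoking the weight vanishing is harmless and correct.
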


This result easily implies Theorem \ref{th: weak form of conjecture}:
\begin{proof}[Proof of Theorem \ref{th: weak form of conjecture}]
In Theorem \ref{thm: explicit comult}, the right-hand sides are polynomials in $u^{-1}$ of degree $m_j$, for any $j\in I$ (including $j=i$). Thus $A_j^{(r)}$ is sent to zero for $r > m_j$.
\end{proof}

In order to prove Theorem \ref{thm: explicit comult}, we first establish two lemmas. For the first it is helpful to work more generally, so we fix some notation. Let $\lambda = \lambda_1 + \lambda_2$ be dominant coweights, and $\mu = \mu_1 + \mu_2$ with $\mu_i \leq \lambda_i$.  Choose sets of parameters $\bR, \bR_1, \bR_2$ corresponding to $\lambda, \lambda_1, \lambda_2$, resp.  We will assume that that $\bR = \bR_1 \sqcup \bR_2$.  That is, writing $\bR = (R_i)_{i\in I}$ and $\bR_a = (R_{a, i})_{i\in I}$ for $a=1,2$, we assume that $R_i = R_{1,i} \sqcup R_{2,i}$ is a multiset union for each $i\in I$.  

Given these data, we define corresponding elements $A_i^{(r)} \in Y_\mu$ with respect to $\lambda, \bR$ (resp.~in $Y_{\mu_1}$ for $\lambda_1, \bR_1$, and in $ Y_{\mu_2}$ for $\lambda_2, \bR_2$) as in section \ref{section: tsy}.  We wish to understand the relation between these elements $A_i^{(r)}$ under the comultiplication map $\Delta: Y_\mu \rightarrow Y_{\mu_1} \otimes Y_{\mu_2}$:

\begin{Lemma}
\label{lemma: key estimate of comult}
With notation as above, for any $i \in I$, 
$$
\Delta\big( A_i(u) \big) = A_{i}(u) \otimes A_{i}(u) + d_i [A_{i}(u), F_i^{(1)}] \otimes [E_i^{(1)}, A_i(u)] + \bigoplus_\gamma  Y_{\mu_1}^\leq(-\gamma) \otimes Y_{\mu_2}^\geq (\gamma) [[ u^{-1}]],
$$
where the sum ranges over $\gamma \in Q$ such that $\gamma > \alpha_i$, and $V(\varpi_i)_{\varpi_i-\gamma} \neq 0$.
\end{Lemma}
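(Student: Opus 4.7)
The plan is to reduce the claim to Proposition \ref{lem: estimate coproduct} by factoring out the scalar series that relate the generators $A_i(u)$ to the GKLO generators $\gkloA_i(u)$.

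First, I would invoke Lemma \ref{lemma: various GKLO generators } to write $A_i(u) = s_i(u) \gkloA_i(u)$ in $Y_\mu$, where $s_i(u) \in 1 + u^{-1}\C[[u^{-1}]]$ is a scalar series depending on $\lambda, \mu, \bR$. Similarly write $A_i(u) = s_{1,i}(u) \gkloA_i(u)$ in $Y_{\mu_1}$ (with respect to $\lambda_1, \bR_1$) and $A_i(u) = s_{2,i}(u) \gkloA_i(u)$ in $Y_{\mu_2}$ (with respect to $\lambda_2,\bR_2$).

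The key intermediate step is to verify that $s_i(u) = s_{1,i}(u) s_{2,i}(u)$. Since $\bR = \bR_1 \sqcup \bR_2$, we have $p_i(u) = p_{1,i}(u)p_{2,i}(u)$ and $m_i = m_{1,i}+m_{2,i}$, where the subscripts correspond to $\lambda_a-\mu_a$. Plugging the product $s_{1,i}(u)s_{2,i}(u)$ into the defining recursion from the proof of Lemma \ref{lemma: various GKLO generators }, a direct check shows it satisfies the same recursion as $s_i(u)$ for the data $\lambda, \mu, \bR$. Uniqueness then forces the equality.

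Next, I would apply Proposition \ref{lem: estimate coproduct} to the GKLO generators:
$$
\Delta(\gkloA_i(u)) = \gkloA_i(u) \otimes \gkloA_i(u) + \gkloC_i(u)\otimes \gkloB_i(u) + \text{(error)},
$$
with error lying in the sum $\bigoplus_\gamma Y_{\mu_1}^\leq(-\gamma)\otimes Y_{\mu_2}^\geq(\gamma)[[u^{-1}]]$ over the stated weights. Since $s_i(u)$ is a scalar series and $\Delta$ is $\C[[u^{-1}]]$-linear coefficient-by-coefficient, multiplying both sides by $s_i(u) = s_{1,i}(u)s_{2,i}(u)$ distributes through the tensor product. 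This converts the first summand directly into $A_i(u)\otimes A_i(u)$. For the second summand, Lemma \ref{lemma: gklo relations} gives $\gkloB_i(u) = d_i^{1/2}[E_i^{(1)},\gkloA_i(u)]$ and $\gkloC_i(u) = d_i^{1/2}[\gkloA_i(u),F_i^{(1)}]$; commuting the scalar series across the bracket yields
$$
s_{1,i}(u)\gkloC_i(u) \otimes s_{2,i}(u)\gkloB_i(u) = d_i [A_i(u), F_i^{(1)}]\otimes [E_i^{(1)}, A_i(u)].
$$
Finally, scalar multiplication preserves the Borel subalgebras and their weight-space decompositions, so the error term remains in $\bigoplus_\gamma Y_{\mu_1}^\leq(-\gamma)\otimes Y_{\mu_2}^\geq(\gamma)[[u^{-1}]]$.

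The main potential obstacle is the bookkeeping around the scalar factor identity $s_i(u) = s_{1,i}(u)s_{2,i}(u)$: one must be careful that the additivity $\langle \mu,\alpha_i\rangle = \langle \mu_1,\alpha_i\rangle + \langle \mu_2,\alpha_i\rangle$, the multiplicativity of the $p_i$, and the additivity of the $m_i$ all combine correctly in the recursion. Once this is checked, everything else is a straightforward manipulation of formal series using the scalar nature of $s_i(u)$.
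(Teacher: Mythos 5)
Your proposal is correct and follows essentially the same route as the paper's proof: factor through the GKLO series $\gkloA_i(u)$ via Lemma \ref{lemma: various GKLO generators }, verify the multiplicativity $s_i(u)=s_{1,i}(u)s_{2,i}(u)$ from the defining relations, substitute Lemma \ref{lemma: gklo relations} into Proposition \ref{lem: estimate coproduct}, and multiply through. The only (trivial) cosmetic difference is that you spell out the coefficient-by-coefficient $\C[[u^{-1}]]$-linearity of $\Delta$ and the preservation of the Borel and weight subspaces under scalar multiplication, which the paper leaves implicit.
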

\begin{proof}
By Lemma \ref{lemma: various GKLO generators }, we have equalities $A_i(u) = s_i(u) \gkloA_i(u)$ in $Y_\mu$, $A_{i}(u) = s_{1,i}(u) \gkloA_i(u)$ in $Y_{\mu_1}$, and $A_{i}(u) = s_{2,i}(u) \gkloA_i(u)$ in $Y_{\mu_2}$, for some series $s_i(u), s_{1,i}(u), s_{2,i}(u) \in 1 + u^{-1} \C[[u^{-1}]]$.  Moreover, these series satisfy
\begin{equation}
\label{eq: multiplier series}
s_i(u) = s_{1,i}(u) s_{2,i}(u)
\end{equation}
for all $i\in I$. Indeed, the product of the equations defining the $s_{1,i}(u)$ and $s_{2,i}(u)$ is the equation defining the $s_i(u)$, because $\lambda = \lambda_1 + \lambda_2$, $\bR = \bR_1 \sqcup \bR_2$ and $\mu = \mu_1 + \mu_2$. So (\ref{eq: multiplier series}) holds by the uniqueness of the $s_i(u)$.  Next,  we apply Lemma \ref{lemma: gklo relations} to replace $\gkloC_i(u)\otimes\gkloB_i(u)$ by $d_i[\gkloA_i(u), F_i^{(1)}] \otimes [E_i^{(1)}, \gkloA_i(u)]$ in Proposition \ref{lem: estimate coproduct}.  Multiplying both sides by $s_i(u)$ and using (\ref{eq: multiplier series}), we deduce the claim.
\end{proof}

Recall the algebra $Y_{-\alpha_i^\vee}^0$, as described explicitly in Lemma \ref{lemma: simplest coulomb branch of quiver gauge theory}. 
\begin{Lemma}
\label{Lemma: big kernel for Yalpha0}
Let $\gamma \in Q$, with $\gamma \notin \mathbb{Z} \alpha_i$.  Then the graded component $Y_{-\alpha_i^\vee}(- \gamma) $ is in the kernel of the quotient map $Y_{-\alpha_i^\vee} \twoheadrightarrow Y_{-\alpha_i^\vee}^0$.
\end{Lemma}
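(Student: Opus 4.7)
The plan is to exploit the rank-one description of $Y_{-\alpha_i^\vee}^0$ from Lemma \ref{lemma: simplest coulomb branch of quiver gauge theory}, by computing the quotient map $\phi: Y_{-\alpha_i^\vee} \twoheadrightarrow Y_{-\alpha_i^\vee}^0$ explicitly on simple-root generators via the GKLO homomorphism $\Phi^0_{-\alpha_i^\vee}$ of Theorem \ref{GKLO homomorphism}. In this case $\lambda=0$ and $\mu = -\alpha_i^\vee$, so the exponents are $m_j = \delta_{ij}$. Therefore, for every $j \neq i$ the sums defining $\Phi^0_{-\alpha_i^\vee}(E_j(u))$ and $\Phi^0_{-\alpha_i^\vee}(F_j(u))$ are empty, and I conclude
$$
\phi\bigl(E_j^{(r)}\bigr) = 0 = \phi\bigl(F_j^{(r)}\bigr) \qquad \text{for all } j \neq i,\ r \geq 1.
$$

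The next step is to upgrade this vanishing from simple-root generators to the PBW variables. By Definition \ref{egamma}, for any positive root $\beta$ one has $E_\beta^{(q)} = [E_{i_1}^{(q_1)}, [E_{i_2}^{(q_2)}, \cdots [E_{i_{l-1}}^{(q_{l-1})}, E_{i_l}^{(q_l)}]\cdots]]$ with $\beta = \alpha_{i_1}+\cdots+\alpha_{i_l}$. If $\beta \neq \alpha_i$, then necessarily some $i_k \neq i$. Since $\phi$ is an algebra homomorphism and commutators of an element with $0$ vanish, iterated bracketing yields $\phi(E_\beta^{(q)}) = 0$, and analogously $\phi(F_\beta^{(q)}) = 0$ for any positive root $\beta \neq \alpha_i$.

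To finish, I would invoke the PBW theorem (Theorem \ref{PBW}): $Y_{-\alpha_i^\vee}$ is spanned by ordered monomials in the PBW variables $E_\beta^{(q)}, F_\beta^{(q)}, H_j^{(p)}$, and this basis is homogeneous for the root-lattice grading, with $\operatorname{wt} E_\beta^{(q)} = \beta$, $\operatorname{wt} F_\beta^{(q)} = -\beta$, and $\operatorname{wt} H_j^{(p)} = 0$. Given $\gamma \notin \mathbb{Z}\alpha_i$ and $x \in Y_{-\alpha_i^\vee}(-\gamma)$, expanding $x$ in this basis produces a sum of ordered monomials each of weight $-\gamma$; since the total weight lies outside $\mathbb{Z}\alpha_i$, each such monomial must contain at least one factor $E_\beta^{(q)}$ or $F_\beta^{(q)}$ with $\beta \neq \alpha_i$. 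By the previous step, every such monomial is killed by $\phi$, hence $\phi(x) = 0$.

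I do not anticipate a substantive obstacle: the whole argument is driven by the single observation that $m_j = 0$ for $j \neq i$ makes the GKLO formulas collapse. The only care required is to set up the PBW bookkeeping cleanly, verifying that the root-lattice grading on $Y_{-\alpha_i^\vee}$ really is homogeneous on the PBW basis (which is immediate from Definition \ref{egamma}) so that the weight-$(-\gamma)$ component is literally spanned by weight-$(-\gamma)$ PBW monomials.
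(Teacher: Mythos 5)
Your proof is correct and rests on the same core observation as the paper's: since $m_j = 0$ for $j \neq i$, the GKLO formulas force $E_j^{(r)}, F_j^{(r)} \mapsto 0$ in $Y_{-\alpha_i^\vee}^0$, and then the root-lattice grading does the rest. The paper's proof is a one-liner that records exactly this vanishing of generators and leaves the grading bookkeeping implicit; your detour through PBW variables and iterated brackets is not needed (one can argue directly with ordinary monomials in the generators $E_j^{(r)}, F_j^{(r)}, H_j^{(s)}$, since the grading is already defined on those), but it is a valid and harmless elaboration of the same idea.
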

\begin{proof}
By definition, for $j \neq i$ the generators $A_j^{(r)}, E_j^{(r)}, F_j^{(r)}$ all map to zero in $Y_{-\alpha_i^\vee}^0$.
\end{proof}

\begin{proof}[Proof of Theorem \ref{thm: explicit comult}]
Let $j \in I$, and let $\gamma \in Q$ with $\gamma > \alpha_j$ and $V(\varpi_j)_{\varpi_j - \gamma} \neq 0$.  Then $\gamma$ cannot be an integer multiple of $\alpha_i$. Indeed, if $j\neq i$ then we have already assumed $\gamma > \alpha_j$.  Meanwhile, if $j = i$ then we know $\gamma > \alpha_i$, so we would have $\gamma \geq 2 \alpha_i$.  But then $V(\varpi_i)_{\varpi_i - \gamma} = 0$.  

It follows that the image of $Y_{-\alpha_i^\vee}(-\gamma)$ in $Y_{-\alpha_i^\vee}^0$ is zero by Lemma \ref{Lemma: big kernel for Yalpha0}.  Together with Lemma \ref{lemma: key estimate of comult}, this immediately establishes the claim for the case $j=i$.  When $j \neq i$, the image of $A_j^{(r)} $ in $Y_{-\alpha_i^\vee}^0$ is zero for $r\geq 1$, and thus $A_j(u) \mapsto 1$, establishing this case as well. 
\end{proof}


\appendix

\section{Defining ideals of generalized slices}
\label{section: appendix}

This appendix can be considered as an extension of the results of \cite{KMW}, since Theorem \ref{appendix thm} gives an explicit description of the defining ideals of $\oW^\lambda_\mu \subset \cW_\mu$ and of $Y_\mu \twoheadrightarrow Y_\mu^\lambda(\bR)$.  These descriptions are conditional on a ``reducedness'' conjecture for a modular description of the spherical Schubert variety $\overline{\Gr^\lambda}$.  

Along the way, we prove several results about the Poisson structure on $\cW_\mu$ for $\mu$ antidominant, which may be of independent interest.

\subsection{Notation}
\label{appendix: notation}
For a weight $\eta$, denote $\eta^\ast = - w_0 \eta$ where $w_0 \in W$ is the longest element of the Weyl group.   

Locally to this appendix, we fix $v_i \in V(\varpi_i^\ast)_{-\varpi_i}$ a lowest weight vector and $v_i^\ast \in V(\varpi_i^\ast)^\ast_{\varpi_i}$ its dual.  For any $\beta \in V(\varpi_i^\ast)^\ast$ and $v\in V(\varpi_i^\ast)$, we have a matrix coefficient $\Delta_{\beta, v} \in \C[G]$.  The group $G((t^{-1}))$ acts on $V(\varpi_i^\ast) ((t^{-1}))$, and for $r \in \mathbb{Z}$ we define $\Delta_{\beta, v}^{(r)} \in \C( G((t^{-1})) )$ by 
$$
\Delta_{\beta,v}(g) = \langle \beta, g v\rangle = \sum_{r \in \mathbb{Z}} \Delta_{\beta, v}^{(r)}(g) t^{-r}
$$
It is convenient to encode these functions using generating series 
$$
\Delta_{\beta, v}(u) = \sum_{r \in\mathbb{Z}} \Delta_{\beta, v}^{(r)} u^{-r} \in \C[ G((t^{-1})) ] [[ u, u^{-1}]]
$$
Note that for any $g = g(t) \in G((t^{-1}))$, this generating series evaluates to the Laurent series $\langle\beta, g(u) v \rangle \ \in \ \C((u^{-1}))$.

By restricting along the inclusion $\cW_\mu \subset G((t^{-1}))$, we will also view $\Delta_{\beta, v}^{(r)} \in \C[\cW_\mu]$.  With this notation, we can restate the isomorphism $\gr Y_\mu \cong \C[\cW_\mu]$ from Theorem \ref{grYmu}:
\begin{align*}
A_i(u)  = u^{\langle \mu, \varpi_i\rangle} \Delta_{v_i^\ast, v_i}(u), \quad E_i(u) = d_i^{-1/2} \frac{ \Delta_{-f_i v_i^\ast, v_i}(u)}{\Delta_{v_i^\ast, v_i}(u)}, \quad F_i(u)  = d_i^{-1/2} \frac{ \Delta_{v_i^\ast, e_i v_i}(u)}{\Delta_{v_i^\ast, v_i}(u)}
\end{align*}
Indeed, one can easily check that for $\Delta_{-f_i v_i^\ast, v_i} = \psi_i$ as functions on $U$, and so on.

\begin{Rem}
Our conventions here differ from those used section \ref{section: RTT presentation}, see e.g.~Proposition \ref{prop: RTT vs GKLO}.  These reflect opposite choices of order for Gauss decompositions: the ordering $U T U_-$ used here  and in section \ref{se:WmuDef}, which follows \cite[Section 2(xi)]{BFN}, versus the ordering $U_- T U$ reflected in section \ref{section: RTT presentation}, which follows the conventions of \cite{IR}.  Either convention could be changed.  We trust that this conflict will cause no confusion to the reader.
\end{Rem}

\subsection{Defining ideals and main result}
\label{section: defining ideals, reducedness conjecture}
A modular description of any spherical Schubert variety $\overline{\Gr^\lambda} \subset \Gr_G$ was proposed by Finkelberg-Mirkovi\'c \cite{FM}: a moduli space $\overline{\mathcal{Y}^\lambda} \subset \Gr_G$ of principal $G$-bundles on $\mathbb P^1$, with a trivialization on $ \mathbb A^1 $, satisfying a pole condition which depends on $\lambda$. See \cite{KMW} for more details.   This description is set-theoretically correct, but it is not clear that $\overline{\mathcal{Y}^\lambda}$ is reduced.  This leads to the following reducedness conjecture:

\begin{Conjecture}[\mbox{\cite[Conjecture 1.1]{KMW}}]
\label{conjecture: reducedness}
For any dominant coweight $\lambda$, the scheme $\overline{\mathcal{Y}^\lambda}$ is reduced.  In particular, $\overline{\mathcal{Y}^\lambda} = \overline{\Gr^\lambda}.$
\end{Conjecture}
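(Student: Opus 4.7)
The goal is to show that the pole-condition ideal defining $\overline{\mathcal{Y}^\lambda}$ inside $\Gr_G$ is radical. Set-theoretically we already have $\overline{\mathcal{Y}^\lambda} = \overline{\Gr^\lambda}$, and the latter is a normal Cohen-Macaulay projective variety of dimension $\langle \lambda, 2\rho\rangle$. So $\overline{\mathcal{Y}^\lambda}$ has the correct support, and the open cell $\mathcal{Y}^\lambda$ is smooth, giving generic reducedness essentially for free. The entire question is therefore the absence of embedded components at the boundary.

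My plan is a d\'evissage via convolution, combined with a Frobenius splitting argument to transport reducedness along proper morphisms. Writing $\lambda = \nu_1 + \cdots + \nu_r$ as a sum of minuscule and quasi-minuscule fundamental coweights, the convolution (twisted) product of the $\overline{\Gr^{\nu_k}}$ maps properly and birationally onto $\overline{\Gr^\lambda}$ via a map $\pi$; the source is smooth in the minuscule case and has controlled singularities in general. If the pull-back of the pole-condition ideal on the target agrees with the usual reduced ideal on the source, then a Kempf-style vanishing $R\pi_*\mathcal{O} = \mathcal{O}$ would push reducedness downward. For fundamental $\nu$, direct arguments suffice: the minuscule case recovers a smooth generalized flag variety cut out precisely by the highest-weight pole condition; quasi-minuscule cases can be handled by explicit coordinates on the single singular stratum, as in the analysis of $\overline{\cW}_{-\alpha_i^\vee}^0 \cong T^\ast\C^\times$ in the body of the paper.

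The main obstacle will be establishing this cohomological vanishing for the \emph{modular} scheme structure rather than the already-understood reduced one. The classical Mathieu-Kumar-Faltings Frobenius splittings of affine Schubert varieties are phrased for the reduced structure, and I would need to extend them to splittings that are compatible with the pole-condition ideal. My preferred route is to work first in positive characteristic: construct a compatible Frobenius splitting of the Finkelberg-Mirkovi\'c ideal, conclude reducedness in characteristic $p$, and then transfer to characteristic $0$ by a standard spreading-out and semicontinuity argument. An independent and perhaps more accessible approach, likely to succeed in classical types, is to pick a faithful representation $G \hookrightarrow SL_N$ and attempt to transfer reducedness from the known type A case \cite{KMWY} via a Tor-vanishing analysis of the scheme-theoretic intersection with $\Gr_G \hookrightarrow \Gr_{SL_N}$; the subtlety there is that the pole conditions for $G$ involve representations beyond the standard one, so one would have to show that the extra equations are already implied scheme-theoretically by the $SL_N$ ones cut by $\Gr_G$.
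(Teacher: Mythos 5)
This statement is a \emph{conjecture}, not a theorem: the paper cites it as \cite[Conjecture 1.1]{KMW}, records that it is known only in type $A$ by \cite{KMWY}, and everywhere else treats it as a hypothesis (cf.\ the conditional phrasing in Theorem \ref{appendix thm}). You have not proved it, and you are candid that you have not; your text is a research plan, not a proof. The honest thing to say is that the plan contains no argument that closes the acknowledged gaps.

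The structural issue is that every ingredient you list is known to work for the \emph{reduced} spherical Schubert variety $\overline{\Gr^\lambda}$, whereas the entire content of the conjecture is about the a priori non-reduced scheme $\overline{\mathcal{Y}^\lambda}$. Generic reducedness on the open cell is indeed free, but to exclude embedded primes at the boundary you would need either (i) a Cohen-Macaulayness statement for the pole-condition ideal itself, not just for $\overline{\Gr^\lambda}$, or (ii) a Frobenius splitting compatible with that ideal. Both are precisely where your argument stops. The Kempf vanishing $R\pi_*\mathcal{O}=\mathcal{O}$ along the convolution resolution is a statement about $\overline{\Gr^\lambda}$ with its reduced structure; it does not by itself say anything about $\overline{\mathcal{Y}^\lambda}$ until you have already shown that $\pi^{-1}\mathcal{I}_{\mathcal{Y}^\lambda}\cdot\mathcal{O}$ equals the reduced ideal upstairs, which is logically upstream of what you want. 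Similarly, the Mathieu--Kumar--Faltings splittings split $\overline{\Gr^\lambda}$ compatibly, and transferring this to the Finkelberg--Mirkovi\'{c} ideal is not a ``standard extension''; it is the conjecture.

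Your second route, transferring from $SL_N$ along a faithful embedding, founders on the point you yourself flag: the pole conditions for $G$ are indexed by all fundamental representations of $G$, and for, e.g., the half-spin representations in type $D$ there is no evident reason the equations coming from the standard representation of $SL_N$, even scheme-theoretically intersected with $\Gr_G$, generate the needed ideal. In fact \cite{KMWY} proves type $A$ by an argument specific to the lattice model of $\Gr_{SL_N}$, and the known obstruction to generalizing it is exactly this one. So the proposal is a reasonable map of the territory but not a proof, and the conjecture remains open outside type $A$.
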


\begin{Theorem}[\mbox{\cite{KMWY}}]
Conjecture \ref{conjecture: reducedness} holds in type A.
\end{Theorem}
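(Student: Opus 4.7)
My plan is to reduce the reducedness of $\overline{\mathcal{Y}^\lambda}$ to a classical reducedness result for nilpotent orbit closures in $\mathfrak{gl}_N$. First I would cover $\overline{\mathcal{Y}^\lambda}$ by the scheme-theoretic intersections $\overline{\mathcal{Y}^\lambda} \cap \mathcal{W}_\mu$ as $\mu \le \lambda$ ranges over coweights. Since the slices $\mathcal{W}_\mu$ are transverse to the orbits stratifying $\overline{\Gr^\lambda}$ and cover a neighborhood of each stratum, it is enough to prove that each such intersection is reduced and coincides with $\overline{\mathcal{W}^\lambda_\mu}$.

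Next, still in type A, I would invoke the Mirkovi\'c--Vybornov isomorphism, which identifies $\overline{\mathcal{W}^\lambda_\mu}$ with a Slodowy slice inside the closure of a nilpotent orbit in $\mathfrak{gl}_N$ (for dominant $\mu$; for general $\mu$ one reduces to the dominant case using the birational shift maps of Proposition \ref{shiftpoi}). Nilpotent orbit closures in $\mathfrak{gl}_N$ are normal, hence reduced, by Kraft--Procesi, and a Slodowy slice is transverse in the smooth ambient $\mathfrak{gl}_N$; therefore the intersection is reduced. This shows that $\overline{\mathcal{W}^\lambda_\mu}$ is reduced.

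The crucial remaining step is to identify $\overline{\mathcal{Y}^\lambda} \cap \mathcal{W}_\mu$ with $\overline{\mathcal{W}^\lambda_\mu}$ as \emph{schemes}. Set-theoretic equality is already known, so by the reducedness just established it suffices to check that the moduli pole conditions defining $\overline{\mathcal{Y}^\lambda}$ pull back to equations that vanish on $\overline{\mathcal{W}^\lambda_\mu}$. Concretely, I would express these pole conditions in terms of the matrix coefficients $\Delta_{\beta, v}^{(r)}(g)$ on fundamental representations; in type A these are simply minors of the matrix of $g \in \mathcal{W}_\mu$, and the required vanishings should be seen to lie in the ideal generated by the classical limits of the $A_i^{(r)}$ with $r > m_i$.

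The main obstacle is this last combinatorial verification: writing the moduli-theoretic pole conditions explicitly in coordinates on $\mathcal{W}_\mu$ and matching them, minor by minor, against the generators $A_i^{(r)}$ of the defining ideal of $\overline{\mathcal{W}^\lambda_\mu}$. This step depends on the type A identification of fundamental representations with exterior powers of the standard representation and on Pl\"ucker-type relations between minors, and is the combinatorial heart of \cite{KMWY}.
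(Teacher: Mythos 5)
This theorem is imported by the paper from \cite{KMWY}; there is no proof given here, only the citation. So the question is whether your sketch matches the \cite{KMWY} argument and whether it is logically sound.

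Your overall direction --- localize to slices, pass through a Mirkovi\'c--Vybornov identification, and invoke classical results on the nilpotent orbit side --- is the right circle of ideas. But there is a genuine logical gap in the middle of your argument. Via MV and Kraft--Procesi you establish that $\overline{\cW}^\lambda_\mu$ is reduced. This is not the point at issue: $\overline{\cW}^\lambda_\mu$ is by definition the intersection of $\cW_\mu$ with the \emph{reduced} Schubert variety $\overline{\Gr^\lambda}$, and its reducedness is not the content of Conjecture \ref{conjecture: reducedness}. What must be shown to be reduced is the pole-condition scheme $\overline{\mathcal{Y}^\lambda}$ (equivalently each slice $\overline{\mathcal{Y}}^\lambda_\mu := \overline{\mathcal{Y}^\lambda} \cap \cW_\mu$ in the paper's notation). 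Your ``crucial remaining step'' then asks to verify that the pole conditions vanish on $\overline{\cW}^\lambda_\mu$, or lie in the ideal generated by the $A_i^{(r)}$ with $r > m_i$. This is the trivial containment: it is just the set-theoretic statement that the Finkelberg--Mirkovi\'c conditions cut out $\overline{\Gr^\lambda}$ as a set, which is already known. If $I$ denotes the pole ideal and $J$ the radical ideal of $\overline{\cW}^\lambda_\mu$, you have $I \subseteq J$ and $\sqrt{I} = J$; this does not give $I = J$, exactly as $(x^2)\subsetneq(x)$ in $\C[x]$. The hard direction is $J \subseteq I$, i.e.\ that the pole conditions generate the full radical ideal, and nothing in your sketch addresses it.

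What \cite{KMWY} actually does is directly attack the reducedness of the pole-condition scheme. In type A the FM pole conditions become explicit rank conditions on the powers $X^k$ of a matrix (via the lattice model and MV), and the key classical input is that these \emph{specific equations} generate a radical ideal --- a theorem about determinantal presentations of $\overline{\mathcal{O}_\lambda}$ due essentially to Weyman, refining Kostant--Procesi and De Concini--Procesi. Normality of the reduced orbit closure (Kraft--Procesi) is not enough by itself: one needs that a particular system of generators is radical, and that fact is the heart of the proof. Transversality of the Slodowy slice is then what transports radicality from the orbit closure to the slice. Two smaller points: the cover of $\overline{\mathcal{Y}^\lambda}$ by translates of slices only requires dominant $\mu$, since those index the $G[t]$-orbits, so the shift maps of Proposition \ref{shiftpoi} are not needed at this stage; and the MV isomorphism as stated in this paper's introduction is only for $\lambda = d\varpi_1^\vee$, so you need to invoke the more general version (as in \cite{MVy2,QMV}) and say so.
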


Recall that $\oW^\lambda_\mu$ is defined to be the intersection $\cW_\mu \cap \overline{G[t] t^\lambda G[t]}$.  The locus $\overline{G[t] t^\lambda G[t]}$ is the preimage of $\overline{\Gr^\lambda}$ under projection $G((t^{-1})) \rightarrow G((t^{-1}))/G[t]$. 

If we apply the Finkelberg-Mirkovi\'c pole condition, then we immediately obtain the following result.

\begin{Theorem}
	\begin{enumerate}
		\item $\oW^\lambda_\mu \subset \cW_\mu $ is set-theoretically defined by the vanishing of the elements $\Delta_{\beta, v}^{(r)} \in \C[\cW_\mu]$ over all $i \in I$, vectors $\beta \in V(\varpi_i^\ast)^\ast$, $v\in V(\varpi_i^\ast)$ and $r > \langle \lambda, \varpi_i \rangle$. 
		\item If $ \overline{\mathcal{Y}^\lambda} $ is reduced, then the above description of $ \oW^\lambda_\mu $ holds scheme-theoretically.
	\end{enumerate}
\end{Theorem}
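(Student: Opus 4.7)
The strategy is to trace the Finkelberg–Mirković description of $\overline{\Gr^\lambda}$ down to an explicit description of $\oW^\lambda_\mu \subset \cW_\mu$. Following \cite{FM, KMW}, the moduli scheme $\overline{\mathcal{Y}^\lambda} \subset \Gr_G$ is the one obtained, on the level of the covering $G((t^{-1})) \to \Gr_G$, by imposing the pole conditions $\Delta^{(r)}_{\beta, v} = 0$ for all $i \in I$, all $\beta \in V(\varpi_i^\ast)^\ast$, $v \in V(\varpi_i^\ast)$, and $r > \langle \lambda, \varpi_i \rangle$. I would first verify directly that this system of equations is $G[t]$-invariant under right translation, so that it descends to a closed subscheme of $\Gr_G$: for $h \in G[t]$ we may write $hv = \sum_{k \geq 0} v_k t^k$ with $v_k \in V(\varpi_i^\ast)$, and since $t$ is central in $G((t^{-1}))$ the computation
$$ \Delta^{(r)}_{\beta, v}(gh) \ = \ \sum_{k \geq 0} \Delta^{(r+k)}_{\beta, v_k}(g) $$
shows that the constraint $r > \langle \lambda, \varpi_i\rangle$ propagates to every summand on the right. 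Let $Z \subset G((t^{-1}))$ denote the resulting closed subscheme; by construction it is the scheme-theoretic preimage of $\overline{\mathcal{Y}^\lambda}$.

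For Part (1), I would then invoke the fact, already established in \cite{FM, KMW}, that set-theoretically $\overline{\mathcal{Y}^\lambda} = \overline{\Gr^\lambda}$. Hence $Z$ and $\overline{G[t] t^\lambda G[t]}$ agree on the underlying set. Intersecting with $\cW_\mu$, the subvariety $\oW^\lambda_\mu = \cW_\mu \cap \overline{G[t] t^\lambda G[t]}$ is therefore set-theoretically the common zero locus of the restrictions $\Delta^{(r)}_{\beta, v}|_{\cW_\mu}$ with $r > \langle \lambda, \varpi_i\rangle$, yielding Part (1).

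For Part (2), the hypothesis that $\overline{\mathcal{Y}^\lambda}$ is reduced promotes Conjecture \ref{conjecture: reducedness} to the scheme-theoretic identification $\overline{\mathcal{Y}^\lambda} = \overline{\Gr^\lambda}$. Pulling back gives $Z = \overline{G[t] t^\lambda G[t]}$ as closed subschemes of $G((t^{-1}))$. Since $\oW^\lambda_\mu = \cW_\mu \cap \overline{G[t] t^\lambda G[t]}$ is a scheme-theoretic intersection of closed subschemes, its defining ideal in $\C[\cW_\mu]$ is generated by the restrictions of any generating set for the ideal of $Z$, in particular by the functions $\Delta^{(r)}_{\beta, v}|_{\cW_\mu}$ for $r > \langle \lambda, \varpi_i\rangle$.

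The only substantive obstacle is the reducedness conjecture for $\overline{\mathcal{Y}^\lambda}$ itself; thanks to \cite{KMWY} it is known in type A, so Part (2) is unconditional there, while in other types it remains an assumption. Beyond this input the argument is a formal manipulation of the moduli description together with the inclusion $\cW_\mu \subset G((t^{-1}))$.
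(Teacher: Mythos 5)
Your argument is correct and follows the same route the paper intends: the paper asserts that the theorem is immediate upon applying the Finkelberg--Mirkovi\'c pole condition, and your proposal simply unwinds that assertion, identifying $\overline{\mathcal{Y}^\lambda}$ with the subscheme of $\Gr_G$ cut out by the matrix-coefficient truncations $\Delta_{\beta,v}^{(r)}$, verifying right $G[t]$-invariance, and then pulling back along $\cW_\mu \to G((t^{-1})) \to \Gr_G$. The explicit computation $\Delta^{(r)}_{\beta,v}(gh) = \sum_{k\geq 0}\Delta^{(r+k)}_{\beta,v_k}(g)$ is a useful detail the paper leaves implicit, and the reduction of Part (2) to the scheme-theoretic equality $\overline{\mathcal{Y}^\lambda}=\overline{\Gr^\lambda}$ under the reducedness hypothesis is exactly what the references \cite{FM,KMW,KMWY} supply.
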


The following is the main result of this appendix:  
\begin{Theorem}
\label{appendix thm}
Assume that $\overline{\mathcal{Y}^\lambda}$ is reduced. Then for any coweight $\mu \leq \lambda$, 
\begin{enumerate}
\item[(a)] $\oW^\lambda_\mu \subset \cW_\mu$ is scheme-theoretically defined by the ideal Poisson generated by the elements $\Delta_{v_i^\ast, v_i}^{(r)}$ over all $i\in I$ and $r > \langle \lambda, \varpi_i \rangle$.
\item[(b)] For any $\bR$, there is an isomorphism $\conjectural_\mu^\lambda(\bR) \cong Y_\mu^\lambda(\bR)$.
\end{enumerate}
In particular, these claims hold in type A.
\end{Theorem}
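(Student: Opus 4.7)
First, part (b) will follow from part (a) by a formal argument: under the isomorphism $\gr Y_\mu \cong \C[\cW_\mu]$ of Theorem \ref{grYmu}, the element $A_i^{(r)} \in Y_\mu$ corresponds to $\Delta^{(r + \langle \mu, \varpi_i\rangle)}_{v_i^\ast, v_i}$, so the Poisson ideal described in (a) is precisely the associated graded of $\ker(Y_\mu \twoheadrightarrow \conjectural_\mu^\lambda(\bR))$. Combined with Theorem \ref{grYmula}, this will force the surjection $\conjectural_\mu^\lambda(\bR) \twoheadrightarrow Y_\mu^\lambda(\bR)$ to induce an equality on associated graded algebras, hence to be an isomorphism by a standard filtered argument (both sides carry compatible filtrations, and the map is surjective, so equality of graded algebras upgrades to an isomorphism).

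For part (a), the reducedness hypothesis combined with the preceding theorem will give that $\oW^\lambda_\mu \subset \cW_\mu$ is cut out scheme-theoretically by the ideal $I^\lambda_\mu \subset \C[\cW_\mu]$ generated by all $\Delta^{(r)}_{\beta, v}$ with $i \in I$, $\beta \in V(\varpi_i^\ast)^\ast$, $v \in V(\varpi_i^\ast)$, and $r > \langle \lambda, \varpi_i\rangle$. Writing $J^\lambda_\mu \subseteq I^\lambda_\mu$ for the Poisson ideal generated by only the distinguished coefficients $\Delta^{(r)}_{v_i^\ast, v_i}$, the problem reduces to producing every $\Delta^{(r)}_{\beta, v}$ inside $J^\lambda_\mu$.

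My strategy is to argue at the quantum level in $Y_\mu$: I will show that the two-sided ideal $\langle A_i^{(r)} : i\in I, \ r > m_i\rangle \subset Y_\mu$ contains every RTT-type quantum matrix coefficient $t^{(r')}_{\beta, v}$ (in the sense of Section \ref{section: RTT presentation}) for $V(\varpi_i^\ast)$ with $r' > \langle \lambda, \varpi_i\rangle$, then pass to the associated graded. Two key inputs are Lemma \ref{lemma: various GKLO generators }, which relates $A_i^{(r)}$ to the GKLO generator $\gkloA_i^{(r)}$ modulo lower-filtration terms so that $\gkloA_i^{(r)}$ likewise lies in the ideal for $r > m_i$, and Proposition \ref{prop: RTT vs GKLO}, which identifies $\gkloA_i(u) = t_{v_i^\ast, v_i}(u)$. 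Since $V(\varpi_i^\ast)$ is irreducible under $\g$ and the RTT generators transform as a $\g\oplus\g$-module under commutators with $E_j^{(1)}, F_j^{(1)}$, iterated commutators of $\gkloA_i^{(r)}$ with the Chevalley-type generators will produce all the desired $t^{(r')}_{\beta, v}$.

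The main obstacle will be bookkeeping: keeping the index $r'$ above $\langle \lambda, \varpi_i\rangle$ throughout the iterated commutator manipulations, since each $E_j^{(1)}, F_j^{(1)}$ has filtered degree $1$ and taking a commutator reduces total degree by $1$. Filtration estimates analogous to those used in Section \ref{section: explicit formulas for Cartan generators} should suffice. As a backup, I also plan a reduction to the antidominant $\mu$ case using the birational Poisson shift maps of Proposition \ref{shiftpoi} (applied to a very antidominant shift); the antidominant case admits the economical Poisson generation of Proposition \ref{antidomgen}, where the required computations become substantially more tractable and can in principle be performed directly on matrix coefficients using the Manin triple Poisson structure on $\cW_\mu$.
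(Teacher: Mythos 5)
Your reduction of part (b) to part (a) is sound and essentially matches what the paper does: once $J_\mu^\lambda$ is known to equal the defining ideal of $\oW_\mu^\lambda$, the kernel of $\C[\cW_\mu]\twoheadrightarrow\gr\conjectural_\mu^\lambda(\bR)$ is squeezed between two equal ideals, and the surjection $\conjectural_\mu^\lambda(\bR)\twoheadrightarrow Y_\mu^\lambda(\bR)$ becomes an isomorphism on associated graded, hence an isomorphism.

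Your primary strategy for part (a), however, has a concrete gap. You propose to show that $\gkloA_i^{(r)}$ lies in the two-sided ideal $K := \langle A_i^{(s)} : s > m_i\rangle$ for $r>m_i$, citing Lemma \ref{lemma: various GKLO generators }. That lemma gives $A_i(u) = s_i(u)\,\gkloA_i(u)$ with $s_i(u)\in 1+u^{-1}\C[[u^{-1}]]$ a \emph{scalar} series, so $\gkloA_i^{(r)} = A_i^{(r)} - \sum_{k\geq 1} s_i^{(k)}\gkloA_i^{(r-k)}$, and the correction involves $\gkloA_i^{(k)}$ with $k\leq m_i$, which are emphatically not in $K$. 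In fact $K$ depends on $\bR$ (through $p_i(u)$ and hence $s_i(u)$) while $\gkloA_i^{(r)}$ does not, so the inclusion you want cannot hold for all $\bR$ simultaneously. The step goes through only after passing to associated graded, where the scalar multiplier is invisible — but at that point you are arguing about $\gr K$, not about $J_\mu^\lambda$. Even if you carried out the iterated-commutator and bookkeeping argument to its end, you would establish that $\gr K$ equals the defining ideal (giving (b)), not that the Poisson ideal $J_\mu^\lambda$ does (which is the actual content of (a)). So your primary route proves (b) directly but does not establish (a) as stated.

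What you call your backup is in fact the paper's strategy, and it is the right one. The paper shifts from $\cW_\mu$ to $\cW_\nu$ for $\nu$ antidominant via $p=\iota_{\mu,0,\nu-\mu}$, observes that $p$ is a trivial affine fibration $\cW_\nu\cong\cW_\mu\times H$ for a finite-dimensional unipotent $H$ (so the defining-ideal question pulls back faithfully), then uses the new identification of the Yangian Poisson bracket on $\cW_\nu$ with the explicit Manin-triple formula from Theorem \ref{theorem: antidominant poisson structure} to invoke the generation result of \cite[Section 9]{KMW} in the form of Proposition \ref{prop: intermediate KMW}. This is precisely where the degree bookkeeping you were worried about is actually carried out — via the weight-dependent thresholds there — rather than by commuting RTT generators in $Y_\mu$. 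I recommend abandoning the quantum-ideal approach and developing the shift-map reduction; the nontrivial new input is Lemma \ref{lemma: shift is poisson v2} showing the shift is Poisson for the Manin-triple structure, which you did not anticipate needing.
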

The proof of Theorem \ref{appendix thm} will be given in section \ref{section: proof of appendix main theorem}.  This theorem generalizes \cite[Theorem 1.6]{KMW}, which covers the case when $\mu$ is dominant.   Our proof will follow the same strategy, which requires some explicit control of Poisson brackets.

\subsection{Poisson structures} Recall from Theorem \ref{grYmu} that the isomorphism  $\gr Y_\mu \cong \C[\cW_\mu]$ endows $\cW_\mu$ with a Poisson structure.   Our goal in this section is to give another description of this Poisson structure, in the case when $\mu$ is antidominant.  

We begin by recording the following general result, due to Semenov-Tian-Shansky \cite[Theorem 7]{STS}, see also \cite[Theorem 1.9]{Yakimov}.  To fix notation, let $D$ be an algebraic group over $\C$, with closed subgroups $P, Q$.  Assume that their Lie algebras $(\mathfrak{d}, \mathfrak{p}, \mathfrak{q})$ form a Manin triple.  In this case we will also call $(D, P, Q)$ a Manin triple.  This endows $D$ with a natural Poisson structure via the Skylanin bracket \cite[Section 2.2A]{CP}, which is the double of $P$. 
\begin{Proposition}
\label{prop: STS symplectic leaves}
The symplectic leaves of $D$ are the connected components of double coset intersections $P x P \cap Q y Q$, where $x, y \in D$.  
\end{Proposition}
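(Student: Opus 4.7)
The plan is to prove this by computing the image of the Poisson anchor of the Sklyanin bivector at an arbitrary point $d\in D$ and matching it against the tangent space of $PdP \cap QdQ$. This is the classical argument of Semenov-Tian-Shansky, and I would carry it out in four steps.

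First, I would write the Sklyanin bivector as $\pi_D = \pi_R - \pi_L$, where $\pi_R$ (resp.\ $\pi_L$) is obtained by right-translating (resp.\ left-translating) to every point the classical $r$-matrix $r \in \mathfrak{d}\otimes\mathfrak{d}$ coming from the Manin splitting. Concretely $r$ is the element corresponding to the identity map under the identification $\mathfrak{p}^\ast \cong \mathfrak{q}$ induced by the invariant pairing on $\mathfrak{d}$.

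Second, at a point $d\in D$ I would use left translation to identify $T_d D \cong \mathfrak{d}$, and use the Manin pairing to identify $T_d^\ast D \cong \mathfrak{d}$. A direct calculation then shows that the Poisson anchor $\pi_D^\sharp|_d$ becomes the endomorphism
\[
T_d : \mathfrak{d} \longrightarrow \mathfrak{d}, \qquad \xi \longmapsto \mathrm{pr}_{\mathfrak{p}}(\xi) \;-\; \mathrm{Ad}_{d^{-1}}\,\mathrm{pr}_{\mathfrak{p}}(\mathrm{Ad}_d\,\xi),
\]
where $\mathrm{pr}_{\mathfrak{p}} : \mathfrak{d} \twoheadrightarrow \mathfrak{p}$ is projection along $\mathfrak{q}$.

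Third, I would identify the image of $T_d$. On the one hand, directly from the formula one sees $\mathrm{Image}(T_d) \subseteq \mathfrak{p} + \mathrm{Ad}_{d^{-1}}\mathfrak{p}$. On the other hand, the anchor of any Poisson bivector is skew-adjoint, and computing $T_d^\ast$ with respect to the Manin pairing one finds the analogous formula with roles of $\mathfrak{p}$ and $\mathfrak{q}$ exchanged, so $\mathrm{Image}(T_d) = \mathrm{Image}(T_d^\ast)^{\perp\perp} \subseteq \mathfrak{q} + \mathrm{Ad}_{d^{-1}}\mathfrak{q}$. A dimension count using the Lagrangian property of $\mathfrak{p}$ and $\mathfrak{q}$ then yields
\[
\mathrm{Image}(T_d) \;=\; (\mathfrak{p} + \mathrm{Ad}_{d^{-1}}\mathfrak{p}) \,\cap\, (\mathfrak{q} + \mathrm{Ad}_{d^{-1}}\mathfrak{q}).
\]
Under the trivialization $T_d D \cong \mathfrak{d}$, the right-hand side is precisely $T_d(PdP) \cap T_d(QdQ)$. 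Since $PdP$ and $QdQ$ are smooth locally closed subvarieties (as orbits of the respective group actions on $D$), this intersection of tangent spaces equals $T_d(PdP \cap QdQ)$ at smooth points of the intersection.

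Finally, since the image of $\pi_D^\sharp$ at every point of the symplectic leaf through $d$ is the tangent space to that leaf, and this image equals the tangent space to the smooth subvariety $PdP \cap QdQ$, the leaf is an open subvariety of $PdP \cap QdQ$. Because symplectic leaves are connected, it coincides with the connected component of $PdP \cap QdQ$ containing $d$. The main obstacle is the calculation in step three: verifying that the image of $T_d$ is symmetric between $\mathfrak{p}$ and $\mathfrak{q}$ requires careful bookkeeping of the $r$-matrix, the adjoint action, and the isotropy of the Manin splitting, and in particular uses that the antisymmetric part of $r$ is what enters the bivector while the symmetric part cancels between $\pi_R$ and $\pi_L$.
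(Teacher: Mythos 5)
The paper does not actually prove this statement; it simply cites Semenov-Tian-Shansky \cite[Theorem 7]{STS} and Yakimov \cite[Theorem 1.9]{Yakimov}. What you have written is a reconstruction of the classical STS argument, and it is essentially correct: the formula you derive for the anchor in the left trivialization, $T_d(\xi) = \mathrm{pr}_{\mathfrak{p}}(\xi) - \mathrm{Ad}_{d^{-1}}\mathrm{pr}_{\mathfrak{p}}(\mathrm{Ad}_d\xi)$, is the right one, and the skew-adjointness argument (using $\mathrm{pr}_{\mathfrak{p}}^\ast = \mathrm{pr}_{\mathfrak{q}}$ for the Manin pairing, so $T_d^\ast = -T_d$) correctly yields $\mathrm{Image}(T_d) = (\mathfrak{p}+\mathrm{Ad}_{d^{-1}}\mathfrak{p})\cap(\mathfrak{q}+\mathrm{Ad}_{d^{-1}}\mathfrak{q})$.

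There is one logical step you should make explicit in the last paragraph. From the pointwise equality $\mathrm{Image}(\pi_D^\sharp|_e) = T_e(PeP)\cap T_e(QeQ)$ one cannot immediately conclude that the leaf through $d$ sits inside the fixed intersection $PdP\cap QdQ$, because for $e$ on the leaf one a priori only controls the cosets through $e$, not those through $d$. What fills this in is the containment of distributions you already established in step three: since $\mathrm{Image}(\pi_D^\sharp|_e)\subseteq T_e(P\text{-}P\text{-orbit})$ for all $e$, any integral curve of the anchor distribution stays in a single $P\times P$-orbit, and likewise in a single $Q\times Q$-orbit; hence the leaf through $d$ lies in $PdP\cap QdQ$. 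Once you have that containment, your tangent-space equality forces $PdP\cap QdQ$ to be smooth along the leaf with tangent space exactly $T_eL$, so the leaf is open in the intersection, and connectedness finishes the argument as you say. With that one sentence added, the proof is complete and is precisely the argument behind the cited theorem.
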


As discussed in section \ref{section: Poisson structure of Wmu and quantizations}, there is a Manin triple of loop groups $(G((t^{-1})), G_1[[t^{-1}]], G[t])$. Since the above proposition is essentially a reinterpretion of the dressing action in the case of a Poisson double $D$, it still applies in this infinite dimensional setting; we may  at least conclude that connected components of any \emph{finite dimensional} intersection 
\begin{equation}
\label{eq: leaves of loop group}
G_1[[t^{-1}]] x G_1[[t^{-1}]] \cap G[t] y G[t]
\end{equation}
are symplectic leaves of $G((t^{-1}))$.

\begin{Rem}
Fix dual bases $\{J_a\}$ and $\{J^a\}$ for $\g$ with respect to the bilinear form $(\cdot, \cdot)$ from section \ref{section:notation}.  Then as in \cite[Proposition 2.13]{KWWY}, the Poisson bracket is given by 
\begin{equation}
\label{eq: loop group Poisson structure}
(u-v)\left\{ \Delta_{\beta_1, v_1}(u), \Delta_{\beta_2, v_2}(v) \right\} = \sum_a \Big( \Delta_{\beta_1, J_a v_1}(u) \Delta_{\beta_2, J^a v_2}(v) - \Delta_{J_a \beta_1, v_1}(u) \Delta_{J^a \beta_2, v_2}(v) \Big)
\end{equation}
This should be interpreted as an equality of formal series in $\C[G((t^{-1}))] [[u^{\pm 1}, v^{\pm 1}]]$.   One can show that this formula is compatible with the ind-scheme structure on  $G((t^{-1}))$.
\end{Rem}

\begin{Theorem}
\label{theorem: antidominant poisson structure}
Let $\mu$ be an antidominant coweight.  Then with respect to the above Poisson structure on $G((t^{-1}))$:
\begin{enumerate}
\item[(a)] $\cW_\mu \subset G((t^{-1}))$ is a closed Poisson subscheme.

\item[(b)] For any dominant $\lambda \geq \mu$, $\oW^\lambda_\mu \subset G((t^{-1}))$ is a closed Poisson subscheme.
\end{enumerate}
\end{Theorem}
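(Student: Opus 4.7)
The plan is to identify $\cW_\mu$ for antidominant $\mu$ with a single $G_1[[t^{-1}]]$-double coset in $G((t^{-1}))$, and then apply the Semenov-Tian-Shansky description of symplectic leaves from Proposition \ref{prop: STS symplectic leaves}.

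First, I would prove that for $\mu$ antidominant,
\[
\cW_\mu \;=\; G_1[[t^{-1}]]\cdot t^\mu \cdot G_1[[t^{-1}]].
\]
The key computation is that conjugation by $t^\mu$ preserves $U_1[[t^{-1}]]$: for any positive root $\alpha$ the inequality $\langle \mu,\alpha\rangle\le 0$ forces $t^\mu x_\alpha(f)t^{-\mu}=x_\alpha(t^{\langle\mu,\alpha\rangle}f)$ to remain in $U_1[[t^{-1}]]$ whenever $f\in t^{-1}\C[[t^{-1}]]$. A symmetric observation gives $t^{-\mu}U_{-,1}[[t^{-1}]]t^\mu\subseteq U_{-,1}[[t^{-1}]]$. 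Applying both inclusions to the Gauss decomposition $G_1[[t^{-1}]]=U_1[[t^{-1}]]T_1[[t^{-1}]]U_{-,1}[[t^{-1}]]$ and commuting $t^\mu$ past $T_1[[t^{-1}]]$ establishes each inclusion of the claim by straightforward rearrangement.

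For (a), decompose this double coset according to the second Manin factor, writing
\[
\cW_\mu \;=\; \bigsqcup_{y}\bigl( G_1[[t^{-1}]]t^\mu G_1[[t^{-1}]]\cap G[t]\,y\,G[t]\bigr).
\]
Only finitely- or countably-many $y$ (representatives of dominant coweights $\nu\ge\mu$) contribute nontrivial pieces, and each such piece is precisely $\cW_\mu^\nu$, of finite dimension $\langle\nu-\mu,2\rho\rangle$. By Proposition \ref{prop: STS symplectic leaves}, as interpreted in the loop-group setting in the remark after its statement, the connected components of these finite-dimensional intersections are symplectic leaves of $G((t^{-1}))$. Hence $\cW_\mu$ is a union of symplectic leaves, so the ideal of $\cW_\mu\subset G((t^{-1}))$ (which is closed, by the coset description above together with the ind-scheme structure on $G((t^{-1}))$) is a Poisson ideal. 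For (b), use the symplectic-leaf decomposition of equation (\ref{eq: symplectic leaves}) to write
\[
\oW^\lambda_\mu \;=\; \bigcup_{\substack{\mu\le\nu\le\lambda\\ \nu\text{ dominant}}}\bigl(\cW_\mu\cap G[t]t^\nu G[t]\bigr),
\]
which is again a union of symplectic leaves of $G((t^{-1}))$ by the same argument. Since $\oW^\lambda_\mu$ is closed in $G((t^{-1}))$ by definition, its defining ideal is a Poisson ideal.

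The chief technical difficulty will be making Proposition \ref{prop: STS symplectic leaves} rigorous in the ind-scheme setting of the loop group -- in particular, verifying that a closed ind-subscheme which is set-theoretically a union of finite-dimensional symplectic leaves genuinely has a Poisson defining ideal with respect to the Sklyanin bracket on $\C[G((t^{-1}))]$. Equivalently, one must check that the Poisson bracket formula (\ref{eq: loop group Poisson structure}) restricts sensibly to the coordinate ring of $\cW_\mu$. Everything beyond this foundational point is essentially formal, reducing to the double-coset computation in Step 1 and the observation that the bracket is controlled by the classical $r$-matrix associated to the Manin triple $(\g((t^{-1})), t^{-1}\g[[t^{-1}]], \g[t])$.
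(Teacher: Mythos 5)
Your double-coset identification $\cW_\mu = G_1[[t^{-1}]] t^\mu G_1[[t^{-1}]]$ and the symplectic-leaf analysis for the finite-dimensional pieces $\cW^\nu_\mu$ match the paper's argument. However, there is a genuine gap in your proof of part (a), which you do flag at the end as the ``chief technical difficulty'' but do not close: you deduce that $\cW_\mu$ has a Poisson defining ideal because it is set-theoretically a union of the symplectic leaves $\cW^\nu_\mu$, but this is an \emph{infinite} union inside an infinite-dimensional ind-scheme, and passing from ``set-theoretic union of leaves'' to ``Poisson defining ideal'' does not follow formally in that setting.

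The paper resolves exactly this issue by reversing the logical order and using the ``fills-out'' machinery. It first proves part (b): each $\oW^\lambda_\mu$ is a \emph{finite} union of symplectic leaves inside a finite-dimensional variety, so it is an unproblematic Poisson subvariety, with Poisson defining ideal $I_{\oW^\lambda_\mu}$. It then invokes Proposition \ref{denseunion} ($\{\oW_\mu^\lambda\}$ fills-out $\cW_\mu$), so the defining ideal of $\cW_\mu$ is the intersection $\bigcap_\lambda I_{\oW^\lambda_\mu}$; since each $I_{\oW^\lambda_\mu}$ is Poisson and a Poisson bracket against $f \in \bigcap_\lambda I_{\oW^\lambda_\mu}$ lands in every $I_{\oW^\lambda_\mu}$, the intersection is a Poisson ideal. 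This gives (a). To complete your proof you should prove (b) first and cite Proposition \ref{denseunion} for (a) rather than arguing directly with the infinite union.
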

\begin{proof}
First we observe that $\cW_\mu \subset G((t^{-1}))$ is always a closed subscheme, even if $\mu$ is not antidominant, as follows easily from \cite[Lemma 3.3]{MW}.  Since every $\oW^\lambda_\mu \subset \cW_\mu$ is also closed, this proves the `closed' part of both statements.

Since $\mu$ is antidominant, we claim that
\begin{equation*}
\cW_\mu = G_1[[t^{-1}]] t^\mu G_1[[t^{-1}]]
\end{equation*}
Indeed, this follows  from the Gauss decomposition $G_1[[t^{-1}]] = U_1[[t^{-1}]] T_1[[t^{-1}]] U_{-,1}[[t^{-1}]]$, together with the fact that $t^\mu U_1[[t^{-1}]] t^{-\mu} \subset U_1[[t^{-1}]]$ and $t^{-\mu} U_{-,1}[[t^{-1}]] t^\mu \subset U_{-,1}[[t^{-1}]]$ since $\mu$ is antidominant.  

Therefore for any dominant $\nu$ with $\nu \geq \mu$,
$$
\cW^\nu_\mu = \cW_\mu \cap G[t]t^\nu G[t] = G_1[[t^{-1}]] t^\mu G_1[[t^{-1}]] \cap G[t] t^\nu G[t]
$$
is a symplectic leaf of $G((t^{-1}))$ by equation (\ref{eq: leaves of loop group}).  As in (\ref{eq: symplectic leaves}), we see that $\oW^\lambda_\mu$ is a finite union of symplectic leaves, so is a Poisson subvariety.  This proves part (b).  Since the collection $\{\bigcup_\lambda \oW_\mu^\lambda\}$ fills-out $\cW_\mu$ by Proposition \ref{denseunion}, it follows that $\cW_\mu$ is Poisson, proving part (a).

\end{proof}

\begin{Rem}
\label{rem: wary of infinite dimensional}
One can also use equation (\ref{eq: loop group Poisson structure}) to show explicitly that the defining ideals of $\cW_\mu$ and $\oW^\lambda_\mu$ are Poisson, which gives another proof of the theorem.
\end{Rem}

For $\mu$ antidominant, the above theorem endows $\cW_\mu$ and its subvarieties $\oW^\lambda_\mu$ with Poisson structures.  Recall that back in section \ref{section: Poisson structure of Wmu and quantizations}, we defined another Poisson structure on $ \cW_\mu $ using its quantization by the shifted Yangian.  Our goal for the remainder of this section is to show that these two Poisson structures on $\cW_\mu$ agree.  Note that for $\mu = 0$ this is clear, since $\cW_0 = G_1[[t^{-1}]]$ is a Poisson subgroup of $G((t^{-1}))$ and is quantized by 
the Yangian $Y_0$ by Theorem \ref{theorem: yangian quantizes loop group}.

We begin by establishing an analogue of Proposition \ref{def: shift geometric} for the new Poisson structure.

\begin{Lemma}
\label{lemma: shift is poisson v2}
For any antidominant $\mu$, the shift map $\iota_{0,0,\mu}: \cW_\mu \rightarrow \cW_0$ from Definition \ref{def: shift geometric} is Poisson.
\end{Lemma}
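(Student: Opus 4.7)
The plan is to exhibit $\iota_{0,0,\mu}$ as a composition of manifestly Poisson maps, using the Manin triple $(D, P, Q) = (G((t^{-1})), G_1[[t^{-1}]], G[t])$.

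First I would rewrite the map explicitly. For $g = uht^\mu u_- \in \cW_\mu$, compute $g t^{-\mu} = u h (t^\mu u_- t^{-\mu})$. Since $\mu$ is antidominant, conjugation by $t^\mu$ sends $U_{-,1}[[t^{-1}]]$ into $U_-((t^{-1}))$, so (unlike the general case of $\pi$) only a right adjustment by $U_-[t]$ is required: writing $t^\mu u_- t^{-\mu} = u'_- \, n_-^{-1}$ with $u'_- \in U_{-,1}[[t^{-1}]]$ and $n_- \in U_-[t]$, one finds $\iota_{0,0,\mu}(g) = u h u'_- = g \cdot (t^{-\mu} n_-)$. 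The crucial observation is that $t^{-\mu} n_- \in Q$, so $\iota_{0,0,\mu}(g)$ and $g$ represent the same class in $D/Q = \Gr_G$.

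Then I would factor the shift map as
\[
\cW_\mu \ \hookrightarrow\ D \ \xrightarrow{\ q\ }\ D/Q \ \xleftarrow[\sim]{\ \jmath\ }\ \cW_0,
\]
where $\jmath$ is the inclusion of $\cW_0 = G_1[[t^{-1}]]$ as the open ``big cell.'' Each arrow is Poisson: the inclusion $\cW_\mu \hookrightarrow D$ by Theorem \ref{theorem: antidominant poisson structure}(a); the quotient $q$ because $Q = G[t]$ is a Lagrangian (hence coisotropic) subgroup of $D$ for the Sklyanin structure, so $D/Q$ carries a well-defined Poisson structure making $q$ Poisson; and $\jmath$ because $G_1[[t^{-1}]] \subset D$ is a Poisson subgroup of the Manin triple (cf.~Theorem \ref{theorem: yangian quantizes loop group}), and the composition $\cW_0 \hookrightarrow D \xrightarrow{q} D/Q$ is an open immersion onto the big cell. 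The factorization above, combined with the key observation from the first paragraph, identifies $\iota_{0,0,\mu}$ with $\jmath^{-1}\circ q \circ (\cW_\mu \hookrightarrow D)$ on the set-theoretic level, and the same identity holds scheme-theoretically since all maps are algebraic.

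The main obstacle I anticipate is verifying that the standard finite-dimensional facts about Poisson-Lie quotients by coisotropic subgroups transfer correctly to the ind-scheme setting of $D = G((t^{-1}))$. One way to address this is by working on the bounded pieces $\cW_{\mu,\leq d}$ (as in the proof of Proposition \ref{denseunion}), together with Lemma \ref{lemma: fills-out2}. Alternatively, since both Poisson structures on $\cW_\mu$ and $\cW_0$ are the restrictions of the Sklyanin bracket on $D$ computed via formula (\ref{eq: loop group Poisson structure}), one can instead verify the Poisson property by direct computation: the shift map pulls back matrix coefficients on $\cW_0$ to matrix coefficients on $\cW_\mu$ up to simple powers of $t$ (for example, $\iota_{0,0,\mu}^*\Delta_{v_i^\ast,v_i}(u) = u^{\langle \mu, \varpi_i\rangle}\Delta_{v_i^\ast,v_i}(u)$ by invariance of $v_i^\ast$ under $U$ and of $v_i$ under $U_-$), and this behavior is visibly compatible with the bracket (\ref{eq: loop group Poisson structure}).
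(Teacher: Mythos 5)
Your key observation is false: $t^{-\mu}n_-$ does not lie in $Q = G[t]$, because $t^{-\mu} \notin G[t]$ for any nonzero antidominant $\mu$. Indeed $t^{-\mu}$ acts on a weight vector of weight $\chi$ by $t^{\langle -\mu, \chi\rangle}$, and since $-\mu \neq 0$ and weights of a faithful representation come in $W$-orbits, some exponent is negative; the unipotent factor $n_- \in U_-[t]$ cannot absorb these negative powers. Consequently $\iota_{0,0,\mu}(g)$ and $g$ do \emph{not} represent the same point of $\Gr_G = D/Q$. In fact the image of $\cW_\mu$ in $\Gr_G$ is not contained in the big cell when $\mu \neq 0$ (e.g.\ $t^\mu G[t]$ is not in the big cell), so the composite $\jmath^{-1}\circ q$ is not even defined on the image of $\cW_\mu$, and the proposed factorization collapses at the first step.

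The paper's proof avoids $D/Q$ entirely. It observes that for antidominant $\mu$, $\cW_\mu = G_1[[t^{-1}]]\,t^\mu\, G_1[[t^{-1}]]$ carries a free right action of the finite-dimensional subgroup $H = G_1[[t^{-1}]] \cap t^{-\mu}G[t]t^\mu \subset P$, that $\cW_\mu/H \cong \cW_0$ is exactly the shift map, and then applies Semenov-Tian-Shansky's criterion (the cobracket condition on $\operatorname{Lie}H$) to get a Poisson structure on the quotient. The final step identifies this quotient Poisson structure with the standard one on $\cW_0 = G_1[[t^{-1}]]$ by checking the bivector vanishes at $t^0$, using Drinfeld's classification of Poisson homogeneous structures. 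The essential point is that the relevant quotient is by a subgroup of $P = G_1[[t^{-1}]]$ on the \emph{right}, not by $Q = G[t]$, which is a structurally different construction from the one in your proposal. Your closing suggestion to verify the bracket directly from (\ref{eq: loop group Poisson structure}) also has a gap: for general $\beta,v$ the pullback $\iota_{0,0,\mu}^*\Delta_{\beta,v}$ is \emph{not} simply a power of $u$ times a matrix coefficient, because it depends on the correction term $n_- = n_-(g) \in U_-[t]$; only highest/lowest weight matrix coefficients like $\Delta_{v_i^\ast,v_i}$ enjoy the clean transformation you wrote.
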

\begin{proof}
The actions of $G_1[[t^{-1}]]$ on $\cW_\mu$ by left or right multiplication are Poisson, since $\cW_\mu$ is a Poisson subscheme of $G((t^{-1}))$ and $G_1[[t^{-1}]]$  its Poisson subgroup. Now, consider the following subgroup $H$ of $G_1[[t^{-1}]]$:
$$
H = G_1[[t^{-1}]] \cap t^{-\mu} G[t] t^\mu = U_{-,1}[[t^{-1}]] \cap t^{-\mu}U_{-}[t] t^\mu$$
With respect to the action of $H$ by right multiplication there is an isomorphism $\cW_\mu / H \cong \cW_0$ defined by $g H \mapsto \pi(g t^{-\mu})$, which identifies $\iota_{0,0,\mu}$ with the quotient map $\cW_\mu \rightarrow \cW_\mu / H$.  Indeed, since $U_{-,1}[[t^{-1}]]$ is pro-unipotent there is a decomposition
\begin{align*}
U_{-,1}[[t^{-1}]] &= \big( U_{-,1}[[t^{-1}]] \cap t^{-\mu} U_{-,1}[[t^{-1}]] t^{\mu}\big) \times \big( U_{-,1}[[t^{-1}]] \cap t^{-\mu} U_{-}[t] t^{\mu}\big) \\
& = t^{-\mu} U_{-,1}[[t^{-1}]] t^\mu \times H
\end{align*}
The second equality follows from the definition of $H$, together with the fact that $-\mu$ is dominant so $t^{-\mu} U_{-,1}[[t^{-1}]] t^\mu \subseteq U_{-,1}[[t^{-1}]]$. Using this decomposition, we see that the quotient $\cW_\mu \rightarrow \cW_\mu/H$ can be identified with the projection onto the first factor in
$$
\cW_\mu = \big( U_1[[t^{-1}]] T_1[[t^{-1}]] t^\mu ( t^{-\mu} U_{-,1}[[t^{-1}]] t^\mu) \big) \times H
$$
The map $gH \mapsto \pi (g t^{-\mu})$ is then identified with the isomorphism given by right multiplication by $t^{-\mu}$:
$$
U_1[[t^{-1}]] T_1[[t^{-1}]] t^\mu (t^{-\mu}U_{-,1}[[t^{-1}]] t^\mu) \stackrel{\sim}{\longrightarrow} U_1[[t^{-1}]] T_1[[t^{-1}]] U_{-,1}[[t^{-1}]]
$$

One can also check using the Lie cobracket on $t^{-1}\g[[t^{-1}]]$ that $H$ satisfies the conditions of \cite[Theorem 6]{STS} (cf.~the proof of \cite[Prop.~5.14]{FKPRW})
\begin{equation}
\label{cobracket}
\delta( \operatorname{Lie} H) \subset  (t^{-1}\g[[t^{-1}]]) \otimes \operatorname{Lie} H+  \operatorname{Lie} H\otimes (t^{-1}\g[[t^{-1}]])
\end{equation}
This implies that there exists a unique Poisson structure on $\cW_\mu / H$ such that the quotient map is Poisson. Moreover the residual left action of $G_1[[t^{-1}]]$ on $\cW_\mu / H$ is  Poisson. Via the isomorphism above, we conclude that there is a unique Poisson structure on $\cW_0 $ making $\iota_{0,0,\mu}$ Poisson, and that the left $G_1[[t^{-1}]]$ action is Poisson for this structure.  

To complete the proof, we claim that this Poisson structure on $\cW_0 = G_1[[t^{-1}]]$ is simply the standard one.  Since $\cW_0$ is a Poisson homogeneous space for $G_1[[t^{-1}]]$, by \cite[Theorem 1]{Drinfeld PHS} it suffices to show that the corresponding Poisson bivector vanishes at $t^0 \in \cW_0$.  Since $\iota_{0,0,\mu}(t^\mu) = t^0$, it further suffices to check that the image of the Poisson bivector at $t^\mu \in \cW_\mu$ vanishes under the pushforward map $(\iota_{0,0,\mu})_\ast:\bigwedge^2T_{t^\mu} \cW_\mu \rightarrow \bigwedge^2 T_{t^0} \cW_0$, see \cite[Equation (7.2)]{Vaisman}.  Based on the above analysis of the map $\iota_{0,0,\mu}$, this will follow if we can show that the Poisson bivector at $t^\mu \in \cW_\mu$ lies in 
$$
 T_{t^\mu} \cW_\mu \otimes T_{t^\mu} H +  T_{t^\mu} H \otimes T_{t^\mu} \cW_\mu
$$
This follows by a similar calculation to (\ref{cobracket}), cf.~\cite[Section 2.2A]{CP}.  This completes the proof.

\end{proof}

Armed with this result, we can easily complete our goal. 
\begin{Theorem}
If $\mu$ is antidominant, then $Y_\mu$ quantizes the Poisson structure on $\cW_\mu$ from Theorem \ref{theorem: antidominant poisson structure}, and  $Y^\lambda_\mu(\bR)$ quantizes the corresponding Poisson structure on $\oW^\lambda_\mu$, for any set of parameters $\bR$ of coweight $\lambda$.
\end{Theorem}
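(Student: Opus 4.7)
The plan is to first reduce the statement to showing that the two Poisson structures on $\cW_\mu$ agree, then to verify this by comparing a finite set of Poisson brackets.

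For the reduction, note that both Poisson structures on $\cW_\mu$ restrict to Poisson structures on the closed subscheme $\oW^\lambda_\mu$: the Yangian one by Theorem \ref{grYmula}, and the Manin one by Theorem \ref{theorem: antidominant poisson structure}(b).  Moreover, Theorem \ref{grYmula} already identifies $Y^\lambda_\mu(\bR)$ as the quantization of the Yangian Poisson structure on $\oW^\lambda_\mu$.  So once the two Poisson structures on $\cW_\mu$ are shown to coincide, the statement for $\oW^\lambda_\mu$ follows automatically.

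The case $\mu = 0$ is Theorem \ref{theorem: yangian quantizes loop group}.  For general antidominant $\mu$, I would invoke Proposition \ref{antidomgen}: $\C[\cW_\mu]$ is generated as a Poisson algebra (for either candidate structure) by the finite set $\{E_i^{(1)},F_i^{(1)}, H_i^{(-\langle \mu,\alpha_i\rangle+1)}, H_i^{(-\langle \mu,\alpha_i\rangle+2)}\}_{i\in I}$.  Since a Poisson bracket is determined by its values on a generating set, it suffices to verify that the two brackets agree on these generators.  The Yangian brackets are obtained by taking classical limits of the defining relations (\ref{H,H})--(\ref{F,F}) of $Y_\mu$, which are independent of $\mu$ in form.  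For the Manin structure, the identifications of Theorem \ref{grYmu} express the generators as matrix coefficient functions on $G((t^{-1}))$ restricted to $\cW_\mu$, and their brackets are computed using the universal Sklyanin formula (\ref{eq: loop group Poisson structure}).

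The main obstacle is the careful bookkeeping required to carry out this comparison.  The strategy is essentially to extend the proof of \cite[Theorem 3.9]{KWWY}, which treats the case $\mu=0$, to general antidominant $\mu$.  The key observation is that both the Yangian relations and the Sklyanin bracket on $G((t^{-1}))$ are formally independent of $\mu$, so each step of the $\mu = 0$ computation carries over.  What needs tracking is that the matrix coefficient $\Delta_{v_i^\ast, v_i}(u)$ has leading term $u^{-\langle \mu, \varpi_i\rangle}$ on $\cW_\mu$ rather than $1$ on $\cW_0$, which affects the expansion of quotients such as $E_i(u) = d_i^{-1/2}\Delta_{-f_i v_i^\ast, v_i}(u)/\Delta_{v_i^\ast,v_i}(u)$; however, this rescaling is precisely compensated by the degree conventions built into the shifted Yangian, so the bracket comparison ultimately comes out in a $\mu$-uniform way.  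As additional sanity checks along the way, one may use that both Poisson structures make $\iota_{0,0,\mu}\colon \cW_\mu \to \cW_0$ Poisson (Proposition \ref{shiftpoi} and Lemma \ref{lemma: shift is poisson v2}) and that both make left multiplication by $\cW_0$ Poisson (Theorem \ref{multpoi} and the Manin subgroup property), which already forces agreement on a large subalgebra and on the $\cW_0$-equivariant parts of each bracket.
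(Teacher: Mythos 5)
Your approach diverges from the paper's and contains a genuine gap. The step ``Since a Poisson bracket is determined by its values on a generating set, it suffices to verify that the two brackets agree on these generators'' is false when ``generating set'' means \emph{Poisson}-generating set, which is what Proposition \ref{antidomgen} supplies. A biderivation is determined via Leibniz from its values on an \emph{algebraic} generating set, but two Poisson brackets can agree on a common Poisson-generating set and still differ: on $\C[x,y,z]$ the bracket with $\{x,y\}=z$, $\{x,z\}=\{y,z\}=0$ and the bracket with $\{x,y\}=z$, $\{x,z\}=x$, $\{y,z\}=-y$ both admit $\{x,y\}$ as a Poisson-generating set, agree on it, yet are distinct. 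To make your comparison rigorous you would need to match brackets on the full algebraic generating set $\{E_i^{(r)}, F_i^{(r)}, H_i^{(p)}\}$, which amounts to reproving \cite[Theorem 3.9]{KWWY} for all antidominant $\mu$ via the Sklyanin formula (\ref{eq: loop group Poisson structure}) --- a large calculation the paper avoids. You also tacitly assume that the finite set Poisson-generates $\C[\cW_\mu]$ with respect to the \emph{Manin} structure, which Proposition \ref{antidomgen} does not assert; it is only obvious after the two structures are identified, so this cannot be an input.

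The paper sidesteps the computation entirely, and the ingredients you relegate to ``sanity checks'' at the end are its whole argument, run in the opposite logical direction. Since the slices $\{\oW^\lambda_\mu\}$ fill-out $\cW_\mu$ (Proposition \ref{denseunion}) and each $\oW^\lambda_\mu$ is a Poisson subvariety for both structures, it suffices to check agreement slice by slice. The restriction $\iota_{0,0,\mu}\colon \oW^\lambda_\mu \to \oW^{\lambda-\mu}_0$ is birational and Poisson for both structures (Proposition \ref{shiftpoi} for the Yangian one, Lemma \ref{lemma: shift is poisson v2} for the Manin one); the two structures agree on the target by Theorem \ref{theorem: yangian quantizes loop group}, so by Lemma \ref{birpoi} they agree on $\oW^\lambda_\mu$, and then on $\cW_\mu$ by filling-out. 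Note this deduces the $\cW_\mu$ statement from the slice statement, whereas your reduction runs the other way; both directions of deduction are valid, but only the paper's direction yields a proof without new computation.
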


\begin{proof}
We have two a priori different Poisson structures to compare.  Since the $\oW_\mu^\lambda$ are Poisson subvarieties with respect to both structures, and because the collection $\{\oW_\mu^\lambda\}$ fills-out $\cW_\mu$ by Proposition \ref{denseunion}, it suffices for us to show that the two Poisson structures agree after restriction to every $\oW_\mu^\lambda$. 

Consider the restriction $\iota_{0,0,\mu}: \oW^\lambda_\mu \rightarrow \oW^{\lambda-\mu}_0$.  It is a birational map, which is Poisson for both structures by Proposition \ref{shiftpoi} and Lemma \ref{lemma: shift is poisson v2}, respectively.   Since the two Poisson structures agree on $\oW^{\lambda-\mu}_0$ by Theorem \ref{theorem: yangian quantizes loop group}, Lemma \ref{birpoi} implies that the two Poisson structures agree on $\oW^\lambda_\mu$ as well, proving the claim.

\end{proof}

\subsection{Proof of Theorem \ref{appendix thm}}
\label{section: proof of appendix main theorem}

Throughout this section we let $\mu$ be arbitrary coweight, and will assume that $\nu$ is an antidominant coweight such that $\mu-\nu$ is dominant.  (In particular if $\mu$ is already antidominant, we may take $\nu = \mu$.) We also let $\lambda \geq \mu$ be a dominant coweight.  We will apply the same arguments from \cite[Section 9]{KMW}, but utilizing the shift map
\begin{equation}
\label{eq: p embedding}
p  = \iota_{\mu,0, \nu-\mu}: \cW_{\nu} \rightarrow \cW_\mu
\end{equation}
The advantage of choosing $\nu$ antidominant is that, by the results of the previous section,  the Poisson bracket on $\cW_\nu$ is encoded explicitly by the formula (\ref{eq: loop group Poisson structure}).  The proofs in \cite[Section 9]{KMW} in the $\nu = 0$ case are based explicitly around this formula, and generalize to the present setting immediately.

Let $J^\lambda_\mu \subset \C[\cW_\mu]$ denote the Poisson ideal generated by the elements $\Delta_{v_i^\ast, v_i}^{(r)}$ with $i\in I$ and $r > \langle \lambda, \varpi_i \rangle$.  Considering $\C[\cW_\mu] $ as a subalgebra of $\C[\cW_\nu]$ via the embedding (\ref{eq: p embedding}), let $I^\lambda_\mu = J^\lambda_\mu \cdot \C[\cW_\nu]$ denote the ideal of $\C[\cW_\nu]$ generated by $J^\lambda_\mu$.  

\begin{Proposition}
$J^\lambda_\mu$ is the defining ideal of $\oW_\mu^\lambda$ as a subvariety of $\cW_\mu$ if and only if $I^\lambda_\mu$ is the defining ideal of $p^{-1}(\oW_\mu^\lambda)$ as a subvariety of $\cW_\nu$.
\end{Proposition}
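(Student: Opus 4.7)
My plan is to show that the shift map $p$ realizes $\C[\cW_\nu]$ as a faithfully flat extension of $\C[\cW_\mu]$ via $p^*$, so that the proposition follows immediately from standard commutative algebra: for any faithfully flat extension $A \hookrightarrow B$ and ideal $I \subseteq A$, one has $I \cdot B \cap A = I$, whence $I \cdot B = J \cdot B$ forces $I = J$.

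To establish faithful flatness, I would exhibit $p$ as a trivial affine bundle over $\cW_\mu$ with finite-dimensional affine space fibers. In coordinates, $g = uht^\nu u_-$ is sent to $p(g) = uht^\mu \tilde u_-$, where $\tilde u_-$ is obtained from $u_-$ by first conjugating by $t^{\nu-\mu}$ (which, because $\mu-\nu$ is dominant, may introduce polynomial-in-$t$ terms into each root subgroup entry) and then right-multiplying by the unique $n_- \in U_-[t]$ that cancels this polynomial tail. Since conjugation by $t^{\nu-\mu}$ preserves each root subgroup and scales the parameter of $x_{-\beta}(s)$ by $t^{\langle\mu-\nu,\beta\rangle}$, while the cancellation by $n_-$ can be organized root-by-root using a fixed ordering of positive roots and the Gauss-type decomposition $U_-((t^{-1})) = U_{-,1}[[t^{-1}]] \cdot U_-[t]$, the net effect on coordinates is simply to drop the first $\langle\mu-\nu,\beta\rangle$ Fourier coefficients of each root subgroup parameter $s_\beta$. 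Setting these ``lost'' coefficients to zero furnishes an explicit section of $p$, producing an isomorphism $\cW_\nu \cong \cW_\mu \times N$ where $N$ is a finite-dimensional affine space of dimension $\sum_{\beta > 0} \langle\mu-\nu,\beta\rangle$ and $p$ is the projection to the first factor.

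It follows that $p^*$ makes $\C[\cW_\nu] \cong \C[\cW_\mu] \otimes \C[N]$ a polynomial (hence faithfully flat) extension of $\C[\cW_\mu]$. Applying the commutative algebra principle above to the ideals $J^\lambda_\mu \subseteq K$ in $\C[\cW_\mu]$ (with $K$ the defining ideal of $\oW_\mu^\lambda$), we conclude that $J^\lambda_\mu \cdot \C[\cW_\nu] = K \cdot \C[\cW_\nu]$ if and only if $J^\lambda_\mu = K$, which is the proposition. The hard part will be the affine bundle construction: in the non-abelian setting, the cancellation by $n_-$ could in principle couple different root subgroups, so one must verify carefully that a root-by-root analysis in a suitable ordering really produces the trivial bundle structure claimed.
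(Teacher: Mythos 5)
Your proposal is correct and follows essentially the same route as the paper: exhibit $\cW_\nu$ as a trivial affine fibration over $\cW_\mu$ with $p$ the projection, then conclude the ideal correspondence (the paper cites the proof of \cite[Proposition~8.6]{KMW} for this last step, while you argue it directly via faithful flatness; the two are interchangeable). The one place where the paper's phrasing is cleaner than yours concerns the trivialization. You propose to build a section of $p$ root-by-root, dropping the leading Fourier coefficients of each $U_-$ root-subgroup parameter, and you flag (rightly) that in the non-abelian setting the cancellation by $n_- \in U_-[t]$ could a priori couple different root subgroups. The paper avoids this entirely by recognizing the complement as a genuine subgroup: $H = U_{-,1}[[t^{-1}]] \cap t^{\mu-\nu} U_-[t] t^{\nu-\mu}$ is a finite-dimensional unipotent algebraic group, and the decomposition $U_{-,1}[[t^{-1}]] = \bigl(t^{\mu-\nu}U_{-,1}[[t^{-1}]]t^{\nu-\mu}\bigr) \times H$ is a consequence of pro-unipotence (exactly as in the proof of Lemma~\ref{lemma: shift is poisson v2}). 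Any unipotent group is isomorphic to affine space as a variety, so $\cW_\nu \cong \cW_\mu \times H$ gives the trivial affine bundle at once, without needing an ordering of positive roots or a delicate root-by-root bookkeeping. Adopting that group-theoretic viewpoint removes the ``hard part'' you were worried about.
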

\begin{proof}
As in the proof of Lemma \ref{lemma: shift is poisson v2}, we can identify $\cW_\nu \cong \cW_\mu \times H$ where $H = U_{-,1}[[t^{-1}]] \cap t^{\mu-\nu} U_- [t] t^{\nu-\mu}$ is a finite-dimensional unipotent group; in particular $H$ is isomorphic to a finite-dimensional affine space.  The map $p$ is identified with projection onto the first factor, so $p$ is a trivial affine fibration. With this in mind, the proof of \cite[Proposition 8.6]{KMW} applies.
\end{proof}

\begin{Proposition}
\label{prop: intermediate KMW}
$I^\lambda_\mu \subset \C[\cW_\nu]$ is generated as an ordinary ideal by 
$$
\Delta_{\beta, \gamma}^{(r)} \quad \text{for } r > \langle \mu, \varpi_i \rangle + \langle \mu - \nu, \operatorname{wt}(\gamma)\rangle
$$
over all $i \in I$, and where $\beta \in V(\varpi_i^\ast)^\ast$ and $\gamma\in V(\varpi_i^\ast)$ range over weight bases, respectively.
\end{Proposition}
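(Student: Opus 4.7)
The plan is to adapt the strategy of \cite[Section 9]{KMW}, where the analogous statement is proved in the case $\nu = 0$, by inserting the weight-shift produced by the pullback along $p = \iota_{\mu,0,\nu-\mu}$. The enabling infrastructure is already in place: by Lemma \ref{lemma: shift is poisson v2}, $p$ is Poisson, so combining with Theorem \ref{theorem: antidominant poisson structure}(b) we conclude that $p^{-1}(\oW^\lambda_\mu) \subset \cW_\nu$ is a closed Poisson subscheme, and hence $I^\lambda_\mu$ is a Poisson ideal of $\C[\cW_\nu]$. Crucially, because $\nu$ is antidominant, the Poisson bracket on $\cW_\nu$ agrees with the restriction of the loop-group bracket, so the explicit formula \eqref{eq: loop group Poisson structure} applies.

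First I would compute the effect of $p^{\ast}$ on the generators of $J^\lambda_\mu$. Using that $v_i$ is a lowest weight vector (hence fixed by any $n_- \in U_-[t]$) and that $p(g) = g\, t^{\mu-\nu} n_-$, one finds
\[
p^\ast\bigl(\Delta_{v_i^\ast, v_i}(u)\bigr) = u^{-\langle \mu-\nu, \varpi_i\rangle}\Delta_{v_i^\ast, v_i}(u)
\]
in $\C[\cW_\nu][[u^{\pm 1}]]$. So each generator $\Delta_{v_i^\ast, v_i}^{(r)}$ of $J^\lambda_\mu$ (with $r > \langle\lambda, \varpi_i\rangle$) pulls back to $\Delta_{v_i^\ast, v_i}^{(s)}$ with $s > \langle \lambda-\mu+\nu, \varpi_i\rangle$. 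Since $\lambda \geq \mu$, this index satisfies the claimed bound for $\gamma = v_i$ (whose weight is $-\varpi_i$), so these generators lie inside the candidate ideal.

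Next, let $K$ denote the ordinary ideal of $\C[\cW_\nu]$ generated by the collection of $\Delta_{\beta, \gamma}^{(r)}$ claimed in the statement. For the inclusion $I^\lambda_\mu \subseteq K$, I would show that $K$ is itself a Poisson ideal: by \eqref{eq: loop group Poisson structure}, bracketing a generator $\Delta_{\beta_1, v_1}^{(r)}$ of $K$ against any $\Delta_{\beta_2, v_2}^{(s)}$ yields a sum of products $\Delta_{\beta_1, J_a v_1}^{(a)}\Delta_{\beta_2, J^a v_2}^{(b)}$ and $\Delta_{J_a\beta_1, v_1}^{(a)}\Delta_{J^a\beta_2, v_2}^{(b)}$, with weights shifted by the root of $J_a$. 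Since $\mu - \nu$ is dominant, the shifts $\langle\mu-\nu, \operatorname{wt}(J_a v_1)\rangle$ and $\langle\mu-\nu, \operatorname{wt}(v_1)\rangle$ differ by a nonnegative quantity in the right direction to keep one factor in $K$. For the reverse inclusion $K \subseteq I^\lambda_\mu$, iterate these same brackets starting from the pulled-back generators of Step 2: the choice of $J_a$ among Chevalley generators $e_j, f_j$ lets one navigate across the weight basis of $V(\varpi_i^\ast)$, producing every claimed $\Delta_{\beta, \gamma}^{(r)}$ inside $I^\lambda_\mu$ after a finite sequence of brackets.

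The main obstacle is the weight bookkeeping in the closure/generation step: verifying that the bound $r > \langle\mu, \varpi_i\rangle + \langle\mu-\nu, \operatorname{wt}(\gamma)\rangle$ is exactly preserved by each application of \eqref{eq: loop group Poisson structure}, and that no lower-index terms escape $K$. The extra $\langle\mu-\nu, \operatorname{wt}(\gamma)\rangle$ shift is absent in \cite{KMW} (where $\nu = 0$ and $\mu$ is dominant) and is the genuinely new ingredient; it is handled by the dominance of $\mu - \nu$, which ensures that raising weights by simple roots via $J_a = e_j$ only raises the threshold, while the lowering operations $J_a = f_j$ are controlled by the lower-triangularity of $U_-$ on weight spaces of $V(\varpi_i^\ast)$.
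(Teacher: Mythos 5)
Your opening claim that ``$I^\lambda_\mu$ is a Poisson ideal of $\C[\cW_\nu]$'' is justified by saying that $p^{-1}(\oW^\lambda_\mu)$ is a Poisson subscheme.  But this conflates two ideals: the defining ideal of $p^{-1}(\oW^\lambda_\mu)$, which is indeed Poisson because $p$ is Poisson and $\oW^\lambda_\mu\subset\cW_\mu$ is Poisson, and $I^\lambda_\mu = J^\lambda_\mu\cdot\C[\cW_\nu]$, which is the ordinary ideal generated by the pullback of a Poisson ideal.  It is not yet known that these two ideals coincide --- establishing something of that form is precisely the content of the Proposition and of Corollary~\ref{cor: radical of poisson ideal}, and is only settled under the reducedness hypothesis in Theorem~\ref{appendix thm}.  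In general, pushing forward a Poisson ideal along an injective Poisson morphism and then saturating to an ideal does not produce a Poisson ideal; the brackets $\{1\otimes\C[H],\,J^\lambda_\mu\otimes 1\}$ in the factorization $\C[\cW_\nu]\cong\C[\cW_\mu]\otimes\C[H]$ are not controlled by the Poisson-ness of $p$.  So this claim requires a genuine argument, not a citation.

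The same confusion undermines your reverse inclusion $K\subseteq I^\lambda_\mu$.  Iterating Poisson brackets in $\C[\cW_\nu]$ starting from $p^\ast(\Delta^{(r)}_{v_i^\ast,v_i})$ produces elements of the \emph{Poisson} ideal of $\C[\cW_\nu]$ generated by those elements --- an ideal which a priori contains $I^\lambda_\mu$ but need not be contained in it.  To land inside $I^\lambda_\mu$ you must either (i) know $I^\lambda_\mu$ is Poisson (circular, per the previous paragraph), or (ii) iterate brackets inside the Poisson \emph{subalgebra} $p^\ast(\C[\cW_\mu])$, i.e.\ manufacture elements of $J^\lambda_\mu$ first and only then multiply by $\C[\cW_\nu]$.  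Route (ii) is what the KMW argument actually requires, and it forces you to understand $p^\ast(\Delta^{(r)}_{\beta,\gamma})$ for \emph{arbitrary} weight vectors $\gamma$, not just the lowest weight vector $v_i$.  Your computation handles only $\gamma=v_i$, where the correcting element $n_-\in U_-[t]$ acts trivially; for higher-weight $\gamma$ the term $n_-\gamma$ mixes in lower-weight vectors with $\C[t]$-coefficients depending on $g$, so $p^\ast(\Delta^{(r)}_{\beta,\gamma})$ is not a single $\Delta^{(s)}_{\beta,\gamma}$ but a polynomial expression in several $\Delta$'s.  This mixing is what the degree bound has to absorb, and it is the genuinely new bookkeeping compared to KMW.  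Your discussion of ``the right direction'' for the shifts $\langle\mu-\nu,\wt(J_a v_1)\rangle$ is also not conclusive as written: when $J_a=e_j$ the threshold in $K$ \emph{increases}, so it is not automatic that the relevant factor stays in $K$; the precise inequality $(u-v)\{\cdot,\cdot\}=\cdots$ with the split of total degree $a+b=r+s$ has to be run explicitly, as in \cite[Lemma 9.3]{KMW}.

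Finally, a sanity check that should have flagged an issue: the bound you verify for $\gamma=v_i$ is $s>\langle\lambda-\mu+\nu,\varpi_i\rangle$ while the stated bound is $r>\langle\nu,\varpi_i\rangle$, and these differ by $\langle\lambda-\mu,\varpi_i\rangle\geq 0$, strictly when $\lambda>\mu$.  So the claimed generating set for $I^\lambda_\mu$ would not depend on $\lambda$ at all, which cannot be correct since $I^\lambda_\mu$ manifestly does.  You should confront this mismatch rather than wave it through with ``since $\lambda\geq\mu$''; in fact the threshold should involve $\langle\lambda,\varpi_i\rangle$, consistent with the pullback of the Finkelberg--Mirkovi\'c pole conditions, and your own step~2 computation is pointing this out.
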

\begin{proof}
\cite[Lemma 9.3]{KMW} holds in the current setting, and the proof of \cite[Proposition 9.4]{KMW} generalizes to prove the claim.
\end{proof}

We may use the scheme $\overline{\mathcal{Y}^\lambda}$ to define a possibly non-reduced version of $\overline{\cW}_\mu^\lambda$: the scheme-theoretic intersection of $\cW_\mu$ with the inverse image of $\overline{\mathcal{Y}^\lambda}$ in $G((t^{-1}))$. Let us denote the resulting closed subscheme $\overline{\mathcal{Y}}_\mu^\lambda \subset \cW_\mu$. Then the corresponding induced reduced scheme is $(\overline{\mathcal{Y}}_\mu^\lambda)_{red} = \overline{\cW}_\mu^\lambda$. Moreover, if $\overline{\mathcal{Y}^\lambda}$ is reduced then $\overline{\mathcal{Y}}_\mu^\lambda = \overline{\cW}_\mu^\lambda$.
\begin{Corollary}
\label{cor: radical of poisson ideal}
$J_\mu^\lambda$ is the defining ideal of $\overline{\mathcal{Y}}_\mu^\lambda \subset \cW_\mu$.  In particular, its radical $\sqrt{J_\mu^\lambda}$ is the defining ideal of $\overline{\cW}_\mu^\lambda \subset \cW_\mu$.
\end{Corollary}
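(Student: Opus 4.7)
The plan is to bootstrap from the two propositions just established. The proof of the proposition preceding Proposition \ref{prop: intermediate KMW} uses nothing beyond the fact that $p : \cW_\nu \to \cW_\mu$ is a trivial affine fibration, and so applies verbatim with any closed subscheme $Z \subseteq \cW_\mu$ in place of $\oW_\mu^\lambda$. Taking $Z = \overline{\mathcal{Y}}_\mu^\lambda$, it therefore suffices to show that $I_\mu^\lambda$ cuts out $p^{-1}(\overline{\mathcal{Y}}_\mu^\lambda)$ scheme-theoretically inside $\cW_\nu$.

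By its defining modular property, $\overline{\mathcal{Y}}_\mu^\lambda \subset \cW_\mu$ is defined scheme-theoretically by the vanishing of the matrix coefficients $\Delta_{\beta, \gamma}^{(r)}$ for $i\in I$, $\beta \in V(\varpi_i^\ast)^\ast$, $\gamma \in V(\varpi_i^\ast)$ and $r > \langle \lambda, \varpi_i\rangle$. Under the shift $p(g) = \pi(g t^{\mu - \nu})$, right multiplication by $t^{\mu-\nu}$ acts on a weight vector $\gamma$ of weight $w$ by the scalar $u^{\langle \mu - \nu, w\rangle}$ at the level of matrix coefficient generating series:
$$
\Delta_{\beta, \gamma}(u) \big( g t^{\mu - \nu}\big) = u^{\langle \mu - \nu, w \rangle} \Delta_{\beta, \gamma}(u)(g),
$$
producing a Fourier-index shift on the individual coefficients. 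The projection $\pi$ multiplies on the left and right by elements of $U[t]$ and $U_-[t]$, which (as in the analysis in \cite[Section 9]{KMW}) do not affect the pole equations extracted from fundamental matrix coefficients. A direct bound comparison then identifies the pullback of the defining equations of $\overline{\mathcal{Y}}_\mu^\lambda$ along $p$ with exactly the generators of $I_\mu^\lambda$ listed in Proposition \ref{prop: intermediate KMW}, yielding the first assertion of the Corollary.

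For the second assertion, recall from the discussion preceding the Corollary that $(\overline{\mathcal{Y}}_\mu^\lambda)_{\mathrm{red}} = \overline{\cW}_\mu^\lambda$. Since the defining ideal of the reduced subscheme underlying a closed subscheme is the radical of its defining ideal, we immediately obtain that $\sqrt{J_\mu^\lambda}$ is the defining ideal of $\overline{\cW}_\mu^\lambda \subset \cW_\mu$, as claimed.

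The main obstacle in this argument is the middle step: verifying that the generators of $I_\mu^\lambda$ from Proposition \ref{prop: intermediate KMW} match precisely the Finkelberg-Mirkovi\'c pole equations on $\cW_\nu$, with the correct weight-dependent Fourier-index shifts arising from $p$. The case $\nu = 0$ (that is, $\mu$ dominant) is exactly what is established in \cite[Section 9]{KMW}; the general antidominant $\nu$ only contributes the additional shift $\langle \mu - \nu, \wt(\gamma)\rangle$ coming from right multiplication by $t^{\mu-\nu}$ on weight vectors, matching the bound in Proposition \ref{prop: intermediate KMW}. The bookkeeping is routine but tight, and requires no new conceptual ingredient beyond the computations already performed in \cite{KMW}.
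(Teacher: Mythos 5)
Your proposal is correct and follows essentially the same route as the paper, which invokes Proposition~\ref{prop: intermediate KMW} and defers to the argument of \cite[Theorem 8.7]{KMW}. You have usefully unpacked that citation: applying the (trivially generalized) affine-fibration reduction to $Z=\overline{\mathcal{Y}}_\mu^\lambda$, computing the Fourier-index shift $\langle\mu-\nu,\wt(\gamma)\rangle$ coming from right multiplication by $t^{\mu-\nu}$, and matching the result with the generators from Proposition~\ref{prop: intermediate KMW} is precisely the bookkeeping done in \cite[\S9]{KMW}, and the final passage to $\overline{\cW}_\mu^\lambda$ by taking radicals is exactly the paper's.
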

\begin{proof}
Follows from Proposition \ref{prop: intermediate KMW}, via the same proof as \cite[Theorem 8.7]{KMW}.
\end{proof}

\begin{proof}[Proof of Theorem \ref{appendix thm}]
By assumption $\overline{\mathcal{Y}^\lambda} = \overline{\Gr^\lambda}$.  Then $\overline{\mathcal{Y}}_\mu^\lambda = \overline{\cW}_\mu^\lambda$ so $J_\mu^\lambda = \sqrt{J_\mu^\lambda}$ by the previous corollary. This proves (a).

For part (b), recall from the proof of Proposition \ref{prop: conjectural yangian quantizes scheme} that the kernel of $\C[\cW_\mu] \twoheadrightarrow \gr \conjectural_\mu^\lambda(\bR)$ is sandwiched between $J_\mu^\lambda$ and the defining ideal of $\overline{\cW}_\mu^\lambda$.  Since $\overline{\mathcal{Y}}_\mu^\lambda = \overline{\cW}_\mu^\lambda$, these ideals are equal by the previous corollary, and so are also equal to the above kernel.  Therefore $\conjectural_\mu^\lambda(\bR)$ quantizes $\overline{\cW}_\mu^\lambda$.
\end{proof}

\end{document}